\documentclass[oneside,reqno]{amsart}
\usepackage[usenames]{color}

\newcommand{\indic}[1]{\id_{\{#1\}}}

\usepackage{amsmath,amsfonts,amssymb,graphics,amsthm,pgfplots}
\usepackage[foot]{amsaddr}

%\pgfplotsset{compat=1.9}

\pgfplotsset{every axis/.append style={
    axis x line=middle,    % put the x axis in the middle
    axis y line=middle,    % put the y axis in the middle
    axis line style={->},  % arrows on the axis
    xlabel={$\gamma$},     % default put x on x-axis
    ylabel={$\mu$},        % default put y on y-axis
    x label style={at={(axis description cs:0.5,-0.05)},anchor=north},
    y label style={at={(axis description cs:-0.1,.46)},anchor=south},              }}
\tikzset{>=stealth}

\usepackage{hyperref}
\usepackage{mathtools}
\usepackage{a4wide}
\usepackage{bbm}
\usepackage{enumerate}

\newtheorem{theorem}{Theorem}[section]
\newtheorem{lemma}[theorem]{Lemma}
\newtheorem{proposition}[theorem]{Proposition}
\newtheorem{corollary}[theorem]{Corollary}
\newtheorem{assumption}[theorem]{Assumption}
\theoremstyle{remark}
\newtheorem{remark}[theorem]{Remark}

\newcommand{\PP}{\mathbb{P}}
\newcommand{\EE}{\mathbb{E}}
\newcommand{\id}{\mathbbm 1}
\newcommand{\invsigma}{\sigma^{-1}}
\newcommand{\ZZ}{\mathbb{Z}}

\DeclareMathOperator*{\argmax}{arg\,max}

\begin{document}

\title[Localisation in the Bouchaud--Anderson model]{Localisation in the Bouchaud--Anderson model}

\author{Stephen Muirhead$^1$}
\address{$^1$Department of Mathematics, University College London (Currently: Mathematical Institute, University of Oxford)}
\email{muirhead@maths.ox.ac.uk}
\author{Richard Pymar$^2$}
\address{$^2$Department of Mathematics, University College London (Currently: Department of Economics, Mathematics and Statistics - Birkbeck)}
\email{r.pymar@bbk.ac.uk}

\begin{abstract}
It is well-known that both random branching and trapping mechanisms can induce localisation phenomena in random walks; the prototypical examples being the parabolic Anderson and Bouchaud trap models respectively. Our aim is to investigate how these localisation phenomena interact in a hybrid model combining the dynamics of the parabolic Anderson and Bouchaud trap models. Under certain natural assumptions, we show that the localisation effects due to random branching and trapping mechanisms tend to (i) mutually reinforce, and~(ii) induce a local correlation in the random fields (the `fit and stable' hypothesis of population dynamics).
\end{abstract}

\subjclass[2010]{60H25 (Primary) 82C44, 60F10, 60G50, 35P05 (Secondary)}
\keywords{Parabolic Anderson model, Bouchaud trap model, localisation, intermittency}

\thanks{Both authors were supported by the Leverhulme Research Grant RPG-2012-608 held by Nadia Sidorova. The first author was also partial supported by the Engineering \& Physical Sciences Research Council (EPSRC) Fellowship EP/M002896/1 held by Dmitry Belyaev. We would like to thank Nadia Sidorova for many invaluable suggestions, and Franziska Flegel, Onur G\"{u}n, Renato Soares dos Santos and two anonymous referees for helpful comments.}

\date{\today}

\maketitle

\noindent \textbf{Minor revision to published version.}
This is an updated version of \cite{MP16} containing the following minor revisions:
\begin{itemize}
\item A typo in the statement of Proposition \ref{prop:thm4.1} has been corrected;
\item The coupling used in Section \ref{sec:extremal} has been slightly modified to fix a gap in its original statement; we thank Renato Soares dos Santos for pointing this out to us;
\item A slight correction has been made to the proof of Proposition \ref{prop:quickpath2}; and
\item The bibliography has been updated.
\end{itemize}

\section{Introduction}
\label{sec:intro}
\subsection{The Bouchaud--Anderson model}
This paper studies a certain random walk model on~$\mathbb{Z}^d$ that is a hybrid of the well-known \textit{parabolic Anderson} (PAM) and \textit{Bouchaud trap} (BTM) models. To introduce this model, first recall the PAM, which describes the evolution of a diffusive particle in a random potential field (or, equivalently, a random branching environment; see below). Precisely, the PAM is the Cauchy problem on the lattice $\mathbb{Z}^d$
\begin{align} \label{eq:PAM}
\frac{\partial u(t, z)}{\partial t} &= (\Delta + \xi) \, u(t, z)\,,  & (t, z) \in [0, \infty) \times \ZZ^d \,;\\
\nonumber u(0, z) &= \id_{\{0\}}(z)\,,  & z \in \ZZ^d\,;
\end{align}
where $\xi = \{\xi(z)\}_{z \in \mathbb{Z}^d}$ is a collection of independent identically distributed (i.i.d.) random variables known as the (random) \textit{potential field} and $\Delta$ is the \textit{discrete Laplacian} defined by $(\Delta f)(z) = \sum_{|y - z| = 1} (2d)^{-1} (f(y) - f(z)) $, where $|\cdot |$ denotes the $\ell_1$-norm. For a large class of potential field distributions,\footnote{More specifically, those satisfying a certain integrability condition on the upper-tail; see \cite{Gartner90}.} equation \eqref{eq:PAM} has a unique non-negative solution defined for all time $t$. For general background information on the PAM, including its origins in the statistical physics literature and its interpretation in terms of a system of branching diffusive particles, see~\cite{Gartner90}.

Recall also the BTM, which describes the evolution of a diffusive particle in a random trapping landscape. Precisely, the BTM is the continuous-time Markov chain on $\ZZ^d$ defined by the jump rates
\begin{align}
\label{eq:BTMjumps}
w_{z \to y} := \begin{cases}
(2d \sigma(z))^{-1}\,,  & \text{if } |y-z|=1\, , \\
0\,, &  \text{otherwise}\, ,
 \end{cases}
 \end{align}
where $\sigma = \{\sigma(z)\}_{z \in \ZZ^d }$ is a collection of strictly-positive i.i.d.\ random variables known as the (random) \textit{trapping landscape}. Remark that the density of the BTM satisfies the equation
\begin{align}
\label{eq:BTM}
\frac{\partial u(t, z)}{\partial t} &= \Delta \invsigma \, u(t, z)\,,  & (t, z) \in [0, \infty) \times \ZZ^d \, ,
\end{align}
where, for clarity, we stress that the operator $\Delta \invsigma$ acts as
\[  (\Delta \invsigma f)(z) = \sum_{|y-z| = 1} (2d \sigma(y))^{-1} f(y) - \invsigma(z) f(z) \, .  \]
For general background information on the BTM, including its origins in the study of spin-glasses dynamics and its broad utility as a simple model for a variety of trapping behaviour, see \cite{BenArous06}.

The PAM and BTM are of great interest in the theory of random processes because they exhibit \textit{intermittency}, that is, unlike other commonly studied models of diffusion, their long-term behaviour cannot, in general, be described with a simple averaging principle (see \cite{Gartner90} and \cite{BenArous06} for a general overview of the PAM and BTM respectively.) Instead, extremes in the respective random environments may create concentration effects, which can result in the eventual \textit{localisation} of the solution to equations \eqref{eq:PAM} and \eqref{eq:BTM} respectively over long periods of time. In the most extreme cases, the solution localises on just a few sites.

Our aim is to study how the localisation phenomena in the PAM and the BTM interact. To do this, we consider the Cauchy problem on the lattice $\ZZ^d$
\begin{align} \label{eq:BAM}
\frac{\partial u(t, z)}{\partial t} &= (\Delta \sigma^{-1} + \xi) u(t, z) \, , & (t, z) \in [0, \infty) \times \mathbb{Z}^d\,;\\
\nonumber u(0, z) &= \id_{\{0\}}(z) \, , & z \in \mathbb{Z}^d\,;
\end{align}
derived by replacing the discrete Laplacian in equation \eqref{eq:PAM} with the generator of the BTM in equation \eqref{eq:BTM}. We refer to equation \eqref{eq:BAM} as the \textit{Bouchaud--Anderson model} (BAM). 

By analogy with the PAM (see \cite{Gartner90}, Section 1.2), the solution to equation \eqref{eq:BAM} has a natural interpretation as the expected number of particles in a system of continuously-branching diffusive particles on the lattice $\ZZ^d$ specified by:
\begin{itemize}
\item \textit{Initialisation}: A single particle at the origin;
\item \textit{Branching}: The local branching rate for a particle at a site $z$ is given by $\xi(z)$;
\item \textit{Trapping}: Each particle evolves as an independent BTM, that is, the waiting time at each visit to a site $z$ is independent and distributed exponentially with mean $\sigma(z)$, with the subsequent site chosen uniformly from among the nearest neighbours.
\end{itemize}
This interpretation can be formalised in the Feynman-Kac representation of the solution to~\eqref{eq:BAM}:
\begin{align}
\label{eq:fk}
u(t, z) := \EE_{0} \left[ \exp \left\{ \int_0^t \xi(X_s) ds)  \right\} \id_{\{X_t = z\}} \right]\,,
\end{align}
where $X$ is the BTM and, for $z\in \ZZ^d$, $\EE_{z}$ denotes the expectation over $X$ given that $X_0=z$.
As we shall see, the interaction between the random branching and trapping mechanisms makes the localisation behaviour of the BAM highly non-trivial.

\subsection{Localisation in the PAM and BTM}
The PAM and BTM are said to \textit{localise} if, as $t \to \infty$, the solution of equations \eqref{eq:PAM} and~\eqref{eq:BTM} respectively are eventually concentrated on a small number of sites with overwhelming probability, i.e.\ if there exists a (random) \textit{localisation set} $\Gamma_t$ such that, as $t \to \infty$, $|\Gamma_t| = t^{o(1)}$ and
\begin{align}
\label{eq:local}
\frac{\sum_{z \in \Gamma_t} u(t, z)}{U(t)} \to 1 \, \qquad \text{in probability}\,,
\end{align}
where $U(t) := \sum_{z \in \ZZ^d} u(t, z)$ is the total mass of the solution (in the BTM, this is identically one); see Section \ref{subsec:notation} for the definition of the asymptotic notation used here and throughout the paper.

Naturally, the primary measure of the strength of localisation in the PAM and BTM is the cardinality of the localisation set $\Gamma_t$. As such, the most extreme form of localisation is \textit{complete localisation}, which occurs if the total mass is eventually concentrated at just one site, i.e.\ if $\Gamma_t$ can be chosen in equation \eqref{eq:local} such that $|\Gamma_t| = 1$. A finer measure of the strength of localisation is the \textit{radius of influence}, which measures the extent to which localisation sites themselves are determined by purely local features of the random environment. More precisely, the radius of influence $\rho$ is the smallest integer for which the localisation sites can be determined by maximising a functional on $\ZZ^d$ that depends on the random environments only through their values in balls of radius $\rho$ around each site.

Broadly speaking, localisation in the PAM and BTM is generated by the structure-forming effects of extremes in the respective random environment. If these extremes are both sufficiently pronounced and sufficiently regular, over long periods of time the model will come to adopt the structure present in the environment, with localisation the most extreme manifestation of this. Naturally then, the strength of localisation in the PAM and BTM should depend on (i) the asymptotic rate of decay, and (ii) the regularity of the upper-tail of the random variables $\xi(0)$ and $\sigma(0)$. In this context, it is convenient to restrict $\xi(0)$ and $\sigma(0)$ to be strictly-positive and to characterise these random variables by their \textit{exponential tail decay rate} function
$$ g_\xi(x) := -\log (\PP(\xi(0) > x))  \quad \text{and} \quad  g_\sigma(x) := -\log (\PP(\sigma(0) >  x)) $$
for then (i) and (ii) translate to the asymptotic growth and regularity of the non-decreasing functions $g_\xi$ and $g_\sigma$.

We briefly outline some known results on localisation in the PAM and BTM. For simplicity, we shall assume all necessary regularity conditions without further specification.

\subsubsection{Localisation in the parabolic Anderson model}
The conditions under which the PAM completely localises in the sense of equation \eqref{eq:local} has been the subject of intense and ongoing research over the last 25 years. The current understanding is that double-exponential tail decay ($g_\xi(x) \approx e^x$) forms the boundary of the complete localisation universality class. More precisely, it is conjectured that the PAM exhibits complete localisation as long as $\log g_\xi(x) \ll x$. This has been proven (in \cite{Konig09}) in the extremal\footnote{This case is extremal in the sense that if $g_\xi(x) \sim \gamma \log x$ for $\gamma > d$ or $\gamma = d = 1$ then the solution to equation~\eqref{eq:PAM} `blows-up' in finite time, see \cite{Gartner90}.} case of Pareto-like tail decay ($g_\xi(x) \sim \gamma \log x$, for $\gamma > d$), and more recently (in \cite{Sidorova12} and \cite{Fiodorov13}) in the case of Weibull-like tail decay ($g_\xi(x) \sim x^\gamma$). On the other hand, if $\log g_\xi(x) \gg x$, then complete localisation is known not to hold (see \cite{Gartner07}). What occurs in the interface regime of double-exponential tail decay ($\log g_\xi(x) \sim  c x$, for $c > 0$) is not currently well-understood.

As for the radius of influence of the potential field, $\rho_\text{PAM}$, in the case of Pareto-like tail decay it has been shown (see \cite{Konig09}) that $\rho_\text{PAM} = 0$, in other words, the localisation site can be determined by maximising a functional that depends on the potential field $\xi$ only through its value at individual lattice sites, with interactions between neighbouring lattice sites having no influence on localisation. On the other hand, in the case of Weibull-like tail decay ($g_\xi(x) \sim x^\gamma$), the radius of influence has been shown (see \cite{Fiodorov13}) to be $\rho_\text{PAM} = \left[ (\gamma - 1)/2 \right]^+$, where $[x]$ and $x^+$ denote the integer and positive parts of $x$ respectively. Clearly this implies that  $\rho_\text{PAM} = 0$ if and only if $\gamma < 3$, and also that $\rho_\text{PAM} \to \infty$ in the $\gamma \to \infty$ limit.

\subsubsection{Localisation in the Bouchaud trap model}
The study of localisation in the Bouchaud trap model has also received considerable attention over the last 10 years. A notable feature of the BTM is that localisation can only occur in dimension one. In higher dimensions, the traps either have negligible effect in the limit (if the tail is integrable, by virtue of the law of large numbers), or are visited in such a way that their overall effect is spatially-homogeneous (see \cite{FIN02} and \cite{BenArous06} for a proof of this result in the case of Pareto-like tail decay, although the result is thought to hold more generally for arbitrary non-integrable tail decay).

On the other hand, it is known that in dimension one, Pareto-like tail decay ($g_\sigma(x) \sim c \log x$, $c > 0$) forms the boundary of the localisation universality class. More precisely, if $\log x = O(g_\sigma(x))$, it is known that the BTM does not localise in the sense of equation \eqref{eq:local} (although it does localise in a certain weaker sense; see, e.g.\ \cite{FIN02}). On the other hand, it was proven in \cite{Muirhead14} that for sub-Pareto tail decay ($g_\sigma(x) \ll \log x)$, the BTM localises on exactly two-sites in the limit, with a radius of influence (i.e.\ of the trapping landscape) equal to $0$.

\subsection{Overview of our results}
\label{sec:over}
Before detailing our results in full, we first provide a brief overview to highlight salient features; this section is for exposition only, and is not intended to be mathematically rigorous. A complete description of our results follows in Section \ref{sec:full} below.

In this initial study of localisation in the BAM, we focus on the case where both potential distribution $\xi(0)$ and trap distribution $\sigma(0)$ have Weibull tail decay
\[   \PP(\xi(0) > x) = e^{-x^\gamma} \quad \text{and} \quad \PP(\sigma(0) > x) = e^{-x^\mu} \,  \qquad \gamma, \mu > 0 \, .\]
Our results also hold in the $\gamma, \mu \to 0$ limit (with some caveats; see Section \ref{sec:setup}). As we shall see, the BAM with Weibull tail decay turns out to be a natural regime to study, since the interaction between the potential field and trapping landscape exhibits certain phase transitions in $(\gamma, \mu)$.

\subsubsection{Complete localisation} Our first main result establishes the complete localisation of the BAM across the entire regime (see Theorem \ref{thm:main1} below).
\begin{theorem}
\label{thm:ex1}
There exists a (random) site $Z_t$ such that, as $t \to \infty$, 
\[ \frac{u(t, Z_t)}{U(t)} \to 1 \qquad \text{in probability} \, .\]
\end{theorem}
That the BAM completely localises for some $(\gamma, \mu)$ is expected, since the PAM with Weibull potential also exhibits complete localisation. More surprising, however, is that complete localisation occurs regardless of the presence of very large traps, even in dimension one, since \textit{a priori} it might be thought that large traps would draw probability mass away from the localisation site. 

\subsubsection{Mutual reinforcement of localisation effects due to the PAM and the BTM}
Since complete localisation holds in the entire regime, in order to probe the interaction between the potential field and trapping landscape we need a finer measure of localisation. Such a measure is provided by the \textit{radius of influence} $\rho$, which as described above is the smallest integer for which the localisation site $Z_t$ can be determined by maximising a functional on $\mathbb{Z}^d$ that depends on $\xi$ and $\sigma$ only through their values in balls of radius $\rho$ around each site. Our second main result is to determine the radius of influence $\rho$, and to prove its optimality (see Theorem \ref{thm:main1} and part \eqref{thm:main3a} of Theorem \ref{thm:main3} below).
\begin{theorem}
\label{thm:exp2}
The radius of influence is
\[ \rho := \left[ \frac{\gamma - 1}{2} \frac{\mu}{\mu + 1} + \frac12  \right]^+ \, . \]
\end{theorem}
Note that $\rho$ is a decreasing function of the strength of both the potential field and trapping landscape (i.e.\ an increasing function of $\gamma$ and $\mu$), in other words, the localisation effects due to the PAM and BTM are \emph{mutually reinforcing}.

\subsubsection{Reducibility of the BAM to the PAM}
We next ask whether the BAM is `reducible' to the PAM. There are actually two distinct notions of reducibility that are relevant. Strong reducibility describes the situation in which the trapping landscape $\sigma$ plays no role in determining the localisation site $Z_t$, and the macroscopic behaviour of the system is adequately approximated by the PAM with potential $\xi$. Weak reducibility describes the situation in which all necessary information to determine $Z_t$ is contained in the `net growth rate' $\eta := \xi - \invsigma$, and moreover, the macroscopic behaviour of the BTM is adequately approximated by the PAM with potential replaced with $\eta$. The term `net growth rate' comes from the interpretation of the BAM as a trapped, branching random walk (see Section \ref{sec:con} below). Our third main result is to determine the regimes in which the BAM is strongly and weakly reducibility to the PAM (see parts \eqref{thm:main3c} and \eqref{thm:main3d} of Theorem \ref{thm:main3} below). These regimes are depicted in Figure~\ref{fig3}.

\begin{theorem}
The BAM is strongly reducible to the PAM if and only if $\gamma < 1$. The BAM is weakly reducible to the PAM if and only if $\rho = 0$ and $\gamma\ge1$.
\end{theorem}

\begin{figure}[t]
\centering
\begin{tikzpicture}[scale = 0.91]
 \begin{axis}[xmin=0,xmax=5,ymin=-0,ymax=6]
  \addplot [thick, domain=2:5] (\x, {1/(\x-2)});
  \addplot[black, dashed, thick] coordinates{(1, 0) (1, 6)};  
 \end{axis}
% \node at (0.7,1.4){\footnotesize $\xi$};
% \node at (2.6,1){\footnotesize $\eta$};
\end{tikzpicture}
\caption{Partition of the parameter space of the BAM according to the whether the BAM is `strongly reducible' to the PAM with the usual potential $\xi$ (left of the dashed line) or `weakly reducible' to the PAM with the potential replaced with the `net growth rate' $\eta$ (left of the bold curve). The boundary curve is $\mu = 1/(\gamma - 2)$.}
\label{fig3}
\end{figure}
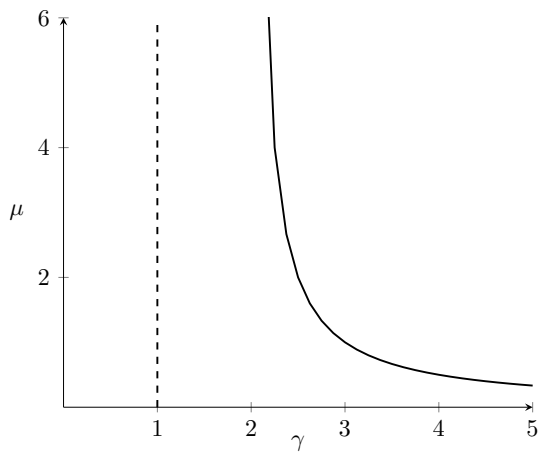

\subsubsection{Local correlation between the potential field and trapping landscape: The `fit and stable' hypothesis}
Our final result is to establish the \textit{local correlation} between the potential field and trapping landscape (where `local' is from the perspective of the localisation site); this is the so-called `fit and stable' hypothesis that has been predicted numerically in the mathematical biology literature (see, e.g., \cite{Brotto14}), but never rigorously confirmed (see Section \ref{sec:con} below). Interestingly, the correlation that we observe is \textit{positive} at the localisation site, but \textit{negative} away from the localisation site, providing an unexpected extension to the `fit and stable' hypothesis.

To describe this correlation, we shall need to define a second, possibly smaller, radius of influence
\[\rho_\xi := \left[ \frac{ \gamma - 1 }{2} \frac{\mu} {\mu + 1}\right]^+ \in \{ \rho - 1, \rho\}  \, ,\]
which is the the smallest integer for which the localisation site $Z_t$ can be determined by a maximising a functional on $\mathbb{Z}^d$ that depends on $\xi$ only through its values in balls of radius $\rho_\xi$ around each site (note, the functional must still depend on $\sigma$ through balls of radius at least $\rho$). For simplicity, we exclude here the `interface cases', i.e.\ the points of discontinuity of $\rho_\xi$.

\begin{theorem}
Assume that $\gamma \ge 1$, so that the BAM is not strongly reducible to the PAM. Let $Z_t$ denote the site of complete localisation. Then, as $t \to \infty$ eventually almost surely: (i) the random variables $\xi(Z_t)$ and $\sigma(Z_t)$ are positively correlated; and (ii) for all $z$ such that $0 < |z - Z_t| \le \rho_\xi$, the random variables $\xi(z)$ and $\sigma(z)$ are negatively correlated.
\end{theorem}
In Theorem \ref{thm:main2} below we make explicit the nature of this correlation, as well as providing a full description of the localisation site, determining its asymptotic distance from the origin, the local profile of the potential field and trapping landscape, and its ageing behaviour.

\subsection{Methods and techniques}
Our approach to proving localisation in the BAM is loosely based on existing techniques to prove localisation in the PAM (see, e.g., \cite{Astrauskas07, Gartner07, Sidorova12}), although the complex interaction between the potential field and the trapping landscape means that these techniques cannot be trivially adapted. Instead, the presence of the trapping landscape requires the development of existing techniques on two main fronts. 

First, proving localisation in the BAM requires the development of the spectral theory of operators of the form $\Delta \invsigma + \xi$, including path expansions and Feynman-Kac representations for the principal eigenvalue and eigenfunction respectively. To our knowledge this theory has not appeared in the literature before, and may be of independent interest, including in the study of position-dependent mass Schr\"{o}dinger operators (see Section \ref{sec:con} below). In the particular case of the BAM with Weibull tails, we also extend existing techniques to establish the max-class of local eigenvalues; this is necessary in order to extract extra information about the local correlation in the potential field and trapping landscape.

Second, in order to analyse the `screening effect' of heavy traps, standard percolation estimates are insufficient: in dimension one, because of the geometry; in dimensions higher than one, because of complex dependencies between the potential field, the trapping landscape, and the localisation site $Z_t$. In dimension one we analyse heavy traps using coarse graining methods; in higher dimensions, we implement new ideas that allow us to apply percolation estimates in the presence of the dependencies.

In addition, our methods provide a new approach to working with `cluster expansions'. Although these expansions have appeared in the literature before (see, e.g., \cite{Astrauskas07, Fiodorov13}), the standard approach has been to access them via resolvent formalism. Our techniques provides a purely probabilistic approach to `cluster expansion', which avoids many of the technicalities of the resolvent formalism. One application would be a simpler, purely probabilistic proof of the localisation results on the PAM found in \cite{Fiodorov13}.

\subsection{Connections to the literature}
\label{sec:con}
Although this is the first work to consider the BAM, there are clear connections between the BAM and other models in the literature. First, the BAM can be interpreted as the thermodynamic limit of a particle system with random branching and trapping mechanisms (given, respectively, by the potential field $\xi$ and the trapping landscape $\sigma$). In the probability literature there have been several other analyses of models combining random branching and trapping mechanisms -- in particular, trapping mechanisms given by asymmetric transition probabilities \cite{Gantert14} and random conductances \cite{Wolff13} -- although these have not considered the localisation properties of the model, focusing instead on the growth of the total population.

Similar models have also appeared in the mathematical biological literature, where they find an application in the study of population dynamics. Here the branching and trapping rates are recast as the \textit{fitness (`adaptedness')} and \textit{stability (`adaptability')} respectively of individual states (e.g.\ geographic locations, genetic configurations etc.). While the literature contains several models which allow for randomness in either the fitness \cite{Kingman78, Park10} or stability \cite{Ishii89, Leigh70, Taddei97}, most relevant is~\cite{Brotto14} which considers a model in which \textit{both} these characteristics vary. Indeed, the model considered in \cite{Brotto14} is essentially identical to the BAM, except it is defined in a domain without any geometry: when an individual's state changes, the fitness and stability are re-sampled according to their respective distributions.\footnote{A second minor difference is that the population size is kept constant by the deletion of a uniformly chosen individual at each replication event.} The primary observation in \cite{Brotto14} (obtained numerically) is the tendency of populations to concentrate on states which are both fit and stable: the `fit and stable' hypothesis. Our results provide the first rigorous analysis of this phenomenon. Indeed, our results actually suggest a refinement of the hypothesis (for our model with geometry): that populations concentrate on states which are fit and stable, \textit{but also} for which neighbouring states are both fit and \textit{unstable}.

Second, operators of the form $\Delta \invsigma + \xi$ have important applications in quantum mechanics, since their eigenvalues give the energy levels of a particle whose effective mass is position-dependent (see, e.g., \cite{Chen04, Eleuch12, vonRoos83}). To make the connection, consider the position-dependent mass Schr\"{o}dinger equation for a particle with effective mass $\sigma$ in a potential field $\xi$. This equation has a Hamiltonian of general form (see \cite{vonRoos83})
\[ \frac{1}{2} \left( \sigma^{-\alpha} \nabla \sigma^{-\beta} \nabla \sigma^{-\gamma} +  \sigma^{-\gamma} \nabla \sigma^{-\beta} \nabla \sigma^{-\alpha} \right)  + \xi \, , \qquad \alpha, \beta, \gamma \ge 0, \ \alpha + \beta + \gamma = 1 \, .\]
Although there is no canonical choice for $\alpha, \beta, \gamma$, in the discrete setting a natural restriction is $\beta = 0$, which avoids symmetry breaking in the definition of $\nabla$. Specialising to the case $\alpha = \gamma = 1/2$ gives the Hamiltonian 
\begin{align}
\label{eq:sh}
\sigma^{-\frac{1}{2}} \Delta \sigma^{-\frac{1}{2}} + \xi  = \sigma^{-\frac{1}{2}} \left(  \Delta \sigma^{-1} + \xi \right) \sigma^{\frac{1}{2}} \, .
\end{align}
We remark that the Hamiltonian in \eqref{eq:sh} is the `symmetrised' form of the operator $\Delta \sigma^{-1} + \xi$, and hence has equivalent spectral theory. In Section \ref{sec:gentheory} we develop general theory for operators of the form $\Delta \invsigma + \xi$, including deriving path expansions and Feynman-Kac representations for the principal eigenvalue and eigenfunction respectively. This section is entirely self-contained, and is completely deterministic, and we expect that it will be of independent interest.

Third, there are connections between the BAM and the PAM in the case where the potential field distribution $\xi(0)$ is allowed to take on highly negative (or even infinitely negative) values, which may be interpreted as `traps'. Previous work has noted the minimal influence of such `traps' in $d \ge 2$ (see, e.g.\ \cite[Section 2.4]{Gartner90}), essentially due to percolation estimates, an observation that finds echoes in our results and methods. However, there are clear differences between this model and the BAM, primarily due to the fact that the traps in the BAM may coexist with sites of high potential; this coexistence underlies the phenomena of mutual reinforcement and correlation that we observe in the BAM. On the other hand, in dimension one the effect of highly negative potential values in the PAM is significant (see~\cite{Biskup01}). Indeed, since such sites cannot be avoided, their effect is to `screen' off the growth that would otherwise occur from sites of high potential, and so the asymptotic growth of the solution depends heavily on the relationship between the upper and lower tails of $\xi(0)$. Again, this is reminiscent of our results in dimension one, which are only valid if the trap distribution decays sufficiently fast to ensure `screening' effects are negligible.

\subsection{The formal set-up for the paper}
\label{sec:setup}
For the rest of the paper, we make the following assumptions on the potential field $\xi$ and the trapping landscape $\sigma$: 
\begin{assumption}[Assumption on the potential field distribution]
\label{assump:xi}
$ $ \\
The random variable $\xi(0)$ is strictly-positive and satisfies
\[\bar F_\xi(x) = e^{-x^\gamma} \, ,\]
for some $\gamma > 0$, where $\bar F_\xi(x) := 1-F_\xi(x):=\PP(\xi(0) > x)$.
\end{assumption}

\begin{assumption}[Assumptions on the trap distribution]
\label{assump:sigma}
$ $\\
The random variable $\sigma(0)$ satisfies:
\begin{enumerate}[(a)]
\item \textbf{No quick sites}: The quantity
\[ \delta_\sigma := \rm{essinf} \, \sigma(0) \]
is strictly positive;
\item \textbf{Regularity}: The quantity
\[ \mu :=  \lim_{x \to \infty} \, \frac{\log g_\sigma(x)}{\log x} \]
exists and is finite. 
If $\mu > 0$, then $\sigma(0)$ has a continuous density function $f_\sigma(x)$ with a Weibull upper-tail, i.e.\ for sufficiently large $x$,
\[ \bar F_\sigma(x) = \exp \{ -x^\mu\}  \, ,\]
where $\bar F_\sigma(x) := 1-F_\sigma(x) := \PP(\sigma(0) > x)$.
If $\mu = 0$, then $\sigma(0)$ has a continuous density function $f_\sigma(x)$, with the property that
\[ f_\sigma(a_x) \sim f_\sigma(b_x)  \]
for any $a_x, b_x \to \infty$ such that $a_x \sim b_x$ (see Section \ref{subsec:notation} for the asymptotic notation). In both cases, the lower-tail of $f_\sigma$ satisfies, as $x \to 0$, 
\[ f_\sigma(x + \delta_\sigma) = o(e^{-1/x})  \, . \]
\end{enumerate}
Furthermore, if $d=1$, then additionally $\sigma(0)$ satisfies the following two extra conditions:
\begin{enumerate}[(a)]
\setcounter{enumi}{2}
\item \textbf{Sufficiently fast tail decay}: As $x \to \infty$ eventually, for some $\varepsilon > 0$,
\begin{align*}
g_\sigma(x) > ( 1 + \varepsilon) \log \log x  \, ;\ 
\end{align*} 
\item \textbf{Regularity}: There exists a $c \in (1, \infty]$ such that
\[ \lim_{x \to \infty} \frac{ g_\sigma(x)}{\log \log x} = c  \, ,\  \]
with the convergence eventually monotone in the case $c = \infty$.
\end{enumerate}
\end{assumption}

We wish to briefly comment on the nature of the above assumptions on $\xi(0)$ and $\sigma(0)$. First, we claim that the BAM with Weibull potential field is a natural regime in which to observe the interaction between the localisation effects in the PAM and the BTM. If the potential field is any stronger (indeed if $\gamma < 1$), the BAM is strongly reducible to the PAM\footnote{Note however that, because of Assumption \ref{assump:sigma}, this conclusion does not apply in dimension one if the trapping landscape is sufficiently strong.}. On the other hand, if the potential field is any weaker, the effect of the trapping landscape, while present, is harder to measure. To see why, recall that the PAM with Weibull potential field has been shown to completely localise with a certain finite radius of influence $\rho_\text{PAM}$; it is on the level of this radius that we measure the impact of the trapping landscape $\sigma$. Since $\rho_\text{PAM} \to \infty$ in the $\gamma \to \infty$ limit, the effect of changes to $\rho_\text{PAM}$ become harder to quantify, and we leave this study to future work.

Second, the regularity assumption on $\xi(0)$ is imposed mainly for simplicity; weaker regularity assumptions (like those found in \cite{Astrauskas07} and \cite{Astrauskas08} for instance) are possible, although they introduce certain technical difficulties that we wish to avoid. Finally, note that equivalent results for the BAM with Pareto-like potential field can be naturally deduced by considering our results in the $\gamma \to 0$ limit.

Turning to the assumptions on $\sigma(0)$, first note that the quantity $\mu$ measures the `Weibull-ness' of the upper-tail of $\sigma(0)$, with the case $\mu = 0$ corresponding to a stronger-than-Weibull trapping landscape. For simplicity, we have chosen not to consider weaker-than-Weibull trapping landscapes in this paper; equivalent results can be naturally deduced by considering our results in the $\mu \to \infty$ limit. As with $\xi(0)$, the regularity assumptions on $\sigma(0)$ are certainly not optimal for our results to hold; they are chosen mainly for simplicity. On the other hand, our assumption that $\sigma(0)$ is bounded away from zero is essential. Indeed we expect that the nature of the localisation behaviour will change if `quick' sites are present. Finally, the additional tail decay assumption in dimension one is also essential, and our results and methods break down completely without it. Note, however, that this condition is only violated for trap distributions with extremely heavy tails, such as if $\sigma(0)$ is a \emph{log-Pareto} random variable.

\subsection{Full description of our results}
\label{sec:full}

Here we describe our results in full, expanding on the exposition given in Section \ref{sec:over}. In order to state our results explicitly, we shall need to introduce some notation. Recall the parameter $\mu \in [0, \infty)$ from Assumption \ref{assump:sigma}, which describes the `Weibull-like' decay parameter of the upper-tail of $\sigma(0)$. Recall also from Section \ref{sec:over} the radius of influence
\[ \rho := \left[ \frac{\gamma - 1}{2} \frac{\mu}{\mu + 1} + \frac12  \right]^+ \]
and the, possibly smaller, radius of influence of the potential field $\xi$,
\[  \rho_\xi := \left[ \frac{ \gamma - 1 }{2} \frac{\mu} {\mu + 1}\right]^+ \in \{ \rho - 1, \rho\}  \le \rho \, .\]
The relationship between $\rho$ and $\rho_\xi$ is depicted in  Figure~\ref{fig1}; we defer further discussion on $\rho$ and $\rho_\xi$ to Remark \ref{rem:radius}.

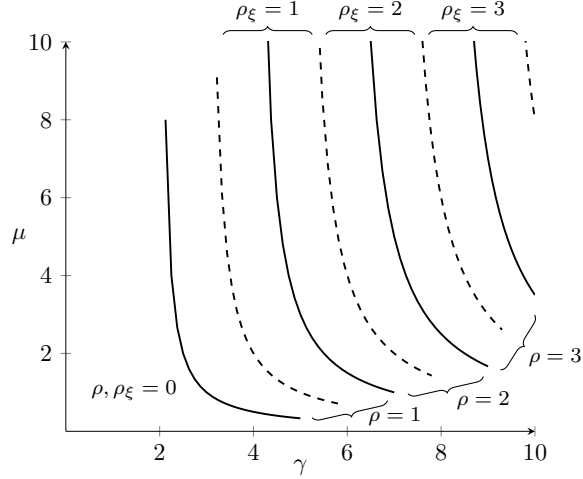
\begin{figure}[t]
\centering
\begin{tikzpicture}[scale = 0.91]
 \begin{axis}[xmin=0,xmax=10,ymin=-0,ymax=10]
 \addplot [thick,domain=2:5] (\x, {1/(\x-2)});
 \addplot [thick,domain=4:7] (\x, {3/(\x-4)});
 \addplot [thick,domain=6:9] (\x, {5/(\x-6)});
 \addplot [thick,domain=8:10] (\x, {7/(\x-8)});
 \addplot [black, dashed, thick,domain=3.22:5.8] (\x, {2/(\x-3)});
 \addplot [black, dashed, thick, domain=5:7.8] (\x, {4/(\x-5)});
 \addplot [black, dashed, thick,domain=7:9.3] (\x, {6/(\x-7)});
 \addplot [black, dashed, thick,domain=9:10] (\x, {8/(\x-9)});
 \end{axis}
 \node at (1,0.6){\footnotesize $\rho, \rho_\xi = 0$};
  \draw [decorate,decoration={brace,amplitude=3pt}]   (4.7,0.45) -- (3.6,0.2) node [black,midway,xshift=0.6cm, yshift=-0.1cm] {\footnotesize $\rho = 1$};
  \draw [decorate,decoration={brace,amplitude=3pt}]   (6.1,0.8) -- (5,0.55) node [black,midway,xshift=0.5cm, yshift=-0.2cm]  {\footnotesize $\rho = 2$};
 \draw [decorate,decoration={brace,amplitude=3pt}]   (6.85,1.7) -- (6.35,0.9) node [black,midway,xshift=0.5cm, yshift=-0.2cm]  {\footnotesize $\rho = 3$};
 \draw [decorate,decoration={brace,amplitude=3pt}] (2.3,5.8) -- (3.6,5.8)  node [black,midway,yshift=0.3cm] {\footnotesize $\rho_\xi = 1$};
  \draw [decorate,decoration={brace,amplitude=3pt}] (3.8,5.8) -- (5.1,5.8)  node [black,midway,yshift=0.3cm] {\footnotesize $\rho_\xi = 2$};
  \draw [decorate,decoration={brace,amplitude=3pt}] (5.3,5.8) -- (6.6,5.8)  node [black,midway,yshift=0.3cm] {\footnotesize $\rho_\xi = 3$};
\end{tikzpicture}
\caption{Partition of the parameter space of the BAM according to the values of $\rho$ (bold lines) and $\rho_\xi$ (dashed lines). The boundary curves are of the form $\mu = (2i-1)/(\gamma - 2i)$ and $\mu = (2i)/(\gamma - 2i - 1)$, for $i \in \mathbb{N} \setminus \{0\}$.}
\label{fig1}
\end{figure}

Next we describe explicitly the localisation site. For each $z \in \mathbb{Z}^d$ and $n \in \mathbb{N}$, define the ball $B(z, n) := \{y \in \ZZ^d : |y-z| \le n\}$. For each $z \in \mathbb{Z}^d$, define the Hamiltonian
\begin{align*}
\mathcal{H}(z) :=  \Delta \invsigma + \xi  \id_{B(z, \rho_\xi)}  
\end{align*}
restricted to the domain $B(z, \rho)$ with Dirichlet boundary conditions, denoting by $\lambda(z)$ its principal eigenvalue. Note that each $\lambda(z)$ is real since the Hamiltonian $\mathcal{H}(z)$ is similar to the Hermitian operator
$$ \sigma^{-\frac{1}{2}} \, \mathcal{H}(z) \, \sigma^{\frac{1}{2}} = \sigma^{-\frac{1}{2}}  \Delta \sigma^{-\frac{1}{2}} + \xi \id_{B(z, \rho_\xi)}  \, . $$

We refer to $\lambda(z)$ as the \textit{local principal eigenvalue at $z$}, and remark that it is a certain functional of the sets $\xi^{(\rho_\xi)}(z) := \{\xi(y)\}_{y \in B(z, \rho_\xi)}$ and $\sigma^{(\rho)}(z) := \{ \sigma(y)\}_{y \in B(z, \rho)}$. Note that the random variables $\{\lambda(z)\}_{z \in \mathbb{Z}^d}$ are identically distributed, and have a dependency range bounded by $2\rho$, i.e.\ the random variables $\lambda(y)$ and $\lambda(z)$ are independent if and only if $|y-z| > 2\rho$. Remark also that in the special case $\rho = 0$, $\lambda(z)$ reduces to the `net growth rate' $\eta(z) = \xi(z) - \invsigma(z)$.

For any sufficiently large $t$, define a \textit{penalisation functional} $\Psi_{t}: \mathbb{Z}^d \to \mathbb{R}$ by
$$ \Psi_{t}(z) := \lambda(z) - \frac{|z|}{\gamma t} \log \log t\,. $$
Note that $\Psi_t$ has a similar form to the penalisation functional introduced in \cite{Fiodorov13} to prove complete localisation in the PAM with Weibull potential field, representing the trade-off between energetic forces (given by the local principal eigenvalue $\lambda(z)$) and entropic forces (given by a probabilistic penalty which is linear in $|z|$ and decaying in $t$); see Remark \ref{remark:intuition}.

Define a large `macrobox' $V_t := [-R_t, R_t]^d \cap \mathbb{Z}^d$, with $R_t := t (\log t)^{\frac{1}{\gamma}}$. Fix a constant $0 < \theta <  1/2$ and define the macrobox level $L_{t} := ((1-\theta) \log |V_t|)^{\frac{1}{\gamma}}$. Let the subset $\Pi^{(L_{t})} := \left\{ z \in \mathbb{Z}^d : \xi(z) > L_{t} \right\} \cap V_t$ consist of sites in $V_t$ at which $\xi$-exceedances of the level $L_{t}$ occur. Finally, define the random site 
\[ Z_{t} := \argmax_{z \in \Pi^{(L_t)}} \Psi_{t}(z) \, .\]
The site $Z_t$ is well-defined eventually almost surely since, as we show in Lemma \ref{lem:asforxi}, the set $\Pi^{(L_t)}$ is non-empty and finite eventually almost surely. Moreover, for $t$ sufficiently large, $Z_t$ almost surely does not depend on the particular choice of $\theta$. We present again (see Theorem~\ref{thm:ex1}) our main theorem:

\begin{theorem}[Complete localisation]
\label{thm:main1}
As $t \to \infty$,
$$ \frac{u(t, Z_t)}{U(t)} \to 1 \qquad \text{in probability} \, .$$
\end{theorem}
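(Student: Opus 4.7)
My plan is to adapt the spectral-penalisation strategy used for the Weibull PAM in \cite{Sidorova12, Fiodorov13} to the non-self-adjoint generator $\Delta \invsigma + \xi$. Because $\sigma(0) \geq \delta_\sigma > 0$ almost surely, this operator is similar (via conjugation by $\sigma^{1/2}$) to the self-adjoint operator $\sigma^{-1/2}\Delta\sigma^{-1/2} + \xi$, so the standard Schr\"odinger spectral toolkit is available. Heuristically, for $z \in \Pi^{(L_t)}$ one should have $\log u(t,z) \approx t\lambda(z) - |z|\log\log t/\gamma$: the first term is the energetic gain from localising on the principal eigenfunction of $\mathcal H(z)$, and the second is the Feynman--Kac hitting cost for the branching walk to reach $z$ from the origin under the Weibull tail. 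This matches the definition of $\Psi_t$, so $Z_t$ should dominate.

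First I would dispatch two standard reductions. The \emph{macrobox reduction} controls $\sum_{z \notin V_t} u(t,z)$ by coupling the BTM with a constant-speed random walk (using $\sigma \geq \delta_\sigma$) and the almost-sure bound $\max_{z \in V_t}\xi(z) \leq (\log |V_t|)^{1/\gamma}(1+o(1))$; beyond $V_t$ the available energetic gain is too small to offset the hitting cost $|z|\log(|z|/t)$. The \emph{level reduction} handles $z \in V_t \setminus \Pi^{(L_t)}$: a Rayleigh--Ritz bound gives $\lambda(z) \leq L_t + O(1)$ there, and since $|V_t|$ is only polynomial in $t$, the total contribution is of order $\exp(tL_t + o(t))$, much smaller than $\exp(t\lambda(Z_t))$.

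The heart of the argument is a two-sided asymptotic $\log u(t,z)/t = \Psi_t(z) + o(\log\log t / t)$ valid uniformly on $\Pi^{(L_t)}$. The lower bound comes from inserting the positive principal eigenfunction $\phi_z$ of $\mathcal H(z)$ into the Feynman--Kac representation, splitting time into a short hitting phase for the BTM to reach $B(z,\rho)$ from the origin and a long localisation phase where the mass grows like $e^{t\lambda(z)}$; the hitting probability contributes exactly $\exp(-|z|\log\log t/\gamma + o(\cdot))$ after adjusting for the $\sigma$-slowdown. The upper bound comes from a spectral expansion of $u(t,\cdot)$ on a well-chosen box around $z$. To convert these asymptotics into complete localisation I would prove a \emph{gap lemma}: with high probability,
\[
t\Bigl(\Psi_t(Z_t) - \max_{z \in \Pi^{(L_t)} \setminus \{Z_t\}} \Psi_t(z)\Bigr) \to \infty \, .
\]
Since the $\{\lambda(z)\}_{z \in V_t}$ are identically distributed, $2\rho$-dependent, and have a Weibull-type upper tail inherited from $\xi$, the separation between top order statistics can be controlled by classical extreme value arguments, and the entropic term $|z|\log\log t/(\gamma t)$ breaks any residual near-ties.

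The step I expect to be hardest is the fine eigenvalue analysis pinning down $\rho$ and $\rho_\xi$. One must show by perturbative Rayleigh--Ritz that enlarging $B(z,\rho)$ does not raise $\lambda(z)$ by enough to beat the spatial cost in $\Psi_t$, while shrinking it below $\rho$ or the potential radius below $\rho_\xi$ loses a correction of precisely the order that shifts $\Psi_t$'s maximiser. This is where the mutual reinforcement of $\xi$ and $\sigma$ becomes quantitative, and it is also where the one-dimensional tail condition on $\sigma$ is essential: the assumption $g_\sigma(x) > (1+\varepsilon)\log\log x$ eventually rules out atypically large single traps on the scale $R_t$ that would ``screen'' the BTM path to $Z_t$, and without it the lower bound on $u(t,Z_t)$ simply fails in $d=1$.
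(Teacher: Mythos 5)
Your proposal follows the right heuristics and correctly identifies the central trade-off (local principal eigenvalue vs.\ entropic penalty), the role of the gap in the top order statistics of $\Psi_t$, and the essential function of the one-dimensional tail condition. However, there are two concrete problems with the way the argument is organised, and they are precisely the points the paper's proof is designed around.

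First, your ``level reduction'' is not a valid step as stated. You propose to bound the contribution from $z \in V_t \setminus \Pi^{(L_t)}$ by observing $\lambda(z) \le L_t + O(1)$ there and concluding $\sum_{z \in V_t \setminus \Pi^{(L_t)}} u(t,z) = \exp(tL_t + o(t))$. But $u(t,z)$ at a low-potential site $z$ adjacent to $Z_t$ is \emph{not} controlled by the local eigenvalue at $z$: it is governed by mass spilled over from $Z_t$, and is of order $u(t,Z_t)\,a_t^{-|z-Z_t|}$, far larger than $e^{tL_t}$. The correct dichotomy is by path trajectories, not by endpoints. In the paper, $U^4(t)$ is the contribution from paths that \emph{never visit} $\Pi^{(L_t)}$, which is bounded trivially by $e^{tL_t}$ since the potential along such a path never exceeds $L_t$. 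The low-potential endpoints near $Z_t$ must instead be absorbed into the localisation step. This is what the path decomposition $\Gamma(0) = E^1_t \cup \cdots \cup E^5_t$ accomplishes, and it is the organisational device your proposal is missing: grouping paths according to the highest local eigenvalue they visit, and whether they stay within the box $B_t$, is what turns the heuristics into a proof.

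Second, your claimed two-sided asymptotic $\log u(t,z)/t = \Psi_t(z) + o(\log\log t/t)$ uniformly on $\Pi^{(L_t)}$ plus the gap lemma does not by itself give complete localisation at a single site. It gives concentration on a small ball around $Z_t$. To sharpen this to $u(t,Z_t)/U(t) \to 1$ you need the exponential decay of the principal eigenfunction of $\Delta\invsigma + \xi$ on the domain $B_t$, established in the paper via a Feynman--Kac representation for the eigenfunction and a cluster expansion (Lemma~\ref{lem:clusterexp}, Proposition~\ref{prop:thm4.1}, Proposition~\ref{prop:expdecay}). Relatedly, the upper bound on $u(t,z)$ for $z \in \Pi^{(L_t)}$, $z \neq Z_t$, must account for paths that visit \emph{several} high-potential sites on the way; ruling out a benefit from such visits is exactly what the cluster expansion together with the choice $j = [\gamma-1]$ achieves in Proposition~\ref{prop:U2neg}. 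Your proposal does not address either of these, and both are load-bearing. The lower bound on $U(t)$ you sketch (split time into a hitting phase and a localisation phase, adjust for the $\sigma$-slowdown) is exactly Proposition~\ref{prop:lowerbound}, and your closing remarks about the $d=1$ tail condition are on target; the missing pieces are the path decomposition for the upper bounds and the eigenfunction decay for the final localisation.
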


\begin{remark}
\label{remark:intuition}
In order to determine $Z_t$ explicitly, a finite approximation is available for $\lambda(z)$ (see Proposition~\ref{prop:pathexp} for a precise formulation):
\begin{align}
\label{eq:pathexp}
\lambda(z) \approx  \eta(z) + \invsigma(z) \! \sum_{2 \leq k \leq 2j} \, \sum_{\substack{p \in \Gamma_{k}(z, z) \\ p_i \neq z, \, 0 < i < k \\ \mathrm{Set}(p) \subseteq B(z, \rho)}} \ \prod_{0 < i < k} (2d)^{-1} \frac{\invsigma(p_i)}{{\lambda}(z) - \eta_z(p_i)} \, ,
\end{align}
where $j := [\gamma - 1]$ and $\eta_z := \xi \id_{B(z, \rho_\xi)} - \invsigma$; see Section \ref{subsec:notation} for the definition of the path set $\Gamma_k(z, z)$. This path expansion can be iteratively evaluated to approximate $\Psi_t(z)$ as an explicit function of $\xi^{(\rho_\xi)}(z)$, $\sigma^{(\rho)}(z)$, $|z|$ and $t$, which, as we show, is sufficiently precise to determine the localisation site $Z_t$ with overwhelming probability.
\end{remark}

Before stating our second and third main results we shall introduce some more notation. First we define exponents that explicitly describe the correlation of the fields $\xi$ and $\sigma$ around the localisation site $Z_t$. To this end, define the function $q_\xi: \mathbb{N} \to [0, 1]$ and the non-negative constant~$q_\sigma$ by
\[ q_\xi(x) := \begin{cases} \left( 1 -  \frac{2x}{\gamma - 1} - \frac{1}{\mu + 1} \right)^+  & \text{if } \gamma > 1 \, , \\
 (1 - x)^+ & \text{else,} \end{cases} \qquad \text{and} \qquad  q_\sigma :=  \left (\frac{\gamma - 1}{\mu + 1} \right)^+   \, .\] 
We shall also need the concept of `interface cases', which correspond to the values of $(\gamma, \mu)$ where~$\rho$, and respectively $\rho_\xi$, are transitioning from one integer to the next. To this end define the sets 
\[  \mathcal{B} := \left\{ (\gamma, \mu) : \frac{\gamma - 1}{2} \frac{\mu}{\mu + 1} + \frac{1}{2}  = \rho  \right\} \quad \text{and} \quad \mathcal{B}_\xi :=  \left\{ (\gamma, \mu) :  \frac{\gamma - 1}{2} \frac{\mu}{\mu + 1} = \rho_\xi \right\}   \, . \]
Note that these sets correspond, respectively, to the bold and dashed curves in Figure~\ref{fig1}. Finally, define the random time $T_t := \inf\{ s > 0 : Z_{t+s} \neq Z_t \}$ and the scales
\begin{equation}
\label{eq:rtat}
r_t := \frac{t (d \log t)^{\frac{1}{\gamma} - 1}}{\log \log t}  \quad \text{and} \quad a_t := (d \log t)^\frac{1}{\gamma}  \, .
\end{equation}
The scales $r_t$ and $a_t$ describe, respectively, the scale of the distance from the origin of the localisation site and the scale of the height of the potential field at the localisation site.

\begin{theorem}[Description of the localisation site] 
\label{thm:main2}
As $t \to \infty$ the following hold:
\begin{enumerate}[(a)]
\item (Localisation distance)
\label{thm:main2a}
$$ \frac{Z_t}{r_t} \Rightarrow X \qquad \text{in law}\,, $$
where $X$ is a random vector whose coordinates are independent and distributed as Laplace (two-sided exponential) random variables with absolute-moment one.

\item (Local correlation of the potential field)
\label{thm:main2b}
If $(\gamma, \mu) \notin \mathcal{B}_\xi$, then for each $z \in B(0, \rho_\xi)$ there exists a $c > 0$ such that
\begin{align}
\label{eq:xiprofile}
 \frac{\xi(Z_t + z)}{a_t^{q_\xi(|z|)}} \to c \qquad \text{in probability} \, .
 \end{align}
If $(\gamma, \mu) \in \mathcal{B}_\xi$, then \eqref{eq:xiprofile} holds for each $z \in B(0, \rho_\xi -1)$, and moreover, for each $z $ such that $|z| = \rho_\xi$ there exists a $c > 0$ such that,
\[  f_{\xi(Z_t + z)}(x) \to \frac{ e^{c x} f_{\xi}(x)}{ \EE[e^{c \xi(0)} ] } \, , \]
uniformly over $x \in (0, L_t)$, where $f_{\xi(z)}$ is the density of the potential field at site $z$ (see Assumption \ref{assump:xi}).
\item (Correlation of the trapping landscape at $Z_t$)
\label{thm:main2c}
If $\mu > 0$ and $\gamma > 1$, then there exists a $c > 0$ such that
$$ \frac{ \sigma(Z_t)}{a_t^{q_\sigma}} \to  c \qquad \text{in probability} \, .$$
If $\mu = 0$ and $\gamma > 1$ then, for each $\nu > 0$,
$$ \PP \left(  \frac{\log \sigma(Z_t)}{ \log a_t} > q_\sigma - \nu \right) \to  1 \, .$$
If $\gamma = 1$ then,
\[ f_{\sigma(Z_t)}(x)  \to \frac{ e^{-1/x} f_\sigma(x) }{\EE[ e^{-1/\sigma(0)} ] } \, ,\]
uniformly over $x$, where $f_{\sigma(Z_t)}$ is the density of the trapping landscape at site $Z_t$.
\item (Local correlation of the trapping landscape)
\label{thm:main2d}
If $(\gamma, \mu) \notin \mathcal{B}$, then for each $z \in B(0, \rho) \setminus  \{0\}$ 
\begin{align}
\label{eq:sigmaprofile}
\sigma(Z_t + z) \to \delta_\sigma \qquad \text{in probability} \, .
\end{align}
If $(\gamma, \mu) \in \mathcal{B}$, then \eqref{eq:sigmaprofile} holds for each $z \in B(0, \rho) \setminus  \{0\}$ and moreover, for each $z$ such that $|z| = \rho$, there exists a $c > 0$ such that
\[ f_{\sigma(Z_t + z)}(x) \to \frac{ e^{c /x} f_\sigma(x)}{\EE[e^{c /\sigma(0)}  ]} \, ,  \]
uniformly over $x$, where $f_{\sigma(z)}$ is the density of the trapping landscape at site $z$ (see Assumption~\ref{assump:sigma}).
\item (Ageing)
\label{thm:main2e}
$$\frac{T_t}{t} \Rightarrow \Theta \qquad \text{in law}\,,$$
where $\Theta$ is a non-degenerate almost surely positive random variable.
\end{enumerate}
\end{theorem}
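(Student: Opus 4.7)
By Theorem~\ref{thm:main1} the solution concentrates at $Z_t := \argmax_{z \in \Pi^{(L_t)}} \Psi_t(z)$, so each statement of Theorem~\ref{thm:main2} is a question about the joint structure of this argmax and of the local environment $(\xi^{(\rho_\xi)}(Z_t), \sigma^{(\rho)}(Z_t))$. My plan is to use the path expansion \eqref{eq:pathexp} to write $\Psi_t(z)$ as an explicit function of the local environment plus the geometric penalty, then analyse the top of the associated point process of exceedences via Poisson convergence. This delivers (a) by a direct Palm calculation, (b)--(d) by identifying the conditional distribution at the argmax, and (e) by tracking the gap process in $t$.

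\textbf{Part (a).} After expanding $\lambda(z)$ to order $O(a_t^{-\gamma})$ via \eqref{eq:pathexp}, I would consider the rescaled point process $\{(z/r_t, \gamma a_t^{\gamma - 1}(\lambda(z) - m_t))\}_{z \in \Pi^{(L_t)}}$, where $m_t$ is a suitable centering chosen so that order-one many points lie at height zero in unit balls. Extreme-value / Poisson convergence arguments analogous to those developed in \cite{Fiodorov13, Sidorova12} for the Weibull PAM show that this rescaled process converges to a Poisson point process on $\mathbb{R}^d \times \mathbb{R}$ with intensity $dx \otimes e^{-y}\,dy$, and that the penalty rescales exactly to $|x|_1$. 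A direct Palm computation then gives that the first coordinate of the argmax of $y - |x|_1$ has density $2^{-d}e^{-|x|_1}$, which is precisely the law of $d$ independent Laplace variables with absolute moment one, yielding (a).

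\textbf{Parts (b)--(d).} These describe the conditional environment at $Z_t$. Expanding each denominator $\lambda(z) - \eta_z(p_i)$ in \eqref{eq:pathexp} in inverse powers of $\xi(z) \sim a_t$, the correction $\lambda(z) - \eta(z)$ becomes an explicit rational function of $(\xi^{(\rho_\xi)}(z), \sigma^{(\rho)}(z))$ and $a_t^{-1}$, and each path of length $k$ contributes on a definite polynomial scale in $a_t$. When this scale is strictly larger (in $a_t^{-1}$) than the top Poisson spacing $1/(\gamma a_t^{\gamma-1})$, the argmax conditioning forces the associated local variable to its extreme allowed value ($\sigma \to \delta_\sigma$ off-centre, and where admissible $\xi$ and $\sigma$ blowing up polynomially in $a_t$), yielding the limits $a_t^{q_\xi(|z|)}$, $a_t^{q_\sigma}$, and $\delta_\sigma$ of (b), (c), (d). In the interface cases $(\gamma, \mu) \in \mathcal{B}_\xi$ or $\mathcal{B}$ the path contribution is of exactly the same order as the Poisson spacing, so the argmax conditioning produces a Gibbs density of the form $e^{c\phi(x)} f(x)/\EE[e^{c \phi(\cdot)}]$ with $\phi$ equal to $x$, $1/x$, or $-1/x$ according to which correction term is critical, and with $c$ fixed by the ratio of the correction and the Poisson spacing.

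\textbf{Part (e) and main obstacle.} For ageing, I would exploit that the top gap $\Psi_t(Z_t) - \max_{z \in \Pi^{(L_t)} \setminus \{Z_t\}} \Psi_t(z)$ and the $t$-derivative of the penalty $|z| \log\log t /(\gamma t)$ are of the same order on the natural Poisson scale when $|z| \asymp r_t$; expressing both as functionals of the Poisson limit from (a) gives $T_t/t \Rightarrow \Theta$ for an almost surely positive, non-degenerate $\Theta$. The main obstacle throughout is controlling the path-expansion error uniformly over the random set $\Pi^{(L_t)}$ on the fine scale $1/(\gamma a_t^{\gamma-1})$ at which the argmax is determined: one must show that the truncation error at $k = 2j$ in \eqref{eq:pathexp} and the self-consistency error from replacing $\lambda(z)$ by $\xi(z)$ inside each denominator are, in probability, $o(a_t^{1-\gamma})$ uniformly over $\Pi^{(L_t)}$, which is what ultimately justifies identifying $Z_t$ with the argmax of the rescaled Poisson process.
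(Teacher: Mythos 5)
Your proposal matches the paper's strategy in essence: reduce $\lambda(z)$ to an explicit local functional via the path expansion \eqref{eq:pathexp}, prove Poisson convergence of the rescaled penalised eigenvalues to get part~\eqref{thm:main2a} and part~\eqref{thm:main2e} (this is Propositions~\ref{prop:asympt}, \ref{prop:pp1} and Corollary~\ref{cor:limit}, whose intensity $dx\otimes e^{-y-|x|}dy$ agrees with your $dx\otimes e^{-y}dy$ once the penalty is moved into the intensity), and analyse the conditional law of the local environment at a high eigenvalue by a Laplace-method/exponential-tilt computation to get parts~\eqref{thm:main2b}--\eqref{thm:main2d} (Proposition~\ref{prop:corr}). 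Two technical points the paper takes care of and your sketch elides are worth flagging. First, the raw local eigenvalues $\{\lambda^{(j)}(z)\}$ are neither i.i.d.\ nor indexed by a deterministic set, which is why the paper introduces the punctured fields $\tilde\xi_z$ and eigenvalues $\tilde\lambda_t^{(j)}(z)$ before invoking the extreme-value machinery; without this device the i.i.d.\ Poisson convergence you appeal to does not directly apply. Second, Proposition~\ref{prop:corr} conditions on a level exceedance $\{\tilde\lambda_t^{(n)}(0)>A_t+xd_t\}$, not on being the argmax; the transfer to $Z_t$ is made through the event $\mathcal{E}_{t,c}$ and the independence of local environments over separated high points (Lemma~\ref{lem:assep}). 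Your phrase ``argmax conditioning forces the variable to its extreme allowed value'' is heuristically in the right direction, but the actual mechanism is a Laplace-method balance between the gain in $\lambda$ and the probability cost, which singles out a specific constant (e.g.\ $\sigma(Z_t)\sim c_\sigma a_t^{q_\sigma}$), not an extremal value; this distinction is exactly what identifies the constants $c_\sigma,\,c_\xi(y)$ and the interface densities.
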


\begin{theorem}[Optimality results]
\label{thm:main3}
As $t \to \infty$ the following hold:
\begin{enumerate}[(a)]
\item (Optimality of the radius of influence)
\label{thm:main3a}
The radius of influence $\rho$ is optimal, in other words, there does not exist a functional $\psi_t$, depending on $\xi$ and $\sigma$ only through their values in balls of radius $\rho - 1$ around each site $z$, such that
\begin{align}
\label{eq:opt1}
 \PP \left( Z_t  = \argmax_{z \in \mathbb{Z}^d} \psi_t(z) \right) \to 1 \, .
 \end{align}
\item (Optimality of the radius of influence with respect to  the potential field)
\label{thm:main3b}
The radius of influence of the potential field $\rho_\xi$ is optimal, in other words, there does not exist a functional $\psi_t$, depending on $\xi$ only through its values in balls of radius $\rho_\xi - 1$ around each site $z$, such that
\begin{align}
\label{eq:opt2}
 \PP \left( Z_t  = \argmax_{z \in \mathbb{Z}^d} \psi_t(z) \right) \to 1 \, .
 \end{align}
\item (Criterion for reduction to the potential $\xi$)
\label{thm:main3c}
The localisation site is independent of the trapping landscape $\sigma$ if and only if $\gamma < 1$, in other words, if and only if $\gamma < 1$, there exists a random site $z_t \in \ZZ^d$, independent of $\sigma$, such that,
\begin{align}
\label{eq:opt3}
 \PP \left( Z_t  = z_t \right) \to 1 \, .
 \end{align}
\item (Criterion for reduction to the `net growth rate' $\eta$) 
\label{thm:main3d}
The localisation site $Z_t$ depends on $\xi$ and $\sigma$ only through the value of $\eta$ if and only if $\rho = 0$, in other words, if and only if $\rho = 0$, there exists a random site $z_t \in \ZZ^d$, dependent on $\xi$ and $\sigma$ only through $\eta$, such that,
\begin{align}
\label{eq:opt4}
 \PP \left( Z_t  = z_t \right) \to 1 \, .
 \end{align}
\end{enumerate}
\end{theorem}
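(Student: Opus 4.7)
The plan is to combine the path expansion of $\lambda(z)$ recalled in Remark~\ref{remark:intuition} with the scaling information provided by Theorem~\ref{thm:main2} to pin down exactly which pieces of the random environment $Z_t$ genuinely depends on.

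For the two "reducibility" statements (c) and (d), the backward directions are short. When $\rho=0$ one has $B(z,\rho)=B(z,\rho_\xi)=\{z\}$, and the Hamiltonian $\mathcal{H}(z)$ restricted to this single site with Dirichlet boundary conditions acts as multiplication by $\xi(z)-\invsigma(z)=\eta(z)$, so $\lambda(z)=\eta(z)$ is $\eta$-measurable, hence so are $\Psi_t$ and $Z_t$, and we may take $z_t=Z_t$. When $\gamma<1$ one has $\rho=\rho_\xi=0$, and the Weibull upper-tail fluctuations of $\xi$ above the macrobox level $L_t$ are of order $L_t^{1-\gamma}/\gamma\to\infty$, whereas $\invsigma$ remains bounded on $\Pi^{(L_t)}$. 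Hence the $\invsigma$ correction inside $\Psi_t(z)=\xi(z)-\invsigma(z)-|z|/(\gamma t)\log\log t$ is asymptotically negligible compared to the gap between the top two $\xi$-values in $\Pi^{(L_t)}$, so $Z_t$ coincides with $\argmax_{z\in\Pi^{(L_t)}}(\xi(z)-|z|/(\gamma t)\log\log t)$ with probability tending to one; this candidate is $\xi$-measurable, establishing the backward implication of (c).

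The forward directions of (a)--(d) rest on a common mechanism: producing, with asymptotic probability bounded away from zero, a pair of competing candidates $z,z'\in\Pi^{(L_t)}$ whose $\Psi_t$-values differ by an amount small enough to be swayed by precisely the information one is forbidden to use. The path expansion \eqref{eq:pathexp} identifies the size of each contribution to $\lambda(z)$: the $k=2\rho$ terms involve products of $\invsigma$ at sites at distance up to $\rho$ from $z$, while the $\xi\id_{B(z,\rho_\xi)}$ factor in $\mathcal{H}(z)$ picks up the $\xi$-values at distance $\rho_\xi$. For (a), I would condition on all values of $\xi,\sigma$ inside balls of radius $\rho-1$ around every candidate and show that the remaining, conditionally independent $\sigma$-values at distance exactly $\rho$ from the top candidates still carry enough variability, on the correct natural scale, to flip the ordering of the top two $\Psi_t$-values with positive asymptotic probability; an analogous argument using the $\xi$-values at distance exactly $\rho_\xi$ yields (b). For (d) forward with $\rho\ge 1$, one conditions on $\eta$: the products $\invsigma(z)\invsigma(p_i)$ appearing in the higher-order terms of \eqref{eq:pathexp} are not $\eta$-measurable, so for any $\eta$-measurable $z_t$ there is a positive-probability event on which the $\sigma$-driven residual fluctuation of $\lambda$ flips the order of the top two candidates and forces $z_t\ne Z_t$. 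For (c) forward with $\gamma\ge 1$, the $\xi$-fluctuations on $\Pi^{(L_t)}$, of order $L_t^{1-\gamma}$, are comparable to or dominated by the order-one $\invsigma$-fluctuations, and a similar conditioning-and-swap argument shows that no $\xi$-measurable rule can match $Z_t$.

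The principal obstacle is quantitative: to run each swap argument one must verify that the gap $\Psi_t(Z_t)-\max_{z\ne Z_t}\Psi_t(z)$ between the first- and second-best candidates over $\Pi^{(L_t)}$ is, with asymptotic probability bounded away from zero, of the same order as the forbidden residual fluctuation. For this I would leverage the extreme-value structure of the Weibull field already used in the proof of Theorem~\ref{thm:main2} to show that the top finitely-many $\Psi_t$-values on $\Pi^{(L_t)}$, appropriately rescaled, converge jointly to the top points of a non-degenerate Poisson point process; this makes `the top two candidates lie within $\varepsilon$' an event of non-vanishing probability, after which the conditional independence of the forbidden residual information produces the swap. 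A secondary technical point is handling the interface sets $\mathcal{B}$ and $\mathcal{B}_\xi$, where the relevant local profiles in Theorem~\ref{thm:main2}(b),(d) are described via biased densities rather than deterministic constants; there the swap argument runs identically but must be carried out under the appropriate conditional laws.
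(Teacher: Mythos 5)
Your backward directions are essentially the paper's. For (d) you observe that $\rho=0$ forces $\mathcal{H}(z)$ to be multiplication by $\eta(z)$, so $\lambda(z)=\eta(z)$ and $Z_t$ is $\eta$-measurable; this is exactly the paper's one-line argument. For (c) you note that when $\gamma<1$ the Weibull gap between consecutive high $\xi$-values, of order $d_t\to\infty$, dominates the bounded $\invsigma$ correction, so the argmax of $\Psi^{(j)}_t$ agrees with the argmax of $\psi_t(z):=\xi(z)-\tfrac{|z|}{\gamma t}\log\log t$; this is precisely Proposition~\ref{prop:critforind}.

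For the forward directions, however, you take a genuinely different route. The paper does not run a swap argument: it deduces necessity directly from the correlation results of Theorem~\ref{thm:main2}. For (c) it notes that if $Z_t$ were $\sigma$-independent then $\sigma(Z_t)\stackrel{d}{=}\sigma(0)$, contradicting Theorem~\ref{thm:main2}\eqref{thm:main2c}; for (a),(b) it similarly invokes the biased marginals of $\xi(Z_t+z)$ and $\sigma(Z_t+z)$ at $|z|=\rho_\xi,\rho$ from parts \eqref{thm:main2b},\eqref{thm:main2d}; and for (d) with $\rho\ge1$ it compares two bounds on $\PP(\sigma(y)<c,\,|Z_t-y|=1)$: a lower bound from Theorem~\ref{thm:main2}\eqref{thm:main2d}, and an upper bound obtained by conditioning on $\bar\eta=\{\eta(v):v\ne y\}$, using that $\{\eta(y)\ge\beta_t(\bar\eta)\}$ and $\{\sigma(y)<c\}$ are negatively correlated. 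Your proposal instead conditions on the permitted information and tries to show the conditional law of $Z_t$ remains non-degenerate, using the Poisson point process limit to place the top two candidates within a window that the forbidden residual can bridge. This is a sound strategy in principle — a non-degenerate conditional law rules out any measurable predictor — and is more self-contained, but it re-derives from scratch something the paper already packaged inside Proposition~\ref{prop:corr} and Theorem~\ref{thm:main2}. The paper's route is shorter because it reuses those results; your route would need to carry out the Laplace-type scaling analysis again under each conditioning, and also to handle the fact that the residual fluctuation and the top-two gap are being compared on the same scale $d_t$ (your sketch gets this scale right in each case, but the positive-probability-of-swap assertion is what needs the careful point-process computation you defer). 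Finally, note that for (c) forward the paper's argument is considerably cleaner than a swap: $Z_t$ being $\xi$-measurable would force $\sigma(Z_t)\stackrel{d}{=}\sigma(0)$ by independence of the two fields, and Theorem~\ref{thm:main2}\eqref{thm:main2c} flatly contradicts this for every $\gamma\ge1$ — no competing-candidates machinery is needed.
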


\begin{remark}
\label{rem:radius}
We note several interesting features of the radius of influence $\rho$. As remarked above, $\rho$ is an increasing function of both $\gamma$ and $\mu$. Moreover, surprisingly it is not necessarily the case that $\rho \to \rho_\text{PAM} := [(\gamma-1)/2]^+$ in the $\mu \to \infty$ limit; indeed, if $\gamma \in (2i, 2i+1)$ for $i \in \mathbb{N} \setminus \{0\}$, then in fact $\rho \to \rho_\text{PAM} + 1$, meaning that influence of the BTM on the BAM is not continuous in the degenerate limit (i.e.\ as $\sigma(z) \to 1$ simultaneously for each $z$). On the other hand, $\rho_\xi \to \rho_\text{PAM}$ in the $\mu \to \infty$ limit, i.e.\ there is no discontinuity in the effect of the BTM on the BAM on the level of the radius of influence of the potential field $\xi$. The relationship between $\rho$, $\rho_\xi$ and $\rho_\text{PAM}$ is depicted in Figure~\ref{fig2}.
\end{remark}

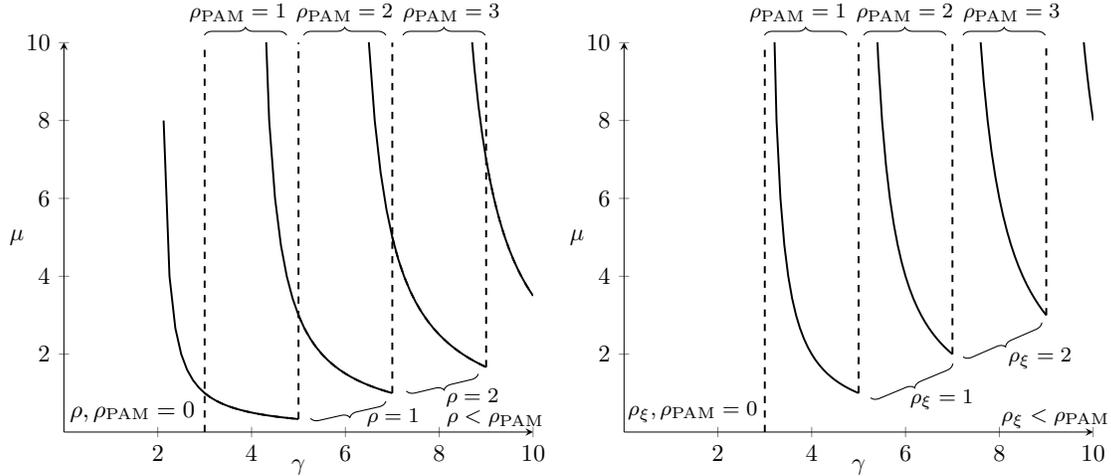
\begin{figure}[t!]\centering
\begin{tikzpicture}[scale = 0.91]
 \begin{axis}[xmin=0,xmax=10,ymin=-0,ymax=10]
 \addplot [thick,domain=2:5] (\x, {1/(\x-2)});
 \addplot [thick,domain=4:7] (\x, {3/(\x-4)});
 \addplot [thick,domain=6:9] (\x, {5/(\x-6)});
 \addplot [thick,domain=8:10] (\x, {7/(\x-8)});
 \addplot[black, dashed, thick] coordinates{(3, 0) (3, 10)};     
 \addplot [dashed,thick,domain=3:5] (\x, {1/(\x-2)});
 \addplot[black, dashed, thick] coordinates{(5, 1/3) (5, 10)};  
 \addplot [dashed,thick,domain=5:7] (\x, {3/(\x-4)});
 \addplot[black, dashed, thick] coordinates{(7, 3/3) (7, 10)}; 
 \addplot [dashed,thick,domain=7:9] (\x, {5/(\x-6)});
 \addplot[black, dashed, thick] coordinates{(9, 5/3) (9, 10)}; 
 \addplot [dashed,thick,domain=9:10] (\x, {7/(\x-8)});
 \end{axis}
 \node at (1,0.3){\small $\rho, \rho_\text{PAM} = 0$};
  \draw [decorate,decoration={brace,amplitude=3pt}]   (4.7,0.45) -- (3.6,0.2) node [black,midway,xshift=0.6cm, yshift=-0.1cm] {\footnotesize $\rho = 1$};
  \draw [decorate,decoration={brace,amplitude=3pt}]   (6.05,0.85) -- (5,0.6) node [black,midway,xshift=0.4cm, yshift=-0.2cm]  {\footnotesize $\rho = 2$};
 \node at (6.3,0.2) {\footnotesize $\rho < \rho_\text{PAM}$};
 \draw [decorate,decoration={brace,amplitude=3pt}] (2.05,5.8) -- (3.25,5.8)  node [black,midway,xshift=-0.1cm,yshift=0.3cm] {\footnotesize $\rho_\text{PAM} = 1$};
 \draw [decorate,decoration={brace,amplitude=3pt}] (3.5,5.8) -- (4.7,5.8)  node [black,midway,yshift=0.3cm]  {\footnotesize $\rho_\text{PAM} = 2$};
 \draw [decorate,decoration={brace,amplitude=3pt}] (4.95,5.8) -- (6.15,5.8)  node [black,midway,xshift=0.1cm,yshift=0.3cm] {\footnotesize $\rho_\text{PAM} = 3$};
\end{tikzpicture}
\begin{tikzpicture}[scale = 0.91]
 \begin{axis}[xmin=0,xmax=10,ymin=-0,ymax=10]
 \addplot [thick,domain=3:5] (\x, {2/(\x-3)});
 \addplot[black, dashed, thick] coordinates{(3, 0) (3, 10)};
 \addplot [thick, domain=5:7] (\x, {4/(\x-5)});
 \addplot[black, dashed, thick] coordinates{(5, 1) (5, 10)};
 \addplot [thick,domain=7:9] (\x, {6/(\x-7)});
 \addplot[black, dashed, thick] coordinates{(7, 2) (7, 10)};
 \addplot [thick,domain=9:10] (\x, {8/(\x-9)});
 \addplot[black, dashed, thick] coordinates{(9, 3) (9, 10)};
 \end{axis}
 \node at (1, 0.3){\footnotesize $\rho_\xi, \rho_\text{PAM} = 0$};
 \draw [decorate,decoration={brace,amplitude=3pt}]   (4.8,1.0) -- (3.6,0.5) node [black,midway,xshift=0.4cm, yshift=-0.25cm]  {\footnotesize $\rho_\xi = 1$};
\draw [decorate,decoration={brace,amplitude=3pt}]   (6.1,1.6) -- (4.95,1.1) node [black,midway,xshift=0.5cm, yshift=-0.25cm] {\footnotesize $\rho_\xi = 2$};
 \draw [decorate,decoration={brace,amplitude=3pt}] (2.05,5.8) -- (3.25,5.8)  node [black,midway,xshift=-0.1cm,yshift=0.3cm] {\footnotesize $\rho_\text{PAM} = 1$};
 \draw [decorate,decoration={brace,amplitude=3pt}] (3.5,5.8) -- (4.7,5.8)  node [black,midway,yshift=0.3cm]  {\footnotesize $\rho_\text{PAM} = 2$};
 \draw [decorate,decoration={brace,amplitude=3pt}] (4.95,5.8) -- (6.15,5.8)  node [black,midway,xshift=0.1cm,yshift=0.3cm] {\footnotesize $\rho_\text{PAM} = 3$};
 \node at (6.3,0.2) {\footnotesize $\rho_\xi < \rho_\text{PAM}$};
 \end{tikzpicture}
\caption{Partition of the parameter space of the BAM according to the relationship between $\rho$ (bold lines) and $\rho_\text{PAM}$ (dashed lines), and $\rho_\xi$ (bold lines) and $\rho_\text{PAM}$ (dashed lines) respectively, where $\rho_\text{PAM}$ denotes the radius of influence in the equivalent PAM with identical potential field. The boundary curves are of the form $\mu = (2i-1)/(\gamma - 2i)$ and $\mu = (2i)/(\gamma - 2i - 1)$ respectively, for $i \in \mathbb{N} \setminus \{0\}$.}
\label{fig2}
\end{figure}

\begin{remark}
\label{rem:localprofile}
The shape of the local profile of the potential field and trapping landscape in parts \eqref{thm:main2b}--\eqref{thm:main2d} of Theorem~\ref{thm:main2} is derived by considering the path expansion in equation \eqref{eq:pathexp} and determining the values of $\xi$ and $\sigma$ that appropriately balance: (i) the increase in $\lambda$ gained from favourable realisations of $\xi$ and $\sigma$; and (ii) the probabilistic penalty that results from such favourable realisations of $\xi$ and $\sigma$ if they are too unlikely. This balance is expressed through a convex function whose integral is asymptotically concentrated in the regions specified in Theorem~\ref{thm:main2}. This computation is carried out in the proof of Proposition~\ref{prop:corr}, identifying the constants in Theorem~\ref{thm:main2} explicitly.

We also give some heuristic reasons why we must distinguish the cases $(\gamma ,\mu) \in \mathcal{B}, \mathcal{B}_\xi$ in the correlation results. If $(\gamma ,\mu) \notin \mathcal{B}_\xi$, then the value of $\xi(Z_t + z)$ is growing (with high probability) as $t \to \infty$ for each $z \in B(0, \rho_\xi)$. However, if $(\gamma ,\mu) \in \mathcal{B}_\xi$, this only occurs for $z \in B(0, \rho_\xi - 1)$; at the interface of the radius, where $|z| = \rho_\xi$, the value of $\xi(Z_t + z)$ instead converges to a certain random variable with law distinct from the law of $\xi(0)$. Similarly, for  $(\gamma ,\mu) \notin \mathcal{B}$, $\sigma(Z_t + z)$ converges to $\delta_\sigma$ for each $z \in B(0, \rho) \setminus \{0\}$. However, if $(\gamma ,\mu) \in \mathcal{B}$, then this is only true for $z \in B(0, \rho - 1) \setminus \{0\}$. If $|z| = \rho$, the value of $\sigma(Z_t + z)$ instead converges to a certain random variable with law distinct from the law of $\sigma(0)$. These properties are reflective of the fact that the correlation in the fields $\xi$ and $\sigma$ induced by the localisation site $Z_t$ decays away from the site.

We also explain why the cases $\gamma \le 1$ and $\mu = 0$ must be further distinguished in our profile for $\sigma(Z_t)$. If $\gamma > 1$ then the value of $\sigma(Z_t)$ is growing, and indeed growing with a deterministic leading order. However, if $\gamma = 1$, this is no longer true and instead $\sigma(Z_t)$ converges to a certain random variable with law distinct from the law of $\sigma(0)$.\footnote{Of course, in the case $\gamma < 1$, with overwhelming probability $\sigma$ is independent of the localisation site $Z_t$ (cf.\ part \eqref{thm:main3c} of Theorem~\ref{thm:main2}) and so $\sigma(Z_t)$ has the same law as $\sigma(0)$.} The case $\mu = 0$ must be distinguished for a different reason; in this case, the extremes of $\sigma$ are so large that there are many sites $z$ for which $\sigma^{-1}(z)$ is smaller than the gap in the top statistics of $\Psi_t$. Past this threshold, differences in the magnitude of $\sigma$ no longer materially influence the determination of $Z_t$, and so we lose a degree of certainty about the order of growth of $\sigma(Z_t)$.

Note finally that if $(\gamma ,\mu)$ is not in $\mathcal{B}$ and $\mathcal{B}_\xi$ respectively, then the probabilities in equations \eqref{eq:opt1} and \eqref{eq:opt2} actually converge to $0$ for any such $\psi_t$; otherwise, the respective probability will converge to a constant $c \in (0, 1)$. Similarly, if $(\gamma, \mu)$ lies to the right of the dashed or bold line in Figure~\ref{fig3}, the probabilities in \eqref{eq:opt3} and \eqref{eq:opt4} respectively converge to $0$ for any such $z_t$; if $(\gamma, \mu)$ lies on either line, the repective probability instead converges to a constant $c \in (0,1)$. We do not prove these additional results here.
\end{remark}

\subsection{Notation}
\label{subsec:notation}
Here we list notation that will be commonly used for the remainder of the paper. 

\textbf{Asymptotic notation}: For functions $f$ and $g$ we use $f \sim g$ to denote that
\[ \lim_{x \to \infty} f(x) / g(x) = 1 \, ,\]
and $f = o(g)$ or $f \ll g$ to denote that
\[ \lim_{x \to \infty} f(x) / g(x) = 0 \, .\]
We use $f = O(g)$ to denote that, as $x \to \infty$, eventually for some constant $c > 0$,
\[ |f(x)| < c |g(x)| \, .\]
 
\textbf{Notation for paths}:
For an integer $k$ and sites $y, z \in \ZZ^d$, let $\Gamma_k(y, z)$ be the set of nearest neighbour paths in $\ZZ^d$ of length $k$ running from $y$ to $z$, with each $p \in \Gamma_k(y, z)$ indexed as
$$ y =: p_0 \to p_1 \to p_2 \to \ldots \to p_k := z \, . $$
Similarly, denote
$$\Gamma_k(y) := \bigcup_{z \in \ZZ^d} \Gamma_k (y, z) \ , \quad \Gamma(y, z) := \bigcup_{k \in \mathbb{N}} \Gamma_k(y, z)\,, $$
$$ \Gamma(y) := \bigcup_{k \in \mathbb{N}} \Gamma_k (y) \ , \quad \Gamma := \bigcup_{y \in \mathbb{Z}^d} \Gamma(y) \, .$$
For a site $z\in \ZZ^d$, let $n(z)$ denote the number of shortest paths from the origin to $z$, i.e.\ 
\[ n(z) := |\Gamma_{|z|}(0, z)| \, .\]
For a path $p \in \Gamma_k(y, z)$ denote $\mathrm{Set}(p):=\{p_0,p_1,\ldots,p_k\}$ and $|p| := k$. For a nearest neighbour random walk $X$ let $p(X_t) \in \Gamma(X_0)$ denote the geometric path associated with the trajectory of $\{X_s\}_{s \le t}$ and let $p_k(X) \in \Gamma_k(X_0)$ denote the geometric path associated with the random walk $\{X_s\}_{s \ge 0}$ up to and including its $k^{\rm{th}}$ jump.

\textbf{Notation for sets}:
For a domain $D \in \mathbb{Z}^d$, denote by 
\[ \partial D = \{y \in D^c :  \text{there exists } x \in D \text{ such that } |x-y| = 1 \} \, . \]
For a set $S \in \ZZ^d$ define $B(S, n) := \bigcup_{z \in S} B(z, n)$ and $\text{sep}\left(S \right) := \min_{x \neq y \in S} \{ |x - y| \} $.

\textbf{Notation for solutions of the BAM}:
For each $y, z \in \mathbb{Z}^d$ define $u_y(t, z)$ to be the solution of the Cauchy problem
\begin{align*} 
\frac{\partial u_y(t, z)}{\partial t} &= (\Delta \sigma^{-1} + \xi) u_y(t, z) \, , \qquad (t, z) \in [0, \infty) \times \mathbb{Z}^d\,;\\
\nonumber u_y(0, z) &= \id_{\{y\}}(z) \, , \quad \qquad \qquad \qquad z \in \mathbb{Z}^d\,;
\end{align*}
and, for $z \in \mathbb{Z}^d$ and $p \in \Gamma$, define 
\[ u^p(t, z) := \EE_{p_0} \left[ \exp\left\{ \int_{0}^{t} \xi(X_s) \, ds \right\} \id_{ \{X_t = z \} }  \id_{\{ p(X_t) = p \}}  \right]  \, , \quad  U^p(t) := \sum_{z \in \ZZ^d} u^p(t, z)\, . \]

\textbf{Notation for local principal eigenvalues}:
For each $z \in \mathbb{Z}^d$ and $n \in \mathbb{N}$ define the $n$-local principal eigenvalue $\lambda^{(n)}(z)$ to be the principal eigenvalue of the Hamiltonian
\[ \mathcal{H}^{(n)}(z) := \Delta \invsigma + \xi \]
restricted to the domain  $B(z, n)$ with Dirichlet boundary conditions.

\textbf{Other notation}: For $a, b \in \mathbb{R}$ define $a \wedge b := \min\{a, b\}.$ 
\section{Outline of proof}
\label{sec:outline}
The main idea of the proof of Theorem~\ref{thm:main1} is that the solution $u(t, z)$ can be decomposed into disjoint components by reference to the trajectories of the underlying BTM in the Feynman-Kac representation in 
\eqref{eq:fk}. Using such a path decomposition, we prove complete localisation by establishing that: (i) a single component carries the entire non-negligible part of the solution; and (ii) the non-negligible component is localised at $Z_t$.

To assist in the proof, we introduce the scale
\begin{equation}
\label{eq:dt}
d_t := \frac{1}{\gamma}(d \log t)^{\frac{1}{\gamma} - 1} 
\end{equation}
which is the derivative (on the log scale) of the height scale $a_t$, and naturally examines the gaps in the maximisers of $\xi$ in growing boxes. We also introduce auxiliary scaling functions
 $f_t, h_t, e_t, b_t \to 0$ and $g_t, s_t \to \infty$ as $t \to \infty$ that are convenient placeholders for negligibly decaying (respectively growing) functions. For technical reasons, we shall need to choose these functions to satisfy certain relationships, as follows. First, let $s_t$ be such that
 $$ ( \log s_t)^2 \ll \log \log t \, .$$
Then, choose $f_t, h_t, e_t, b_t$ and $g_t$ satisfying
\begin{align}\label{scalingfns}
\max\{ \bar F_\sigma(s_t) ,  (\log s_t)^2 / \log \log t , 1/ \log \log s_t \} \,  g_t \ll b_t \ll f_t h_t  \ll g_t h_t \ll e_t \, .
\end{align}
It is easy to check that such a choice is always possible.

\subsubsection*{Path decomposition}
We explain here how to construct the path decomposition.  Recall the definition of $V_t$ from Section \ref{sec:full}. For a path $p \in \Gamma(0)$ such that $\mathrm{Set}(p) \subseteq V_t$, let 
\[ z^{(p)} := \argmax_{ z \in \mathrm{Set}(p) }  \lambda(z)  \]
which is well-defined almost surely. Abbreviate 
\[ B_t := B \left(0, |Z_t|(1 + h_t) \right) \cap V_t \, .\]
We partition the path set $\Gamma(0)$ into the following five disjoint components
$$ E^{i}_t := \begin{cases}
 \left\{ p \in \Gamma(0) : \mathrm{Set}(p) \subseteq B_t, \, z^{(p)} = Z_t \right\}   , & i = 1 \, ; \text{(non-negligible component)} \\
\left\{ p \in \Gamma(0) : \mathrm{Set}(p) \subseteq V_t, \, z^{(p)} \in \Pi^{(L_{t})} \setminus Z_t  \right\}  , & i = 2  \, ; \text{(path does not hit best site)} \\
\left\{ p \in \Gamma(0) : \mathrm{Set}(p) \subseteq V_t, \, \mathrm{Set}(p) \not\subseteq B_t , \, z^{(p)} = Z_t \right\}  , & i = 3  \, ; \text{(path travels far)} \\
\left\{ p \in \Gamma(0) : \mathrm{Set}(p) \subseteq V_t, \, z^{(p)} \notin \Pi^{(L_{t})}  \right\}  , & i = 4 \, ;  \text{(path avoids all high sites)}\\
\left\{ p \in \Gamma(0) :  \mathrm{Set}(p) \not\subseteq V_t  \right\}  , & i = 5 \, ;  \text{(path leaves macrobox)} \\
\end{cases} 
$$
and associate each component $E^i_t$ with a portion of the total mass $U(t)$ of the solution. As such, for each $1 \le i \le 5$, let
$$u^{i}(t, z) := \sum_{p \in E^i_t} u^p(t, z) \quad \text{and} \quad U^i(t) = \sum_{z \in \ZZ^d} u^i(t, z)   \, .$$
Our strategy is to establish that each of $U^2(t)$, $U^3(t)$, $U^4(t)$ and $U^5(t)$ are negligible with respect to the total mass $U(t)$ of the solution, in other words that,
$$ \frac{U^{i}(t)}{U(t)} \to 0 \qquad \text{in probability, \qquad for } i = 2,3,4,5 \, .$$
To complete the proof of localisation, we also prove that $U^1(t)$ is localised at $Z_t$, i.e.\ that,
$$ \frac{u^{1}(t, Z_t)}{U^1(t)} \to  1 \qquad \text{in probability}.$$
Note that this strategy requires a balance to be struck in how $B_t$ is defined; it must be large enough that $U^3(t)$ is negligible, but small enough to ensure localisation. The scale $h_t$ has been fine-tuned in \eqref{scalingfns} precisely to ensure this balance is struck correctly.

\subsubsection*{Negligible paths}
The negligibility of $U^4(t)$ and $U^5(t)$ are simple to establish; the main difficulty is establishing the negligibility of $U^{2}(t)$ and $U^{3}(t)$. Our proof is based on formalising two heuristics.

\noindent \textbf{First heuristic}: Recall the constant $j := [\gamma - 1] \ge \rho$ and the definition of the $j$-local principal eigenvalue $\lambda^{(j)}$ from Section \ref{subsec:notation}. We expect that, for a path $p \in \Gamma(0) \setminus E^5_t$,
\begin{align}
\label{eq:heuristic1}
 U^p(t) &\approx \exp \left\{ t {\lambda}^{(j)}(z^{(p)}) \right\} a_t^{-|p|} \ ,
\end{align}
which represents the balance between (i) the exponential growth of the solution at each site, and (ii) the probabilistic penalty for travelling each step along the path $p$. 

The accuracy of this heuristic relies on some subtle observations about the BAM (and indeed the PAM) which we shall briefly discuss further. First is the claim that the $j$-local principal eigenvalues closely approximate the exponential growth rate of the solution at a site (note that here we could take a slightly smaller constant in place of $j$, but $j$ will turn out to be convenient for another reason; see immediately below). This approximation, in turn, is based on the fact that there is a lack of resonance between the top eigenvalues of the operator $\Delta + \xi$ restricted to any finite domain.

Second is the claim that it is never beneficial for a path to visit other sites of high potential, other than $z^{(p)}$. This is proved by way of a `cluster expansion' (see Lemma~\ref{lem:clusterexp}) which bounds the contribution to $U^p(t)$ between the time $p$ visits a site $z$ of high potential until it leaves the ball $B(z, j)$. Crucially, $j$ is chosen precisely to be the smallest integer for which this `cluster expansion' bound is smaller than the probabilistic penalty associated with the path getting from outside the ball $B(z, j)$ to $z$ (see the proof of Proposition~\ref{prop:U2neg}).

Third is the claim that the probabilistic penalty for travelling along the path $p$ is approximately $1/a_t$ for each step of the path. Implicit in this claim is the highly non-trivial fact that the trapping landscape $\sigma$ is not an obstacle to the diffusivity of the particle, in other words, that a sufficiently `quick' path exists from $0$ to the site $z^{(p)}$. If $d \ge 2$, this is essentially due to percolation estimates; if $d = 1$, then this relies crucially on the additional tail decay assumption on the distribution of $\sigma(0)$, and our proofs and methods break down without it.

\noindent \textbf{Second heuristic}: We expect that, for $i = 1, 2, 3$,
\begin{align}
\label{eq:heuristic2}
U^i(t) \approx \max_{p \in E^i_t} U^p(t) \ ,
\end{align}
which represents the idea that $U^i(t)$ should be dominated by the contribution from just a single path in the path set $E^i_t$. This is essentially due to the fact that the number of paths of length~$k$ grows exponentially in $k$, whereas the probabilistic penalty associated with a path of length $k$ decays as $a_t^{-k}$, which dominates since $a_t \to \infty$.

Let us consider what these heuristics imply for $U(t)$. Recall the definition of $\Pi^{(L_t)}$ from Section~\ref{sec:full}. By analogy with $\Psi_t$ and $Z_t$, define
\[ \Psi^{(j)}_t(z) =  \lambda^{(j)}(z) - \frac{|z|}{\gamma t} \log \log t  \]
and $Z_t^{(j)} := \argmax_{z \in \Pi^{(L_t)}} \Psi^{(j)}_t$. Note that it will turn out that $Z_t^{(j)} = Z_t$ with overwhelming probability (see Corollary \ref{corry:zeqz0}), so we will interchange between these freely in the discussion that follows. Clearly, by the two heuristics, the dominant contribution to $U(t)$ will come from a path $p \in \Gamma(0)$ that goes directly from the origin to $z^{(p)}$, and so we expect that
\begin{align*} U(t) & \approx \max_{p \in \Gamma(0)} \left\{ \exp \left\{ t {\lambda}^{(j)}( z^{(p)}) \right\}   a_t^{-|z^{(p)}|} \right\} \approx \exp \left\{ t  \max_{z \in \mathbb{Z}^d} {\Psi}_t^{(j)}(z) \right\}  = \exp \left\{ t  {\Psi}_t^{(j)}( Z_t^{(j)}) \right\} \, .
 \end{align*}
Indeed, we formalise this approximation as a lower bound
\begin{align}
\label{eq:lb}
\log U(t) \ge t {\Psi}_t^{(j)}( Z_t^{(j)}) + O(t d_t b_t) \, .
\end{align}

Similarly for $U^2(t)$, the heuristics imply that the dominant contribution will come from the path $p \in E^2_t$ that goes directly from the origin to the site
$$ Z_t^{(j, 2)} = \argmax_{z \in \Pi^{(L_t)} \setminus \{Z_t^{(j)}\} } {\Psi}_t^{(j)}(z) \, ,$$
and so
$$ U^2(t) \approx \exp \left\{ t {\lambda}^{(j)}( Z_t^{(j, 2)}) \right\}  a_t^{-|Z_t^{(j, 2)}|} \approx \exp \left\{ t {\Psi}_t^{(j)}( Z_t^{(j, 2)}) \right\} \, .$$
We formalise this approximation as an upper bound
$$ \log U^2(t) \le t {\Psi}_t^{(j)}( Z_t^{(j, 2)}) + O(t d_t b_t) \, ,$$
which, together with equation \eqref{eq:lb}, implies that
\begin{align*}
\log U^2(t) - \log U(t) \le - t \left( \Psi_t^{(j)}( Z_t^{(j)}) - \Psi_t^{(j)}(Z_t^{(j, 2)}) + O(d_t b_t) \right)  \, .
\end{align*}
Remark that the negligibility of $U^2(t)$ is then a consequence of the gap in the top order statistics of ${\Psi}_t^{(j)}$ being larger than the error (of order $O(d_t b_t)$) in these bounds.

Finally, the heuristics imply that the dominant contribution to $U^3(t)$ will come from a path $p$ that visits $Z_t$ but that also ventures outside $B_t$, and so
$$ U^3(t) \approx \exp \left\{ t {\lambda}^{(j)}( Z_t) \right\}  a_t^{-|Z_t|(1 + h_t)} \, .$$
We formalise this approximation as an upper bound
\begin{align}
\label{eq:ub}
\log U^3(t) \le t {\lambda}^{(j)}( Z_t) - \frac{1}{\gamma} |Z_t| (1 + h_t) \log \log t  + O(t d_t b_t) 
\end{align}
which, together with equation \eqref{eq:lb}, implies that
\begin{align*}
\log U^3(t) - \log U(t) \le - \frac{1}{\gamma} |Z_t| h_t \log \log t + O(t d_t b_t)  \, .
\end{align*}
Remark that the negligibility of $U^3(t)$ is then a consequence of $|Z_t| h_t \log \log t$ being larger than the error (also of order $O(t d_t b_t)$) in these bounds.

In Section~\ref{sec:extremal} we study extremal theory for $\lambda^{(j)}$ and $\Psi_t^{(j)}$, demonstrating, in particular, that
$$ \Psi_t^{(j)}(Z_t^{(j)}) - \Psi_t^{(j)}(Z_t^{(j, 2)}) > d_t e_t \quad \text{and} \quad |Z_t^{(j)}| h_t  \log \log t  > t d_t e_t $$
both hold eventually with overwhelming probability. We also show that $Z_t^{(j)} = Z_t$ with overwhelming probability. In the process, we establish the description of the localisation site $Z_t$ that is contained in Theorem~\ref{thm:main2}, as well as the optimality results in Theorem~\ref{thm:main3}. In Section~\ref{sec:neg}, we show how to formalise the heuristics in equations \eqref{eq:heuristic1} and \eqref{eq:heuristic2} into the bounds in equations \eqref{eq:lb} and \eqref{eq:ub}, and so complete the proof of the negligibility of $U^2(t)$ and $U^3(t)$. 

Throughout, we draw on the preliminary results established in Sections~\ref{sec:gentheory} and \ref{sec:prop}. Section~\ref{sec:gentheory} contains a compilation of general results on operators of the form $\Delta \invsigma + \xi$. Section~\ref{sec:prop} contains general results on the random fields $\xi$ and $\sigma$. Of particular concern here is the existence of `quick' paths through the trapping landscape $\sigma$.

\subsubsection*{Localisation}
In Section~\ref{sec:loc} we complete the proof of Theorem~\ref{thm:main1} by showing that $u^1(t, z)$ is localised at the site $Z_t$. The main idea is the same as in \cite{Gartner07} and \cite{Sidorova12}, namely that: (i) the solution $u^1(t, z)$ is asymptotically approximated by the principal eigenfunction of the operator $\Delta \invsigma + \xi$ restricted to the domain~$B_t$; and (ii) the principal eigenfunction decays exponentially away from the site~$Z_t$. Underlying this reasoning is the fact that the domain $B_t$ has been constructed to ensure that $\lambda^{(j)}(Z_t)$ is the largest $j$-local principal eigenvalue in the domain. This in turn allows us to give a Feynman-Kac representation of the principal eigenfunction $v_t$ (see Proposition~\ref{prop:pathexpeig}), which we analyse probabilistically to establish exponential decay.
\section{General theory for the BAM}
\label{sec:gentheory}
In this section we develop general theory for operators of the form $\Delta \invsigma + \xi$ which is valid for arbitrary $\xi$ and positive $\sigma$. This section will be entirely self-contained and is completely deterministic, and may be of independent interest. We have chosen to develop the theory in full generality so as to take advantage of the results in future work.

Throughout this section, let $D \subset \mathbb{Z}^d$ be a bounded domain and let $\xi$ and $\sigma$ be arbitrary functions $\xi: \mathbb{Z}^d \to  \mathbb{R}$ and $\sigma: \mathbb{Z}^d \to \mathbb{R}^+$,  with $\eta := \xi - \invsigma$. Denote by $\mathcal{H}$ the Hamiltonian
$$ \mathcal{H} := \Delta \invsigma + \xi $$
restricted to the domain $D$ with Dirichlet boundary conditions, and let $\{\lambda_i\}_{i \le |D|}$ and $\{\varphi_i\}_{i \le |D|}$ be respectively the (finite) set of eigenvalues and eigenfunctions of $\mathcal{H}$, with eigenvalues in descending order and eigenfunctions $\ell_2$ normalised. Finally, recall that $X_s$ denotes the BTM and define the stopping times
$$ \tau_z := \inf\{ t \ge 0 : X_t = z \} \quad \text{and} \quad  \tau_{D^c} := \inf\{ t \ge 0 : X_t \notin D \} \, .$$

We start by presenting representations and deriving simple bounds for $\lambda_1$ and $\varphi_1$.

\begin{lemma}[Principal eigenvalue monotonicity]
\label{lem:mono}
For each $z \in D$ and $\delta > 0$, let $\bar \lambda_1$ be the principal eigenvalue of the operator $\mathcal{H} + \delta \indic{z}$. Then $\bar \lambda_1 > \lambda_1$.

Moreover, for each strictly smaller domain $\bar D \subset D$, let $\bar \lambda_1$ be the principal eigenvalue of~$\mathcal{H}$ restricted to the domain $\bar D$ with Dirichlet boundary conditions. Then $\bar\lambda_1 < \lambda_1$.
\end{lemma}
\begin{proof}
These are general properties of elliptic operators.
\end{proof}

\begin{lemma}[Bounds on the principal eigenvalue]
\label{lem:minmax}
$$  \max_{z \in D} \left\{\eta(z)\right\} \le \lambda_1 \le \max_{z \in D} \left\{  \eta(z) + \sum_{|y-z|=1} \frac{1}{2d} \invsigma(y) \right\} \, .$$
\end{lemma}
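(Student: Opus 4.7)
The plan is to work with the symmetric operator $\tilde{\mathcal{H}} := (\sigma^{-1/2} \Delta \sigma^{-1/2} + \xi) \id_D$ which, as the paper notes, is similar to $\mathcal{H}$ via conjugation by $\sigma^{1/2}$, and hence has the same spectrum. In particular, by the Rayleigh--Ritz principle,
\[
\lambda_1 = \max_{\substack{ f : D \to \mathbb{R} \\ \|f\|_2 = 1 }} \langle f, \tilde{\mathcal{H}} f \rangle,
\]
where $f$ is implicitly extended by zero outside $D$ (Dirichlet boundary conditions).

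For the lower bound, I would test this variational formula against the indicator $f = \id_{\{z^*\}}$, where $z^* := \argmax_{z \in D} \eta(z)$. A direct computation gives $\langle \id_{\{z^*\}}, \tilde{\mathcal{H}} \id_{\{z^*\}} \rangle = \xi(z^*) - \sigma^{-1}(z^*) = \eta(z^*)$, which is the claimed lower bound.

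For the upper bound, I would expand the quadratic form as
\[
\langle f, \tilde{\mathcal{H}} f \rangle = \sum_{z \in D} \eta(z) f(z)^2 + \sum_{z \in D} \sum_{\substack{y \in D \\ |y-z|=1}} \frac{1}{2d}\, \sigma^{-1/2}(z)\, \sigma^{-1/2}(y)\, f(z) f(y),
\]
and control the off-diagonal sum via AM--GM: $\sigma^{-1/2}(z)\sigma^{-1/2}(y) f(z) f(y) \le \tfrac12(\sigma^{-1}(y) f(z)^2 + \sigma^{-1}(z) f(y)^2)$. Summing and then swapping the dummy variables $z \leftrightarrow y$ in the second term (using the symmetry of the nearest-neighbour relation) collapses the cross terms into $\sum_{z \in D} f(z)^2 \sum_{|y-z|=1} \tfrac{1}{2d} \sigma^{-1}(y)$. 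Combining with the diagonal contribution and bounding by the maximum then gives
\[
\langle f, \tilde{\mathcal{H}} f \rangle \le \left( \max_{z \in D} \Bigl\{ \eta(z) + \sum_{|y-z|=1} \tfrac{1}{2d}\sigma^{-1}(y) \Bigr\} \right) \|f\|_2^2,
\]
which yields the upper bound by Rayleigh--Ritz.

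Neither direction presents a serious obstacle; the only subtlety is making sure the symmetrisation argument goes through cleanly at the boundary of $D$, which is handled by the observation that the AM--GM step only introduces additional nonnegative contributions from neighbours $y \notin D$, making the row-sum bound valid (if slightly lossy) without restricting to $D$. An alternative route would be to add a large constant $C$ to the diagonal to make $\mathcal{H} + C\cdot I$ entrywise nonnegative and then invoke the Collatz--Wielandt row-sum bound for Perron--Frobenius eigenvalues, but the Rayleigh--Ritz argument above is more direct.
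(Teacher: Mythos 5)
Your proof is correct, and the two halves diverge in how closely they track the paper. For the lower bound you both use the Rayleigh--Ritz (min-max) principle; testing against a point mass at $z^* := \argmax_{z \in D}\eta(z)$ is exactly what "the min-max theorem" amounts to here, so this direction is essentially the same. For the upper bound, however, the paper invokes the Gershgorin circle theorem on the (non-symmetric) matrix of $\mathcal{H}$, whose diagonal entries are $\eta(z)$ and whose off-diagonal entries in row $z$ are the nonnegative numbers $(2d)^{-1}\sigma^{-1}(y)$ at neighbouring $y \in D$; the row-sum bound gives the claim at once. You instead stay entirely in the variational framework: expand the quadratic form of the symmetrised operator $\tilde{\mathcal{H}}$, kill the cross terms by AM--GM, and symmetrise the double sum. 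Both give precisely the same bound, but the flavour is different: the Gershgorin route is a one-line appeal to a classical theorem about row sums of matrices, while yours is a self-contained quadratic-form computation that does not leave the Rayleigh--Ritz world. Your observation that the Collatz--Wielandt/Perron--Frobenius row-sum bound would also work is worth noting, since that is really the same idea as Gershgorin here (nonnegative off-diagonal plus a diagonal shift), so that remark is in fact closest in spirit to what the paper does. One small remark on your boundary handling: after symmetrising, the sum over neighbours $y$ is a priori restricted to $y \in D$ (since $f$ vanishes outside $D$); extending it to all $|y-z|=1$ only adds nonnegative terms $\tfrac{1}{2d}\sigma^{-1}(y) \ge 0$, so, as you say, the final bound stated in the lemma (which sums over all neighbours) is valid though slightly lossy at the boundary of $D$.
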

\begin{proof}
The lower bound follows from the min-max theorem for the principal eigenvalue; the upper bound follows from the Gershgorin circle theorem.
\end{proof}

\begin{proposition}[Feynman-Kac representation for the principal eigenfunction]
\label{prop:genfk}
For each $y, z \in D$ the principal eigenfunction $\varphi_1$ satisfies the Feynman-Kac representation
\begin{align}
\label{eq:genfk}
\frac{\varphi_1(y)}{\varphi_1(z)} = \frac{\sigma(y)}{\sigma(z)} \, \EE_y \left[ \exp \left\{ \int_0^{\tau_z} \left( \xi(X_s) - \lambda_1 \right) \, ds  \right\} \id_{\{ \tau_{D^c} > \tau_z \}}  \right] .
\end{align}
\end{proposition}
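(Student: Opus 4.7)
The plan is to exploit a simple similarity transformation that converts $\mathcal{H}$ into the generator of a genuine Markov process plus potential. Specifically, I set $\psi := \sigma^{-1}\varphi_1$ and observe that the eigen-equation $\mathcal{H}\varphi_1 = \lambda_1\varphi_1$ is equivalent, on $D$, to $(\sigma^{-1}\Delta + \xi)\psi = \lambda_1\psi$; indeed, expanding $\mathcal{H}\varphi_1 = \Delta(\sigma^{-1}\varphi_1) + \xi\varphi_1 = \Delta\psi + \xi\sigma\psi$ and using $\varphi_1 = \sigma\psi$ on the right-hand side, then dividing through by $\sigma$, gives exactly this. Crucially, $\sigma^{-1}\Delta$ is by construction the generator of the BTM $X$ acting on functions (cf.\ the jump rates \eqref{eq:BTMjumps}), and the Dirichlet condition $\varphi_1=0$ on $D^c$ transfers to $\psi=0$ on $D^c$ since $\sigma$ is strictly positive.

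With $\psi$ now an eigenfunction of a bona fide Markov generator plus potential, I would invoke the standard Feynman--Kac identity. Setting $\tau := \tau_z\wedge\tau_{D^c}$, Dynkin's formula applied to the process
\[ M_t := \psi(X_{t\wedge\tau})\exp\left\{\int_0^{t\wedge\tau}(\xi(X_s)-\lambda_1)\,ds\right\} \]
together with the eigen-equation shows that $M$ is a local martingale under $\PP_y$. Since $D$ is bounded and $\sigma$ is strictly positive on it, the jump rates of $X$ are bounded above and below in $D$, so $\tau$ admits exponential moments of every order; this renders $(M_{t\wedge\tau})$ uniformly integrable, and optional stopping yields $\psi(y) = \EE_y[M_\tau]$.

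To conclude, I split on whether $X$ hits $z$ or leaves $D$ first: on $\{\tau_{D^c} < \tau_z\}$ we have $\psi(X_{\tau_{D^c}})=0$ by the Dirichlet condition, while on $\{\tau_z<\tau_{D^c}\}$ we have $X_{\tau_z}=z$. Hence
\[ \psi(y) = \psi(z)\,\EE_y\left[\exp\left\{\int_0^{\tau_z}(\xi(X_s)-\lambda_1)\,ds\right\}\id_{\{\tau_z<\tau_{D^c}\}}\right], \]
and substituting $\psi = \sigma^{-1}\varphi_1$ and rearranging gives the claimed formula, provided $\varphi_1(z)\neq 0$. The latter follows from a Perron--Frobenius argument, facilitated by the similarity of $\mathcal{H}$ to the self-adjoint operator $\sigma^{-1/2}\Delta\sigma^{-1/2}+\xi$ noted in the paper, which ensures the principal eigenfunction can be chosen strictly positive on (each connected component of) $D$. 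The only delicate point in the whole argument is the justification of optional stopping, but with uniform bounds on the rates inside the bounded domain $D$ this is entirely routine and presents no real obstacle; the conceptual heart of the proof is the ground transformation $\varphi_1 \mapsto \sigma^{-1}\varphi_1$, which converts the non-Markovian operator $\mathcal{H}$ into a setting where the classical Feynman--Kac machinery applies directly.
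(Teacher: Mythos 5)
Your ground transformation $\psi := \sigma^{-1}\varphi_1$ is correct and genuinely clarifying: it exhibits $\psi$ as an eigenfunction of $\sigma^{-1}\Delta+\xi$, where $\sigma^{-1}\Delta$ is the true generator of the BTM, so the classical Feynman--Kac machinery is in scope, and the local-martingale property of $M_t$ does indeed follow from Dynkin's formula. The route is different from the paper's, which instead exhibits the quantity $v^z(y):=\varphi_1(y)/\varphi_1(z)$ and the right-hand side of \eqref{eq:genfk} as solutions of the same Dirichlet problem on $D\setminus\{z\}$ and then concludes by uniqueness.

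However, there is a genuine gap in your justification of optional stopping. You assert that because $D$ is bounded and the jump rates are bounded above and below, ``$\tau$ admits exponential moments of every order'' and that this ``renders $(M_{t\wedge\tau})$ uniformly integrable''. Neither claim survives scrutiny. First, $\tau=\tau_z\wedge\tau_{D^c}$ is stochastically dominated by a geometric sum of bounded-rate exponential waiting times, and such a random variable has $\EE[e^{c\tau}]<\infty$ only for $c$ below a finite threshold determined by the escape rate --- not for all $c$. Second, and more to the point, even if $\tau$ had all exponential moments, what is actually needed for uniform integrability is control of $\EE_y\bigl[\,\|\psi\|_\infty\exp\{t\max_D(\xi-\lambda_1)\}\,\id_{\{\tau>t\}}\bigr]$ as $t\to\infty$, and on sites where $\xi>\lambda_1$ the exponential growth is competing against the decay of $\PP_y(\tau>t)$. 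Writing this term via the spectral representation on $D\setminus\{z\}$, it decays like $e^{(\lambda_1(D\setminus\{z\})-\lambda_1)t}$, and this tends to zero \emph{precisely because} $\lambda_1>\lambda_1(D\setminus\{z\})$. That strict inequality is the domain monotonicity of the principal eigenvalue (Lemma~\ref{lem:mono}), and it is exactly the ingredient the paper's proof invokes to establish uniqueness for the Dirichlet problem. So the uniform integrability is not ``entirely routine''; it encodes the same spectral fact, and your argument cannot close without invoking it. Once Lemma~\ref{lem:mono} is brought in to control $\EE[M_t\id_{\{\tau>t\}}]\to 0$, your martingale proof becomes a valid alternative to the paper's.
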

\begin{proof}
Consider $z$ fixed and define $v^z(y) := \varphi_1(y)/\varphi_1(z)$. Note that the function $v^z$ satisfies the Dirichlet problem
\begin{align*}
 (\Delta \invsigma + \xi - \lambda_1) \,  v^z (y) & = 0\, ,  & y \in D \setminus \{z\} \, ; \\
 v^z(y) & = \id_{\{z\}}(y)\, , & y \notin D \setminus \{z\} \, .
\end{align*} 
It is easy to check (for instance, by integrating over the first holding time) that the Feynman-Kac representation on the right-hand side of equation \eqref{eq:genfk} also satisfies this Dirichlet problem; hence we are done if there is a unique solution. So assume another non-trivial solution $w$ exists. Then the difference $q := v^z - w$ satisfies the Dirichlet problem
\begin{align*}
(\Delta \invsigma + \xi - \lambda_1) \, q(y) & = 0\, ,  & y \in D \setminus \{z\} \, ; \\ 
q(y) & = 0\, , & y \notin D \setminus \{z\}  \, ;
\end{align*} 
which is nonzero if and only if $\lambda_1$ is an eigenvalue of the operator $\Delta \invsigma + \xi$ restricted to the domain $D \setminus \{z\}$ with Dirichlet boundary conditions. By the domain monotonicity of the principal eigenvalue in Lemma~\ref{lem:mono}, this is impossible.
\end{proof}

\begin{lemma}[Path-wise evaluation]
\label{lem:pathwise}
For each $k \in \mathbb{N}$, $y, z \in D$, $p \in \Gamma_k(z, y)$ such that $p_i \neq y$ for $i < k$ and $\mathrm{Set}(p) \subseteq D$, and $\zeta>\max_{0\le i < k}\eta(p_i)$, we have
\[ \mathbb{E}_z \left[ \exp \left\{\int_0^{\tau_y}(\xi(X_s)-\zeta) \, ds \right\} \indic{ p_k(X) = p} \right] =\prod_{i=0}^{k-1} \\
\frac{1}{2d} \frac{\invsigma(p_i)}{\zeta- \eta(p_i)}  \, .\]
\end{lemma}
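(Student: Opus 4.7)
The plan is to exploit the explicit law of the BTM trajectory when conditioned on the event $\{p_k(X) = p\}$. Recall that at any site $w$ the BTM waits an exponential time of rate $\invsigma(w)$ and then jumps uniformly to one of its $2d$ neighbours, independently of the past. Since the hypotheses $p_i \neq y$ for $i < k$ and $p_k = y$ force $\tau_y$ to coincide with the time of the $k$-th jump on the event $\{p_k(X) = p\}$, the contribution of $\int_0^{\tau_y}(\xi(X_s)-\zeta)\,ds$ decomposes cleanly into the holding intervals at $p_0, p_1, \ldots, p_{k-1}$.

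More precisely, first I would let $W_0, W_1, \ldots, W_{k-1}$ denote the successive holding times and condition on the event $\{p_k(X) = p\}$. The strong Markov property at successive jump times writes this event as the intersection of the $k$ independent events ``jump from $p_i$ to $p_{i+1}$'', each of probability $1/(2d)$, while making the $W_i$ independent with $W_i \sim \mathrm{Exp}(\invsigma(p_i))$. On this event, $X_s = p_i$ for $s \in [\sum_{j<i} W_j, \sum_{j \le i} W_j)$ and $\tau_y = \sum_{j=0}^{k-1} W_j$, so
\[ \int_0^{\tau_y}(\xi(X_s)-\zeta)\,ds = \sum_{i=0}^{k-1} (\xi(p_i) - \zeta) W_i. \]

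Next I would factor the expectation using independence of the $W_i$ and of the directional choices, obtaining
\[ \prod_{i=0}^{k-1} \frac{1}{2d} \, \mathbb{E}\!\left[\exp\{(\xi(p_i)-\zeta)W_i\}\right]. \]
Since $W_i$ is exponential with rate $\invsigma(p_i)$ and the hypothesis $\zeta > \eta(p_i) = \xi(p_i) - \invsigma(p_i)$ is exactly the condition $\invsigma(p_i) - (\xi(p_i) - \zeta) > 0$ guaranteeing that the moment generating function is finite, a direct computation gives
\[ \mathbb{E}\!\left[\exp\{(\xi(p_i)-\zeta)W_i\}\right] = \frac{\invsigma(p_i)}{\invsigma(p_i) + \zeta - \xi(p_i)} = \frac{\invsigma(p_i)}{\zeta - \eta(p_i)}, \]
and assembling the factors yields the claimed identity.

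There is no real obstacle here: the only point requiring a line of justification is that, under $\{p_k(X) = p\}$ and the assumption $p_i \neq y$ for $i < k$, the hitting time $\tau_y$ equals the time of the $k$-th jump (so that the integral truly terminates at $T_k$), and that the $W_i$'s are unconditionally independent exponentials whose law is not altered by conditioning on the independent directional choices defining the event $\{p_k(X)=p\}$. Once this is noted, the rest is the standard exponential MGF computation.
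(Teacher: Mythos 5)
Your proof is correct and follows essentially the same route as the paper's one-line argument: integrate over the independent exponential holding times at $p_0, \ldots, p_{k-1}$, factor in the $1/(2d)$ probability of each directional choice, and use $\zeta > \eta(p_i)$ to guarantee finiteness of each exponential moment. You have simply spelled out the details that the paper leaves implicit.
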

\begin{proof}
This follows by integrating over the holding times at the sites $\{p_i\}_{0 \le i \le k-1}$, which are independent. The restriction on $\zeta$ ensures that the resulting integrals are finite.
\end{proof}

\begin{proposition}[Path expansion for the principal eigenvector]
\label{prop:genpathexpeig}
For each $y, z \in D$ the principal eigenfunction $\varphi_1$ satisfies the path expansion
\begin{align*}
\frac{\varphi_1(y)}{\varphi_1(z)} = \frac{\sigma(y)}{\sigma(z)} \ \sum_{k \ge 1} \ \sum_{\substack{ p \in \Gamma_{k}(y, z) \\ p_i \neq z, \, 0 \le i < k \\ \mathrm{Set}(p) \subseteq D}} \ \prod_{0 \leq i < k} \frac{1}{2d} \frac{\invsigma(p_i)}{\lambda_1 - \eta(p_i)}   \, . 
\end{align*}
\end{proposition}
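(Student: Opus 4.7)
The plan is to derive the path expansion directly from the Feynman--Kac representation in Proposition~\ref{prop:genfk} by decomposing the expectation according to the geometric path traced by $X$ between its start at $y$ and its first hitting time of $z$. Assume throughout that $y \neq z$ (the case $y = z$ is trivial from Proposition~\ref{prop:genfk} provided one admits a $k = 0$ empty-product term). Starting from
\[
\frac{\varphi_1(y)}{\varphi_1(z)} = \frac{\sigma(y)}{\sigma(z)} \, \EE_y \!\left[ \exp \!\left\{ \int_0^{\tau_z} \! ( \xi(X_s) - \lambda_1 ) \, ds  \right\} \id_{\{ \tau_{D^c} > \tau_z \}}  \right],
\]
the idea is to partition the event $\{\tau_{D^c} > \tau_z\}$ according to the number of jumps $k \ge 1$ made by $X$ before reaching $z$ and the geometric path $p = p_k(X)$ it follows. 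By the minimality of $\tau_z$ and the condition $\tau_{D^c} > \tau_z$, such a path necessarily lies in $\Gamma_k(y,z)$, satisfies $p_i \neq z$ for $0 \le i < k$, and has $\{p\} \subseteq D$, producing precisely the index set of the expansion.

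The first key step is to verify the strict inequality $\lambda_1 > \max_{w \in D} \eta(w)$, which is required in order to invoke Lemma~\ref{lem:pathwise} with $\zeta = \lambda_1$. Lemma~\ref{lem:minmax} already gives $\lambda_1 \ge \max_D \eta$; for strict inequality, evaluate the eigenvalue equation at a maximiser $w_\star$ of $\eta$ in $D$: equality would force $\sum_{|u - w_\star| = 1, \, u \in D} (2d)^{-1} \invsigma(u) \varphi_1(u) = 0$, hence $\varphi_1 \equiv 0$ on all neighbours of $w_\star$ in $D$; iterating and using the strict positivity of $\varphi_1$ on (connected) $D$, obtained via the similarity transformation $\psi = \sigma^{-1/2} \varphi_1$ from the Perron eigenfunction of the symmetric operator $\sigma^{-1/2} \Delta \sigma^{-1/2} + \xi$, yields a contradiction. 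With this strict inequality in place, Lemma~\ref{lem:pathwise} (with $y$ and $z$ interchanged and $\zeta = \lambda_1$) evaluates, for each admissible path $p$,
\[
\EE_y \!\left[ \exp\! \left\{ \int_0^{\tau_z} ( \xi(X_s) - \lambda_1 )\, ds \right\} \id_{\{ p_k(X) = p \}} \right] = \prod_{0 \le i < k} \frac{1}{2d} \frac{\invsigma(p_i)}{\lambda_1 - \eta(p_i)}.
\]
Summing over all $k \ge 1$ and all admissible $p$ and multiplying by $\sigma(y)/\sigma(z)$ yields the claimed expansion.

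The main obstacle is justifying the interchange of expectation and the infinite sum over paths. This is handled via the almost-sure partition
\[
\id_{\{\tau_{D^c} > \tau_z\}} \,=\, \sum_{k \ge 1} \ \sum_{\substack{p \in \Gamma_k(y,z) \\ p_i \neq z, \, 0 \le i < k \\ \{p\} \subseteq D}} \id_{\{p_k(X) = p\}},
\]
combined with the observation that the integrand $\exp\{\int_0^{\tau_z}(\xi(X_s)-\lambda_1)\, ds\}$ is strictly positive, so that monotone convergence (equivalently, Tonelli) legitimately exchanges $\EE_y$ and the sum. Absolute convergence of the resulting series follows \emph{a posteriori} from the finiteness of the Feynman--Kac expectation on the left-hand side of Proposition~\ref{prop:genfk}, completing the proof.
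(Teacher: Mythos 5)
Your proof follows essentially the same route as the paper's: decompose the Feynman--Kac representation of Proposition~\ref{prop:genfk} path-wise and evaluate each contribution with Lemma~\ref{lem:pathwise}. You are more careful on two technical points that the paper's one-line proof glosses over --- you establish the strict inequality $\lambda_1 > \max_{w \in D} \eta(w)$ needed to invoke Lemma~\ref{lem:pathwise} (the paper cites only the non-strict bound of Lemma~\ref{lem:minmax}), and you justify the interchange of expectation and the infinite sum over paths via Tonelli --- both refinements are correct and worthwhile.
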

\begin{proof}
The expectation on the right-hand side of equation \eqref{eq:genfk} can be expanded path-wise using Lemma~\ref{lem:pathwise}, which is valid by the lower bound in Lemma~\ref{lem:minmax}.
\end{proof}

\begin{remark}
Note that the initial factor $\sigma(y)$ in the above path expansion cancels with the term $\invsigma(p_1)$ appeaing in each component of the sum. This turns out to be crucial in establishing the localisation of the eigenfunctions in Sections \ref{sec:neg} and \ref{sec:loc}, since \textit{a priori} $\sigma(y)$ could be arbitrarily large.
\end{remark}

\begin{proposition}[Path expansion for the principal eigenvalue]
\label{prop:genpathexp}
For each $z \in D$ the principal eigenvalue has the path expansion
\begin{align*}
\lambda_1 &= \eta(z) + \invsigma(z) \sum_{k \geq 2} \ \sum_{\substack{ p \in \Gamma_{k}(z, z) \\ p_i \neq z , \, 0 < i < k \\ \mathrm{Set}(p) \subseteq D  } } \ \prod_{0 < i < k} \frac{1}{2d} \frac{\invsigma(p_i)}{\lambda_1 - \eta(p_i)}  \, .
\end{align*}
\end{proposition}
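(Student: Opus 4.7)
The plan is to derive the expansion by combining the eigenvalue equation at the site $z$ with the path expansion for the eigenfunction ratios $\varphi_1(y)/\varphi_1(z)$ given by Proposition~\ref{prop:genpathexpeig}. The idea is to turn the local eigenvalue equation into a self-consistent equation for $\lambda_1$ whose iteration through the eigenfunction ratios manifests as a sum over closed paths based at $z$.

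First I would write out $(\mathcal{H} \varphi_1)(z) = \lambda_1 \varphi_1(z)$, noting that $(\mathcal{H} \varphi_1)(z) = \sum_{|y-z|=1} (2d)^{-1} \invsigma(y) \varphi_1(y) - \invsigma(z) \varphi_1(z) + \xi(z) \varphi_1(z)$. Collecting the diagonal term $-\invsigma(z)\varphi_1(z)$ with $\xi(z)\varphi_1(z)$ via $\eta(z) = \xi(z) - \invsigma(z)$ gives the local identity
\[ (\lambda_1 - \eta(z))\, \varphi_1(z) = \sum_{|y-z|=1} \frac{1}{2d}\, \invsigma(y)\, \varphi_1(y)\,, \]
where the Dirichlet boundary condition restricts the sum to neighbours $y \in D$. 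Dividing through by $\varphi_1(z)$ gives a preliminary expression for $\lambda_1 - \eta(z)$ purely in terms of the ratios $\varphi_1(y)/\varphi_1(z)$.

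Next I would substitute Proposition~\ref{prop:genpathexpeig} into each such ratio. The simplification $\invsigma(y) \cdot \sigma(y)/\sigma(z) = \invsigma(z)$ lets me pull the factor $\invsigma(z)$ outside the whole expression. I then reindex the resulting double sum: each pair consisting of a neighbour $y$ of $z$ and an admissible path $p \in \Gamma_k(y, z)$ (with $p_i \neq z$ for $0 \le i < k$ and $\{p\} \subseteq D$) corresponds uniquely to the closed path $q \in \Gamma_{k+1}(z, z)$ obtained by prepending the edge $z \to y$, namely $q_0 = z$, $q_1 = y$, and $q_{j+1} = p_j$ for $j \ge 0$. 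The constraint $p_i \neq z$ for $i < k$ translates to $q_j \neq z$ for $0 < j < k+1$, and $\{p\} \subseteq D$ together with $z \in D$ gives $\{q\} \subseteq D$, which matches the admissibility conditions on the right-hand side of the stated formula. The edge factor $1/(2d)$ arising from the outer sum over $y$ combines with the product from Proposition~\ref{prop:genpathexpeig} to produce the appropriate per-edge weight, while the $\invsigma/(\lambda_1 - \eta)$ factors line up along the intermediate sites of $q$.

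I expect no genuine obstacles; the content is essentially a combinatorial identity that promotes the ratio expansion of the eigenfunction to a self-consistent equation for the eigenvalue, and the only care needed is the bookkeeping of indices. The term-by-term substitution is legitimate because Proposition~\ref{prop:genpathexpeig} is absolutely convergent (the underlying geometric-series estimate that drives Lemma~\ref{lem:pathwise} is available thanks to the bound $\lambda_1 > \eta(p_i)$ for all $p_i \in D$ from Lemma~\ref{lem:minmax}), and the outer sum over neighbours of $z$ is finite, so all rearrangements are valid.
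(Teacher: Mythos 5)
Your approach is the same as the paper's: evaluate the eigenvalue equation at $z$, substitute the path expansion of Proposition~\ref{prop:genpathexpeig} for the ratios $\varphi_1(y)/\varphi_1(z)$, simplify $\invsigma(y)\,\sigma(y)/\sigma(z)=\invsigma(z)$, and reindex by prepending the edge $z\to y$ to turn a neighbour $y$ together with a path $p\in\Gamma_k(y,z)$ into a closed path $q\in\Gamma_{k+1}(z,z)$. This is exactly the paper's one-line proof, spelled out.

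One bookkeeping point deserves more care, and your careful derivation in fact exposes it. You (correctly, given the definition of $\Delta$) keep the $(2d)^{-1}$ in the local eigenvalue equation, so after substituting Proposition~\ref{prop:genpathexpeig} the reindexed summand for a closed path $q$ of length $k$ carries $k$ factors of $(2d)^{-1}$ (one per jump of $q$), whereas the product $\prod_{0<i<k}$ in the stated proposition supplies only $k-1$ of them. Your remark that the extra factor ``combines \ldots to produce the appropriate per-edge weight'' is precisely where the count goes off: one $(2d)^{-1}$ is left over. The paper's own proof quotes the eigenvalue equation \emph{without} the $(2d)^{-1}$, which is the mirror image of the same discrepancy. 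A direct check on a small example confirms it: take $d=1$, $D=\{-1,0,1\}$, $\sigma\equiv1$, $\eta(\pm1)=0$, $\eta(0)=a$; then $\lambda_1=(a+\sqrt{a^2+2})/2$, which satisfies $\lambda_1=a+\tfrac{1}{2}\cdot\tfrac{1}{\lambda_1}$ but not $\lambda_1=a+\tfrac{1}{\lambda_1}$, so an additional $(2d)^{-1}$ in front of the sum is needed. This is a constant-only issue, carried verbatim into \eqref{eq:pathexp} and Proposition~\ref{prop:pathexp} and irrelevant to every estimate downstream, but your writeup should state the extra $(2d)^{-1}$ explicitly rather than assert that the factors line up.
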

\begin{proof}
Recalling that the eigenfunction relation evaluated at a site $z$ gives 
$$ \lambda_1 = \eta(z) +  \sum_{|y-z| = 1} \invsigma(y) \, \frac{\varphi_1(y)}{\varphi_1(z)} \, ,$$ 
the result follows from Proposition~\ref{prop:genpathexpeig}.
\end{proof}

We now study the solution $u_z(t, y)$ to the Cauchy problem
\begin{align}
\label{eq:cauchy}
\frac{\partial u_z(t, y)}{\partial t} &= \mathcal{H} \, u(t, y)\ ,  & (t, y) \in [0, \infty) \times D \ ;\\
\nonumber u_z(0, y) &= \id_{\{z\}}(y)\ ,  & y \in \ZZ^d  \, .
\end{align}
In particular, we give the spectral representation of $u_z(t, y)$ and deduce upper and lower bounds.

\begin{proposition}[Feynman-Kac representation of the solution] 
\label{prop:fksln}
For each $y, z \in D$,
\begin{align*}
u_z(t, y) = \EE_{z} \left[ \exp \left\{ \int_0^t \xi(X_s) ds  \right\} \id_{\{X_t = y\}} \id_{ \{ \tau_{D^c} > t  \} } \right] \, .
\end{align*}
\end{proposition}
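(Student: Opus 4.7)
The plan is to define $v_z(t,y)$ to be the right-hand side, verify it solves \eqref{eq:cauchy}, and conclude via uniqueness of solutions (the Cauchy problem restricted to $D$ is a linear ODE on $\mathbb{R}^{|D|}$). The initial and boundary conditions are immediate: $v_z(t,y)=0$ for $y\notin D$ because $\id_{\{\tau_{D^c}>t\}}$ forces $X_t\in D$, and $v_z(0,y)=\id_{\{X_0=y,\,\tau_{D^c}>0\}}=\id_{\{z\}}(y)$. The only substantive task is the PDE $\partial_tv_z=\mathcal{H}v_z$ on $D$.

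The conceptual subtlety is that $\mathcal{H}=(\Delta\invsigma+\xi)\id_D$ is \emph{not} the generator of the BTM plus $\xi$; the BTM has generator $L=\invsigma\Delta$, and a short calculation (using self-adjointness of $\Delta$ and of multiplication by $\invsigma$ and $\xi$, together with the fact that functions on $D$ are extended by zero) shows that $\mathcal{G}:=(L+\xi)\id_D$ is the $\ell^2$-adjoint of $\mathcal{H}$ on $D$. Accordingly, I would first establish the standard backward Feynman--Kac representation for $\mathcal{G}$: for any $f:D\to\mathbb{R}$ extended by zero, the function
\[
w_f(t,x):=\EE_x\!\left[f(X_t)\exp\!\left\{\int_0^t\xi(X_s)\,ds\right\}\id_{\{\tau_{D^c}>t\}}\right]
\]
solves $\partial_tw_f=\mathcal{G}w_f$ on $D$ with Dirichlet boundary and $w_f(0,\cdot)=f$. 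Conditioning on the first jump from $x$ (time exponential with rate $\invsigma(x)$, target uniform on the $2d$ neighbours) and applying the strong Markov property yields the renewal identity
\[
w_f(t,x)=e^{t(\xi(x)-\invsigma(x))}f(x)+\frac{\invsigma(x)}{2d}\int_0^t e^{s(\xi(x)-\invsigma(x))}\!\!\sum_{|y-x|=1,\,y\in D}\!\!w_f(t-s,y)\,ds;
\]
differentiating in $t$ recovers the desired PDE.

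The Feynman--Kac formula for $u_z$ then follows by transferring the identity for $\mathcal{G}$ to $\mathcal{H}$ via the adjoint relation. Given any $T>0$ and $f$,
\[
\frac{d}{dt}\langle u_z(t),w_f(T-t)\rangle_D=\langle\mathcal{H}u_z(t),w_f(T-t)\rangle_D-\langle u_z(t),\mathcal{G}w_f(T-t)\rangle_D=0,
\]
so $\langle u_z(T),f\rangle_D=\langle\id_{\{z\}},w_f(T)\rangle_D=w_f(T,z)$. Choosing $f=\id_{\{y\}}$ for each $y\in D$ identifies $u_z(T,y)$ as the claimed expectation.

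The main obstacle is careful bookkeeping at the boundary of $D$: verifying that jumps out of $D$ drop out of the renewal identity (they do, because $\id_{\{\tau_{D^c}>t\}}$ kills such trajectories), and that the boundary terms vanish in the adjoint calculation (they do, because $w_f(t,\cdot)$ and $u_z(t,\cdot)$ are both supported on $D$). Neither step is deep, but the indicator functions and sign conventions must be tracked carefully.
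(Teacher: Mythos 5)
Your argument is correct, but it takes a more circuitous route than the paper's terse ``direct verification'' intends. You are right to flag that the generator of the BTM is $\invsigma\Delta$, not $\Delta\invsigma$, so a naive appeal to the standard (first-jump) Feynman--Kac theorem would produce the wrong operator; your repair --- establish the backward Feynman--Kac identity for $\mathcal{G}:=(\invsigma\Delta+\xi)\id_D$ and transfer to $\mathcal{H}$ via the $\ell^2$-adjoint relation $\mathcal{G}=\mathcal{H}^{*}$ --- is sound (the renewal identity, the PDE for $w_f$, and the cancellation in $\frac{d}{dt}\langle u_z(t),w_f(T-t)\rangle_D$ all check out). The shorter route, however, is to forgo the backward equation entirely: the claimed expectation obeys the Chapman--Kolmogorov identity $u_z(t+\delta,y)=\sum_{w\in D}u_z(t,w)\,u_w(\delta,y)$, and expanding the \emph{final} time increment rather than the first jump gives $u_w(\delta,y)=\id_{\{y=w\}}\bigl(1+(\xi(y)-\invsigma(y))\delta\bigr)+\id_{\{|y-w|=1\}}\tfrac{1}{2d}\invsigma(w)\delta+o(\delta)$ uniformly for $w,y\in D$, so that $\partial_t u_z(t,y)=(\Delta\invsigma+\xi)u_z(t,\cdot)(y)$ follows immediately; the restriction $w\in D$ in the sum is harmless since $u_z(t,w)=0$ off $D$. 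It is also worth noting that the duality you exploit is exactly what the time-reversal identity $u_z(t,y)\sigma(z)=u_y(t,z)\sigma(y)$ of Lemma~\ref{lem:tr} encodes --- the paper obtains it from the symmetrization $\sigma^{-1/2}\mathcal{H}\sigma^{1/2}$ immediately after this proposition, which is why it is not available to you here, but your $\mathcal{G}=\mathcal{H}^{*}$ computation is the matrix-transpose form of the same observation.
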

\begin{proof}
It can be directly verified that the Feynman-Kac representation satisfies \eqref{eq:cauchy}.
\end{proof}

\begin{lemma}[Time-reversal]
\label{lem:tr}
For each $y, z \in D$,
$$ u_z(t, y) \, \sigma(z) = u_y(t, z)\,  \sigma(y)  \, .$$
\end{lemma}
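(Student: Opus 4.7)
The plan is to reduce the claim to the symmetry of the semigroup of a self-adjoint operator, exploiting the similarity transformation already recorded in the introduction: conjugation by multiplication by $\sigma^{1/2}$ sends $\mathcal{H}$ to
\[ \tilde{\mathcal{H}} := \sigma^{-1/2} \mathcal{H} \sigma^{1/2} = \left( \sigma^{-1/2} \Delta \sigma^{-1/2} + \xi \right) \id_D \, , \]
whose off-diagonal matrix entries $(2d)^{-1} \sigma^{-1/2}(y)\sigma^{-1/2}(w)$ (for $|y-w|=1$) and diagonal entries $\xi(y) - \sigma^{-1}(y)$ are manifestly symmetric in the index pair, so that $\tilde{\mathcal{H}}$ is self-adjoint and the semigroup matrix $e^{t\tilde{\mathcal{H}}}$ is symmetric.

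The first step is to identify $u_z(t, y)$ as a matrix entry of $e^{t\mathcal{H}}$. Since $u_z$ solves the Cauchy problem \eqref{eq:cauchy} with initial data $\id_{\{z\}}$, uniqueness of solutions (or, equivalently, the Feynman--Kac representation of Proposition~\ref{prop:fksln}) gives $u_z(t, y) = (e^{t\mathcal{H}})_{y,z}$. The similarity relation $e^{t\mathcal{H}} = \sigma^{1/2} e^{t\tilde{\mathcal{H}}} \sigma^{-1/2}$ then implies
\[ u_z(t, y) = \sigma^{1/2}(y) \, \bigl(e^{t\tilde{\mathcal{H}}}\bigr)_{y,z} \, \sigma^{-1/2}(z) \, . \]

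The second step is to combine this with the analogous formula for $u_y(t,z)$ and invoke the symmetry $(e^{t\tilde{\mathcal{H}}})_{y,z} = (e^{t\tilde{\mathcal{H}}})_{z,y}$, which produces
\[ \sigma^{-1/2}(y) \, u_z(t, y) \, \sigma^{1/2}(z) = \sigma^{-1/2}(z) \, u_y(t, z) \, \sigma^{1/2}(y) \, ; \]
multiplying through by $\sigma^{1/2}(y)\sigma^{1/2}(z)$ yields the claim.

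There is no real obstacle here; the content of the lemma is just the detailed balance of the BTM twisted by the Feynman--Kac functional, and the spectral reformulation makes it visible in a line. An alternative, more probabilistic route would expand the Feynman--Kac expectation path-wise in the spirit of Lemma~\ref{lem:pathwise}: for each nearest-neighbour path $z = p_0 \to \cdots \to p_n = y$ one would observe that time-reversing the path leaves $\int_0^t \xi(X_s) ds$ invariant, while shifting the lone ``unused'' factor of $\sigma^{-1}$ from $\sigma^{-1}(p_0)$ to $\sigma^{-1}(p_n)$, contributing exactly the ratio $\sigma(z)/\sigma(y)$ relating the contributions of the two reversed paths. I would nevertheless favour the spectral derivation, since it avoids any explicit bookkeeping over path decompositions.
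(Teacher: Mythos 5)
Your proof is correct and is essentially identical to the paper's: both conjugate $\mathcal{H}$ by $\sigma^{1/2}$ to obtain the symmetric (the paper mistakenly says ``unitary'') operator $\tilde{\mathcal{H}}$, use $e^{t\mathcal{H}} = \sigma^{1/2} e^{t\tilde{\mathcal{H}}} \sigma^{-1/2}$, and appeal to the symmetry of $e^{t\tilde{\mathcal{H}}}$. The additional probabilistic sketch you mention is a nice sanity check but is not needed.
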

\begin{proof}
Consider the Hermitian operator
$$ \tilde{\mathcal{H}} := \sigma^{-\frac{1}{2}} \mathcal{H} \sigma^{\frac{1}{2}} = \sigma^{-\frac{1}{2}} \Delta  \sigma^{-\frac{1}{2}} + \xi $$
which can be viewed as the `symmetrised' form of $\mathcal{H}$. Since, 
\[ e^{\tilde{\mathcal{H}} t}  =  e^{\sigma^{-\frac{1}{2}} \mathcal{H} \sigma^{\frac{1}{2}}  t}  = \sigma^{-\frac{1}{2}} e^{\mathcal{H} t} \sigma^{\frac{1}{2}}\,, \]
we have, by the fact that $\tilde{\mathcal{H}}$ is Hermitian,
\begin{align*}
 u_z(t, y) & = e^{\mathcal{H} t} \indic{z} (y)  = \left( \frac{\sigma(y)}{\sigma(z)} \right)^{\frac{1}{2}} e^{\tilde{\mathcal{H}} t} \indic{z} (y)  = \left( \frac{\sigma(y)}{\sigma(z)} \right)^{\frac{1}{2}} e^{\tilde{\mathcal{H}} t} \indic{y} (z)  \\ 
 & =   \frac{\sigma(y)}{\sigma(z)}  e^{\mathcal{H} t} \indic{y} (z)   = \frac{\sigma(y)}{\sigma(z)} u_y(t, z) \, . \qedhere
 \end{align*}
\end{proof}

\begin{proposition}[Spectral representation for the solution]
\label{prop:sr}
For each $y, z \in D$, the solution $u_z(t, y)$ satisfies the spectral representation 
$$ u_z(t, y) =   \invsigma(z)  \sum_{i} \frac{ e^{\lambda_i t}  \varphi_i(z) \varphi_i(y) }{|| \sigma^{-\frac{1}{2}} \varphi_i ||^2_{\ell_2} }  \, .$$
\end{proposition}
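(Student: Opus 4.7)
The plan is to exploit the symmetrised operator $\tilde{\mathcal{H}} := \sigma^{-1/2} \mathcal{H} \sigma^{1/2}$ already introduced in the proof of Lemma~\ref{lem:tr}. Since $\tilde{\mathcal{H}} = \sigma^{-1/2} \Delta \sigma^{-1/2} + \xi$ restricted to $D$ with Dirichlet boundary conditions is self-adjoint on $\ell_2(D)$, it admits an orthonormal eigenbasis, and my strategy is to obtain the spectral decomposition of $e^{\mathcal{H} t}$ by conjugating the one for $e^{\tilde{\mathcal{H}} t}$.

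First I would observe that if $\varphi_i$ is an eigenfunction of $\mathcal{H}$ with eigenvalue $\lambda_i$, then applying $\sigma^{-1/2}$ to both sides of $\mathcal{H} \varphi_i = \lambda_i \varphi_i$ and using $\sigma^{-1/2} \mathcal{H} = \tilde{\mathcal{H}} \sigma^{-1/2}$ yields
\[
\tilde{\mathcal{H}}\bigl(\sigma^{-1/2}\varphi_i\bigr) = \lambda_i \bigl(\sigma^{-1/2}\varphi_i\bigr).
\]
Hence $\tilde{\varphi}_i := \sigma^{-1/2} \varphi_i / \| \sigma^{-1/2} \varphi_i \|_{\ell_2}$ is an $\ell_2$-normalised eigenfunction of $\tilde{\mathcal{H}}$ with eigenvalue $\lambda_i$, and by self-adjointness the family $\{\tilde{\varphi}_i\}$ is orthonormal and complete on $\ell_2(D)$.

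Next I would write the spectral decomposition of the semigroup of the symmetric operator,
\[
e^{\tilde{\mathcal{H}} t} f(y) = \sum_i e^{\lambda_i t} \langle \tilde{\varphi}_i, f \rangle_{\ell_2} \, \tilde{\varphi}_i(y),
\]
and then use the identity $e^{\mathcal{H} t} = \sigma^{1/2} e^{\tilde{\mathcal{H}} t} \sigma^{-1/2}$ derived in Lemma~\ref{lem:tr} together with $u_z(t,y) = e^{\mathcal{H} t} \id_{\{z\}}(y)$. Substituting $f = \sigma^{-1/2} \id_{\{z\}}$, the inner product collapses to $\sigma^{-1/2}(z)\tilde{\varphi}_i(z)$, so after unfolding the definition of $\tilde{\varphi}_i$ and multiplying by $\sigma^{1/2}(y)$, one factor of $\sigma^{1/2}(y)$ cancels the $\sigma^{-1/2}(y)$ inside $\tilde{\varphi}_i(y)$, and the remaining $\sigma^{-1/2}(z) \cdot \sigma^{-1/2}(z) = \invsigma(z)$ produces exactly the claimed formula.

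There is no real obstacle here; the only thing that could conceivably cause trouble is the non-self-adjointness of $\mathcal{H}$ itself, which is resolved the moment one passes to $\tilde{\mathcal{H}}$, and the completeness and orthonormality of $\{\tilde{\varphi}_i\}$ (and therefore the invertibility of the change-of-basis $\varphi_i \mapsto \tilde{\varphi}_i$) are immediate in the finite-dimensional setting where $D$ is bounded. The relation $\tilde{\mathcal{H}} \sigma^{-1/2} = \sigma^{-1/2} \mathcal{H}$, which drives the whole argument, has already been verified in Lemma~\ref{lem:tr}, so the computation above is essentially bookkeeping.
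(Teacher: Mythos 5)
Your proposal is correct and follows essentially the same approach as the paper: both pass to the symmetrised operator $\tilde{\mathcal{H}}$, identify its $\ell_2$-normalised eigenfunctions as $\sigma^{-1/2}\varphi_i / \|\sigma^{-1/2}\varphi_i\|_{\ell_2}$, and invoke the spectral theorem for $e^{\tilde{\mathcal{H}}t}$ before conjugating back. You spell out the bookkeeping more explicitly, but the proof structure and the key observation are identical to the paper's.
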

\begin{proof}
Recall the Hermitian operator  $ \tilde{\mathcal{H}}$ from the proof of Lemma~\ref{lem:tr}. Note that each ($\ell_2$ normalised) eigenfunction $\tilde{\varphi}_i$ of $\tilde{\mathcal{H}}$ satisfies the relation
$$ \tilde{\varphi}_i = \frac{ \sigma^{-\frac{1}{2}} \varphi_i }{ || \sigma^{-\frac{1}{2}} \varphi_i ||_{\ell_2}  }  $$
with $\lambda_i$ the corresponding eigenvalue for $\tilde{\varphi}_i$. The proof then follows by applying the spectral theorem to $\tilde{\mathcal{H}}$.
\end{proof}

\begin{corollary}[Bounds on the solution]
\label{cor:sb}
For each $z \in D$ we have the bounds
$$\frac{ e^{\lambda_1 t} \invsigma(z)  \varphi^2_1(z) }{ || \sigma^{-\frac{1}{2}} \varphi_1 ||^2_{\ell_2}   } \le u_z(t, z) \le   e^{\lambda_1 t}  \, .$$
\end{corollary}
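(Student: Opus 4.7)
The plan is to deduce both inequalities directly from the spectral representation established in Proposition~\ref{prop:sr}, which, evaluated at $y = z$, gives
\[ u_z(t, z) = \invsigma(z) \sum_{i} \frac{ e^{\lambda_i t}\, \varphi_i^2(z) }{ \| \sigma^{-\frac{1}{2}} \varphi_i \|^2_{\ell_2} } \, . \]
Since every summand is non-negative (each involves $\varphi_i^2(z) \geq 0$ and $\invsigma(z) > 0$), retaining only the $i=1$ term immediately yields the lower bound asserted in the corollary.

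For the upper bound, I would first use the elementary inequality $e^{\lambda_i t} \le e^{\lambda_1 t}$ (valid since $\lambda_1$ is by convention the largest eigenvalue) to factor out $e^{\lambda_1 t}$:
\[ u_z(t,z) \le e^{\lambda_1 t}\, \invsigma(z) \sum_{i} \frac{ \varphi_i^2(z) }{ \| \sigma^{-\frac{1}{2}} \varphi_i \|^2_{\ell_2} } \, . \]
It then remains to show the prefactor equals $1$. For this I would invoke the identity, already used in the proof of Proposition~\ref{prop:sr}, that the functions
\[ \tilde{\varphi}_i := \frac{ \sigma^{-\frac{1}{2}} \varphi_i }{ \| \sigma^{-\frac{1}{2}} \varphi_i \|_{\ell_2} } \]
form an orthonormal basis of $\ell_2(D)$, being the $\ell_2$-normalised eigenfunctions of the symmetric operator $\tilde{\mathcal{H}} = \sigma^{-\frac{1}{2}}\mathcal{H}\sigma^{\frac{1}{2}}$. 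Applying Parseval's identity to the indicator $\id_{\{z\}} \in \ell_2(D)$ gives $\sum_i \tilde{\varphi}_i^2(z) = \|\id_{\{z\}}\|^2_{\ell_2} = 1$, which rewritten is precisely
\[ \invsigma(z) \sum_{i} \frac{ \varphi_i^2(z) }{ \| \sigma^{-\frac{1}{2}} \varphi_i \|^2_{\ell_2} } = 1 \, , \]
completing the upper bound.

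There is really no serious obstacle here: the statement is a routine consequence of the spectral theorem for the symmetrised operator $\tilde{\mathcal{H}}$, so all the heavy lifting has already been done in Lemma~\ref{lem:tr} and Proposition~\ref{prop:sr}. The only point worth being careful about is that the non-negativity used for the lower bound crucially requires $y = z$ (since the principal eigenfunction is positive by Perron--Frobenius but higher eigenfunctions can change sign), and that the normalisation identity arises from Parseval applied to $\id_{\{z\}}$ rather than to $\varphi_1$.
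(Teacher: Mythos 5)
Your proof is correct and takes essentially the same route as the paper: lower bound by truncating the spectral sum from Proposition~\ref{prop:sr} at $i=1$, upper bound by pulling out $e^{\lambda_1 t}$ and observing that the remaining prefactor equals $1$. The only cosmetic difference is that you establish $\invsigma(z)\sum_i \varphi_i^2(z)/\|\sigma^{-\frac12}\varphi_i\|_{\ell_2}^2 = 1$ via Parseval applied to $\id_{\{z\}}$, whereas the paper obtains the same identity by evaluating the spectral representation at $t=0$ and using $u_z(0,z)=1$ — these are the same fact in disguise.
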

\begin{proof}
The lower bound follows directly from Proposition~\ref{prop:sr}. For the upper bound, first use Proposition~\ref{prop:sr} to write
\[  u_z(t, z) \le e^{\lambda_1 t}  \invsigma(z) \sum_i \frac{\varphi^2_i(z)}{\|\sigma^{-\frac1{2}}\varphi_i\|_{\ell_2}^2} \, . \]
Then, since $u_z(0,z)=1$, Proposition~\ref{prop:sr} also implies that
\[ \invsigma(z) \sum_i \frac{\varphi^2_i(z)}{\|\sigma^{-\frac1{2}}\varphi_i\|_{\ell_2}^2} = 1 \]
and the result follows. 
\end{proof}

\begin{proposition}[Bound on the total mass of the solution]
\label{prop:tm}
For each $y, z \in D$,
\[ \sum_{y \in D} u_z(t, y) \le e^{\lambda_1 t}  \sum_{y \in D} \frac{\varphi_1(y)}{\varphi_1(z)} \, . \]
\end{proposition}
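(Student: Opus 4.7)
The plan is to compare $u_z(t, \cdot)$ pointwise with the ``ansatz'' solution built from the principal eigenfunction, namely $g(t,y) := e^{\lambda_1 t}\, \varphi_1(y)/\varphi_1(z)$, and then sum the resulting inequality over $y \in D$. This is the natural comparison because $g$ satisfies the same parabolic equation as $u_z$ and has initial data that dominates $\indic{\{z\}}$.

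First, I would recall that the principal eigenfunction $\varphi_1$ of $\mathcal{H}$ is strictly positive on $D$ by Perron--Frobenius (the matrix representing $\mathcal{H}$ has nonnegative off-diagonal entries $(2d)^{-1}\invsigma(y)$ coming from $\Delta\invsigma$, so $\mathcal{H}$ restricted to $D$ is irreducible after the usual shift, and the Perron eigenfunction is strictly positive). In particular $\varphi_1(z) > 0$, so the quantity $g(t,y)$ is well-defined, and moreover $g$ solves the Cauchy problem \eqref{eq:cauchy}, i.e.\ $\partial_t g = \mathcal{H} g$ on $D$ with Dirichlet boundary conditions, with initial data $g(0,y) = \varphi_1(y)/\varphi_1(z)$.

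Next, I would set $f(t, y) := g(t, y) - u_z(t, y)$. By linearity, $f$ also satisfies $\partial_t f = \mathcal{H} f$ on $D$ with Dirichlet boundary conditions, and its initial data is
\[ f(0, y) = \frac{\varphi_1(y)}{\varphi_1(z)} - \id_{\{z\}}(y) \]
which vanishes at $y=z$ and is strictly positive at every other $y \in D$, hence is nonnegative on $D$. The key step is then to conclude that nonnegativity is preserved under the evolution, so $f(t,y) \ge 0$ for all $t \ge 0$ and $y \in D$. This is immediate from the Feynman--Kac representation of Proposition~\ref{prop:fksln} applied component-wise: the solution of \eqref{eq:cauchy} with any nonnegative initial datum $\psi$ on $D$ can be written as $\mathbb{E}_y[\exp\{\int_0^t \xi(X_s)\, ds\}\, \psi(X_t)\, \id_{\{\tau_{D^c} > t\}}]$, which is manifestly nonnegative. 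Equivalently, since $\mathcal{H}$ (restricted to $D$) has nonnegative off-diagonal entries, the semigroup $e^{t\mathcal{H}}$ is a nonnegative matrix.

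Finally, the inequality $u_z(t, y) \le e^{\lambda_1 t} \varphi_1(y)/\varphi_1(z)$ holds for every $y \in D$, so summing over $y \in D$ yields the claimed bound. I do not anticipate any real obstacle here; the only mild subtlety is confirming the positivity of $\varphi_1$ on all of $D$, which follows from the standard Perron--Frobenius argument applied to $\mathcal{H} + c\id_D$ for sufficiently large $c$ (or, equivalently, can be read off from the path expansion in Proposition~\ref{prop:genpathexpeig} since $\lambda_1 > \eta(y)$ for all $y \in D$ by Lemma~\ref{lem:minmax} together with the strict monotonicity in Lemma~\ref{lem:mono}).
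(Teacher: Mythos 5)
Your proof is correct, and it takes a genuinely different route from the paper's. You argue by the parabolic comparison principle: the function $g(t,y) = e^{\lambda_1 t}\varphi_1(y)/\varphi_1(z)$ solves the same Cauchy problem as $u_z$, dominates the initial datum $\id_{\{z\}}$ pointwise (using strict positivity of $\varphi_1$ on $D$ via Perron--Frobenius), and positivity preservation of the semigroup $e^{t\mathcal{H}}$ (immediate from the nonnegative off-diagonal entries $(2d)^{-1}\invsigma$, or from the Feynman--Kac formula with nonnegative initial data) then yields the pointwise inequality $u_z(t,y) \le g(t,y)$, which is summed over $y$. The paper instead reaches the same pointwise inequality probabilistically: it conditions the Feynman--Kac formula for $u_y(t,z)$ on the hitting time $\tau_z$, applies the strong Markov property, bounds the resulting on-diagonal factor $u_z(t-\tau_z,z)$ by $e^{\lambda_1(t-\tau_z)}$ via Corollary~\ref{cor:sb}, recognises the pre-hitting factor as the Feynman--Kac representation of $\varphi_1(y)/\varphi_1(z)$ from Proposition~\ref{prop:genfk}, and finally undoes the index swap with the time-reversal Lemma~\ref{lem:tr}. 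Your argument is shorter and uses only elementary spectral/positivity facts, which is a genuine simplification for this particular statement; the paper's stopping-time decomposition, while heavier here, is the same machinery it reuses almost verbatim in the proof of Proposition~\ref{prop:thm4.1}, where the comparison-principle shortcut is not available, so developing it once in this simpler setting serves a structural purpose.
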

\begin{proof}
We write $\mathcal{F}_{\tau_z}$ for the $\sigma$-algebra generated by the stopping time $\tau_z$. First decompose the Feynman-Kac representation for $u_y(t, z)$ in Proposition~\ref{prop:fksln} by conditioning on $\mathcal{F}_{\tau_z}$ and using the strong Markov property:
\begin{align*}
u_y(t, z) & = \EE_{\tau_z} \left[ e^{ \lambda_1 \tau_z}  \, \EE_y \left[ \exp \left\{ \int_0^{\tau_z} \left( \xi(X_s) - \lambda_1 \right)  \, ds \right\} \id_{\{ \tau_z <\tau_{D^c}\}} \bigg| \, \mathcal{F}_{\tau_z}  \right] \right. \\
& \qquad \left. \times \,  \EE_z \left[ \exp \left\{ \int_0^{t - \tau_z} \xi(X'_s) \, ds \right\} \id_{\{ X'_{t - \tau_z} = z, \tau'_{D^c} > t - \tau_z  \}}\bigg| \, \mathcal{F}_{\tau_z} \right]   \id_{\{ \tau_z \le t  \}} \right] \\
& = \EE_{\tau_z} \left[ e^{ \lambda_1 \tau_z }  \, \EE_y \left[ \exp \left\{ \int_0^{\tau_z} \left( \xi(X_s) - \lambda_1 \right)  \, ds \right\} \id_{\{ \tau_z <\tau_{D^c}\}} \bigg| \, \mathcal{F}_{\tau_z}  \right]  u_z(t-\tau_z, z) \id_{\{ \tau_z \le t  \}} \right]  \, ,
\end{align*}
where $\EE_{\tau_z}$ denotes expectation taken over $\tau_z$, $X_t'$ is an independent copy of $X_t$, and $\tau'_{D^c} := \inf \{t \ge 0: X'_t \notin D\}$. Using the upper bound in Corollary \ref{cor:sb} combined with the Feynman-Kac representation for the principal eigenfunction in Proposition~\ref{prop:genfk}, we have that
\begin{align*}
u_y(t, z)  \le e^{\lambda_1t}\EE_{y} \left[   \exp \left\{ \int_0^{\tau_z} \left( \xi(X_s) - \lambda_1 \right)  \, ds \right\} \id_{\{ \tau_z <\tau_{D^c}\}}\right]  = e^{\lambda_1t}  \frac{\varphi_1(y)}{\varphi_1(z)} \frac{\sigma(z)}{\sigma(y)}   \, .
\end{align*}
Finally, applying the time-reversal Lemma~\ref{lem:tr}, we have
\begin{align*}
u_z(t, y) = u_y(t, z) \frac{\sigma(y)}{\sigma(z)} \le e^{\lambda_1 t}  \frac{\varphi_1(y)}{\varphi_1(z)}   \, ,
\end{align*}
which, after summing over $y \in D$, yields the result.
\end{proof}

Next we prove a `cluster expansion' that is useful for bounding expectations of the `Feynman-Kac type'. It is similar in spirit to \cite[Lemma~4.2]{Gartner07} and \cite[Lemma~2.18]{Gartner98}, however we will need an additional form of the bound to accommodate the impact of the trapping landscape (see the proof of Lemma~\ref{L:negupper2}).

\begin{lemma}[Cluster expansion]
\label{lem:clusterexp}
For each $z \in D$ and for any $\zeta > \lambda_1$,
\begin{align*}
\EE_{z} \left[ \exp \left\{ \int_{0}^{\tau_{D^c}} (\xi(X_s) - \zeta) \, ds \right\} \right]  & \le 1 +   \frac{ \max_{w \in D} \{\invsigma(w)\} \, |D|}{\zeta-\lambda_1} 
\end{align*}
and
\begin{align*}
\EE_{z} \left[ \exp \left\{ \int_{0}^{\tau_{D^c}} (\xi(X_s) - \zeta) \, ds \right\} \right]  & \le  \frac{\invsigma(z)}{\zeta - \lambda_1}  \left( 1 +   \frac{ \max_{w \in D} \{\invsigma(w)\} \, |D|}{\zeta-\lambda_1}  \right) \, . 
\end{align*}
\end{lemma}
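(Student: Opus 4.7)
The plan is to derive both bounds from a Kac-type integration by parts combined with a spectral bound on the Green's function of $\mathcal{H}$. First, applying the identity
\[e^{\int_0^\tau(\xi-\zeta)ds}-1 = \int_0^\tau(\xi(X_s)-\zeta)e^{\int_0^s(\xi-\zeta)dr}ds\]
(with $\tau=\tau_{D^c}$) to the Feynman--Kac quantity, taking $\EE_z$, and using Fubini gives
\[F(z) := \EE_z\bigl[e^{\int_0^{\tau_{D^c}}(\xi-\zeta)ds}\bigr] = 1 + \sum_{y\in D}(\xi(y)-\zeta)G_z(y), \qquad G_z(y):=\int_0^\infty e^{-\zeta s}u_z(s,y)\,ds,\]
where $G_z$ is the $z$-th column of $(\zeta I - \mathcal{H})^{-1}$ on $D$ with Dirichlet boundary, so $(\mathcal{H}-\zeta)G_z = -\id_{\{z\}}$ on $D$. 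Substituting $(\xi-\zeta)G_z = -\id_{\{z\}}-\Delta\invsigma G_z$ and invoking the conservative identity $\sum_{y\in\ZZ^d}\Delta g(y)=0$ for finitely supported $g$ converts the bulk sum into a boundary flux, yielding
\[F(z) = \frac{1}{2d}\sum_{y\in D}\invsigma(y)G_z(y)\,\partial_{\mathrm{ext}}(y) \le \max_{z\in D}\{\invsigma(z)\}\cdot\sum_{y\in D}G_z(y),\]
where $\partial_{\mathrm{ext}}(y):=|\{y'\notin D:|y-y'|=1\}|\le 2d$.

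To bound $G_z(y)$ pointwise I would use Proposition~\ref{prop:sr}: conjugating by $\sigma^{1/2}$ gives $G_z(y) = \sqrt{\sigma(y)/\sigma(z)}\cdot[(\zeta I-\tilde{\mathcal{H}})^{-1}]_{y,z}$, where $\tilde{\mathcal{H}}:=\sigma^{-1/2}\mathcal{H}\sigma^{1/2}$ is self-adjoint with the same spectrum as $\mathcal{H}$. The resolvent $(\zeta I -\tilde{\mathcal{H}})^{-1}$ is positive semidefinite, so Cauchy--Schwarz on its entries combined with the diagonal bound $[(\zeta I -\tilde{\mathcal{H}})^{-1}]_{z,z}\le 1/(\zeta-\lambda_1)$ (immediate from the spectral decomposition of $\tilde{\mathcal{H}}$ via $\sum_i\tilde\varphi_i(z)^2=1$) yields $G_z(y)\le\sqrt{\sigma(y)/\sigma(z)}/(\zeta-\lambda_1)$. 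Summing over $y\in D$ and using $\sigma^{-1/2}(z),\sigma^{-1/2}(y)\le M^{1/2}$, where $M:=\max_D\invsigma$, gives $F(z)\le M|D|/(\zeta-\lambda_1)$, which trivially implies the first stated bound.

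For the second bound, conditioning on the first holding time at $z$ (exponential with rate $\invsigma(z)$) and the uniform choice of first jump destination yields the one-step recursion
\[F(z) = \frac{\invsigma(z)}{\zeta-\eta(z)}\cdot\frac{1}{2d}\sum_{y:|y-z|=1}F(y),\]
with $F\equiv 1$ outside $D$; then bounding $\zeta-\eta(z)\ge\zeta-\lambda_1$ via Lemma~\ref{lem:minmax} and inserting the first bound uniformly for each $F(y)$ (noting $1\le 1+M|D|/(\zeta-\lambda_1)$ so the bound also majorises $F$ outside $D$) yields the second inequality. The main technical obstacle I expect is the pointwise bound on $G_z(y)$: the eigenfunctions $\varphi_i$ for $i\ge 2$ have indefinite signs, so a term-wise bound on the spectral sum for $G_z$ is unavailable, and the Cauchy--Schwarz bound for off-diagonal entries of a self-adjoint PSD resolvent is essential. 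A secondary point is the bookkeeping in the flux identity, which requires careful handling of the finite support of $\invsigma G_z$ in the discrete integration by parts.
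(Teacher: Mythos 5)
Your route to the first bound is genuinely different from the paper's, but as written there is a bookkeeping slip that makes the final estimate fail; the ingredients you have assembled are, however, exactly the ones needed to fix it.

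The paper's proof sets $w:=u-\id$ (with $u(y)=\EE_y[\cdot]$), observes $w$ solves $(\mathcal{H}-\zeta)w=-(\Delta\invsigma+\xi-\zeta)\id$ with zero Dirichlet data, bounds the source pointwise by $\max_D\invsigma$ via Lemma~\ref{lem:minmax}, and then invokes positivity of $\mathcal{R}_\zeta$ together with the spectral bound. Your version expresses $F(z)-1=\sum_y(\xi(y)-\zeta)G_z(y)$ via the Kac integration-by-parts, substitutes the resolvent equation for $G_z$, and then uses the discrete Green's/conservation identity to collapse the bulk sum into the boundary flux $F(z)=\frac{1}{2d}\sum_{y\in D}\invsigma(y)G_z(y)\partial_{\mathrm{ext}}(y)$. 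That is a different and rather cleaner decomposition: it isolates exactly what must be estimated (a Green's function weighted by $\invsigma$) rather than arguing through the positivity of the resolvent applied to a bounded source.

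The slip is in the displayed step $F(z)\le \max_D\{\invsigma\}\cdot\sum_{y\in D}G_z(y)$: discarding the factor $\invsigma(y)$ at this stage is fatal, because your pointwise Green's function bound reads $G_z(y)\le\sqrt{\sigma(y)/\sigma(z)}/(\zeta-\lambda_1)$, and $\sigma(y)^{1/2}$ is unbounded (only $\invsigma$, not $\sigma$, is controlled by $M$). With the factor removed, $M\sum_y G_z(y)$ cannot be bounded by $M|D|/(\zeta-\lambda_1)$; the quoted tools $\sigma^{-1/2}(y),\sigma^{-1/2}(z)\le M^{1/2}$ bound the wrong quantity. The fix is to keep $\invsigma(y)$ until after inserting the Green's function estimate, so it cancels the $\sigma(y)^{1/2}$:
\[
F(z)\le \sum_{y\in D}\invsigma(y)G_z(y)\le\frac{1}{\sigma^{1/2}(z)(\zeta-\lambda_1)}\sum_{y\in D}\sigma^{-1/2}(y)\le\frac{M^{1/2}\cdot M^{1/2}|D|}{\zeta-\lambda_1}=\frac{M|D|}{\zeta-\lambda_1}\,.
\]
This is precisely the computation your sentence in prose seems to intend, but the displayed chain as written does not go through. (A minor point: you also need the sign of the off-diagonal entries of $(\zeta I-\tilde{\mathcal{H}})^{-1}$, not just PSD Cauchy--Schwarz, to drop the absolute value; positivity-preservation of the semigroup supplies this.) Once that is repaired, your first bound is a valid alternative proof. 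Your derivation of the second bound — one-step conditioning, $\zeta-\eta(z)\ge\zeta-\lambda_1$, and insertion of the first bound at the neighbours — is exactly the paper's argument.
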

\begin{proof}
We proceed by modifying the proofs of \cite[Lemma~4.2]{Gartner07} and \cite[Lemma~2.18]{Gartner98}. First abbreviate 
\[  u(y):=  \EE_{y} \left[ \exp \left\{ \int_{0}^{\tau_{D^c}} (\xi(X_s) - \zeta) \, ds \right\} \right] \]
and remark that $u$ solves the boundary value problem
\begin{align}
\label{eq:cluster1}
 (\sigma^{-1} \Delta +\xi-\zeta)u(y) &=0\, , & y \in D\, ;\\
 \nonumber u(y) &=1\, , & y \notin D \, .
\end{align}
Note that, in contrast to in the proof of Proposition \ref{prop:genfk}, in the above boundary value problem the relevant operator is the adjoint of $\mathcal{H}$, since here we have not weighted the expectation by~$\sigma$. We make the substitution $w:=u-\id$, where $\id$ denotes the vector of ones, which turns~\eqref{eq:cluster1} into
 \begin{align*}
 (\sigma^{-1} \Delta+\xi-\zeta)w(y) &= - (\sigma^{-1} \Delta+\xi-\zeta) \id (y) = \zeta - \xi(y) \, , & y \in D \,;\\
w(y) &=0\, , & y \notin D \, .
 \end{align*}
Since $\zeta > \lambda_1$, the solution exists and is given by 
$$ w(y) = \left( \mathcal{R}_\zeta (\xi - \zeta ) \right) (y) $$
where $\mathcal{R}_\zeta$ is the resolvent of $\invsigma \Delta + \xi$ at $\zeta$. By Lemma~\ref{lem:minmax} and since $\zeta > \lambda_1$ we have that $\xi(y)   - \zeta \le \invsigma$ for all $y \in D$, and so by the positivity of the resolvent (guaranteed since $\mathcal{H}$ is elliptic and $\zeta > \lambda_1$) we obtain 
\[ w(z)  \le  \left( \mathcal{R}_\zeta  \invsigma  \right) (y)  =    \left( \sigma^{-\frac{1}{2}} \tilde{\mathcal{R}}_\zeta \sigma^{-\frac{1}{2}} \right) (y)  \le  \max_{z \in D} \{\invsigma(z)\} |D| \, \| \tilde{\mathcal{R}}_\zeta  \| \, , \]
where $\tilde{\mathcal{R}}_\zeta$ is the resolvent of the Hermitian operator $\tilde{\mathcal{H}} = \sigma^{-\frac{1}{2}} \Delta \sigma^{-\frac{1}{2}} + \xi$ at $\zeta$ and $\| \cdot \|$ denotes the operator norm. By considering the spectral representation of $\tilde{\mathcal{R}}_\zeta$ we have $\| \tilde{\mathcal{R}}_\zeta \| \le \left( \zeta-\lambda_1 \right)^{-1} $ which gives the first bound. For the second bound, consider that \eqref{eq:cluster1} implies the identify
\begin{equation}
\label{eq:cluster}
u(y) = \frac{ \invsigma(y) }{\zeta - \xi(y) + \invsigma(y) } \sum_{|x - y|=1} \frac{1}{2d} u(x) \, .
\end{equation} 
Applying the first bound to each $u(x)$ in the sum in \eqref{eq:cluster}, the result follows by bounding $\xi(y) - \invsigma(y)$ in the denominator of \eqref{eq:cluster} from above by $\lambda_1$, valid by the lower bound in Lemma~\ref{lem:minmax}.
\end{proof}

Finally, we give a general way to bound the contribution to the solution $u_z(t, y)$ from paths that hit a certain site $x \in D$ and then stay within a subdomain $E \subseteq D$ that contains $x$. In particular, we show that this contribution is proportional to the principal eigenfunction of $\mathcal{H}$ restricted to $E$. This is similar in spirit to \cite[Theorem 4.1]{Gartner07}, and it crucial to establishing complete localisation of the solution.

So fix a domain $E \subseteq D$, a site $x \in E$, and define the operator $\mathcal{H}^E$ to be the restriction of $\mathcal{H}$ to the domain $E$ with Dirichlet boundary conditions, with $\lambda^E_1$ and $\varphi^E_1$ respectively its principal eigenvalue and eigenfunction. Define the stopping time
$$ \tau_{x, E^c} := \inf \{t \ge \tau_x : X_t \notin E \} \, .$$
Then the contribution to the solution $u_z(t, y)$ from paths that hit $x$ and then stay within $E$ can be written
$$ u_z^{x, E}(t, y) := \EE_z \left[ \exp \left\{ \int_0^t  \xi(X_s)  \, ds  \right\}  \id_{\{ X_t = y, \tau_x \le t,  \tau_{x, E^c} > t, \tau_{D^c} > t \}} \right] \, .  $$

\begin{proposition}[Link between solution and principal eigenfunction; see {\cite[Theorem 4.1]{Gartner07}}]
\label{prop:thm4.1}
For each $x\in E$, $y \in E \setminus \{x\}$ and $z \in D$,
$$ \frac{u_z^{x, E}(t, y)}{\sum_{y \in D} u_z(t, y)} \le \frac{\sigma(x) \|\sigma^{-\frac1{2}} \varphi_1^E\|_{\ell_2}^2} { ( \varphi_1^E(x))^3 } \, \varphi_1^E(y)\,. $$
\end{proposition}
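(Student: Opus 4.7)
The plan is to apply the strong Markov property at the stopping time $\tau_x$, which cleanly factorises the event defining $u_z^{x,E}(t,y)$ into a journey from $z$ to $x$ inside $D$ (contributing the Feynman--Kac integral up to $\tau_x$) and a subsequent evolution from $x$ to $y$ inside $E$, which is exactly $u_x^E(t-\tau_x, y)$. The matching lower bound on $\sum_{y'\in D}u_z(t,y')$ will be obtained by discarding all but the mass at $x$ and then applying the strong Markov property a second time at $\tau_x$. The point of organising things this way is that the common Feynman--Kac expectation over $\tau_x$, together with the $e^{\lambda_1^E t}$ factor, will cancel exactly in the ratio, leaving only a spectral prefactor built out of $\varphi_1^E$ and $\sigma$.

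Concretely, after writing
\[
u_z^{x,E}(t,y) = \EE_z\Bigl[\exp\Bigl\{\int_0^{\tau_x}\xi(X_s)\,ds\Bigr\}\, u_x^E(t-\tau_x,y)\,\indic{\tau_x \le t,\, \tau_{D^c} > \tau_x}\Bigr],
\]
I would upper bound $u_x^E(s,y)$ by a pointwise estimate of the form $c\, e^{\lambda_1^E s}\, \varphi_1^E(y)$, obtained by rerunning the argument from the proof of Proposition~\ref{prop:tm} on the restricted domain $E$ (which invokes the Feynman--Kac representation of the eigenfunction in Proposition~\ref{prop:genfk}) in combination with the time-reversal identity of Lemma~\ref{lem:tr}. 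Extracting the $e^{\lambda_1^E t}$ prefactor then produces an upper bound of the form $c\, \varphi_1^E(y) \, e^{\lambda_1^E t}\EE_z\bigl[\exp\{\int_0^{\tau_x}(\xi-\lambda_1^E)ds\}\indic{\tau_x \le t,\,\tau_{D^c}>\tau_x}\bigr]$.

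For the denominator I would use the trivial bound $\sum_{y'\in D}u_z(t,y') \ge u_z(t,x)$, apply the strong Markov property at $\tau_x$, and then invoke the diagonal lower bound of Corollary~\ref{cor:sb} on the restricted domain,
\[
u_x^E(s,x) \ge \frac{e^{\lambda_1^E s}\,\invsigma(x)\,(\varphi_1^E(x))^2}{\|\sigma^{-\frac{1}{2}}\varphi_1^E\|_{\ell_2}^2},
\]
to produce a lower bound featuring the same $e^{\lambda_1^E t}$ factor and the same Feynman--Kac expectation over $\tau_x$ as in the numerator. Taking the ratio, these two expectations cancel identically and what remains is precisely the purely spectral/geometric prefactor stated in the proposition.

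The main subtlety I anticipate is the precise $\sigma$-bookkeeping in the pointwise upper bound on $u_x^E(s,y)$: the time-reversal identity $u_x^E(s,y)\sigma(x) = u_y^E(s,x)\sigma(y)$ can be combined with the Feynman--Kac representation of $\varphi_1^E$ from Proposition~\ref{prop:genfk} in several equivalent-looking ways, and care is needed to select the combination that reconstitutes the precise $\sigma$-prefactor appearing in the target bound rather than a variant that carries $\sigma$ factors on the wrong sites. Once this is nailed down, the cancellation of the $\tau_x$-expectations and all remaining algebra are essentially routine.
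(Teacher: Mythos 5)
Your proposal is correct, and it takes a genuinely more direct route than the paper's proof. The paper first applies the time-reversal idea of Lemma~\ref{lem:tr} to rewrite $u_z^{x,E}(t,y)/u_z^{x,E}(t,x)$ in terms of $u_y^{\overleftarrow{x,E}}(t,z)/u_x^{\overleftarrow{x,E}}(t,z)$, then decomposes $u_y^{\overleftarrow{x,E}}(t,z)$ at $\tau_x$, inserts a factor of the form $1 \le u_x^{x,E}(\tau_x,x)\,\sigma(x)\,\|\sigma^{-\frac{1}{2}}\varphi_1^E\|_{\ell_2}^2\,(\varphi_1^E(x))^{-2}\,e^{-\tau_x\lambda_1^E}$ (the rearranged diagonal lower bound of Corollary~\ref{cor:sb}) inside the conditional expectation, recombines the two auxiliary $X'$-expectations by deleting the constraint $X'_{\tau_x}=x$, and finally invokes Proposition~\ref{prop:genfk}. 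Your version applies the strong Markov property at $\tau_x$ directly in both the numerator $u_z^{x,E}(t,y)$ and the lower bound $u_z(t,x)$ on the denominator, then controls the post-$\tau_x$ factor from above by the pointwise estimate $u_x^E(s,y) \le e^{\lambda_1^E s}\varphi_1^E(y)/\varphi_1^E(x)$ (the intermediate step of Proposition~\ref{prop:tm} specialised to $E$) and from below by $u_x(s,x) \ge u_x^E(s,x)$ together with Corollary~\ref{cor:sb}; the common $e^{\lambda_1^E t}$ and the common Feynman-Kac integral over $\tau_x$ then cancel in the ratio. The spectral content is the same, but your organisation is more symmetric and avoids both the explicit time-reversal step and the multiply-by-one device.

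On the $\sigma$-bookkeeping you correctly anticipate as the main subtlety: carried through carefully, your route yields $\sigma(x)$ in the prefactor, not $\sigma(y)$ as printed in the proposition. This is not a defect of your approach. The paper's own chain of inequalities also lands on $\sigma(x)$ (the $\sigma(y)/\sigma(x)$ from the initial time-reversal cancels against the $\sigma(x)^2/\sigma(y)$ that emerges from the bound on $u_y^{\overleftarrow{x,E}}(t,z)$), and the way the proposition is invoked at the end of Section~\ref{sec:loc} to complete Theorem~\ref{thm:main1} (with $x=Z_t$) pulls out the single factor $\sigma(Z_t)$ before summing over $y$, which only works if the prefactor is $\sigma(x)$ rather than $\sigma(y)$ --- a $\sigma(y)$ under the sum would be out of control away from the localisation site. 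The printed $\sigma(y)$ is therefore a typographical slip for $\sigma(x)$; keep the $\sigma(x)$ that your calculation produces.
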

\begin{proof}
We proceed by modifying the proof of \cite[Theorem 4.1]{Gartner07}. The first step is to make use of the time-reversal in Lemma~\ref{lem:tr}, suitably adapted to $u_z^{x, E}(t, y)$. In particular, defining
\[ u_y^{\overleftarrow{x, E}}(t, z) := \EE_y \left[ \exp \left\{ \int_0^t  \xi(X_s)  \, ds  \right\}  \id_{\{ X_t = z, \tau_x  \le t, \tau_x < \tau_{E^c}, \tau_{D^c} > t \}} \right]   \]
we can write
\begin{align}
\label{eq:thm4.1}
\frac{u_z^{x, E}(t, y)}{\sum_{y \in D}u_z(t, y)}   \le \frac{u_z^{x, E}(t, y)}{u_z(t, x)}  =  \frac{\sigma(y)}{\sigma(x)} \frac{u_y^{\overleftarrow{x, E}}(t, z)}{u_x(t, z) } \, .
\end{align}
Next we decompose the Feynman-Kac formula for $u_y^{\overleftarrow{x, E}}(t, z)$ as in the proof of Proposition~\ref{prop:tm}, by conditioning on the $\sigma$-algebra generated by the stopping time $\tau_x$, and using the strong Markov property. More precisely, we write
\begin{align}
\label{eq:uxE}
u_y^{\overleftarrow{x, E}}(t, z) & = \EE_{\tau_x} \left[ e^{ \tau_x \lambda^E_1  }  \, \EE_y \left[ \exp \left\{ \int_0^{\tau_x} \left( \xi(X_s) - \lambda^E_1 \right)  \, ds \right\} \id_{\{ \tau_x <\tau_{E^c}\}} \bigg| \, \mathcal{F}_{\tau_x}  \right] \right. \\
& \nonumber \qquad \left. \times \,  \EE_x \left[ \exp \left\{ \int_0^{t - \tau_x} \xi(X'_s) \, ds \right\} \id_{\{ X'_{t - \tau_x} = z, \tau'_{D^c} > t - \tau_x  \}}\bigg| \, \mathcal{F}_{\tau_x}   \right]   \id_{\{ \tau_x \le t  \}} \right] \, ,
\end{align}
where $\EE_{\tau_x}$, $X_t'$ and $\tau'_{D^c}$ are defined as in the proof of Proposition~\ref{prop:tm}. Next, note that an application of Corollary \ref{cor:sb} gives the bound
\begin{align}
\label{eq:bound}
 1 \le u^{x, E}_x(w, x) \  \frac{\sigma(x) \|\sigma^{-\frac1{2}} \varphi^E_1\|^2_{\ell_2}  }{ ( \varphi^E_1(x) )^2} \  e^{-w \lambda^E_1 }  \, ,
 \end{align}
and recall the representation
$$  u^{x, E}_x(w, x) =  \EE_x \left[ \exp \left\{ \int_0^w \xi(X'_s) \, ds \right\}  \id_{ \left\{ X'_w =  x, \tau'_{E^c} > w \right\}}   \right]  \, .$$
Combining the bound in \eqref{eq:bound} with equation \eqref{eq:uxE} (setting $w = \tau_x$), gives
\begin{align*}
u_y^{\overleftarrow{x, E}}(t, z) & \le \frac{\sigma(x)  \|\sigma^{-\frac1{2}} \varphi^E_1\|^2_{\ell_2}  }{ ( \varphi^E_1(x) )^2}    \ \EE_{\tau_x} \left[ \EE_y \left[ \exp \left\{ \int_0^{\tau_x} \left( \xi(X_s) - \lambda^E_1 \right)  \, ds \right\} \id_{\{    \tau_{E^c} > \tau_x \}} \bigg| \, \mathcal{F}_{\tau_x}\right] \right. \\
& \qquad  \times \EE_x \left[ \exp \left\{ \int_0^{\tau_x} \xi(X'_s) \, ds \right\}  \id_{ \left\{ X'_{\tau_x } =  x, \tau'_{E^c} > \tau_x \right\}}  \bigg| \, \mathcal{F}_{\tau_x}   \right]   \\ 
& \qquad  \left. \times \, \EE_x \left[ \exp \left\{ \int_0^{t- \tau_x} \xi(X'_s) \, ds \right\}  \id_{ \left\{ X'_{t - \tau_x} = z , \tau'_{D^c} > t - \tau_x \right\}}  \bigg| \, \mathcal{F}_{\tau_x}   \right]  \id_{\{  \tau_x \le t \}}   \right] \,  \\
& \le \frac{\sigma(x)  \|\sigma^{-\frac1{2}} \varphi^E_1\|^2_{\ell_2}  }{ ( \varphi^E_1(x) )^2}     \ \EE_{\tau_x} \left[ \EE_y \left[ \exp \left\{ \int_0^{\tau_x} \left( \xi(X_s) - \lambda_1^E \right)  \, ds \right\} \id_{\{    \tau_{E^c} > \tau_x \}} \bigg| \, \mathcal{F}_{\tau_x} \right] \right. \\
& \qquad  \left. \times \EE_x \left[ \exp \left\{ \int_0^t \xi(X'_s) \, ds \right\}  \id_{ \left\{ X'_t = z , \tau'_{D^c} > t \right\}}   \bigg| \, \mathcal{F}_{\tau_x} \right]  \id_{\{  \tau_x \le t \}}  \right]  \\ 
& \le \frac{\sigma(x)^2  \|\sigma^{-\frac1{2}} \varphi^E_1\|^2_{\ell_2}  }{ \sigma(y)( \varphi^E_1(x) )^3}  \,  \varphi^E_1(y)  \ u_x(t, z)\, ,
\end{align*}
where the inequality in the second step results from deleting the condition that $X'_{\tau_x} = x$, and where the last inequality results from deleting the condition that $\tau_x \le t$, and where we have used the Feynman-Kac representation for $\varphi^E_1$ given by Proposition~\ref{prop:genfk}. Combining this with equation~\eqref{eq:thm4.1} gives the result.
\end{proof}
\section{Properties of the random environments}
\label{sec:prop}
In this section we establish properties of the i.i.d.\ fields $\xi$ and $\sigma$. In the first part we give asymptotics for the upper order statistics of $\xi$ and $\sigma$. The second part is devoted to proving the existence of `quick paths', which are an essential part of our proof that the trapping landscape does not prevent complete localisation in the BAM.

\subsection{Almost sure asymptotics for $\xi$ and $\sigma$} 

For each $a \le 1$, define the macrobox level $L_{t, a} := ((1-a) \log |V_t|)^{\frac{1}{\gamma}}$ and let the subset $\Pi^{(L_{t,a})} := \left\{ z \in \mathbb{Z}^d : \xi(z) > L_{t,a} \right\} \cap V_t$ consist of sites in $V_t$ at which $\xi$-exceedances of the level $L_{t, a}$ occur. Recall that $L_t := L_{t, \theta}$. 

\begin{lemma}[Almost sure asymptotics for $\xi$]  \label{lem:asforxi}
Denote by $\xi_{t, i}$ the $i^{\rm{th}}$ highest value of $\xi$ in $V_t$. Then for $a \in [0, 1)$ and $a' \in (0, 1]$, as $t \to \infty$,
$$\xi_{t, [|V_t|^{a}]} \sim L_{t, a} \qquad \text{and} \qquad |\Pi^{(L_{t, a'})} | \sim |V_t|^{a'} $$
hold almost surely.
\end{lemma}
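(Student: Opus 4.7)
My plan is to prove the second statement first by a concentration argument combined with Borel--Cantelli, then to derive the first statement by applying the second at slightly perturbed exceedence levels.

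For the second statement, write $N_t := |\Pi^{(L_{t, a'})}|$. This is a sum of $|V_t|$ i.i.d.\ Bernoulli random variables with success probability $\bar F_\xi(L_{t, a'}) = |V_t|^{-(1-a')}$, and hence has mean $|V_t|^{a'}$. Since $|V_t|$ is polynomial in $t$ and $a' > 0$, the mean grows polynomially in $t$. Bernstein's inequality therefore yields
\[
\PP\bigl( |N_t - |V_t|^{a'}| > \varepsilon |V_t|^{a'} \bigr) \le 2 \exp\bigl( -c_\varepsilon |V_t|^{a'} \bigr),
\]
which is summable along the integers, so Borel--Cantelli gives $N_n \sim |V_n|^{a'}$ almost surely. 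To upgrade this to continuous $t \in [n, n+1]$, I will use the monotonicity sandwich
\[
|\Pi^{(L_{n+1, a'})} \cap V_n| \le N_t \le |\Pi^{(L_{n, a'})} \cap V_{n+1}|,
\]
valid because $t \mapsto L_{t, a'}$ and $t \mapsto V_t$ are both non-decreasing, and note that $|V_n|/|V_{n+1}| \to 1$ and $L_{n, a'}/L_{n+1, a'} \to 1$, so both bounds are asymptotic to $|V_t|^{a'}$.

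For the first statement, I will use the elementary identity $\{\xi_{t, k} > x\} = \{|\Pi^{(x)}| \ge k\}$. Fixing $\varepsilon > 0$ small and setting $k = [|V_t|^a]$, the second statement applied at level $L_{t, a+\varepsilon}$ gives $|\Pi^{(L_{t, a+\varepsilon})}| \sim |V_t|^{a+\varepsilon} \gg [|V_t|^a]$ almost surely, so $\xi_{t, [|V_t|^a]} > L_{t, a+\varepsilon}$ eventually almost surely; symmetrically, if $a > 0$, the second statement at level $L_{t, a-\varepsilon}$ yields $\xi_{t, [|V_t|^a]} \le L_{t, a-\varepsilon}$. The edge case $a = 0$, for which the upper bound formally requires the exceedence count at level $L_{t, -\varepsilon}$ to be zero eventually, is handled by a direct Markov-plus-Borel--Cantelli argument, since $\EE|\Pi^{(L_{t, -\varepsilon})}| = |V_t|^{-\varepsilon}$ decays polynomially. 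Taking a countable sequence $\varepsilon_k \downarrow 0$ and using that $L_{t, a \pm \varepsilon_k}/L_{t, a} \to ((1 - a \mp \varepsilon_k)/(1-a))^{1/\gamma}$ can be made arbitrarily close to $1$, I conclude $\xi_{t, [|V_t|^a]} \sim L_{t, a}$ almost surely.

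The only real subtlety is the interpolation from integer $t$ to continuous $t$ in the proof of the second statement; everything else is standard extreme-value theory for i.i.d.\ fields, very much in the spirit of the arguments in the PAM literature.
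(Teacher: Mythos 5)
Your argument is correct and is, in effect, a self-contained version of what the paper declines to spell out, instead referring the reader to \cite[Lemma~4.7]{vanDerHofstad08} as a ``well-known'' i.i.d.\ extreme-value fact. Both rest on the same engine: exponential concentration of the Bernoulli count $N_t$ around its mean $|V_t|^{a'}$, a Borel--Cantelli argument along integers made summable because $|V_n|^{a'}$ grows polynomially, a monotonicity sandwich to pass to continuous $t$, and finally the identity $\{\xi_{t,k} > x\} = \{|\Pi^{(x)}| \ge k\}$ with $\varepsilon$-perturbed levels to transfer the counting statement to the order statistic.

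Two small points of care. First, a notational slip: the paper defines $\Pi^{(L_{t,a})}$ as \emph{already} intersected with $V_t$, so the object you write as $\Pi^{(L_{n,a'})} \cap V_{n+1}$ collapses to $\Pi^{(L_{n,a'})}$ (counting in $V_n$, not $V_{n+1}$); what you intend, and what the sandwich needs, is $|\{z \in V_{n+1} : \xi(z) > L_{n,a'}\}|$. The lower bound $|\Pi^{(L_{n+1,a'})} \cap V_n|$ is unaffected. Second, the two sandwiching quantities are Bernoulli sums whose means are only \emph{asymptotic to} $|V_n|^{a'}$ (with a $(|V_n|/|V_{n+1}|)^{1-a'} \to 1$ correction), so strictly speaking you should note that the Bernstein--Borel--Cantelli step applies to them as well; you gesture at this (``both bounds are asymptotic to $|V_t|^{a'}$'') but it deserves an explicit half-sentence, since the a.s.\ concentration is being invoked for these auxiliary quantities, not only for $N_n$. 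Neither point affects the validity of the argument. The $a=0$ edge case via Markov plus Borel--Cantelli at level $L_{t,-\varepsilon}$ is handled correctly.
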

\begin{proof}
These follow from well-known results on sequences of i.i.d.\ random variables; they are proved in a similar way as \cite[Lemma~4.7]{vanDerHofstad08}.
\end{proof}

Recall that for a set $S \in \ZZ^d$ we denote by $\text{sep}\left(S \right) := \min_{x \neq y \in S} \{ |x - y| \} $.
\begin{lemma}[Almost sure separation of high points of $\xi$]
\label{lem:assep}
For any $a > 0$ and $n \in \mathbb{N}$ let 
$$ \Pi_n^{(L_t, a)} := \{ z \in B(V_t, n) : \xi(z) > L_{t, a} \}  $$
be the set of $L_{t, a}$ exceedences of $\xi$ in the $n$-extended macrobox $B(V_t, n)$. Then,  for any $a'<a$, as $t \to \infty$
$$\text{sep} \left( \Pi_n^{(L_{t, a})} \cup \{0\}  \right) > |V_t|^{\frac{1-2a'}{d}} $$
eventually almost surely.
\end{lemma}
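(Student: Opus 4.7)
The plan is a first-moment/Borel--Cantelli estimate counting nearby pairs of high $\xi$-values, combined with a monotonicity interpolation from a dyadic subsequence to all $t$. By Assumption~\ref{assump:xi} and the definition of $L_{t,a}$, $\PP(\xi(0)>L_{t,a})=e^{-L_{t,a}^\gamma}=|V_t|^{a-1}$. Setting the threshold $K_t:=|V_t|^{(1-2a')/d}$, the bad event $\{r(\Pi_n^{(L_{t,a})}\cup\{0\})\le K_t\}$ is covered by (i) the existence of two distinct sites $y,z\in B(V_t,n)$ at distance $\le K_t$ with both $\xi(y),\xi(z)>L_{t,a}$, and (ii) the existence of $z\in B(V_t,n)\setminus\{0\}$ with $|z|\le K_t$ and $\xi(z)>L_{t,a}$. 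The number of candidate pairs is $O(|V_t|K_t^d)$ and the number of candidate sites near the origin is $O(K_t^d)$, so by independence of the $\xi(z)$,
\[\PP\bigl(r(\Pi_n^{(L_{t,a})}\cup\{0\})\le K_t\bigr)\le C|V_t|K_t^d|V_t|^{2(a-1)}+CK_t^d|V_t|^{a-1}\le C|V_t|^{2(a-a')}.\]
(This bound is only useful when $a'>a$; I read the inequality $a'<a$ in the statement as a typo for $a'>a$, since otherwise $K_t$ would exceed the diameter of $V_t$ while $|\Pi^{(L_{t,a})}|\sim|V_t|^a\gg 1$ by Lemma~\ref{lem:asforxi}.)

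Next, the Borel--Cantelli step along dyadic times. Take $t_k:=e^k$; then $|V_{t_k}|$ grows exponentially in $k$, so the tail probabilities above are summable, and the bad event at time $t_k$ occurs for only finitely many $k$ almost surely.

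Finally, interpolation to arbitrary $t$. For $t\in[t_k,t_{k+1}]$, monotonicity gives $V_t\subseteq V_{t_{k+1}}$ and $L_{t,a}\ge L_{t_k,a}$, so
\[\Pi_n^{(L_{t,a})}\subseteq\widetilde\Pi_k:=\{z\in B(V_{t_{k+1}},n):\xi(z)>L_{t_k,a}\}.\]
Since taking subsets only increases the minimum pairwise distance, $r(\Pi_n^{(L_{t,a})}\cup\{0\})\ge r(\widetilde\Pi_k\cup\{0\})$. Repeating the first-moment calculation with $V_t$ replaced by $V_{t_{k+1}}$ and $L_{t,a}$ replaced by $L_{t_k,a}$, and using $|V_{t_{k+1}}|\asymp|V_{t_k}|$, yields a summable bound on $\PP\bigl(r(\widetilde\Pi_k\cup\{0\})\le|V_{t_{k+1}}|^{(1-2a')/d}\bigr)$; Borel--Cantelli then gives, eventually a.s., $r(\widetilde\Pi_k\cup\{0\})>|V_{t_{k+1}}|^{(1-2a')/d}\ge|V_t|^{(1-2a')/d}$, as claimed. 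The only genuine subtlety is this joint variation of $V_t$ and $L_{t,a}$, which is dealt with by the monotonicity inclusion above; everything else is routine union bounds on i.i.d.\ events.
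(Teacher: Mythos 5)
The paper proves Lemma~\ref{lem:assep} by a one-line citation to \cite[Lemma~1]{Astrauskas07}, so there is no in-paper argument to compare against; your proposal supplies a self-contained proof, and it is correct. Your identification of the misprint is also right: since $|\Pi_n^{(L_{t,a})}|\sim|V_t|^a$ almost surely by Lemma~\ref{lem:asforxi}, the typical minimum pairwise distance is of order $|V_t|^{(1-2a)/d}$, so the claimed lower bound $|V_t|^{(1-2a')/d}$ can hold eventually almost surely only when $(1-2a')/d<(1-2a)/d$, i.e.\ $a'>a$. Your first-moment bound $C|V_t|^{2(a-a')}$ decays to zero exactly in that range (and the expected number of nearby pairs diverges for $a'<a$), and the lemma's applications elsewhere in the paper — always with $a=\theta<1/2$ and a choice of $a'\in(\theta,1/2)$, so that the lower bound is a positive power of $t$ — are consistent with this corrected reading. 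The rest of the argument is sound: the union bound over pairs at distance at most $K_t$ and over sites at distance at most $K_t$ from the origin, the Borel--Cantelli step along $t_k=e^k$ (along which $|V_{t_k}|^{2(a-a')}$ decays geometrically and is summable), and the monotone envelope $\Pi_n^{(L_{t,a})}\subseteq\widetilde\Pi_k$ for $t\in[t_k,t_{k+1}]$, which correctly handles the simultaneous increase of $V_t$ and $L_{t,a}$. The only small point you leave implicit is that the closing inequality $|V_{t_{k+1}}|^{(1-2a')/d}\ge|V_t|^{(1-2a')/d}$ requires $a'\le1/2$; for $a'>1/2$ the threshold tends to $0$ while distinct lattice points are always at distance $\ge1$, so the claim is then trivial — a case split worth recording, but not a gap.
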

\begin{proof}
This result is proved as in \cite[Lemma~1]{Astrauskas07}.
\end{proof}
\begin{remark}
Note that we need the almost sure separation of high points in the $n$-extended macrobox $B(V_t, n)$ rather than just in $V_t$ because each ${\lambda}^{(n)}(z)$, for $z \in V_t$, depends on the random environments $\xi$ and $\sigma$ in the ball $B(z, n) \subseteq B(V_t,n)$. This result implies that, eventually almost surely, each $z \in \Pi^{(L_{t, a})}$ has the property that $\xi(y) < L_{t, a}$ for all $y \in B(z, n) \setminus \{z\}$.
\end{remark}

\begin{corollary}[Paths cannot always remain close to high points of $\xi$]
\label{cor:pathoutsideballs}
There exists a $c \in (0, 1)$ such that, for each $n \in \mathbb{N}$, all paths $p \in \Gamma(0, z)$ such that $\mathrm{Set}(p) \subseteq V_t$ satisfy, as $t \to \infty$,
  \[   \left| \left\{ i : p_i \notin B(\Pi^{(L_t)},n) \right\} \right| \ge  |z| - \frac{|z|}{t^c} ,\]
  eventually almost surely.
\end{corollary}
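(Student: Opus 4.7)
The plan is to translate the separation of high points from Lemma~\ref{lem:assep} into a lower bound on $W := |\{i : p_i \notin B(\Pi^{(L_t)},n)\}|$ via a ``footprint'' argument on $\ell_1$-distances. I would define the footprint
\[
F := \bigcup_{y \in \Pi^{(L_t)}} \{|y|-n, |y|-n+1, \ldots, |y|+n\}
\]
and observe that whenever $p_i \in B(y,n)$, the triangle inequality for the $\ell_1$-norm forces $|p_i| \in [|y|-n, |y|+n] \subseteq F$. Meanwhile, the sequence $(|p_i|)_{0 \le i \le |p|}$ is a nearest-neighbour walk on $\mathbb{N}\cup\{0\}$ starting at $0$ and ending at $|z|$, so the discrete intermediate value theorem ensures it attains every integer in $\{0, 1, \ldots, |z|\}$. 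Distinct attained distances correspond to distinct indices, and each $d \in \{0, \ldots, |z|\} \setminus F$ is then realised by an outside index, which gives
\[
W \ge (|z|+1) - |F \cap \{0, 1, \ldots, |z|\}|.
\]

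The bulk of the work is to bound $|F \cap \{0, 1, \ldots, |z|\}|$. Only high points $y$ with $|y| \le |z|+n$ contribute, each adding at most $2n+1$ integers, so it suffices to bound $|\Pi^{(L_t)} \cap B(0, |z|+n)|$. For this I would apply Lemma~\ref{lem:assep} with $a = \theta$ and some $a' \in (0, \theta)$ (to be chosen later): eventually almost surely, points of $\Pi^{(L_t)} \cup \{0\}$ are pairwise separated by more than $r_t := |V_t|^{(1-2a')/d}$, so disjoint $\ell_1$-balls of radius $r_t/2$ can be placed around each such point. Comparing $\ell_1$-volumes inside $B(0, |z|+n+r_t/2)$ would yield
\[
|\Pi^{(L_t)} \cap B(0, |z|+n)| \le C_d \bigl((|z|+n)/r_t\bigr)^d
\]
in the regime $|z|+n \ge r_t$, and $|\Pi^{(L_t)} \cap B(0, |z|+n)| = 0$ otherwise.

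In the easy regime $|z|+n < r_t$ the estimate above already gives $W \ge |z|+1$, which is more than enough. In the main regime $|z|+n \ge r_t$, the desired conclusion $W \ge |z| - |z|/t^c$ reduces (using $|z| \le d R_t \sim t (\log t)^{1/\gamma}$ and $r_t \sim t^{1-2a'}$ up to log factors) to the exponent inequality $c < 1 - 2a'd$. So I would first fix any $c \in (0,1)$ and then choose $a' \in (0, \min\{\theta, (1-c)/(2d)\})$; both constraints are simultaneously achievable for any $\theta > 0$. The main obstacle is obtaining a sharp enough bound on $|F \cap \{0, \ldots, |z|\}|$: the trivial estimate $(2n+1)|\Pi^{(L_t)}| \sim (2n+1)|V_t|^\theta$ is easily too weak when $d \ge 2$ and $\theta$ is close to $1/2$, so one really must localise to high points in $B(0, |z|+n)$ and exploit the sphere-packing consequence of the separation lemma. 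A minor but important point is that $n$ enters only through the constant $C_d$ (becoming $C_{d,n}$) and not through the exponent, so a single $c$ can be chosen uniformly in $n$ as required.
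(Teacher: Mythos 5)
The proposal is correct, and it takes a genuinely different route from the paper's at the step where the separation of high points is exploited. Both arguments share the core observation that the sequence $(|p_i|)$ of $\ell_1$-distances is a $\pm 1$ walk from $0$ to $|z|$ and therefore attains every integer in $\{0,\ldots,|z|\}$; the paper encodes this implicitly in the bound $W\ge |z|-(2n+1)m$ (writing $W$ for the quantity to be bounded), whereas you make it explicit via your footprint set $F$. Where they diverge is the quantity tracked. The paper introduces $m$, the number of distinct $n$-balls around high points that the path \emph{actually visits}, and pairs $W\ge|z|-(2n+1)m$ with the complementary bound $W\ge m(N-1)$, where $N:=r(\Pi^{(L_t)}\cup\{0\})$, capturing the fact that the path must spend of order $N$ steps outside the balls to travel between consecutive visited balls; minimising the maximum over $m$ then gives $W\ge|z|-(2n+1)|z|/(N+2n)$, and Lemma~\ref{lem:assep} finishes. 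You instead bound the \emph{total} number $K$ of high points within $\ell_1$-distance $|z|+n$ of the origin by a packing argument using the same separation, yielding roughly $W\ge|z|+1-C_{d,n}\bigl((|z|+n)/N\bigr)^d$. Your route is more global --- it counts all nearby high points rather than only those the path visits --- which is why it loses a factor of $d$ in the admissible exponent ($c<1-2a'd$ versus the paper's $c<1-2a'$); but since $a'\in(0,\theta)$ may be taken arbitrarily small and the corollary only requires existence of some $c\in(0,1)$, this is immaterial. You also correctly observe that $n$ enters only through multiplicative constants, not the exponent, so a single $c$ works for all $n$ as required. In short: a correct, complete proof via a reasonable alternative; the paper's optimisation over $m$ is slightly sharper and avoids the volumetric estimate, but yours is equally valid.
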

\begin{proof}
Abbreviate $N := \text{sep}  (\Pi^{(L_t)} \cup \{0\} ) $ and 
 $$Q := \left| \left\{ i : p_i \notin B(\Pi^{(L_t)},n) \right\} \right| \, .$$
Suppose a path $p$ passes through $m$ distinct $B(x, n)$ with $x \in \Pi^{(L_t)}$. Then, since there is a minimum distance of  $(N-2n)$ between each such ball, the path $p$ satisfies
 \[  Q  \ge m (N- 2n ) \, .\]
On the other hand, it is clear that that $Q \ge |z| - (2n+1)m$. Therefore
\[ Q \ge \min_{m \in \mathbb{N}}  \max \left\{ m(N-2n-1), |z| - (2n+1)m \right\} \ge \frac{(N-2n-1)|z|}{N} =  |z| - \frac{(2n+1)|z| }{N} \]
and the result follows from Lemma~\ref{lem:assep}.
\end{proof}

\begin{lemma}[Almost sure asymptotics for $\sigma$]
\label{lem:asforsigma}
Denote by $\sigma_{n}^{1}$ the largest value among $n$ i.i.d.\ copies of $\sigma(0)$. Then, under Assumption \ref{assump:sigma}, for any $c > 1$, as $n \to \infty$,
$$ g_\sigma(\sigma_{n}^1) \le  c \log n $$
eventually almost surely.
\end{lemma}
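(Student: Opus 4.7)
The plan is a standard Borel--Cantelli argument based on the trivial union bound
\[ \PP(\sigma_n^1 > x) \le n \bar F_\sigma(x) = n e^{-g_\sigma(x)}, \]
combined with an exponential subsequence to absorb the loss of a direct summability.

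Fix $c > 1$ and choose an auxiliary $c' \in (1, c)$. First I would substitute $x = g_\sigma^{-1}(c' \log n)$ into the union bound above to obtain
\[ \PP\!\left(\sigma_n^1 > g_\sigma^{-1}(c' \log n)\right) \le n \cdot e^{-c' \log n} = n^{1 - c'}. \]
Since this is not summable for $c' \le 2$, the direct Borel--Cantelli argument fails, and this is the only genuine subtlety in the proof. To sidestep it, consider the exponential subsequence $n_k := 2^k$. Then $\sum_k n_k^{1 - c'} = \sum_k 2^{k(1-c')}$ converges because $c' > 1$, so by the first Borel--Cantelli lemma,
\[ \sigma_{n_k}^1 \le g_\sigma^{-1}(c' \log n_k) \qquad \text{eventually almost surely.} \]

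The final step is to interpolate from the subsequence to all $n$ using the monotonicity $n \mapsto \sigma_n^1$ (non-decreasing when we couple the $n$ samples to be the first $n$ of a fixed i.i.d.\ sequence) together with the monotonicity of $g_\sigma^{-1}$. For $n \in [n_k, n_{k+1})$,
\[ \sigma_n^1 \le \sigma_{n_{k+1}}^1 \le g_\sigma^{-1}\!\left(c' \log n_{k+1}\right) = g_\sigma^{-1}\!\left(c' (\log 2 + \log n_k)\right) \le g_\sigma^{-1}\!\left(c'(\log 2 + \log n)\right). \]
Since $c' < c$, we have $c'(\log 2 + \log n) \le c \log n$ for all sufficiently large $n$, and hence, using again that $g_\sigma^{-1}$ is non-decreasing,
\[ \sigma_n^1 \le g_\sigma^{-1}(c \log n) \qquad \text{eventually almost surely,} \]
as required. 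Note that $g_\sigma^{-1}$ is well-defined at the relevant arguments by virtue of the regularity of $\bar F_\sigma$ in Assumption~\ref{assump:sigma}.
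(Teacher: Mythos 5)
Your proof is correct, but it takes a genuinely different route from the paper's. The paper invokes the maxima zero--one law of Embrechts, Kl\"uppelberg and Mikosch (Theorem 3.5.1), which asserts that for a non-decreasing threshold sequence $u_n$ and $M_n := \max(X_1,\ldots,X_n)$ the event $\{M_n \le u_n \text{ eventually}\}$ is a tail event whose probability is $1$ precisely when $\sum_n \bar F(u_n) < \infty$; with $u_n := g_\sigma^{-1}(c\log n)$ this reduces instantly to $\sum_n n^{-c} < \infty$, so the argument is two lines once the citation is in place. What you do instead is prove the needed direction of that theorem from scratch: the naive union bound $\PP(\sigma_n^1 > u_n) \le n\bar F_\sigma(u_n) \sim n^{1-c'}$ is too weak to sum, so you pass to the geometric subsequence $n_k = 2^k$, apply Borel--Cantelli there, and interpolate using the monotone coupling $\sigma_n^1 = \max(\sigma(X_1),\ldots,\sigma(X_n))$ together with the monotonicity of $g_\sigma^{-1}$ and the slack $c' < c$ to absorb the $\log 2$ offset. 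This is the standard technique hidden inside proofs of Klass-type results; you correctly identify that the only subtlety is the failure of direct summability and you correctly flag the coupling needed to make $n \mapsto \sigma_n^1$ non-decreasing, which is the natural one in this paper since the lemma is applied to a fixed i.i.d.\ field. The trade-off is the usual one: the paper's version is shorter but outsources the hard step to a reference, while yours is self-contained and elementary at the cost of the subsequence bookkeeping.
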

\begin{proof}
By \cite[Theorem 3.5.1]{Embrechts97} we have equivalence of the statements
\[ \left\{ \PP \left( g_\sigma(\sigma_{n}^{1}) \le   c \log n \text{ ev.}\right)=1 \right\}  \quad \text{and} \quad  \Big\{\sum_{n=1}^\infty  \PP\big( g_\sigma( \sigma(0) ) >  c \log n \big) < \infty\Big\} \, .
\]
The proof is complete by noticing that, since $g_\sigma$ is continuous by Assumption \ref{assump:sigma}, the random variable $\bar F_\sigma( \sigma(0))$ is uniformly distributed over $(0, 1)$. Hence, for any $c > 1$,
\[
 \sum_{n=1}^\infty \PP\big(g_\sigma(\sigma(0)) > c \log n \big) = \sum_{n=1}^\infty n^{-c}<\infty \, .\qedhere\]
\end{proof}

\subsection{Existence of quick paths}
In this section we prove the existence of paths $p \in \Gamma(0, z)$ for certain $z \in V_t$ that have the property that (i) all $\sigma(p_i)$ are relatively small, and (ii) $p$ is not much longer than a direct path to $z$; what we mean by `relatively small' and `not much longer' will depend on the dimension. We shall informally refer to such paths as \emph{quick paths}. The reason we are interested in quick paths is that they are intimately related to the probability that a particle undertaking the BAM reaches a certain site $z$ by time $t$.

In dimension higher than one, we will additionally require that such paths do not travel too close to a certain well-separated set $S_t$. The reason for this additional requirement is that we will eventually seek to apply our results to the site $Z_t$, which depends in a complicated way on $\sigma(z)$ for $z \in B(\Pi^{(L_t)}, \rho)$. We will wish to avoid this dependence, hence our insistence on the fact that the paths do not travel too close to $S_t$.

\subsubsection{Dimension one}
In dimension one, there is only one shortest path from $0$ to $z$ and this must pass through all intermediate sites. Hence, we seek to show that not too many traps on this path are too large. Clearly, the ability to do this depends on the tail decay of $\sigma$, which is the origin of the extra tail decay condition for $d = 1$ in Assumption \ref{assump:sigma}.

To proceed, we must undertake a rather delicate analysis of the trapping landscape $\sigma$ in the region between $0$ and $z$. We simplify this using coarse graining, essentially placing each site $y$ into a certain `bin' depending on the value of $\sigma(y)$. We then seek to bound the number of sites in each bin, weighted by the depth of the traps corresponding to each bin. To assist in the coarse graining, we state and prove a technical lemma on the regularity of the upper-tail of $\sigma(0)$.

\begin{lemma}[Regularity of the upper-tail of $\sigma(0)$]
\label{lem:sigmareg}
Under Assumption \ref{assump:sigma}, let $x_t$ be such that
$$ g_\sigma(\exp \{ \exp \{ x_t  \} \} ) = t  \, ,$$
which is well-defined by the continuity of $g_\sigma$. Then, for constants $c_1$ and $c_2$ such that $c_2 > c_1 \ge 1$, as $t \to \infty$,
$$ g_\sigma(\exp \{ \exp \{ c_2 x_t \} \} ) > c_1 t $$
eventually.
\end{lemma}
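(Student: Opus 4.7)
The strategy is a direct substitution followed by a case split on the value of the constant $c$ appearing in Assumption \ref{assump:sigma}(d). Set
\[ y_t := \exp\{\exp\{x_t\}\} \quad \text{and} \quad y_t' := \exp\{\exp\{c_2 x_t\}\} \, , \]
so that by construction $\log\log y_t = x_t$, $\log\log y_t' = c_2 x_t$, and $g_\sigma(y_t) = t$. Since $g_\sigma$ is non-decreasing and $g_\sigma(x) \to \infty$ as $x \to \infty$ (which follows from the lower bound in Assumption \ref{assump:sigma}(c)), the defining equation $g_\sigma(y_t) = t$ with $t \to \infty$ forces $x_t \to \infty$, and hence $y_t, y_t' \to \infty$ with $y_t' > y_t$ eventually (using $c_2 > 1$).

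In the case $c \in (1, \infty)$, Assumption \ref{assump:sigma}(d) gives $g_\sigma(x) = c \log\log x \cdot (1+o(1))$ as $x \to \infty$. Applied at $y_t$ this yields $t = c x_t (1+o(1))$, so $x_t \sim t/c$; applied at $y_t'$ it yields $g_\sigma(y_t') = c \cdot c_2 x_t \cdot (1+o(1)) = c_2 t (1+o(1))$, which exceeds $c_1 t$ for all sufficiently large $t$ since $c_2 > c_1$. In the case $c = \infty$, the key ingredient is the clause in Assumption \ref{assump:sigma}(d) that the ratio $g_\sigma(x)/\log\log x$ converges to infinity \emph{monotonically}, which forces this ratio to be eventually non-decreasing. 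Since $y_t' > y_t$ for large $t$, the monotonicity then gives
\[ \frac{g_\sigma(y_t')}{c_2 x_t} \;=\; \frac{g_\sigma(y_t')}{\log\log y_t'} \;\ge\; \frac{g_\sigma(y_t)}{\log\log y_t} \;=\; \frac{t}{x_t} \, , \]
so that $g_\sigma(y_t') \ge c_2 t > c_1 t$, as required.

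The argument is essentially an unwinding of definitions combined with the asymptotic behaviour of $g_\sigma$ near infinity. The only subtlety, and the only place where the statement of Assumption \ref{assump:sigma}(d) is used in a non-superficial way, is in the $c = \infty$ case: here no simple asymptotic equivalence is available, and one genuinely needs the monotonicity clause to transfer the value $g_\sigma(y_t) = t$ at scale $y_t$ to a comparable lower bound at the larger scale $y_t'$.
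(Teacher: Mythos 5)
Your proof is correct and takes essentially the same route as the paper: in the finite-$c$ case, the paper uses explicit $\varepsilon$'s where you use $o(1)$ notation, but the manipulation is identical; in the $c = \infty$ case, the paper writes $g_\sigma(\exp\{\exp\{x\}\}) = x\kappa_x$ with $\kappa_t\uparrow\infty$ and compares $\kappa_{c_2 x_t}\ge\kappa_{x_t}$, which is precisely your monotone-ratio argument in slightly different notation.
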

\begin{proof}
Let $c$ be the constant in part $(d)$ of Assumption \ref{assump:sigma}. In the case where $c < \infty$, for any $\varepsilon > 0$, as $t \to \infty$,
$$ t = g_\sigma(\exp \{ \exp \{ x_t \} \} )  <  x_t  \left( c + \varepsilon \right) $$
eventually. Choosing the $0 < \varepsilon < c (c_2 - c_1)/(c_1 + c_2)$, we have that, as $t \to \infty$,
$$  g_\sigma(\exp \{ \exp \{ c_2 x_t \} \} )  > c_2 x_t  \left( c - \varepsilon \right) > t \frac{c_2 ( c - \varepsilon ) }{c + \varepsilon } > c_1 t$$
eventually. On the other hand, in the case where $c = \infty$, then by Assumption \ref{assump:sigma},
$$ t = g_\sigma(\exp \{ \exp \{ x_t \} \} )  =  x_t \kappa_{x_t} $$
for some $\kappa_t \uparrow \infty$. Similarly
$$  g_\sigma(\exp \{ \exp \{ c_2 x_t \} \} )  = c_2 x_t \kappa_{c_2 x_t}  > c_1 x_t \kappa_{x_t} = c_1 t $$
eventually, which completes the proof.
\end{proof}

We now define the coarse graining scales that we will use. Let $n_t$ and $\sigma_t$ be arbitrary functions tending to $\infty$ as $t \to \infty$. 

\begin{lemma}[Existence of well-spaced coarse graining scales]
\label{lem:scales}
Let $\varepsilon < 1$ be a constant that satisfies part $(c)$ of Assumption \ref{assump:sigma}. Then there exist constants $0 < \delta_1 < \delta_2 <  \varepsilon < 1 < c_1$, an integer $I_t = O(\log \log n_t)$ and a set of scaling functions $\{\sigma^i_t\}_{0 \le i \le I_t}$ such that, as $t \to \infty$, the following are all satisfied eventually: 
\begin{enumerate}[(a)]
\item $\sigma^0_t=0 \, , \, \frac{\log\log\sigma^1_t}{\log\log \sigma_t} \in [ 1 + \delta_1, 1 + \delta_2] \, , \, \frac{\log\log\sigma^i_t}{\log\log \sigma^{i-1}_t} \in [ 1 + \delta_1, 1 + \delta_2] \ \text{for } 2 \le i \le I_t $ ; 
\item $  g_\sigma( \sigma^{I_t-1}_t) \le  c_1^{-1} \log n_t $ ; and
\item $  g_\sigma( \sigma^{I_t}_t) \ge  c_1 \log n_t   \, .$
\end{enumerate}
\end{lemma}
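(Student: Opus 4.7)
My plan is a greedy-plus-tuning construction: iterate with $\delta^{(i)}=\delta_1$ until just before the $g_\sigma$-value would reach an intermediate target $c_1^{-1}\log n_t$, then use the intermediate value theorem to pick a single $\delta^{(J)}\in[\delta_1,\delta_2]$ so that $g_\sigma(\sigma^J_t)=c_1^{-1}\log n_t$ exactly, and finally take one last step with $\delta^{(J+1)}=\delta_2$ to leap across $c_1\log n_t$ via Lemma~\ref{lem:sigmareg}.

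First I would fix constants $\delta_1,\delta_2,c_1$ satisfying the three interlocking inequalities $c_1>1$, $c_1^2<1+\delta_2$, and $(1+\delta_1)^2<1+\delta_2$, compatibly with $0<\delta_1<\delta_2<\varepsilon<1$ (for instance $\delta_1:=\varepsilon/5$, $\delta_2:=\varepsilon/2$, $c_1:=\sqrt{1+\varepsilon/4}$). The second inequality will let me apply Lemma~\ref{lem:sigmareg} with its ``$c_1$'' replaced by our $c_1^2$ and its ``$c_2$'' by $1+\delta_2$, producing a jump factor of at least $c_1^2$ in $g_\sigma$ at the terminal step; the third is the key technical observation that will prevent the greedy phase from overshooting $c_1^{-1}\log n_t$ before the tuning step becomes available.

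Next, setting $\sigma^0_t:=0$, $\sigma^1_t:=\exp\exp((1+\delta_1)\log\log\sigma_t)$, and recursively $\sigma^i_t:=\exp\exp((1+\delta_1)\log\log\sigma^{i-1}_t)$ in the greedy phase, I would define
\[ J:=\min\bigl\{\,j\ge 1:\ g_\sigma\bigl(\exp\exp((1+\delta_2)\log\log\sigma^{j-1}_t)\bigr)\ge c_1^{-1}\log n_t\,\bigr\} \]
and $I_t:=J+1$. Because $\log\log\sigma^{i-1}_t=(1+\delta_1)^{i-1}\log\log\sigma_t$ grows geometrically and Assumption~\ref{assump:sigma}(c) controls $g_\sigma^{-1}$ through $g_\sigma(x)\ge(1+\varepsilon)\log\log x$, one gets $J=O(\log\log n_t)$. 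The map $\delta\mapsto a_J(\delta):=g_\sigma(\exp\exp((1+\delta)\log\log\sigma^{J-1}_t))$ is continuous and non-decreasing in $\delta$ by the continuous-density hypothesis in Assumption~\ref{assump:sigma}(b), and satisfies $a_J(\delta_2)\ge c_1^{-1}\log n_t$ by definition of $J$. Moreover, for $J\ge 2$ (which holds eventually in $t$), comparing the $\log\log$-arguments and invoking $(1+\delta_1)^2<1+\delta_2$ yields $a_J(\delta_1)\le a_{J-1}(\delta_2)<c_1^{-1}\log n_t$, so the intermediate value theorem furnishes $\delta^{(J)}\in[\delta_1,\delta_2]$ with $g_\sigma(\sigma^J_t)=c_1^{-1}\log n_t$ upon redefining $\sigma^J_t$ using this tuned $\delta^{(J)}$.

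Finally, taking $\delta^{(I_t)}:=\delta_2$ and $\sigma^{I_t}_t:=\exp\exp((1+\delta_2)\log\log\sigma^J_t)$, Lemma~\ref{lem:sigmareg} applied at the terminal step eventually gives $g_\sigma(\sigma^{I_t}_t)\ge c_1^2\,g_\sigma(\sigma^J_t)=c_1\log n_t$, which is (c); (b) holds by the IVT construction (with equality), and (a) is immediate from $\delta^{(i)}\in[\delta_1,\delta_2]$ for all $i$. The main obstacle is spotting the non-obvious constraint $(1+\delta_1)^2<1+\delta_2$, which is what forces the minimal greedy values $a_j(\delta_1)$ to lag behind the maximal reachable values $a_{j-1}(\delta_2)$ and so guarantees the greedy iteration never overshoots $c_1^{-1}\log n_t$ at a step where $\delta^{(J)}$ could no longer be lowered; without it, one cannot secure (b) and (c) at the same $I_t$.
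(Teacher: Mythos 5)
Your construction takes a genuinely different route from the paper's. The paper defines $I_t$ as the \emph{last} index for which $g_\sigma(\bar\sigma^{I_t-1}_t)\le c_1^{-1}\log n_t$ (with $\bar\sigma^i_t$ the purely greedy $\delta_1$-sequence), obtaining (b) for free; it then simply redefines the single final element $\sigma^{I_t}_t$ by a $\delta_2$-step and derives (c) by applying Lemma~\ref{lem:sigmareg} to the \emph{overshoot} $g_\sigma(\bar\sigma^{I_t}_t)>c_1^{-1}\log n_t$, with ratio $c_2=(1+\delta_2)/(1+\delta_1)$ and the constraint $1+\delta_1<(1+\delta_2)/c_1^2$. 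You instead insist on exact equality $g_\sigma(\sigma^{J}_t)=c_1^{-1}\log n_t$ via the intermediate value theorem and then jump with ratio $c_2=1+\delta_2$; this requires only $c_1^2<1+\delta_2$ at the final step but purchases that simplification at the cost of the IVT precondition. Your observation that the constraint $(1+\delta_1)^2<1+\delta_2$ keeps the greedy iterates from leapfrogging the target is correct and is the key to making the tuning step well-posed when $J\ge 2$.

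However, there is a genuine gap. The lemma is stated for \emph{arbitrary} $\sigma_t,n_t\to\infty$, and you assert that ``$J\ge 2$ eventually in $t$'' without justification. This need not hold: if $\log\log\sigma_t$ grows at least as fast as $\log n_t$ (e.g.\ $\sigma_t=n_t^{n_t}$), then since $g_\sigma(x)\sim c\log\log x$ with $c>1$ by Assumption~\ref{assump:sigma}(d), one has $g_\sigma(\exp\exp((1+\delta_1)\log\log\sigma_t))\gtrsim c(1+\delta_1)\log n_t>c_1^{-1}\log n_t$ eventually, forcing $J=1$ \emph{and} $a_1(\delta_1)>c_1^{-1}\log n_t$, so the IVT has nothing to work with and condition (b) cannot be met by your $\sigma^1_t$ with $I_t=J+1=2$. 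The paper avoids this precisely through the sentinel $\sigma^0_t=0$: when $I_t=1$, condition (b) becomes $g_\sigma(0)=0\le c_1^{-1}\log n_t$, trivially true, and (c) still follows because the greedy $\bar\sigma^1_t$ already overshoots $c_1^{-1}\log n_t$ and the sigmareg amplification applies. Your proof could be repaired by splitting off the case $a_1(\delta_1)>c_1^{-1}\log n_t$ and handling it as the paper does with $I_t=1$ (note your sample constants $\delta_1=\varepsilon/5,\delta_2=\varepsilon/2,c_1=\sqrt{1+\varepsilon/4}$ happen to also satisfy $c_1^2(1+\delta_1)<1+\delta_2$, so the paper's argument would go through), but as written the case is unaddressed.
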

\begin{proof}
Choose $c_1$, $\delta_1$ and $\delta_2$ such that $1 < c_1^2 < 1 + \delta_2$ and $1 + \delta_1 < (1 + \delta_2)/c_1^2 $. Suppose that we define a sequence $\{\bar{\sigma}^i_t\}_{i \ge 0}$ such that 
$$ \bar \sigma^0_t=0 \ , \ \frac{\log\log \bar \sigma_t^1}{\log\log  \sigma_t} = 1 + \delta_1   \quad  \text{and} \quad  \frac{\log\log \bar \sigma_t^i}{\log\log \bar \sigma^{i-1}_t} = 1 + \delta_1  \quad \text{for each } i \ge 2\, ,  $$
and let $I_t$ be the maximum integer such that
$$ g_\sigma  (\bar \sigma_t^{I_t - 1}) \le  c_1^{-1} \log n_t \,  \, .$$
This satisfies
$$ I_t = O(\log \log n_t)\, ,   $$
since if $I_t>1$, then eventually
$$ (1 + \delta_1)^{I_t - 2} \log \log \bar \sigma^1_t =  \log \log \bar{\sigma}^{I_t - 1 }_t < g_\sigma(\bar{\sigma}_t^{I_t-1})  \le c_1^{-1} \log n_t \, .$$
Now set $\sigma^i_t= \bar\sigma^i_t$ for all $ 0 \le i \le I_t - 1$, and choose $\sigma_t^{I_t}$ by
$$ \begin{cases}  \log\log  \sigma^{I_t}_t = (1 + \delta_2 )\log\log \sigma^{I_t-1}_t\,, & I_t > 1\,; \\
 \log\log  \sigma^{I_t}_t = (1 + \delta_2 )\log\log \sigma_t\,,  & I_t = 1 \,.
 \end{cases} $$ 
It remains to check that $g_\sigma( \sigma^{I_t}_t) \ge  c_1 \log n_t  $. By definition,  
$$  \log \log \sigma^{I_t}_t =  \frac{1 + \delta_2}{1 + \delta_1} \log \log \bar \sigma^{I_t}_t \, .$$
Then by Lemma~\ref{lem:sigmareg}, and the fact that $1 + \delta_1 < (1 + \delta_2)/c_1^2 $, as $t \to \infty$,
$$ g_\sigma( \sigma^{I_t}_t ) > c_1^2 g_\sigma( \bar \sigma^{I_t}_t) $$
eventually. Finally, by the definition of $I_t$, 
$$ g_\sigma( \bar \sigma^{I_t}_t)  > c_1^{-1} \log n_t $$
which completes the proof.
\end{proof}

Finally, we prove the existence of a quick path. Let $c_1$, $\delta_1$, $\delta_2$, $I_t$ and $\{\sigma^i_t\}_{0 \le i \le I_t}$ satisfy the conditions in Lemma~\ref{lem:scales}. Moreover, for a path $p \in \Gamma_k$ define
\[  N_i = \sum_{ 0 \le j < k } \id_{\{\sigma(p_j)\in(\sigma^{i-1}_t,\sigma^i_t]\}} \quad  \text{for each } 1\le i\le I_t  \, . \] 
The following proposition essentially bounds the number of sites in each coarse graining scale, weighted by the log of the scale. This will turn out to be the correct definition of a `quick path' in Section \ref{sec:neg}.

\begin{proposition}[Existence of quick paths; $d = 1$]
\label{prop:quickpath1}
As $t \to \infty$, each path $p \in \Gamma_{|z|}(0, z)$ with $|z| < n_t$, satisfies
\begin{align*}
\PP \left( \sum_{i=1}^{I_t} N_{i} \log \sigma^i_t < n_t  \max\left\{ (\log \sigma_t)^2, \log \log n_t / \log \log \sigma_t  \right\}  \right) \to 1 \, 
\end{align*}
and
 $$\max_{0 \le i < |z|} \sigma(p_i) < \sigma^{I_t}_t\, ,$$
 eventually almost surely.
\end{proposition}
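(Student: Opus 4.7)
The plan is to split the sum $\sum_{i=1}^{I_t} N_i \log \sigma^i_t$ according to whether $i=1$ or $i \ge 2$: the first term will be handled deterministically, because $\log \sigma^1_t$ is only a small power of $\log\sigma_t$, while for $i\ge 2$ the tail-decay hypothesis forces $\EE[N_i]$ to be small enough that a Markov estimate suffices. The maximum-value claim will follow from the upper-tail asymptotics already established in Lemma~\ref{lem:asforsigma}.

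For the $i=1$ term, I would use the deterministic bounds $N_1 \le |p| < n_t$ together with $\log\sigma^1_t \le (\log\sigma_t)^{1+\delta_2}$, the latter obtained by exponentiating Lemma~\ref{lem:scales}(a). Since $\delta_2 < \varepsilon < 1$, this gives $N_1 \log\sigma^1_t \le n_t (\log\sigma_t)^{1+\delta_2} < n_t (\log\sigma_t)^2$ for all $t$ sufficiently large, deterministically.

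For the $i\ge 2$ terms I would bound $\EE[N_i] \le n_t \, \bar F_\sigma(\sigma^{i-1}_t)$ by the i.i.d.\ structure of $\sigma$, and use part (c) of Assumption~\ref{assump:sigma} to get $\bar F_\sigma(\sigma^{i-1}_t) = e^{-g_\sigma(\sigma^{i-1}_t)} \le (\log\sigma^{i-1}_t)^{-(1+\varepsilon)}$ eventually. Combined with $\log\sigma^i_t \le (\log\sigma^{i-1}_t)^{1+\delta_2}$ from Lemma~\ref{lem:scales}(a), this yields $\EE[N_i]\log\sigma^i_t \le n_t (\log\sigma^{i-1}_t)^{\delta_2 - \varepsilon}$. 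Since $\sigma^{i-1}_t \ge \sigma_t$ for $i \ge 2$ and $I_t = O(\log\log n_t)$ by Lemma~\ref{lem:scales}, summing over $i$ gives
\[ \EE\Bigl[\sum_{i=2}^{I_t} N_i\log\sigma^i_t\Bigr] \le n_t I_t (\log\sigma_t)^{\delta_2-\varepsilon} = O\bigl(n_t \log\log n_t \, (\log\sigma_t)^{\delta_2-\varepsilon}\bigr), \]
which is $o(n_t \log\log n_t / \log\log\sigma_t)$ because $(\log\sigma_t)^{\varepsilon - \delta_2} \gg \log\log\sigma_t$. Markov's inequality then delivers the first claim.

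For the second claim, in $d=1$ the unique shortest path from $0$ to any $z$ with $|z| < n_t$ visits only sites in $B(0, n_t)$, so it suffices to show $\max_{|y| < n_t} \sigma(y) < \sigma^{I_t}_t$ eventually almost surely. Applying Lemma~\ref{lem:asforsigma} to the $O(n_t)$ i.i.d.\ values in $B(0, n_t)$ bounds this maximum by $g_\sigma^{-1}(c \log n_t)$ eventually a.s.\ for any $c > 1$; choosing $c \in (1, c_1)$ and comparing with Lemma~\ref{lem:scales}(c), which gives $g_\sigma(\sigma^{I_t}_t) \ge c_1 \log n_t$, completes the argument. The main obstacle is the delicate calibration of the coarse-graining scales: the tail-decay exponent $1+\varepsilon$ must strictly dominate the successive-scale growth exponent $1+\delta_2$, so that $(\log\sigma^{i-1}_t)^{\delta_2-\varepsilon}$ is summable (indeed geometrically decreasing) in $i$; this is precisely what Lemma~\ref{lem:scales} guarantees by enforcing $\delta_2 < \varepsilon$.
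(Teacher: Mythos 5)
Your proof is correct and follows essentially the same approach as the paper: the same coarse-graining scales from Lemma~\ref{lem:scales}, the same split of the sum at $i=1$ versus $i\ge 2$, the same use of the tail-decay hypothesis to kill the higher scales, and the same reduction of the second claim to Lemmas~\ref{lem:asforsigma} and~\ref{lem:scales}(c). The only departure is that you apply Markov's inequality directly to $\EE\bigl[\sum_{i\ge 2}N_i\log\sigma^i_t\bigr]$, whereas the paper first uses Chebyshev plus a union bound to show that each $N_i$ is individually at most $2\EE[N_i]$ with high probability (an event for which they also need Lemma~\ref{lem:scales}(b) to lower-bound $\bar F_\sigma(\sigma^{i-1}_t)$); your Markov route is slightly more economical since it bypasses the variance estimate and condition (b) entirely, but the core decomposition and calibration are identical.
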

\begin{proof}
We first prove that the event 
$$ \mathcal{N}_t := \bigcup_{i=1}^{I_t  }\left\{ N_i \le 2 n_t \, \bar F_\sigma(\sigma^{i-1}_t )\right\} $$
satisfies $\PP ( \mathcal{N}_t ) \to 1$ as $t \to \infty$. Note that each $N_i$ is stochastically dominated by
$$\bar{N_i} \stackrel{d}{=} \text{Binom}(n_t, \bar F_\sigma(\sigma^{i-1}_t ) ) \, ,$$
with $\EE \bar N_i = n_t \bar F_\sigma(\sigma^{i-1}_t ) $ and $\text{Var} \bar N_i\le n_t \bar F_\sigma(\sigma^{i-1}_t ) $. By the union bound and Chebyshev's inequality,
\begin{align}\label{E:unionNi}
 \PP \Big(\bigcup_i \{\bar N_i > 2\EE \bar N_i\}\Big)\le\sum_i\PP(\bar N_i>2\EE \bar N_i) \le \sum_i\frac{\text{Var} \bar N_i}{(\EE \bar N_i)^2} \le \sum_i  \big(n_t \bar F_\sigma(\sigma^{i-1}_t )  \big)^{-1}\,.
\end{align}
Since the $\sigma^i_t$ are increasing in $i$, for any $1 \le i \le I_t$,
\begin{align*}
 \bar F_\sigma (\sigma^{i-1}_t ) \ge \bar F_\sigma(\sigma^{I_t-1}_t ) \ge n_t^{-c_1^{-1}} \,, 
\end{align*}
by condition $(b)$ of Lemma~\ref{lem:scales}. Combining with \eqref{E:unionNi}, by the union bound, as $t \to \infty$, eventually
\[  \PP (\mathcal{N}_t ) > 1 -  I_t \, n_t ^{c_1^{-1}-1} \to  1 \, ,\]
since $c_1 > 1$ and $I_t = O(\log \log n_t)$. 

So assume the event $\mathcal{N}_t$ holds and split the sum 
\[ \sum_{i=1}^{I_t} N_{i} \log \sigma^i_t  =  N_1 \log\sigma^1_t+\sum_{i=2}^{I_t} N_i \log\sigma^i_t 
 \, .\]
For the first term, on the event $\mathcal{N}_t$ and by condition $(a)$ in Lemma~\ref{lem:scales} we have
\begin{align*}
N_1 \log\sigma^1_t & \le 2 n_t \bar F_\sigma (\sigma_t^0) \log\sigma_t^1 =  2 n_t \log \sigma_t^1 \le 2 n_t (\log \sigma_t)^{1+\delta_2} < n_t (\log \sigma_t)^2 / 2
\end{align*}
eventually. Hence it suffices to show that each of the other terms, for $2\le i\le I_t$, satisfy
$$ I_t N_i \log\sigma^i_t < \frac{1}{2} n_t \log \log n_t / \log \log \sigma_t $$
eventually. Recall that by condition $(a)$ in Lemma~\ref{lem:scales}, $\log\sigma^i_t \le (\log \sigma^{i-1}_t)^{1+\delta_2}$ for $2\le i\le I_t$. Then, on the event $\mathcal{N}_t$ and by part $(c)$ of Assumption \ref{assump:sigma}, eventually,
\begin{align*}
  N_i \log \sigma^i_t & \le 2 n_t \bar F_\sigma ( \sigma^{i-1}_t ) \log \sigma^i_t \le 2 n_t (\log\sigma^{i-1}_t)^{- \varepsilon + \delta_2} \\
&\le n_t(\log\sigma^{i-1}_t)^{-c_2}\,,
\end{align*}
for some $c_2 > 0$, since $\delta_2 < \varepsilon$. So by monotonicity in $i$ and condition $(a)$ in Lemma~\ref{lem:scales},
\begin{align*}
 I_t N_i \log \sigma^i_t \le I_t n_t (\log\sigma^{1}_t)^{-c_2} < n_t \log \log n_t  (\log \sigma_t)^{-c_3 } 
\end{align*}
eventually, for any $0 <c_3 <c_2(1+\delta_1)$ which proves the claim.

Finally, the fact that, eventually almost surely,
\[ \max_{0 \le i < |z| } \sigma(p_i) < \sigma^{I_t}_t  \]
follows from combining condition $(c)$ in Lemma~\ref{lem:scales} with Lemma~\ref{lem:asforsigma}.
\end{proof}

\subsubsection{Dimension higher than one}
In dimensions higher than one we use percolation-type estimates to prove the existence of a path $p \in \Gamma(0, z)$ with $z \in S_t$ for some well-separated set $S_t$ that (i) avoids all the deep traps, (ii) has $|p|$ not much more than $|z|$, and (iii) does not travel too close to sites in $S_t$. Because we use percolation-type arguments, it will turn out that we need no extra assumption on the tail decay of $\sigma(0)$.

So let us start with the relevant percolation-type estimates; for background on percolation theory see \cite{Grimmett99}. Consider site percolation on $\mathbb{Z}^d$ with \mbox{$\PP(v\text{ open})=q$} independently for every $v\in\mathbb{Z}^d$. We say that a subset of $\mathbb{Z}^d$ is $\ast$-connected if it is connected with respect to the adjacency relation 
\[  v\overset{\ast}{\sim}w\Leftrightarrow\max_{1\le i\le d}|v_i-w_i|=1 \, ,\]
where $v_i$ and $w_i$ denote the coordinate projections of $v$ and $w$ respectively. If $v\overset{\ast}{\sim}w$ we say that $w$ is a $\ast$-neighbour of $v$. A $\ast$-connected subset of $\mathbb{Z}^d$ is referred to as a $\ast$-cluster. The relevance of $\ast$-clusters is that they represent the blocking clusters for open paths in $\ZZ^d$. For $v\in\mathbb{Z}^d$ a closed site, denote by $\mathcal{C}(v)$ the largest $\ast$-cluster of closed sites containing $v$.

For two sites $u,v$ in $\mathbb{Z}^d$ denote by $d_\infty(u,v)$ their chemical distance (also known as the graph distance) with respect to site percolation, defined to be the length of the shortest open path from $u$ to $v$ (and defined to be infinite if no such path exists).

\begin{lemma}[Expected size and maximum of closed $\ast$-clusters]
\label{lem:clustersize}
Let $q\in(1-(3d)^{-1},1)$ and suppose $u_1,\ldots,u_M$ are $M \in \mathbb{N}$ distinct closed sites in $\mathbb{Z}^d$. Then
\begin{enumerate}[(i)]
\item $\EE[|\mathcal{C}(u_1)|]\le (1-3^d(1-q))^{-1}$, and so in particular $\EE[|\mathcal{C}(u_1)|]\to1$ as $q\to1$; and
\item For every $x\in\mathbb{N}$, $$\PP(\max\{|\mathcal{C}(u_1)|,\ldots,|\mathcal{C}(u_M)|\}<x)\ge 1-M(3^d(1-q))^{[\log_{3^d} x]} \, .$$
\end{enumerate}
\end{lemma}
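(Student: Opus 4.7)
The plan is to treat both statements as essentially Peierls-type bounds obtained by exploring the closed $\ast$-cluster of $u_1$ as a path-counting problem: each closed vertex has at most $3^d - 1$ $\ast$-neighbours, and each such neighbour is independently closed with probability $1-q$, so one either dominates $\mathcal{C}(u_1)$ by a Galton--Watson tree or directly sums probabilities over self-avoiding $\ast$-paths.

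For part (i), I would first condition on $u_1$ being closed and perform a breadth-first exploration of $\mathcal{C}(u_1)$; the number of newly discovered vertices at each step is dominated by a $\mathrm{Bin}(3^d - 1, 1-q)$ variable, so $|\mathcal{C}(u_1)|$ is stochastically dominated by the total progeny of a Galton--Watson process with this offspring distribution. Equivalently, the path-counting bound
\[ \mathbb{E}\bigl[|\mathcal{C}(u_1)| \,\big|\, u_1 \text{ closed}\bigr] \le \sum_{n \ge 0} \bigl(\text{\# self-avoiding $\ast$-paths of length $n$ from }u_1\bigr)(1-q)^n \le \sum_{n \ge 0}\bigl(3^d(1-q)\bigr)^n \]
works: using the crude estimate $(3^d - 1)^n \le 3^{dn}$ for the number of paths, summing the geometric series (valid since $3^d(1-q) < 1$ in the relevant regime) yields $(1 - 3^d(1-q))^{-1}$. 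The assertion that $\mathbb{E}[|\mathcal{C}(u_1)|] \to 1$ as $q \to 1$ is immediate from this bound.

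For part (ii), I would apply the union bound
\[ \PP\bigl(\max_i |\mathcal{C}(u_i)| \ge x\bigr) \le \sum_{i=1}^{M} \PP\bigl(|\mathcal{C}(u_i)| \ge x\bigr) \,, \]
and then show that $\{|\mathcal{C}(u_i)| \ge x\}$ forces the existence of a closed self-avoiding $\ast$-path of length $k := [\log_{3^d} x]$ from $u_i$. The combinatorial input is that any BFS spanning tree of $\mathcal{C}(u_i)$ rooted at $u_i$ has branching at most $3^d - 1$, so a tree of depth $< k$ contains at most $\sum_{j < k}(3^d - 1)^j < (3^d)^k \le x$ vertices. Hence $|\mathcal{C}(u_i)| \ge x$ implies depth $\ge k$, and therefore the existence of a self-avoiding $\ast$-path of length $k$ from $u_i$ all of whose sites are closed. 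Since the number of such paths is at most $(3^d)^k$ and (given $u_i$ is closed) the conditional probability that the other $k$ sites are all closed is $(1-q)^k$, a final union bound over paths gives $\PP(|\mathcal{C}(u_i)| \ge x) \le (3^d(1-q))^k$, and summing over $i$ completes the proof.

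The main obstacle is purely bookkeeping: tracking constants so that the exponent $[\log_{3^d} x]$ emerges cleanly from a tree with branching $3^d - 1$ rather than $3^d$, and matching the hypothesis on $q$ to what is actually needed for $3^d(1-q) < 1$. Any $O(1)$ discrepancy between the branching bound and the stated exponent can be absorbed into the leading factor because $3^d(1-q) < 1$, so the geometric tails behave uniformly.
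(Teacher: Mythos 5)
Your proposal is correct, and part (i) is essentially the paper's argument: a breadth-first exploration of $\mathcal{C}(u_1)$ is dominated by a Galton--Watson tree, and the expected total progeny of $\mathrm{Binom}(3^d,1-q)$ offspring gives $(1-3^d(1-q))^{-1}$. Your alternative route via self-avoiding $\ast$-path counting is the same first-moment computation written out explicitly.

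For part (ii) the two arguments differ cosmetically but are formally equivalent. The paper keeps the branching-process abstraction: it observes that $Z\ge x$ forces $Z(n)>0$ for $n=[\log_{3^d}x]$ (because the dominating tree has branching at most $3^d$, so if generation $n$ is empty the total size is below $(3^d)^n\le x$), and then applies Markov's inequality $\PP(Z(n)>0)\le\EE[Z(n)]=(3^d(1-q))^n$. You bypass the branching process and argue directly on the cluster: a BFS tree with branching $\le 3^d-1$ and depth $<k$ has fewer than $(3^d)^k\le x$ vertices, so $|\mathcal{C}(u_i)|\ge x$ produces a closed self-avoiding $\ast$-path of length $k$ from $u_i$, and the union bound over the at most $(3^d)^k$ such paths, each costing $(1-q)^k$, gives the same exponent. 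The union bound over paths is precisely the first-moment bound $\PP(Z(n)>0)\le\EE[Z(n)]$ expanded in the dominating tree, so the two proofs carry identical content; yours avoids introducing the auxiliary process, the paper's reuses the same domination as part (i). Your closing remark about tracking constants is apt: the stated hypothesis on $q$ should be read as ensuring $3^d(1-q)<1$ (which is all that is actually used, and all that is needed in the application as $q\to1$), and any $O(1)$ slack between the branching factors $3^d-1$ and $3^d$ is harmless for the same reason.
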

\begin{proof}
Consider performing a breadth-first search on $\mathcal{C}(u_1)$ starting from the site $u_1$, by first discovering the closed $\ast$-neighbours $v_1,\ldots,v_k$ of $u_1$, and then in turn discovering the closed $\ast$-neighbours of each of the $v_j$, $1\le j\le k$, iterating this procedure to explore $\mathcal{C}(u_1)$. Suppose that the site $w$ has just been explored in this procedure. Then the number of closed $\ast$-neighbours of $w$ that have not already been discovered is stochastically dominated by a Binom($3^d-1,1-q$) random variable. It follows that $|\mathcal{C}(u_1)|$ is stochastically dominated by the total progeny of a branching process with offspring distribution Binom($3^d,1-q$). Since the expected total progeny of this branching process is $(1-3^d(1-q))^{-1}$, this proves the first statement.

For the second statement, note that by the union bound we have 
\[ \PP(\max\{|\mathcal{C}(u_1)|,\ldots,|\mathcal{C}(u_M)|\}\ge x)\le\sum_{i=1}^M\PP(|\mathcal{C}(u_i)|\ge x)=M \, \PP(|\mathcal{C}(u_1)|\ge x) \, . \] 
Again by exploring $\mathcal{C}(u_1)$ we have 
\[
\PP(|\mathcal{C}(u_1)|\ge x)\le\PP(Z\ge x) \, ,
\]
where $Z$ is the total progeny of a branching process with offspring distribution Binom($3^d,1-q$). To complete the proof, note that by Markov's inequality we have 
\begin{equation*}
\PP(Z\ge x)\le\PP(Z(\lfloor \log_{3^d} x\rfloor)>0)\le (3^d(1-q))^{[\log_{3^d} x]} \, ,
\end{equation*}
where $Z(n)$ denotes number of individuals in generation $n$ of the branching process. 
\end{proof}

\begin{lemma}[Chemical distance]
\label{lem:chemdist}
Fix two sites $u,v$ in $\mathbb{Z}^d$ and a function $c:=c(q)$ with $c\to\infty$ as $q\to1$. Then, as $q \to 1$,
 \[  \PP\left(\frac{d_\infty(u,v)}{|u-v|}<1+c(1-q)\right)\to1 \, .\]
\end{lemma}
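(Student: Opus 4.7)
The plan is to construct an open path from $u$ to $v$ by starting with a straight nearest-neighbour path and detouring around the closed $\ast$-clusters that it meets. Fix a shortest lattice path $P=(u=p_0,p_1,\ldots,p_m=v)$ of length $m=|u-v|$. Let $\mathcal{C}_1,\ldots,\mathcal{C}_k$ denote the distinct closed $\ast$-clusters intersecting $P$. For each $\mathcal{C}_j$, I will argue that one can splice into $P$ an open detour around $\mathcal{C}_j$ of length at most $K_d|\mathcal{C}_j|$, where $K_d$ depends only on the dimension. Granting this, the concatenation yields an open path from $u$ to $v$ of length at most
\[ |u-v| + K_d \sum_{j=1}^{k}|\mathcal{C}_j| \le |u-v| + K_d \sum_{i=0}^{m} |\mathcal{C}(p_i)|\id_{\{p_i\text{ closed}\}} \, , \]
where the inequality uses the fact that each cluster $\mathcal{C}_j$ is counted at least once on the right.

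To bound the expected detour, I would condition on whether each site $p_i$ is closed and apply Lemma~\ref{lem:clustersize}(i): since the $\ast$-cluster containing $p_i$ conditional on $p_i$ being closed is distributed as $\mathcal{C}(u_1)$, its expected cardinality is at most $(1-3^d(1-q))^{-1}$. Hence
\[ \EE\left[\sum_{i=0}^{m} |\mathcal{C}(p_i)|\id_{\{p_i\text{ closed}\}}\right] \le \frac{(1-q)\,|u-v|}{1-3^d(1-q)} \, . \]
Applying Markov's inequality at the threshold $c(1-q)|u-v|/K_d$ gives
\[ \PP\!\left(\frac{d_\infty(u,v)}{|u-v|} \ge 1 + c(1-q)\right) \le \frac{K_d}{c\,(1-3^d(1-q))} \, , \]
which tends to $0$ as $q\to 1$ since $c\to\infty$ and $(1-3^d(1-q))^{-1}\to 1$.

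The main obstacle is the deterministic geometric claim that any finite closed $\ast$-cluster $\mathcal{C}$ in $\mathbb{Z}^d$ admits an open detour of length $O(|\mathcal{C}|)$ splicing any two entry/exit points on its boundary. This is a standard but nontrivial fact from percolation geometry: it relies on the observation that the outer vertex boundary of a finite $\ast$-connected set has cardinality at most $(3^d-1)|\mathcal{C}|$ and, when viewed in the complement, is nearest-neighbour connected in a manner that permits traversal between any two adjacent entry sites using a path of length proportional to its cardinality (see, e.g., the classical boundary lemmas used in Antal--Pisztora-type arguments). Since these structural results are standard, I would either invoke them directly or prove a self-contained version tailored to our setting by induction on cluster size. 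Once this geometric ingredient is in hand, the probabilistic estimate above closes the argument.
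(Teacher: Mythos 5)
Your overall strategy — detour around the closed $\ast$-clusters that the straight path meets, bound the expected total detour length by Lemma~\ref{lem:clustersize}(i), then apply Markov — is exactly the approach the paper takes, and your outline of the geometric claim about outer boundaries mirrors the paper's. However, there is a genuine gap: you never handle the event that $u$ or $v$ is itself closed, or more generally that they do not both lie in the infinite open cluster $\mathcal{C}_\infty$. If $u$ is closed (probability $1-q>0$) then $d_\infty(u,v)=\infty$, so no detour construction can even begin, and the Markov bound you write down,
\[ \PP\!\left(\frac{d_\infty(u,v)}{|u-v|} \ge 1 + c(1-q)\right) \le \frac{K_d}{c\,(1-3^d(1-q))} \, , \]
is simply false: the left-hand side is at least $1-q$, whereas if $c$ grows fast enough (e.g.\ $c=(1-q)^{-2}$, which is permitted since the lemma only asks $c\to\infty$) the right-hand side is of order $(1-q)^2\ll 1-q$.

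The paper repairs this by first conditioning on the increasing event $\{u,v\in\mathcal{C}_\infty\}$, on which the detour construction is valid, and then adding $\PP(\{u,v\in\mathcal{C}_\infty\}^c)$ to the final estimate; since this probability tends to $0$ as $q\to1$ the conclusion survives. The conditioning also forces a small extra ingredient that you omit: the bound $\EE[S\mid u,v\in\mathcal{C}_\infty]\le |u-v|(1-q)/(1-3^d(1-q))$ uses the FKG inequality, because conditioning on the increasing event $\{u,v\in\mathcal{C}_\infty\}$ can only \emph{decrease} the expectation of the decreasing functional $S$ (and of $|K|$). Your unconditional expectation bound is fine as far as it goes, but the construction it feeds into needs the conditioning, and once you condition you must justify that the expectation does not go up. Adding these two steps — the conditioning on $\{u,v\in\mathcal{C}_\infty\}$ (plus the complementary probability in the final Markov bound) and the FKG justification for the conditional first moment — would make your argument complete and essentially identical to the paper's.
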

\begin{proof}
Denote by $\mathcal{C}_\infty$ the unique infinite open cluster, which exists almost surely for all $q$ sufficiently close to 1 (see \cite{Grimmett99}). Let $\hat p\in\Gamma_{|u-v|}(u,v)$ be any shortest path, denote by $K$ the subset of $\mathrm{Set}(\hat p)$ consisting only of closed sites, and define  \begin{align}\label{E:Sbound}
 S:=\Big|\bigcup_{x\in K}\mathcal{C}(x) \Big|  \le \sum_{x\in K}|\mathcal{C}(x)|  \, .                                                                                                                   \end{align}
By part $(i)$ of Lemma~\ref{lem:clustersize} and the FKG inequality (see \cite{Grimmett99}, Section 2.2), we have the bound
\[
\EE[S|\{u,v\in\mathcal{C}_\infty\}]\le \frac{\EE[ \, |K|\big|\{u,v\in\mathcal{C}_\infty\}]}{1-3^d(1-q)}\le\frac{|u-v|(1-q)}{1-3^d(1-q)}.
\] 
We now claim that, on the event $\{u,v\in\mathcal{C}_\infty\}$, it is possible to find a path $p\in\Gamma_k(u,v)$ for some $k\le|u-v|+(3^d-1)S$ such that every site in $\mathrm{Set}(p)$ is open. To obtain the required path $p$ take the direct path $\hat p$ and divert it around $\mathcal{C}(u)$ for each closed $u \in \mathrm{Set}(\hat p)$, so that every site in $\mathrm{Set}(p)$ is either in $\mathrm{Set}(\hat p)$ or in the outer boundary of some $\mathcal{C}(u)$, where by outer boundary we mean the set of sites $\{v\notin \mathcal{C}(u):\exists u\in \mathcal{C}(u),\, u\overset{\ast}{\sim}v\}$.  This procedure is possible since $u,v\in\mathcal{C}_\infty$. Then $\mathrm{Set}(p)$ will consist of just open sites since the outer boundary of each $\mathcal{C}(u)$ is a path of open sites. The bound on $|p|$ follows from the fact that the size of the outer boundary of a $\ast$-cluster $A$ is at most $(3^d-1)|A|$.
 
We complete the proof of the Lemma with Markov's inequality:
 \begin{align*}  
 \PP\left(\frac{d_\infty(u,v)}{|u-v|}\ge1+c(1-q)\right) &\le \PP\left(|S|>\frac{c(1-q)|u-v|}{(3^d-1)} \bigg| \{u,v\in\mathcal{C}_\infty\} \right) +\PP\left(\{u,v\in\mathcal{C}_\infty\}^c\right) \\
 & \le\frac{3^d}{c(1-3^d(1-q))} +\PP\left(\{u,v\in\mathcal{C}_\infty\}^c\right)\, . 
 \end{align*}
Since $\PP(u,v\in\mathcal{C}_\infty)\to1$ as as $q\to1$, this completes the proof.\qedhere
\end{proof}

We are now ready to show the existence of a quick path in dimensions higher than one. Let $S_t \subseteq \ZZ^d$ be such that 
\[ \text{sep}(S_t) >t^{\varepsilon} \quad \text{and} \quad \min_{u\in S_t}{|u|}>t^{\varepsilon}  \]
eventually for some $\varepsilon > 0$. Recall the definition of $j := [\gamma - 1]$. Let $\sigma_t$ be an arbitrary function tending to infinity as $t \to \infty$. Define the set
$$\ZZ^d(\sigma_t, S_t) := \{ z \in \ZZ^d: \sigma(z) \le \sigma_t , z \notin B(S_t, j)   \} \, . $$
For a site $z \in \mathbb{Z}^d$, let $|z|_{\rm{chem}}$ be the ‘chemical distance’ of the ball $B(z , j)$ in this set, that is, the length of the shortest path from the origin to $\partial B(z, j)$ that lies exclusively in this subgraph (setting it as $\infty$ if such a path does not exist).

\begin{proposition}[Existence of quick paths; $d > 1$]
\label{prop:quickpath2}
Let $z_t \in S_t \cap V_t$ and let $c_t$ be a function such that $c_t\to\infty$ as $t\to \infty$ and $\bar F_\sigma(\sigma_t)c_t \ll 1$. Then, there exists a constant $c  > 0$ such that, as $t \to \infty$,
\[ \PP \left( \frac{|z_t|_{\rm{chem}}}{|z_t|} \le  1 + \bar F_\sigma(\sigma_t) c_t + t^{-c}  \right) \to 1 \, .\]
\end{proposition}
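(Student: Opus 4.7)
The plan is to adapt the proof of Lemma~\ref{lem:chemdist} to the setting where the sites in $B(S_t,j)$ are treated as deterministically closed in addition to the random deep traps. Specifically, I would consider site percolation on $\ZZ^d$ where $z$ is declared \emph{open} iff $\sigma(z)\le\sigma_t$ and $z\notin B(S_t,j)$; the open sites then form exactly $\ZZ^d(\sigma_t,S_t)$. Away from $B(S_t,j)$, sites are i.i.d.\ with open probability $q:=1-\bar F_\sigma(\sigma_t)\to 1$, so the random percolation is supercritical for large $t$; since $B(S_t,j)$ has density $O(t^{-\varepsilon d})$, a unique infinite open cluster $\mathcal{C}_\infty$ still exists and, by the FKG inequality, both $0$ and some site $v\in\partial B(z_t,j)$ lie in $\mathcal{C}_\infty$ with probability tending to one.

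I would then mimic the proof of Lemma~\ref{lem:chemdist}: fix a direct path $\hat p\in\Gamma_{|v|}(0,v)$ with $|v|\le|z_t|+j$, and divert around each $\ast$-cluster of closed sites that $\hat p$ meets. This produces a path in $\ZZ^d(\sigma_t,S_t)$ from $0$ to $v$ of length at most $|v|+(3^d-1)S$, where $S$ denotes the total size of the closed $\ast$-clusters hit by $\hat p$. The key step is to bound $S=S_{\mathrm{rand}}+S_{\mathrm{det}}$, splitting $\ast$-clusters according to whether they contain any site of $B(S_t,j)$. For $S_{\mathrm{rand}}$, the argument of Lemma~\ref{lem:chemdist} together with FKG and Lemma~\ref{lem:clustersize}(i) gives
\[\EE\big[S_{\mathrm{rand}}\bigm|\{0,v\in\mathcal{C}_\infty\}\big]\le\frac{|v|\bar F_\sigma(\sigma_t)}{1-3^d\bar F_\sigma(\sigma_t)},\]
and Markov's inequality (using $c_t\to\infty$, and absorbing a constant factor by replacing $c_t$ with $c_t/(3^d-1)$) yields $S_{\mathrm{rand}}\le c_t|v|\bar F_\sigma(\sigma_t)$ with probability tending to one. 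For $S_{\mathrm{det}}$, the separation $r(S_t)>t^{\varepsilon}$ ensures that $\hat p$ enters at most $|v|/t^{\varepsilon}+1$ balls $B(u,j)$, $u\in S_t$. Each such $\ast$-cluster equals $B(u,j)$ together with the random closed clusters $\ast$-attached to its $O(1)$ boundary sites; applying Lemma~\ref{lem:clustersize}(ii) to each boundary site and taking a union bound, every such hybrid cluster has size $O(1)$ uniformly with probability tending to one, so $S_{\mathrm{det}}=O(|v|/t^{\varepsilon})$ with high probability.

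Combining these bounds and using $|v|\le|z_t|+j$ together with $|z_t|>t^{\varepsilon}$, I obtain $|z_t|_{\mathrm{chem}}\le|z_t|(1+\bar F_\sigma(\sigma_t)c_t+t^{-c})$ for any $0<c<\varepsilon$, which is the desired estimate. The main technical obstacle is the hybrid clusters contributing to $S_{\mathrm{det}}$: were a random closed cluster attached to some $B(u,j)$ to grow anomalously large, it would spoil the $O(1)$ per-ball bound and could produce a contribution to $S$ dominating the $t^{-c}$ error term. This is handled by the exponential tail in Lemma~\ref{lem:clustersize}(ii), which is strong enough to absorb the union bound over all $O(|v|/t^{\varepsilon})$ balls along $\hat p$.
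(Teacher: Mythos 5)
Your overall strategy is very close to the paper's: both approaches build the quick path by diverting a direct path around the $j$-balls of $S_t$ and around the random closed $\ast$-clusters encountered, bounding the resulting detour length. The paper applies Lemma~\ref{lem:chemdist} as a black box to first handle the random traps, then separately modifies around $B(S_t,j)$ and bounds the \emph{new} closed sites encountered during that modification; you instead set up a single hybrid percolation from the start and split the detour into $S_{\mathrm{rand}}+S_{\mathrm{det}}$. That difference is cosmetic and either decomposition can be made to work.

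There is, however, a genuine gap in your bound on $S_{\mathrm{det}}$. You claim that each ``hybrid'' cluster has size $O(1)$ uniformly, with probability tending to one, by applying Lemma~\ref{lem:clustersize}(ii) to the boundary sites of the $B(u,j)$. But Lemma~\ref{lem:clustersize}(ii) with a \emph{constant} threshold $x=K$ gives a failure probability of order $M\big(3^d\bar F_\sigma(\sigma_t)\big)^{[\log_{3^d}K]}$, where $M$ is the number of boundary sites, which is $O(|z_t|/t^{\varepsilon})$ and hence can be polynomial in $t$ (since $|z_t|$ can be as large as $R_t$). Meanwhile the hypotheses give no quantitative rate on $\bar F_\sigma(\sigma_t)$: in the intended application $\sigma_t=s_t$ grows like $\exp\{o(\sqrt{\log\log t})\}$ and, with no upper--tail assumption on $\sigma(0)$ in $d\ge2$, $\bar F_\sigma(s_t)$ may decay arbitrarily slowly (e.g.\ like $1/\log\log t$). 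Then $M\big(3^d\bar F_\sigma(\sigma_t)\big)^{L}\to\infty$ for any fixed $L$, so the union bound over boundary sites fails and you cannot conclude that the cluster sizes are bounded by a constant.

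The paper's remedy is to take the threshold $x=t^{c_2}$ for a small $c_2>0$. Since $[\log_{3^d}t^{c_2}]\sim c_2\log_{3^d}t$, the factor $\big(3^d\bar F_\sigma(\sigma_t)\big)^{[\log_{3^d}t^{c_2}]}$ then decays super-polynomially in $t$ as soon as $\bar F_\sigma(\sigma_t)<3^{d\alpha}$ for suitable $\alpha<-1-(1-c_1)/c_2$, which beats the polynomial growth of $M$; the resulting detour $3^d M(|B(0,j)|+t^{c_2})$ is still $O(|z_t|t^{-(c_1-c_2)})$, because $M\le|z_t|t^{-c_1}$ from the separation of $S_t$. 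Your argument is salvageable by making exactly this change: replace the $O(1)$ cluster bound with $t^{c_2}$, note that $S_{\mathrm{det}}=O(|v|\,t^{c_2-\varepsilon})$, and choose $c<\varepsilon-c_2$. You should also be aware that, working directly in the hybrid percolation model, the existence of a unique infinite cluster and the validity of FKG need at least a word of justification (the model is no longer i.i.d.), whereas the paper's modular use of Lemma~\ref{lem:chemdist} avoids these points.
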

\begin{proof}
Let $q := 1-\bar F_\sigma(\sigma_t)$. By Lemma~\ref{lem:chemdist}, with probability tending to 1 as $t\to\infty$ there exists a path $\hat p\in \Gamma_{\ell_t}(0,z_t)$ for some 
\[ \ell_t \le |z_t|(1+ \bar F_\sigma(\sigma_t) c_t  )\]
such that $\sigma(\hat p_i) \le \sigma_t$ for all $0\le i<\ell_t$. Let $i=\min\{0\le j<\ell_t:\,\hat p_j \in\partial B(z_t,j)\}$ and define $v_t := \hat p_i$ to be the first site in $\partial B(z_t,j)$ visited by path $\hat p$. We show how to modify $\hat p$ so that we obtain a new path $p\in\Gamma(0,v_t)$ for some $v_t\in\partial B(Z_t,j)$ with $\mathrm{Set}(p)\subseteq \ZZ^d(\sigma_t, S_t)$. 

The modification is done by diverting $\hat p$ around the balls of radius $j$ centred on sites in~$S_t$. In doing so, we may encounter new closed sites $v$, and these too must be avoided if we wish to find a path $p$ with $\mathrm{Set}(p)\subseteq\ZZ^d(\sigma_t, S_t)$. Formally, the set of these new closed sites is precisely
 \[
 \left\{x\in\partial B\left(S_t\cap B(\mathrm{Set}(\hat p), j), j\right):\,\sigma(x)>\sigma_t\right\}.
 \]
 Denote by $M_t$ the size of this set and its elements as $w_1,\ldots,w_{M_t}$, and choose $0<c_1<\varepsilon$ where $\varepsilon$ is the constant appearing in the definition of~$S_t$. Then by the separation of sites in~$S_t$, we have  
\[
|S_t\cap B(\mathrm{Set}(\hat p),j)| \le 2 \ell_t t^{-\varepsilon},
\]
and so 
\begin{align}
\label{eq:M}
M_t \le 3^d|B(0,j)|\ell_t t^{-\varepsilon}<|z_t| t^{-c_1}
\end{align}
for all $t$ sufficiently large. Choose now $0<c_2<c_1$, $\alpha<-1-(1-c_1)/c_2$, and $t$ sufficiently large so that 
$$\bar F_\sigma(\sigma_t) < 3^{d\alpha} \, .$$ 
Applying part $(ii)$ of Lemma~\ref{lem:clustersize}, we deduce that $$\max\{|\mathcal{C}(w_1)|,\ldots,|\mathcal{C}(w_{M_t})|\}\le t^{c_2}$$ with probability tending to $1$ as $t\to\infty$. We claim this implies that, by the separation of sites in $S_t$ and the fact that $c_2<\varepsilon$, with overwhelming probability there exists a path $p\in\Gamma(0,v_t)$ which avoids all $j$-balls centred on sites in $S_t$ and all closed sites. Indeed to obtain this path we take path $\hat p$ and then divert around $j$-balls centred on sites in $S_t$ and then further divert around any new closed $\ast$-clusters we encounter. Since we know that no such cluster is too large, they cannot cut the origin off from $v_t$ in $\ZZ^d(\sigma_t, S_t)$, and furthermore we will not encounter any more sites in $S_t$ on the new path.

We can now bound $|p|$. The number of additional sites we must visit to obtain $p$ from $\hat p$ is at most $3^d M_t(|B(0,j)|+t^{c_2})$ with probability tending to 1 as $t\to\infty$; this comes from counting the diversions around each $j$-ball and the diversions around each closed cluster we then encounter. Using \eqref{eq:M}, we can thus choose $0<c<c_1-c_2$ to yield the result.
\end{proof}

\section{Extremal theory for local eigenvalues}
\label{sec:extremal}
In this section, we use point process techniques to study the random variables $Z_t^{(j)}$ and $\Psi^{(j)}_t(Z_t^{(j)})$, and generalisations thereof; the techniques used are similar to those found in \cite{Astrauskas08, Fiodorov13, Sidorova12}, although we strengthen the results available in those papers. In the process, we complete the proof of Theorems~\ref{thm:main2} and \ref{thm:main3}. Throughout this section, let $\varepsilon$ be such that $0 < \varepsilon < \theta$.

\subsection{Upper-tail properties of the local principal eigenvalues}
The first step is to give upper-tail asymptotics for the distribution of the local principal eigenvalues ${\lambda}^{(n)}(z)$ for $z\in\Pi^{(L_t)}$ and $n \in \mathbb{N}$. These will allow us to study the random variables $Z_{t}^{(j)}$ and ${\Psi}^{(j)}_{t}(Z_{t}^{(j)})$ via point process techniques. For technical reasons, we shall actually consider an i.i.d.\ set of \emph{punctured} versions of $\lambda^{(n)}(0)$ which can be coupled to coincide with $\lambda^{(n)}(z)$ eventually almost surely for each $z\in\Pi^{(L_t)}$. 

To this end, let $\{\tilde{\xi}_z\}_{z \in V_t}$ be an independent collection of i.i.d.\ potential fields $\tilde{\xi}_z:\mathbb{Z}^d\to\mathbb{R}$ distributed as
\begin{align}\label{tildefields}
\tilde\xi_z(y)\stackrel{d}{=}\begin{cases}\xi(0) \, ,&\mbox{if }y=z \, ;\\
\xi(0)\indic{\xi(0)<L_t} \, ,&\mbox{otherwise} \, ,
\end{cases}
\end{align}
and define
  \[\tilde\Pi^{(L_t)}:=\{z\in V_t:\,\tilde\xi_z(z)>L_t\} \quad \text{and} \quad  \tilde\Pi^{(L_{t, \varepsilon})}:=\{z\in V_t:\,\tilde\xi_z(z)> L_{t, \varepsilon}\} \]
  by analogy with $\Pi^{(L_t)}$ and $\Pi^{(L_{t, \varepsilon})}$.
Similarly, let $\{\tilde{\sigma}_z\}_{z \in V_t}$ be an independent collection of i.i.d.\ trapping landscapes $\tilde{\sigma}_z:\mathbb{Z}^d\to\mathbb{R}$ distributed as $\tilde{\sigma}_z(y) \stackrel{d}{=} \sigma(0)$, and abbreviate $\tilde{\eta}_z(z) = \tilde{\xi}_z(z) - \tilde{\sigma}^{-1}_z(z)$. Then, for each $z \in V_t$ and $n \in \mathbb{N}$, let $\tilde{\lambda}^{(n)}(z)$ be the principal eigenvalue of the \emph{punctured} Hamiltonian
 \[ \tilde{\mathcal{H}}^{(n)}(z) := \Delta \tilde{\sigma}^{-1}_z  + \tilde \xi_z  \]
 restricted to the domain $B(z,n)$ with Dirichlet boundary conditions, and observe that $\{\tilde{\lambda}^{(n)}(z)\}_{z \in V_t}$ are i.i.d.\ by construction. Note that we have suppressed the explicit $t$-dependence in $\tilde \xi_z$ (and hence $\tilde \lambda^{(n)}(z)$ etc.). 
 
 We now proceed to study the upper-tail asymptotics for the distribution of each $\tilde\lambda^{(n)}(z)$.

\begin{proposition}[Path expansion for $\tilde{\lambda}^{(n)}$]
\label{prop:pathexp}
For each $n \in \mathbb{N}$ and $z \in \tilde\Pi^{(L_{t, \varepsilon})}$ uniformly, as $t \to \infty$,
\begin{align*}
\tilde{\lambda}^{(n)}(z) &=  \tilde\eta_z(z) + \tilde{\sigma}^{-1}_z(z) \sum_{2 \leq k \leq 2j} \sum_{\substack{ p \in \Gamma_{k}(z, z) \\ p_i \neq z , \, 0 < i < k  \\ \mathrm{Set}(p) \subseteq B(z, n) }} \prod_{0 < i < k} (2d)^{-1} \frac{ \tilde{\sigma}_z^{-1}(p_i)}{\tilde{\lambda}^{(n)}(z) - \tilde{\eta}_z(p_i)} + o(d_t e_t) \,  , \\
& =  \tilde\eta_z(z) + O(a_t^{-1}) \, .  
\end{align*}
\end{proposition}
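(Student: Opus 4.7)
The plan decomposes into two parts: (i) identifying the punctured and un-punctured eigenvalues almost surely, and (ii) deriving the truncated path expansion by applying the general formula of Proposition~\ref{prop:genpathexp} to $\tilde{\mathcal H}^{(n)}(z)$ and cutting the series at loop length $2j$.

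For (i), I will argue that because $\xi$-exceedences of the level $L_t$ are sparse, the punctured field $\tilde\xi_z$ coincides with $\xi$ throughout $B(z,n)$ eventually almost surely. Since $\varepsilon < \theta$ gives $L_{t,\varepsilon} > L_t$ and hence $\Pi^{(L_{t,\varepsilon})} \subseteq \Pi^{(L_t)}$, I apply Lemma~\ref{lem:assep} with $a=\theta$ in the $n$-extended macrobox to conclude that distinct $L_t$-exceedences are separated by at least $|V_t|^{(1-2a')/d}$ for some $a'<\theta$, which vastly exceeds the fixed $n$ eventually. Under the natural coupling of $\tilde\xi_z$ with $\xi$ (which forces $\tilde\xi_z(y)=\xi(y)$ whenever $\xi(y)<L_t$), it follows that $\tilde{\mathcal H}^{(n)}(z)=\mathcal{H}^{(n)}(z)$ on $B(z,n)$ and hence $\tilde\lambda^{(n)}_t(z)=\lambda^{(n)}(z)$ eventually almost surely.

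For (ii), Proposition~\ref{prop:genpathexp} applied to $\tilde{\mathcal H}^{(n)}(z)$ on the domain $B(z,n)$ yields the exact identity as an infinite sum over loops from $z$ to $z$, and it remains to bound the tail past $k=2j$. The crucial input is a uniform lower bound on the denominators: since $z\in\Pi^{(L_{t,\varepsilon})}$, the lower bound in Lemma~\ref{lem:minmax} gives $\tilde\lambda^{(n)}_t(z)\ge\eta(z)>L_{t,\varepsilon}-\delta_\sigma^{-1}$, while the punctured field enforces $\tilde\xi_z(p_i)<L_t$ (and hence $\eta(p_i)<L_t$) for every $p_i\neq z$. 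Combined,
\[ \tilde\lambda^{(n)}_t(z)-\eta(p_i) \ge L_{t,\varepsilon}-L_t-\delta_\sigma^{-1} \asymp a_t, \]
uniformly in $z\in\Pi^{(L_{t,\varepsilon})}$. With $\invsigma\le\delta_\sigma^{-1}$ and the bound $|\Gamma_k(z,z)|\le(2d)^k$, the length-$k$ contribution is $O(a_t^{-(k-1)})$; since loops have even length, the tail over $k>2j$ is $O(a_t^{-(2j+1)})$. The second display $\tilde\lambda^{(n)}_t(z)=\eta(z)+O(a_t^{-1})$ then follows because the truncated sum for $2\le k\le 2j$ consists of a bounded (in terms of $n$) number of terms, each of size $O(a_t^{-1})$.

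The main technical subtlety is verifying that $a_t^{-(2j+1)}=o(d_t e_t)$, and this is precisely what dictates the choice $j=[\gamma-1]$. Indeed, this choice is equivalent to $2(j+1)>\gamma$, which yields $a_t^{-(2j+1)}/d_t = O((\log t)^{1-2(j+1)/\gamma})$, a strict negative power of $\log t$. Since the scaling constraints in \eqref{scalingfns} permit $e_t$ to decay as slowly as one wishes (subject to the relationships with $b_t,f_t,g_t,h_t$), in particular more slowly than any such power, the ratio $a_t^{-(2j+1)}/(d_t e_t)\to 0$. Everything else is a direct application of the general eigenvalue theory of Section~\ref{sec:gentheory} together with the separation estimates of Section~\ref{sec:prop}, with uniformity in $z\in\Pi^{(L_{t,\varepsilon})}$ coming for free because all of the bounds on the denominators depend on $z$ only through the lower bound $\xi(z)>L_{t,\varepsilon}$ that defines the set.
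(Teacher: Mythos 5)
Your proof takes essentially the same route as the paper's: apply the general loop expansion of Proposition~\ref{prop:genpathexp} to $\tilde{\mathcal H}^{(n)}(z)$, control the denominators uniformly over $z\in\Pi^{(L_{t,\varepsilon})}$ via Lemmas~\ref{lem:assep} and~\ref{lem:minmax} together with the fact that the punctured field is $<L_t$ off $z$, truncate the even-length loop sum at $2j$, and establish the identity $\tilde{\lambda}^{(n)}_t=\lambda^{(n)}$ eventually almost surely by separation plus the natural coupling. Two small remarks: (i) your tail exponent $O(a_t^{-(2j+1)})$ is the sharp one, since the first omitted loop has length $2j+2$ and hence $2j+1$ factors of order $a_t^{-1}$ (the paper's $a_t^{-(2j+2)}$ is a harmless slip); and (ii) the claim that the requirement $a_t^{-(2j+1)}=o(d_te_t)$ is ``precisely what dictates'' $j=[\gamma-1]$, or that this choice is ``equivalent to $2(j+1)>\gamma$'', is not accurate---$j$ is actually fixed by the strictly stronger constraint $j>\gamma-2$, which is equation~\eqref{eq:j} in the cluster-expansion argument of Proposition~\ref{prop:U2neg}; the condition $2(j+1)>\gamma$ needed here is a (weaker) consequence, so your estimate goes through but does not by itself determine $j$.
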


\begin{proof}
Applying Proposition~\ref{prop:genpathexp} we have that
\[ \tilde{\lambda}^{(n)}(z) = \tilde\eta_z(z) + \tilde{\sigma}^{-1}_z(z)   \sum_{k \geq 2} \sum_{\substack{ p \in \Gamma_{k}(z, z) \\ p_i \neq z , \, 0 < i < k  \\ \mathrm{Set}(p) \subseteq B(z, n) }} \prod_{0 < i < k} (2d)^{-1}  \frac{  \tilde{\sigma}_z^{-1}(p_i) }{\tilde{\lambda}^{(n)}(z) - \tilde\eta_z(p_i)} \, .  \]
Now observe that, by Lemma \ref{lem:minmax} and the truncation in $\tilde\xi_z$, for each $p_i \in B(z, n) \setminus \{z\}$,
$$ \tilde{\lambda}^{(n)}(z) - \tilde\eta_z(p_i) > L_{t, \varepsilon} - L_t - \delta_\sigma^{-1} \sim (\theta - \varepsilon) a_t \, . $$
Moreover, each $\tilde{\sigma}_z^{-1}(p_i)$ is bounded above by $\delta^{-1}_\sigma$. Finally, as $t \to \infty$,
$$ a_t ^{-(2j + 2)} = o(d_t e_t) \, ,$$
by the definition of $j$. This means that, up to the error $o(d_t e_t)$, we can truncate the sum at paths with $2j$ steps. It also means that the total contribution from the sum over paths $p \in \Gamma_k(z, z)$ is $O(a_t^{-1})$.
\end{proof}

\begin{proposition}[Extremal theory for $\tilde{\lambda}^{(n)}$;  see {\cite[Section 6]{Astrauskas08}, \cite[Proposition~4.2]{Fiodorov13}}]
\label{prop:asympt}
For each $n \in \mathbb{N}$, there exists a scaling function $A_t = a_t + O(1)$ such that, as $t \to \infty$ and for each fixed $x \in \mathbb{R}$,
\[ t^d \, \PP \left(\tilde{\lambda}^{(n)}(0) > A_t + x d_t  \right) \to e^{- x} \, .  \]
Moreover, there exists a $c > 0$ such that, as $t \to \infty$ and uniformly for $x > 0$,
\[ t^d \, \PP \left(\tilde{\lambda}^{(n)}(0) > A_t + x d_t  \right) < e^{-cx^{\min\{1, \gamma\} }} \, .  \]
\end{proposition}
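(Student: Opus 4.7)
My plan is to reduce the extremal problem for $\tilde\lambda_t^{(n)}(0)$ to that of $\xi(0)$, which has an explicit Weibull tail, by integrating out the independent nuisance fields $\sigma$ and $\{\tilde\xi_0(y)\}_{y\neq 0}$. By Proposition~\ref{prop:pathexp}, the path expansion gives
\[
\tilde\lambda_t^{(n)}(0) = \xi(0) - \invsigma(0) + \Phi + o(d_t e_t),
\]
where $\Phi = O(a_t^{-1})$ is a functional of the data on $B(0,n)\setminus\{0\}$ (with an implicit self-dependence on $\tilde\lambda_t^{(n)}(0)$). On the tail event $\{\tilde\lambda_t^{(n)}(0) > A_t + xd_t\}$ we have $\xi(0) \sim a_t \gg L_t$, so every denominator $\tilde\lambda_t^{(n)}(0) - \eta(p_i)$ appearing in the expansion is of order $a_t$; the sensitivity of $\Phi$ to its own argument is therefore $O(a_t^{-2})$, and by the monotonicity of principal eigenvalues in $\xi(0)$ (Lemma~\ref{lem:mono}) this can be resolved into an explicit functional of the remaining fields with negligible error.

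Setting $\Psi := \invsigma(0) - \Phi$, which is uniformly bounded by Assumption~\ref{assump:sigma}(a), and conditioning on everything other than $\xi(0)$, I would then write
\[
\PP\big(\tilde\lambda_t^{(n)}(0) > A_t + xd_t \,\big|\,\sigma,\{\tilde\xi_0(y)\}_{y\neq 0}\big) = \exp\!\big(-(A_t + xd_t + \Psi)^\gamma\big)\,(1+o(1)).
\]
The key analytic identity is $\gamma a_t^{\gamma-1} d_t = 1$. Expanding
\[
(A_t + xd_t + \Psi)^\gamma = (A_t + \Psi)^\gamma + \gamma(A_t + \Psi)^{\gamma-1} xd_t + O\!\big((A_t + \Psi)^{\gamma-2}(xd_t)^2\big),
\]
and using $(A_t + \Psi)^{\gamma-1} d_t = \gamma^{-1}(1+o(1))$ (valid since $\Psi = O(1)$ and $A_t - a_t = O(1)$), while the quadratic remainder is $o(1)$, I obtain after taking expectation
\[
\PP\big(\tilde\lambda_t^{(n)}(0) > A_t + xd_t\big) = e^{-x}\,\EE\!\big[\exp\!\big(-(A_t + \Psi)^\gamma\big)\big]\,(1+o(1)).
\]

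Finally, I would define $A_t$ implicitly as the unique solution of $t^d\,\EE[\exp(-(A_t + \Psi)^\gamma)] = 1$, which exists and is unique by monotonicity of the left-hand side in $A_t$. To verify $A_t = a_t + O(1)$, observe that since $|\Psi| \leq K$ for some $K$ independent of $t$, one has $(A_t - K)^\gamma \leq (A_t + \Psi)^\gamma \leq (A_t + K)^\gamma$, whence
\[
\exp(-(A_t+K)^\gamma) \leq \EE[\exp(-(A_t+\Psi)^\gamma)] \leq \exp(-(A_t-K)^\gamma);
\]
solving both bounds forces $A_t^\gamma = d\log t + O(a_t^{\gamma-1})$, i.e.\ $A_t = a_t + O(1)$. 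The main technical obstacle will be controlling the implicit dependence of $\Phi$ on $\xi(0)$ uniformly over the tail event and ensuring the $(1+o(1))$ approximation errors survive the transition to tail probabilities; the calibrated truncation depth $2j$ in Proposition~\ref{prop:pathexp} is designed precisely so that these errors remain $o(d_t)$ at this scale, even for large $\gamma$ where higher-order path terms are significant.
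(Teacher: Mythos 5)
Your proof takes essentially the same route as the paper's: you use the path expansion of Proposition~\ref{prop:pathexp}, exploit monotonicity of $\tilde\lambda_t^{(n)}(0)$ in $\xi(0)$ to convert the tail event into an event $\{\xi(0) > A_t + xd_t + \Psi\}$ with $\Psi$ a bounded functional of the remaining fields, then Taylor-expand the Weibull tail using the identity $\gamma a_t^{\gamma-1} d_t = 1$ to extract the factor $e^{-x}$, and finally select $A_t$ by a normalisation argument. Your calculation of the quadratic remainder (order $a_t^{-\gamma} = o(1)$) and the observation that $|\Psi| \le \delta_\sigma^{-1} + O(a_t^{-1})$ is uniformly bounded are both correct and are exactly the estimates driving the paper's argument. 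The only cosmetic difference is that you pin down $A_t$ as the exact solution of $t^d\,\EE[\exp(-(A_t+\Psi)^\gamma)]=1$ with a monotonicity argument for existence/uniqueness, whereas the paper constructs $A_t = a_t + O(1)$ by a sign-change-plus-continuity argument applied to the same integral; these are equivalent. (One minor point of agreement with your notation rather than the paper's: your $\Psi := \invsigma(0) - \Phi$ carries the correct sign for the path-expansion correction, which is worth noting.)
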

\begin{proof}
To ease notation, we abbreviate the fields $\tilde\xi_0$ and $\tilde\sigma_0$ by $\xi$ and $\sigma$ respectively (although these should not be confused with the original fields $\xi$ and $\sigma$, since the law of $\tilde \xi_0(z)$ has been truncated at $z \neq 0$). First remark that, by Lemma \ref{lem:minmax}, as $t \to \infty$, 
$$ \tilde{\lambda}^{(n)}(0)> A_t + x d_t   
\quad \text{implies that} \quad \xi(0) > L_{t, \varepsilon} \,,$$
eventually, which means that we can apply the path expansion in Proposition~\ref{prop:pathexp} to $\tilde{\lambda}^{(n)}(0)$. Let $A_t$ be an arbitrary scale such that $A_t = a_t + O(1)$, and define the function
\[ Q(A_t; \xi, \sigma) :=   \invsigma(0) + \invsigma(0) \sum_{2 \leq k \leq 2j} \sum_{\substack{ p \in \Gamma_{k}(0, 0) \\ p_i \neq 0 , \, 0 < i < k  \\ \mathrm{Set}(p) \subseteq B(z, j) }} \prod_{0 < i < k} (2d)^{-1} \frac{\invsigma(p_i)}{A_t - \eta(p_i)}  \, , \]
if $\xi(y) < L_t$ for each $y \in B(0, j) \setminus \{0\}$ and $Q(A_t; \xi, \sigma) := 0$ otherwise. Then, since $\tilde{\lambda}^{(n)}(0)$ is strictly increasing in $\xi(0)$ we have that, as $t \to \infty$,
\begin{align}
 \nonumber & \PP \left( \tilde{\lambda}^{(n)}(0)  >  A_t + x d_t  \right)  \sim  \PP \bigg( \xi(0) >  A_t + x d_t + Q(A_t + x d_t; \xi, \sigma)  \bigg)  \\
\label{eq:intrep1}  &  \qquad \sim  \PP \bigg( \xi(0) >  A_t + x d_t + Q(A_t; \xi, \sigma)  \bigg)  \\
 \label{eq:intrep4} &  \qquad \sim t^{-d} e^{-x} \int_{\xi, \sigma} \exp \bigg\{ a_t^\gamma - \bigg( A_t + Q(A_t; \xi, \sigma) \bigg)^\gamma \bigg\} \, d \mu_\xi \, d \mu_\sigma 
\end{align}
where the first asymptotic accounts for the error in the path expansion Proposition~\ref{prop:pathexp}, the second and third asymptotics result from Taylor expansions, and are uniform in $\xi$ and $\sigma$, and where $\mu_\xi$ and~$\mu_\sigma$ stand for the joint probability densities of $\{\xi(y)\}_{y \in B(0, n) \setminus \{0\}}$ and $\{\sigma(y)\}_{y \in B(0, n)}$ respectively.  Consider then the integral in \eqref{eq:intrep4}, which we abbreviate as
\[  I(A_t) :=  \int_{\xi, \sigma} f(A_t; \xi, \sigma) \, d \mu_\xi \, d \mu_\sigma  \ , \quad f(A_t; \xi, \sigma):= \exp \bigg\{ a_t^\gamma - \bigg( A_t + Q(A_t; \xi, \sigma) \bigg)^\gamma \bigg\}  \, . \]
Note that $Q(A_t; \xi, \sigma)$ is uniformly bounded as $t \to \infty$ (by $2\delta_\sigma^{-1}$ for instance) . Hence, for $C$ sufficiently large, as $t \to \infty$ eventually
\[  f( a_t + C; \xi, \sigma)  < 1 < f(a_t - C; \xi, \sigma)   \]
uniformly in $\{\xi(y)\}_{y \in B(0, n) \setminus \{0\}}$ and $\{\sigma(y)\}_{y \in B(0, n)}$. This implies that $I(A_t - C) < 1 < I(a_t + C)$. Moreover, since $f(A_t; \xi, \sigma)$ is continuous in $A_t$ uniformly for each $\{\xi(y)\}_{y \in B(0, n) \setminus \{0\}}$ and $\{\sigma(y)\}_{y \in B(0, n)}$, the function $I(A_t)$ is continuous in $A_t$. Hence, by the intermediate value theorem function, there exists an $A_t = a_t + O(1)$ such that, as $t \to \infty$ eventually $I(A_t) = 1$, which gives the first result. For the second, instead of \eqref{eq:intrep1} we bound $Q(A_t + xd_t; \xi, \sigma)$ above, uniformly in $x > 0$, by $Q(A_t; \xi, \sigma)$, which produces the bound
\[  t^{-d} \int_{\xi, \sigma} \exp \bigg\{ a_t^\gamma - \bigg( A_t + Q(A_t; \xi, \sigma) \bigg)^\gamma \bigg( 1 + \frac{x}{\gamma} (\log t)^{-1}  \bigg)^\gamma \bigg\} \, d \mu_\xi \, d \mu_\sigma\, . \]
In the case $\gamma \ge 1$, we bound this expression above uniformly in $x > 0$ by
\[  t^{-d} \int_{\xi, \sigma} \exp \bigg\{ a_t^\gamma - \bigg( A_t + Q(A_t; \xi, \sigma) \bigg)^\gamma \bigg( 1 + \frac{x}{\gamma} (\log t)^{-1}  \bigg) \bigg\} \, d \mu_\xi \, d \mu_\sigma \sim e^{- \frac{x}{\gamma}(1+o(1))} \, , \]
using the definition of $A_t$ and the fact that $A_t + Q(A_t; \xi, \sigma) \sim a_t$ in the last step. The case $\gamma < 1$ is simpler, since then we have simply
\[  \PP \bigg( \xi(0) >  A_t + x d_t + \invsigma(0)  \bigg) =   \PP \bigg( \xi(0) >  a_t + x d_t + O(1)  \bigg) \] 
and the bound follows from the regularity of Weibull tail of $\xi(0)$ in Assumption \ref{assump:xi}.
 \end{proof}
  
We now define the set-up we shall need to examine the correlation of the potential field and trapping landscape near sites of high $\tilde{\lambda}^{(n)}$; since the nature of this correlation differs depending on $(\gamma, \mu)$, so does our set-up. Fix a constant $\nu \in (0, 1)$. Recalling the definition of the `interface cases' $\mathcal{B}$ and $\mathcal{B}_\xi$, define the `interface sites'
\[ \mathcal{F} :=  \begin{cases}
  y \in \mathbb{Z}^d : |y| = \rho\,, & \text{if } (\gamma, \mu) \in \mathcal{B}\, , \\
\varnothing\,,   & \text{else}\, , \\
\end{cases} \qquad \text{and} \qquad
 \mathcal{F}_\xi :=  \begin{cases}
  y \in \mathbb{Z}^d : |y| = \rho_\xi\,, & \text{if } (\gamma, \mu) \in \mathcal{B}_\xi\, , \\
\varnothing\,,   & \text{else}\,. \\
\end{cases} \,  \]
Recalling the definition of $n(y)$, for each $y \in \mathbb{Z}^d$ define the positive constants
\[   c_\sigma :=  \begin{cases} 
 \left(\frac{\gamma}{\mu} \right)^{\frac{1}{\mu+1}}  \, , & \text{if } q_\sigma > 0 \, ,\\
 0 \, ,  & \text{else} \, ,
\end{cases} \ , \quad c_\xi(y) :=  \begin{cases} 
 \left( n(y)^2 (2d)^{-1} \delta^{-1}_\sigma c^{-1}_\sigma \right)^{\frac{1}{\gamma - 1}} \, ,  & \text{if } q_\xi(|y|) > 0 \, ,\\
 0 \, , & \text{else} \, ,
\end{cases}  \, ,\] 
 \[  \bar c_\sigma(y) := n(y)^2 (2d)^{-1} \gamma c_\sigma^{-1}    \qquad \text{and} \qquad   \bar c_\xi(y) := \bar c_\sigma(y) \, \delta_\sigma^{-1} \, . \] 
For each $n \in \mathbb{N}$, if $\mu > 0$ and $\gamma > 1$, define the rectangles
\[ E_\xi :=  \prod_{y \in (B(0, n \wedge \rho_\xi) \setminus \{0\}) \setminus \mathcal{F}_\xi }  (-f_t, f_t) \ \times \prod_{y \in (B(0, n) \setminus B(0, n \wedge \rho_\xi)) \cup \mathcal{F}_\xi} (f_t, g_t)  \, ,  \]
\[  E_\sigma :=   ( - f_t,  f_t) \ \times \ \prod_{y \in (B(0, n) \setminus \{0\}) \setminus \mathcal{F} } (0, f_t) \ \times \prod_{y \in (B(0, n) \setminus B(0, n \wedge \rho )) \cup \mathcal{F} } (0,  g_t)  \, , \]
\[ S_\xi := \prod_{y \in (B(0, n \wedge \rho_\xi) \setminus \{0\}) \setminus \mathcal{F}_\xi }  a_t^{q_\xi(|y|)} (c_\xi(y) - f_t, c_\xi(y) + f_t)  \ \times \prod_{y \in (B(0, n) \setminus B(0, n \wedge \rho_\xi)) \cup \mathcal{F}_\xi} (f_t, g_t)  \, ,   \]
and
\[ S_\sigma := a_t^{q_\sigma}(c_\sigma  - f_t, c_\sigma  + f_t) \ \times \ \prod_{y \in (B(0, n) \setminus \{0\}) \setminus \mathcal{F}  }   (\delta_\sigma,\delta_\sigma+ f_t) \ \times \prod_{y \in (B(0, n) \setminus B(0, n \wedge \rho )) \cup \mathcal{F} } (0,  g_t) \, . \]
If $\mu = 0 $ and $\gamma > 1$, define instead 
\[ E_\sigma :=  (a_t^{-\nu}, \infty) \ \times \prod_{y \in B(0, n) \setminus \{0\} } (0, g_t) \quad \text{and} \quad S_\sigma :=  a_t^{\gamma - 1}(a_t^{-\nu}, \infty) \ \times \ \prod_{y \in B(0, n) \setminus \{0\} }  (0,  g_t) \, , \]
whereas if $\gamma \le 1$, maintain the definition of $E_\sigma$ but define instead
\[ S_\sigma :=  \prod_{y \in B(0, n) }  (0,  g_t) \, . \]
For each $n \in \mathbb{N}$ and $z \in \mathbb{Z}^d$, define the event
\begin{align*}
& \mathcal{S}^{(n)}_t(z) := \left\{ \{\tilde\xi_z(y)  \}_{y \in B(z, n) \setminus \{z\}} \in S_\xi  \ , \ \{\tilde\sigma_z(y) \}_{y \in B(z, n)} \in S_\sigma \right\} \, ,
\end{align*}
and, for each $x \in \mathbb{R}$ and the scaling function  $A_t$ from Proposition~\ref{prop:asympt}, further define the event
\[  \mathcal{A}_t := \left\{ \tilde{\lambda}^{(n)}(0) > A_t  + x d_t  \right\} \, .\]

\begin{proposition}[Correlation of potential field and trapping landscape]
\label{prop:corr}
For each $n \in \mathbb{N}$, as $t \to \infty$,
$$  \PP  \left(   \mathcal{S}^{(n)}_t(0) \big| \mathcal{A}_t \right) \to 1 \, . $$
Moreover, as $t \to \infty$,
\begin{align}
\label{eq:thresxi}
f_{\tilde{\xi}_0(y)|\mathcal{A}_t}(x)  \to \frac{ e^{\bar c_\xi(y) x} f_\xi(x)} { \EE[ e^{\bar c_\xi(y) \xi(0)}  ]} \ , \quad \text{for each } y \in \mathcal{F}_\xi , 
\end{align}
uniformly over $x \in (0, L_t)$, and
\begin{align}
\label{eq:thressigma2}
f_{\tilde{\sigma}_0(y)|\mathcal{A}_t}(x)   \to \frac{e^{\bar c_\sigma(y) /x}f_{\sigma}(x)}{\EE[e^{\bar c_\sigma(y) / \sigma(0)} ]} \ , \quad \text{for each } y \in \mathcal{F}   ,
\end{align}
uniformly over $x$. Finally, if $\gamma = 1$, then for each $x \in \mathbb{R}^+$, as $t \to \infty$,
\begin{align}
\label{eq:thressigma1}
 f_{\tilde\sigma_0(0)|\mathcal{A}_t}(x)   \to \frac{ e^{ -1/x} f_\sigma(x) }{\EE[ e^{-1/\sigma(0)} ]} \, ,
 \end{align}
 uniformly over $x$.
\end{proposition}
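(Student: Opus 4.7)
The plan is to extend the Laplace integral technique used in Proposition 4.2 so as to track the joint conditional distribution of $\{\xi(y)\}_{y \in B(0,n)}$ and $\{\sigma(y)\}_{y \in B(0,n)}$ given $\mathcal{A}_t$. Using the path expansion in Proposition 4.1, the event $\mathcal{A}_t$ is equivalent, up to an $o(d_t e_t)$ error, to $\{\xi(0) > A_t + x d_t - R(A_t + x d_t;\tilde{\xi}_0, \sigma)\}$, where $R$ denotes the truncated path-expansion functional with the $\xi(0)$-term removed. Integrating out $\xi(0)$ via the explicit form of $\bar F_\xi$, and then Taylor expanding $(A_t + x d_t - R)^\gamma$ around $a_t^\gamma$ (valid since $R = O(1)$ uniformly on the relevant region and $d_t = \gamma^{-1} a_t^{1-\gamma}$), one obtains, for any measurable $\mathcal{T}_t$ depending only on the remaining variables,
\begin{align*}
\PP(\mathcal{A}_t \cap \mathcal{T}_t) \sim t^{-d} e^{-x} \int_{\mathcal{T}_t} \exp\bigl\{ \gamma a_t^{\gamma-1} R(\tilde{\xi}_0,\sigma) + o(1) \bigr\}\, d\mu_\xi\, d\mu_\sigma,
\end{align*}
so that the conditional law of $(\tilde{\xi}_0,\sigma)$ given $\mathcal{A}_t$ is asymptotically proportional to this tilted density.

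The second step is a term-by-term saddle-point analysis of the tilted density. The shortest-path structure of $R$ (for each site $y \in B(0,n) \setminus \{0\}$ the leading contribution comes from the $n(y)^2$ shortest closed paths through $y$ of length $2|y|$, with all other intermediate $\sigma$-values at the lower bound $\delta_\sigma$) causes the tilted density to factorize, to leading order, into independent Laplace integrals in each $\xi(y)$ and $\sigma(y)$. Each such integral reduces to an optimization of the form $\alpha_y a_t^{\beta_y} \xi(y) - \xi(y)^\gamma$ in $\xi(y)$ and analogously $\alpha'_y a_t^{\beta'_y} \sigma(y)^{-1} - \sigma(y)^\mu$ in $\sigma(y)$. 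When the resulting exchange rate is strictly positive, the integral concentrates at the interior saddle, yielding $\sigma(0) \sim c_\sigma a_t^{q_\sigma}$, $\xi(y) \sim c_\xi(y) a_t^{q_\xi(|y|)}$ for $y \in B(0,\rho_\xi) \setminus \mathcal{F}_\xi$, and $\sigma(y) \sim \delta_\sigma$ for $y \in (B(0,\rho) \setminus \{0\}) \setminus \mathcal{F}$; when the rate is strictly negative the cost dominates and the field stays $O(1)$. Substituting the path-expansion coefficients $n(y)^2 (2d)^{-(2|y|-1)} \delta_\sigma^{-(2|y|-1)}$ into the first-order optimality conditions matches the saddles to the stated constants $c_\sigma$ and $c_\xi(y)$.

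At the interface cases, the exchange rate vanishes and the Laplace integral ceases to concentrate. Specifically, for $(\gamma,\mu) \in \mathcal{B}_\xi$ with $|y| = \rho_\xi$, the Taylor factor $\gamma a_t^{\gamma-1}$ multiplied by the path-expansion coefficient of $\xi(y)$ in $R$ tends to the constant $\bar c_\xi(y)$, so that the conditional density of $\xi(y)$ is proportional to $e^{\bar c_\xi(y) \xi(y)} f_\xi(\xi(y))$ on $(0, L_t)$; uniformity of this density on $(0,L_t)$ is inherited from uniformity of the Taylor expansion, which is valid since $L_t \ll A_t$. The analogous argument for $(\gamma,\mu) \in \mathcal{B}$ with $|y| = \rho$, applied to the $\sigma(y)^{-1}$-dependence of $R$ through the intermediate $\sigma$ in the shortest paths, produces the tilt $e^{\bar c_\sigma(y)/\sigma(y)}$. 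The $\gamma = 1$ case for $\sigma(0)$ is structurally identical: the Taylor factor $\gamma a_t^{\gamma-1} = 1$ degenerates so the $-\sigma(0)^{-1}$ term of $\eta(0)$ in $R$ produces the tilt $e^{-1/\sigma(0)}$ with no concentration. In the $\mu = 0$, $\gamma > 1$ case the cost $\sigma(0)^\mu \equiv 1$ is trivial above the truncation threshold, so no saddle can be identified; the weak polynomial lower bound in $S_\sigma$ remains, however, since a $\sigma(0)$ smaller than $a_t^{\gamma-1-\nu}$ pays an $a_t^{-\nu}$-sized excess in $\sigma(0)^{-1}$ that is prohibited by the tilt.

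The main technical obstacle is controlling error terms—the $o(d_t e_t)$ remainder of the path expansion, the Taylor remainder in $(A_t + x d_t - R)^\gamma$, and the Gaussian-type fluctuations around each saddle—uniformly over the integration regions, and on the $f_t$-scale defining $S_\xi$ and $S_\sigma$. This is precisely what the scaling relations in (2.1), in particular $b_t \ll f_t h_t$ and $(\log s_t)^2 \ll \log \log t$, are designed to deliver; together with a Fubini argument they allow the per-site Laplace analyses to be combined into the multivariate statement.
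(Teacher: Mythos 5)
Your proposal follows essentially the same approach as the paper's proof: rewrite $\PP(\mathcal{A}_t)$ via the path expansion and the explicit Weibull tail as a tilted integral over the remaining field values, Taylor expand the outer $\gamma$-power to expose a per-site exponential tilt, and run a site-by-site Laplace/saddle-point analysis that matches the paper's cases (concentration at an interior saddle where the exchange rate is strictly positive, degeneration to a bounded tilt at the interfaces $\mathcal{B}$, $\mathcal{B}_\xi$, and the special handling of $\gamma=1$ and $\mu=0$). One small slip: you justify the uniformity of \eqref{eq:thresxi} over $x\in(0,L_t)$ by ``$L_t\ll A_t$,'' but in fact $L_t=((1-\theta)\log|V_t|)^{1/\gamma}\sim(1-\theta)^{1/\gamma}a_t$, so $L_t$ is a constant fraction of $A_t$; what one actually needs (and what holds) is that $A_t-L_t\gtrsim c\,a_t\to\infty$ for some $c>0$, which suffices to control the Taylor remainder uniformly on that range.
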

\begin{proof}
Once again, we abbreviate $\tilde\xi_0$ and $\tilde\sigma_0$ as $\xi$ and $\sigma$ respectively. Define a field $s : B(0, n) \setminus \{0\} \cup B(0, n) \to \mathbb{R}$ with projections $s_\xi$ and $s_\sigma$ onto $B(0, n) \setminus \{0\}$ and $B(0, n)$ respectively. For a scale $C_t \sim a_t $ define the function
\begin{align*} & Q_t(C_t; s) :=  a_t^{-q_\sigma} (c_\sigma +  s_\sigma(0))^{-1} -  a_t^{-q_\sigma} (c_\sigma +  s_\sigma(0))^{-1} \\
 & \qquad \times \sum_{2 \leq k \leq 2j}   \sum_{\substack{ p \in \Gamma_{k}(0, 0) \\ p_i \neq 0  , \, 0 < i < k  \\ \mathrm{Set}(p) \subseteq B(0, n) }}  \prod_{0 < i < k} \frac{(2d)^{-1} \, ( \delta_\sigma +  s_\sigma(p_i))^{-1}}{C_t - a_t^{q_\xi(|p_i|)} (c_\xi(p_i) + s_\xi(p_i)) + (\delta_\sigma + s_\sigma(p_i))^{-1}} \,,
 \end{align*}
if, for each $y \in B(0, n) \setminus \{0\}$,
\[ a_t^{q_\xi(|y|)} (c_\xi(y) + s_\xi(y)) \in (0, L_t) \ , \quad  s_\sigma(y) > 0 \quad \text{and} \quad a_t^{q_\sigma} (c_\sigma + s_\sigma(0) ) > 0 \]
are satisfied, and $Q_t(C_t; s) := 0$ otherwise. Define further the function
\begin{align*}
 R_t(C_t; s) & :=  a_t^\gamma - (C_t + Q_t(C_t; s))^\gamma + \sum_{y \in B(0, n) } \left( \log f_\xi \left(a_t^{q_\xi(|y|)} (c_\xi(y) + s_\xi(y)) \right) + \log a_t^{q_\xi(|y|)} \right) \\
 & \qquad  +  \log f_\sigma \left(a_t^{q_\sigma}(c_\sigma + s_\sigma(0) \right) + \log a_t^{q_\sigma}  + \sum_{y \in B(0, n) \setminus\{0\} }  \log f_\sigma \left(\delta_\sigma +  s_\sigma(y) \right)  \, .
\end{align*}
To motivate these definitions, consider that, similarly to the above, we can write
\begin{align}
\label{eq:intrep2}
 \PP \left( \tilde{\lambda}^{(n)}(0)  >  A_t + x d_t  \right)  \sim  t^{-d} e^{-x} \int_{\mathbb{R}^{2|B(0, n)| - 1}}  \exp \left\{ R_t(A_t; s)  \right\} \, d s \, .
\end{align}
It remains to show that the integral in \eqref{eq:intrep2} is asymptotically concentrated on the set $E_\xi \times E_\sigma$ and that equations \eqref{eq:thresxi}--\eqref{eq:thressigma1} are satisfied. This fact can be checked by a somewhat lengthy computation which we only sketch here. We shall treat separately three cases: (i) $\gamma > 1$ and $\mu > 0$; (ii) $\gamma > 1$ and $\mu = 0$; and (iii) $\gamma \le 1$. We begin with case (i), which is the most delicate. 

We must analyse the variables $s_\sigma(0)$, $\{s_\sigma(y)\}_{y \in B(0, n) \setminus \{0\}}$, and $\{s_\xi(y)\}_{y \in B(0, n) \setminus \{0\}}$ separately; we start with $s_\sigma(0)$. In what follows abbreviate $R_t(A_t; s)$ by $R_t(s)$. Fix an arbitrary choice of the components of $s$ and consider how $R_t(s)$ varies with $s_\sigma(0)$. Notice that the function $R_t(s)$ can be decomposed into two parts, one of which decreases as $s_\sigma(0)$ increases (through $Q_t$) and another which increases as $s_\sigma(0)$ increases (through $f_\sigma$). The first part is analysed by Taylor expanding $(A_t + Q_t(A_t;s))^\gamma$, from which it can be seen that the dependence on $s_\sigma(0)$ is, as $t \to \infty$,
\[ \gamma  \, a_t^{-q_\sigma}\, a_t^{\gamma-1} \, (c_\sigma +  s_\sigma(0))^{-1} \,  \left( 1 + o(1) \right) \] 
where the error term $o(1)$ is uniform in $s$. The second part is given by $- \log f_\sigma (a_t^{q_\sigma} (c_\sigma + s_\sigma(0)))$ which is, if $\mu > 0$, eventually
\[   a_t^{q_\sigma \mu} (c_\sigma + s_\sigma(0))^\mu   \, .\]
Hence, since we defined $q_\sigma$ precisely so that
\[-q_\sigma + \gamma - 1 = q_\sigma \mu \, ,\]
the function $R_t$ has the asymptotic form, as $t \to \infty$,
\[  R_t(s) = f_1(t; s) + a_t^{\kappa_1} \left( g_1(s_\sigma(0)  + o(1) \right) \]
where $f_1(t; s)$ is some function not depending on $s_\sigma(0)$, $\kappa_1$ is some positive constant, the function $g_1(x)$ satisfies
\[  g_1(x) := - \gamma (c_\sigma + x)^{-1} - (c_\sigma + x)^\mu  \, ,\]
and the error term $o(1)$ is uniform in $s$. Then we have, uniformly in $s$, as $t \to \infty$,
\begin{align}
\label{eq:intrep3}
 \int_{\mathbb{R}} e^{R_t(s)} \, d s_\sigma(0) \sim e^{f_1(t;s)} \int_{\mathbb{R}} \exp \left\{ a_t^{\kappa_1} g_1(s_\sigma(0))  \right\} \, d s_\sigma(0) \, .
 \end{align}
Remark that $g_1(x)$ achieves a unique maximum at $0$ (by the construction of $c_\sigma$). Therefore, by the Laplace method, the above integral is eventually asymptotically concentrated around $0$ on the order $a_t^{\kappa_1}$, and hence the integral is concentrated on the domain $s_\sigma(0) \in (-f_t, f_t)$.

Consider now the variables $\{s_\sigma(y)\}_{y \in B(0, n) \setminus \{0\}}$. Fix an $s_\sigma(0) \in (-f_t, f_t)$ and an arbitrary choice of the remaining components of $s$. Again, similarly to the above, the function $R_t(s)$ can be decomposed into two parts, one whose dependence on $s_\sigma(y)$ is, as $t \to \infty$,
\[  n(y)^2 \, (2d)^{-1} \, \gamma  \, c_\sigma^{-1} \,  a_t^{\gamma - 2|y|}  \, a_t^{-q_\sigma} \, (\delta_\sigma + s_\sigma(y))^{-1}  \, \left(1 + o(1) \right) \,  \] 
uniformly in $s$, and another whose dependence is
\[  - \log  f_\sigma(\delta_\sigma + s_\sigma(y) ) \, . \]
Then we have, uniformly in $s$, as $t \to \infty$,
\[  \int_{\mathbb{R}} e^{R_t(s)} \, d s_\sigma(y) \sim e^{f_2(t;s)} \int_{\mathbb{R}} \exp \left\{  \gamma c_\sigma^{-1} a_t^{\kappa_2} (\delta_\sigma + s_\sigma(y))^{-1}   \right\}  f_\sigma(\delta_\sigma + s_\sigma(y) )  \, d s_\sigma(y) \,  , \]
where $f_2(t; s)$ is some function not depending on $s_\xi(y)$, $\kappa_2$ is some non-negative constant with $\kappa_2 > 0$ if and only if $y \in B(0, \rho) \setminus \mathcal{F}$,  and where the error term $o(1)$ is uniform in $s$. Hence, if $y \in B(0, \rho) \setminus \mathcal{F}$, then along with the lower-tail assumption in \ref{assump:sigma}, it is clear that the above integral is asymptotically concentrated on $s_\sigma(y) \in (0, f_t)$. On the other hand, if $y \in \mathcal{F}$, then the integrand is asymptotically
\[ e^{ \bar c_\sigma(y) / (s_\sigma(y) + \delta_\sigma)} f_\sigma(s_\sigma(y) + \delta_\sigma) \, , \]
uniformly over $s_\sigma(y)$, which establishes \eqref{eq:thressigma2}. Trivially, if $y \notin B(0, \rho)$, then the integral is concentrated on $s_\sigma(y) \in (f_t, g_t)$.

Finally, consider the variables $\{s_\xi(y)\}_{y \in B(0, n) \setminus \{0\}}$ and fix $s_\sigma(0) \in (-f_t, f_t)$, $s_\sigma(y) \in (0, f_t)$ for each $y \in B(0, \rho) \setminus \mathcal{F}$, and an arbitrary choice of the remaining components of $s$.  The function $R_t(s)$ can be decomposed into two parts, one whose dependence on $s_\xi(y)$ is of order, as $t \to \infty$,
\[ n(y)^2 \, (2d)^{-1} (\delta_\sigma + s_\sigma(y))^{-1} \gamma c_\sigma^{-1} \, a_t^{q_\xi(|y|)}  \,  a_t^{\gamma - 1 - 2|y|}  a_t^{-q_\sigma}\, (c_\xi(y) + s_\xi(y)) \left( 1 + o(1) \right) \, ,\] 
uniformly in $s$, another whose dependence is 
\[   a_t^{q_\xi(|y|) \gamma} (c_\xi(y) + s_\xi(y))^\gamma     \, .\]
Hence, since we defined $q_\xi(|y|)$ precisely so that
\[  q_\xi(|y|) + \gamma - 1 - 2|y| - q_\sigma = q_\xi(|y|) \gamma \, , \]
if $y \in B(0, \rho_\xi)$, the function $R_t$ has the asymptotic form, as $t \to \infty$,
\[ R_t(s) = f_3(t; s)  + a_t^{\kappa_3} \left( g_3(s_\xi(y))  + o(1) \right)  \]
where $f_3(t; s)$ is some function not depending on $s_\xi(y)$, $\kappa_3$ is some non-negative constant with $\kappa_3 > 0$ if any only if $y \in B(0, \rho_\xi) \setminus \mathcal{F}_\xi$, the function $g_3(x)$ satisfies
\[ g_3(x) :=  \gamma \, n(y)^2 \, (2d)^{-1} \, \delta^{-1}_\sigma \, c_\sigma^{-1}  (c_\xi(y) + x) - (c_\xi(y) + x)^\gamma  \, , \]
 and where the error term $o(1)$ is uniform in $s$. Then we have, uniformly in $s$, as $t \to \infty$,
\[  \int_{\mathbb{R}} e^{R_t(s)} \, d s_\xi(y) \sim e^{f_3(t;s)} \int_{\mathbb{R}} \exp \left\{ a_t^{\kappa_3} g_3(s_\xi(y))  \right\} \, d s_\xi(y) \, .  \]
If $y \in B(0, \rho_\xi) \setminus \mathcal{F}_\xi$, and since $g_3(x)$ achieves a unique maximum at $0$ (by the construction of $c_\xi(y)$), again by the Laplace method this integral is also asymptotically concentrated on $s_\xi(y) \in  (-f_t, f_t)$. On the other hand, if $y \in \mathcal{F}_\xi$, then the integrand  is asymptotically
\[ e^{ \bar c_\xi(y) s_\xi(y)} f_\xi(s_\xi(y)) \, , \]
uniformly over $s_\xi(y)$, which establishes \eqref{eq:thresxi}. Trivially, if $y \notin B(0, \rho_\xi)$, then the integral is concentrated on $s_\xi(y) \in (f_t, g_t)$. Since we have now shown that each component of \eqref{eq:intrep2} is asymptotically concentrated on the respective component of the set $E_\xi \times E_\sigma$, integrating first over $s_\xi(y)$ and $s_\sigma(y)$ for $y \in B(0, n) \setminus \{0\}$, and then over $s_\sigma(0)$, we have the result.

We now turn to case (ii). In this case the integral over $s_\sigma(0)$ in \eqref{eq:intrep3} becomes
\begin{align*}
e^{f_1(t; s)} \int_{\mathbb{R}}  e^{ -\gamma s^{-1}_\sigma(0)}  f_\sigma \left( A_t^{\gamma - 1} s_\sigma(0) \right) \, d s_\sigma(0)  \sim e^{f_1(t; s)} \int_{\mathbb{R}}  e^{ - \gamma  s^{-1}_\sigma(0) }    f_\sigma \left( a_t^{\gamma - 1} s_\sigma(0) \right) \, d s_\sigma(0) \, ,
\end{align*}
where we used the regularity in \ref{assump:sigma} in the last step. On the region $(0, a_t^{-\nu})$, this integral can be bounded above as
\[ \int_0^{a_t^{-\nu}}  e^{ - \gamma  s^{-1}_\sigma(0)}    f_\sigma \left( a_t^{\gamma - 1} s_\sigma(0) \right) \, ds_\sigma(0) \le  \int_0^{a_t^{-\nu}}  e^{ - \gamma  s^{-1}_\sigma(0) }    \, ds_\sigma(0) \le  e^{- \gamma a_t^{\nu}} \, . \] 
On the other hand, for any $0 < c < \nu$, the integral is bounded below by
\[  \int_{a_t^{-c}}^\infty  e^{ - \gamma  s^{-1}_\sigma(0) }    f_\sigma \left( a_t^{\gamma - 1} s_\sigma(0) \right) \, ds_\sigma(0)   \ge e^{- \gamma a_t^{c}} \bar F_\sigma \left( a_t^{\gamma - 1 - c} \right)  \gg e^{ - \gamma a_t^{\nu}} \]
with the final asymptotic following since $\mu = 0$. Hence the integral in \eqref{eq:intrep3} is asymptotically concentrated on $s_\sigma(0) \in (a_t^{-\nu}, \infty)$. Finally, notice that for fixed $s_\sigma(0) \in (a_t^{-\nu}, \infty)$ we have that, as $t \to \infty$,
\[ Q_t(A_t; s) = a_t^{1 - \gamma} s_\sigma^{-1}(0) + o(d_t)\,  \]
since $\nu < 1$, with the error uniform in $s_\sigma(0)$. Hence, for $s_\sigma(0) \in (a_t^{-\nu}, \infty)$, as $t \to \infty$,
\[\exp \{ R_t(A_t; s) \} \sim t^{-d} a_t^{\gamma - 1}  e^{-\gamma s_\sigma^{-1}(0)}  \prod_{s_\xi} f_\xi(s_\xi) \prod_{s_\sigma} f_\sigma(s_\sigma) \,    \]
and so the integral in \eqref{eq:intrep2} is asymptotically concentrated on $E_\xi \times E_\sigma$.

Case (iii) is easier to handle. Now the integral in \eqref{eq:intrep3} becomes
\[ e^{f_1(t; s)} \int_{\mathbb{R}}  \exp\{ - \gamma a_t^{\gamma - 1} s^{-1}_\sigma(0) + o(1) \} \, f_\sigma(s_\sigma(0)) \, d s_\sigma(0) \, , \]
with the error uniform in $s$. If $\gamma < 1$, then this integral is clearly concentrated on $s_\sigma(0) \in (0, g_t)$. If $\gamma = 1$, then the integrand of this integral is asymptotically
\[  e^{ s^{-1}_\sigma(0)} f_\sigma(s_\sigma(0)) \, , \]
uniformly over $s_\sigma(0)$, which establishes \eqref{eq:thressigma1}. The remainder of the proof is identical.
\end{proof}

\subsection{Constructing the point process}
The existence of asymptotics for the (punctured) local principal eigenvalues allows us to establish scaling limits for the penalisation functional $\Psi^{(j)}_t$. We start by constructing a point set from the pair $(z, \Psi^{(j)}_t(z))$ which will converge to a point process in the limit. Here we make use of the length scale $r_t$ defined in equation \eqref{eq:rtat}, and the first and second order scales $A_t$ and $d_t$ for the extremes of the local principal eigenvalues defined in Proposition~\ref{prop:asympt} and equation \eqref{eq:dt} respectively. Note that the appropriate rescaling functions for the point set are actually $A_{r_t}$ and $d_{r_t}$, although since $d_{r_t} \sim d_t$ we shall eventually end up substituting these.

For technical reasons, we shall actually need to consider a certain generalisation of the functional~$\Psi^{(j)}_t$. More precisely, for each $c \in \mathbb{R}$, define the functional $ \Psi^{(j)}_{t, c}: V_t \to \mathbb{R}$ by
$$ {\Psi}^{(j)}_{t, c}(z) := {\lambda}^{(j)}(z) - \frac{|z|}{\gamma t} \log \log t + c \frac{|z|}{t} \, .$$
For each $z \in \Pi^{(L_t)}$ define
$$ Y^{(j)}_{t, c, z} := \frac{ {\Psi}^{(j)}_{t, c}(z) - A_{r_t} }{d_{r_t}}  \qquad \text{and} \qquad \mathcal{M}^{(j)}_{t, c} := \sum_{z \in \Pi^{(L_t)}} \id_{(z r_t^{-1} , Y^{(j)}_{t, c, z})}  \, .$$ 
Finally, for each $\tau \in \mathbb{R}$ and $\alpha > -1$ let  
$$ \hat{H}_\tau^\alpha := \{ (x, y) \in \dot{\mathbb{R}}^{d+1} : y \geq \alpha|x| + \tau \} $$
where $\dot{\mathbb{R}}^{d+1}$ is the one-point compactification of $\mathbb{R}^{d+1}$.

\begin{proposition}[Point process convergence]
\label{prop:pp1}
For each $\tau, c \in \mathbb{R}$ and $\alpha > -1$, as $t \to \infty$,
$$ \mathcal{M}^{(j)}_{t, c}|_{\hat{H}_\tau^\alpha} \Rightarrow \mathcal{M} \quad \text{in law}\,,$$
where $\mathcal{M}$ is a Poisson point process on $\hat{H}_{\tau}^\alpha$ with intensity measure $ \nu(dx, dy) = dx \otimes e^{- y - |x|} dy$.
\end{proposition}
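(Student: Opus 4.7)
The plan is to prove Poisson convergence via the method of factorial moments, exploiting the finite-range ($2j$) dependence of $\lambda^{(j)}$ together with the extremal asymptotics of Proposition~\ref{prop:asympt}. Since $\alpha > -1$,
\[ \nu(\hat H^\alpha_\tau) = e^{-\tau}\int_{\mathbb{R}^d} e^{-(1+\alpha)|x|}\,dx < \infty, \]
so $\mathcal M$ is a finite Poisson point process on $\hat H_\tau^\alpha$. It therefore suffices to verify that for every bounded Borel rectangle $R = I_x \times I_y \subset \hat H_\tau^\alpha$ (bounded away from the boundary $y = \alpha|x| + \tau$), and every $k \in \mathbb N$, the $k$-th factorial moment of $\mathcal M^{(j)}_{t,c}(R)$ converges to $\nu(R)^k$.

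For the first moment, I would first use the scaling identities $r_t \log\log t / t = \gamma d_t$, $d_t / d_{r_t} \to 1$ (the latter since $\log r_t \sim \log t$), and $c|z|/t = o(d_{r_t})$ for $|z| = O(r_t)$, to rewrite, for $z/r_t \to x \in I_x$,
\[ Y^{(j)}_{t, c, z} = \frac{\lambda^{(j)}(z) - A_{r_t}}{d_{r_t}} - |x| + o(1). \]
Applying Lemma~\ref{lem:assep} (to get $B(z, j) \cap \Pi^{(L_t)} = \{z\}$ almost surely and hence $\tilde\lambda^{(j)}_t(z) = \lambda^{(j)}(z)$ for $z \in \Pi^{(L_t)}$), Lemma~\ref{lem:minmax} (to note that $\tilde\lambda^{(j)}_t(z)$ lying in a window $A_{r_t} + O(d_{r_t})$ forces $\xi(z) > L_t$, i.e.\ $z \in \Pi^{(L_t)}$, automatically, since $A_{r_t} \gg L_t$), and Proposition~\ref{prop:asympt} uniformly in $z/r_t \in I_x$, one obtains
\[ \PP\bigl(z \in \Pi^{(L_t)},\, Y^{(j)}_{t, c, z} \in I_y\bigr) = r_t^{-d}\, e^{-|x|} \int_{I_y} e^{-y}\,dy\,(1 + o(1)), \]
and summing over the $\sim r_t^d |I_x|$ lattice sites in $r_t I_x \cap V_t$ yields $\nu(R)$.

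For higher factorial moments, the key observation is that by Lemma~\ref{lem:assep}, eventually almost surely any two sites in $\Pi^{(L_t)}$ are separated by much more than $2j$; consequently, for any $k$-tuple of distinct sites $(z_1, \ldots, z_k) \subset \Pi^{(L_t)}$, the balls $\{B(z_i, j)\}$ are pairwise disjoint and the eigenvalues $\{\lambda^{(j)}(z_i)\}$ are mutually independent functions of the environment. The $k$-th factorial moment therefore factorises over such well-separated configurations into a product of single-site probabilities, and a standard computation yields the limit $\nu(R)^k$; contributions from $k$-tuples with two sites within $2j$ of each other are negligible, again by Lemma~\ref{lem:assep}.

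The main technical obstacle is promoting Proposition~\ref{prop:asympt}, stated pointwise, to an estimate uniform in the spatial shift $z/r_t$ across compact subsets of $\mathbb{R}^d$. Retracing the proof of that proposition, the Taylor expansion leading to the tail $e^{-x}$ is in fact uniform in perturbations of $A_t$ on the scale $d_t$, so the required uniformity can be extracted directly (with the shift encoded in the $|x|$-term in the exponent). A secondary subtlety is the identification $\lambda^{(j)}(z) = \tilde\lambda^{(j)}_t(z)$ on the relevant event; this follows from Lemma~\ref{lem:assep}, which ensures that the unique site in $B(z, j)$ carrying a high $\xi$-value is $z$ itself.
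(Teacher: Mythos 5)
Your approach is correct in spirit but takes a genuinely different route from the paper. The paper's proof first decouples the problem: it replaces the (short-range dependent) eigenvalues $\lambda^{(j)}(z)$ for $z\in\Pi^{(L_t)}$ with the genuinely i.i.d.\ punctured eigenvalues $\tilde\lambda^{(j)}_t(z)$, $z\in V_t$, invokes the known i.i.d.\ point-process result \cite[Lemma~3.1]{Sidorova12} (which is applicable once Proposition~\ref{prop:asympt} gives the tail asymptotics), observes via Lemma~\ref{lem:minmax} that any point of $\tilde{\mathcal{M}}^{(j)}_{t,c}$ falling in $\hat H^\alpha_\tau$ automatically has its base site in $\Pi^{(L_t)}$, and then constructs an explicit coupling under which $\lambda^{(j)}(z)=\tilde\lambda^{(j)}_t(z)$ simultaneously for all $z\in\Pi^{(L_t)}$, using Lemma~\ref{lem:assep} to ensure the $2j$-separation needed for the coupling to be consistent. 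You instead prove Poisson convergence by the method of factorial moments directly for the dependent field, using the separation to obtain independence for well-spread $k$-tuples and discarding close tuples. Both strategies work; the paper's buys brevity by outsourcing the Poisson machinery to a citation, while yours is self-contained and effectively re-proves what sits inside that citation.

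One point in your argument is imprecise as stated and would need repair in a full write-up: you invoke Lemma~\ref{lem:assep} both to identify $\lambda^{(j)}(z)$ with $\tilde\lambda^{(j)}_t(z)$ at the level of the first-moment probability, and to discard $k$-tuples with two sites within $2j$ of each other. Lemma~\ref{lem:assep} is an \emph{eventually almost sure} statement about the random set $\Pi^{(L_t)}$; it does not directly yield the quantitative probability bounds that a factorial-moment computation requires. What you actually need at these two steps are elementary estimates of the form $\PP\big(\xi(z)>L_t,\ \exists y\in B(z,j)\setminus\{z\}:\xi(y)>L_t\big)=O\big(|V_t|^{-2(1-\theta)}\big)=o(r_t^{-d})$ and, for $|z_1-z_2|\le 2j$, $\PP(z_1,z_2\in\Pi^{(L_t)})=O\big(|V_t|^{-2(1-\theta)}\big)$, combined with union bounds over the $O(r_t^d)$ such sites or pairs; since $\theta<1/2$ these are indeed negligible. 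These are easy to prove, but they are not Lemma~\ref{lem:assep} and should be stated explicitly. The other caveat you flag, uniformity of Proposition~\ref{prop:asympt} over $z/r_t$ in compacts, is real and your sketch of how to extract it from the Taylor expansion in that proof is the right idea.
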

\begin{proof}
The idea of the proof is to replace the set $\{\lambda^{(j)}(z)\}_{z \in \Pi^{(L_t)}}$ with the set of i.i.d.\ punctured principal eigenvalues $\{\tilde{\lambda}^{(j)}\}_{z \in V_t}$ and then apply standard results in i.i.d.\ extreme value theory to show convergence to $\mathcal{M}$ in  $\hat{H}_\tau^\alpha$.

To this end, define $\tilde{\Psi}^{(j)}_{t, c}(z)$ and $\tilde{Y}^{(j)}_{t, c, z}$  equivalently to $\Psi^{(j)}_{t, c}(z)$ and  $Y^{(j)}_{t, c, z}$ after replacing $\lambda^{(j)}(z)$ everywhere with $\tilde{\lambda}^{(j)}(z)$ and further define $$\tilde{\mathcal{M}}^{(j)}_{t, c}=\sum_{v\in V_t}\id_{(z r_t^{-1},\tilde{Y}_{t,c,z}^{(j)})}.$$ Recall that $\{\tilde{\lambda}^{(j)}_t\}_{z \in V_t}$ are i.i.d.\ with tail asymptotics and uniform tail decay governed by Proposition~\ref{prop:asympt}. By applying an identical argument as in \cite[Lemma~3.1]{Sidorova12} and \cite[Lemma 4.3]{vanDerHofstad06}, we have that, as $t \to \infty$,  
\[ \tilde{\mathcal{M}}^{(j)}_{t, c} \big|_{\hat{H}_\tau^\alpha} \Rightarrow \mathcal{M} \quad \text{in law} \, .\]
Note that the uniform tail decay is necessary for the point process convergence to hold since $\hat{H}_\tau^\alpha$ is a non-compact set (see \cite[Lemma 4.3]{vanDerHofstad06}). We claim that if $z \in V_t$ is such that
\[ (z r_t^{-1} , \tilde{Y}^{(j)}_{t, c, z}) \in \hat{H}_\tau^\alpha  \, ,\]
then $z \in \tilde\Pi^{(L_t)}$ eventually almost surely. This is since $(z r_t^{-1} , \tilde{Y}^{(j)}_{t, c, z}) \in \hat{H}_\tau^\alpha$ is equivalent to
\[ \tilde{\lambda}^{(j)}(z) \ge A_{r_t} + \frac{\alpha |z| d_{r_t}}{r_t} + \frac{|z|}{\gamma t} \log \log t - \frac{c |z|}{t} + \tau d_{r_t} \]
which implies that, as $t \to \infty$,
\begin{align*}
\tilde{\lambda}^{(j)}(z) & \ge a_t (1 + o(1)) + (\alpha + 1 + o(1))\frac{|z|}{\gamma t} \log \log t + O(d_t)  \\
& \ge a_t( 1 + o(1)) + O(d_t) 
\end{align*}
since $A_{r_t} \sim a_{r_t} \sim a_t$, $d_{r_t} \sim d_t$ and $\alpha > -1$. The claim then follows by the upper bound in Lemma~\ref{lem:minmax}. As a consequence, we have that, as $t \to \infty$,
\begin{equation}
\label{eq:pplaw}
 \sum_{z \in \tilde\Pi^{(L_t)}} \id_{( z r_t^{-1}  , \tilde{Y}^{(j)}_{t, c, z} )}  \big|_{\hat{H}_\tau^\alpha} \Rightarrow \mathcal{M} \quad \text{in law} \, . 
\end{equation}

To complete the proof we show how to construct a coupling, valid eventually almost surely, with the property that 
  \begin{align}
\label{eq:lambdatilde} 
\left\{ \lambda^{(j)}(z) \right\}_{z \in \Pi^{(L_t)}} = \left\{ \tilde{\lambda}^{(j)}(z) \right\}_{z \in \tilde\Pi^{(L_t)}} .
\end{align}
To do this, note that by Lemma~\ref{lem:assep} there almost surely exists a $t_0$ such that, for all $t>t_0$, we have $r(\Pi^{(L_t)})>2j$ and, for each $z \in \Pi^{(L_{t})}$ and $ y\in B(z,j)\setminus\{z\}$, it holds that $\xi(y) < L_t$. For such $t$ we define the coupling as follows: for $z \in \Pi^{(L_t)}$ and $y\in B(z,j)$ set $\tilde\xi_z(y)=\xi(y)$ and $\tilde\sigma_z(y) = \sigma(y)$; for $z\notin\Pi^{(L_{t})}$ independently sample $\tilde\xi_z(z)$ as $\xi(0)$ conditioned on $\xi(0)< L_t$; and otherwise independently sample $\tilde\xi_z(y)$ and $\tilde\sigma_z(y)$ according to their respective law. By the separation guaranteed for $t > t_0$, $\{\tilde\xi_z\}_{z\in V_t}$ and $\{\tilde\sigma_z\}_{z\in V_t}$ are indeed i.i.d.\ fields with law as in \eqref{tildefields}, and moreover~\eqref{eq:lambdatilde} holds by construction. Combining with \eqref{eq:pplaw} completes the proof.
\end{proof}
\begin{remark}
Although we state Proposition~\ref{prop:pp1} for arbitrary $c \in \mathbb{R}$, we shall only apply it to $c = 0$ and one other value of $c$ that will be determined in Section \ref{sec:neg}.
\end{remark}

We now use the point process $\mathcal{M}$ to analyse the joint distribution of top two statistics of the functional $\Psi^{(j)}_{t, c}$. So let 
\[ Z_{t, c}^{(j)} := \argmax_{z \in \Pi^{(L_t)}} \Psi^{(j)}_{t, c}(z) \quad \text{and} \quad Z_{t, c}^{(j, 2)} := \argmax_{\substack{z \in \Pi^{(L_t)} \\ z \neq Z_{t, c}^{(j)}}} \Psi^{(j)}_{t, c} \, . \]
Note that eventually these are well-defined almost surely, since $\Pi^{(L_t)}$ is finite and non-zero by Lemma~\ref{lem:asforxi}. 

\begin{corollary}
 \label{cor:limit}
For each $c \in \mathbb{R}$, as $t \to \infty$,
$$ \left(  \frac{Z_{t, c}^{(j)}}{r_t}, \frac{Z_{t, c}^{(j, 2)}}{r_t},  \frac{ {\Psi}^{(j)}_{t, c}(Z_{t, c}^{(j)}) - A_{r_t}}{d_{r_t}} , \frac{{\Psi}^{(j)}_{t, c} (Z_{t, c}^{(j, 2)}) - A_{r_t}}{d_{r_t}}  \right)  $$
converges in law to a random vector with density
$$ p(x_1, x_2, y_1, y_2) =  \exp \{- (y_1 + y_2) - |x_1| - |x_2|) - 2^d e^{- y_2} \} \id_{\{y_1 > y_2\}} \, .$$
\end{corollary}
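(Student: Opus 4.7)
The plan is to combine the point process convergence in Proposition~\ref{prop:pp1} with a continuous mapping argument, then identify the limiting joint density by a direct Poisson computation. Throughout, I interpret $|\cdot|$ as the $\ell^1$ norm on $\mathbb{R}^d$ extending the graph distance used in the paper, so that $\int_{\mathbb{R}^d} e^{-|x|} \, dx = 2^d$.

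First I would fix a truncation level $\tau \in \mathbb{R}$ and apply Proposition~\ref{prop:pp1} with $\alpha = 0$, which gives $\mathcal{M}^{(j)}_{t,c}|_{\hat{H}_\tau^0} \Rightarrow \mathcal{M}|_{\hat{H}_\tau^0}$ in the space of finite point measures on $\hat{H}_\tau^0$. Since the total intensity $\int_{\hat{H}_\tau^0} e^{-y-|x|} \, dx \, dy = 2^d e^{-\tau}$ is finite, the limit is almost surely a finite configuration. The map $\Phi$ sending a finite point configuration to its top two points ordered by $y$-coordinate is continuous at configurations with at least two distinct $y$-values (a set of full measure under $\mathcal{M}|_{\hat{H}_\tau^0}$ by absolute continuity of the intensity). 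The continuous mapping theorem then yields joint distributional convergence of the restricted top two points.

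Second I would identify the limit density by a direct Poisson calculation. Conditioning on the top two points being at $(x_1, y_1)$ and $(x_2, y_2)$ with $y_1 > y_2 > \tau$, the joint density is the product of the two intensity factors $e^{-y_1 - |x_1|}$ and $e^{-y_2 - |x_2|}$ together with the void probability $\exp\{-2^d e^{-y_2}\}$ that $\mathcal{M}$ has no point with $y$-coordinate exceeding $y_2$. This yields
\[ p_\tau(x_1, x_2, y_1, y_2) = \exp\{-(y_1 + y_2) - |x_1| - |x_2| - 2^d e^{-y_2}\} \id_{\{y_1 > y_2 > \tau\}}, \]
which integrates to $1 - (1 + 2^d e^{-\tau}) e^{-2^d e^{-\tau}}$, exactly the probability that $\mathcal{M}|_{\hat{H}_\tau^0}$ has at least two points.

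Third I would remove the truncation. On the event $\mathcal{E}_\tau := \{\Psi^{(j)}_{t,c}(Z_{t,c}^{(j,2)}) \ge A_{r_t} + \tau d_{r_t}\}$, the restricted top two points coincide with the unrestricted pair $(Z_{t,c}^{(j)}, Z_{t,c}^{(j,2)})$ appearing in the statement. Applying Proposition~\ref{prop:pp1} to the continuous bounded functional counting the number of points in $\hat{H}_\tau^0$ gives $\lim_{t \to \infty} \PP(\mathcal{E}_\tau^c) = (1 + 2^d e^{-\tau}) e^{-2^d e^{-\tau}}$, which tends to $0$ as $\tau \to -\infty$. Since also $p_\tau \to p$ pointwise as $\tau \to -\infty$, a standard diagonal argument (first $t \to \infty$, then $\tau \to -\infty$) completes the proof. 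The main obstacle is precisely this interchange of limits: one must verify that truncation at height $\tau$ does not discard probability mass in the prelimit, uniformly in large $t$, which is handled by the Poisson estimate above; the remaining steps are routine bookkeeping.
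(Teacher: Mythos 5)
Your proposal is correct and follows essentially the same route as the paper, whose one-line proof defers to the identical Poisson computation in Proposition~3.2 of \cite{Sidorova12}: restrict the point process from Proposition~\ref{prop:pp1} to a half-space $\hat H_\tau^0$ where it is almost surely finite, extract the top two points by continuous mapping, identify the limiting density by the intensity-times-void-probability formula, and remove the truncation by sending $\tau\to-\infty$. Your bookkeeping is accurate throughout (total intensity $2^d e^{-\tau}$, void probability $e^{-2^d e^{-y_2}}$, and the defective-density mass $1-(1+2^d e^{-\tau})e^{-2^d e^{-\tau}}$).
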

\begin{proof}
This follows from the point process density in Proposition~\ref{prop:pp1} using the same computation as in \cite[Proposition~3.2]{Sidorova12}.
\end{proof}

\subsection{Properties of the localisation site} 
In this subsection we use the results from the previous subsection to analyse the localisation sites $Z_{t, c}^{(j)}$ and $Z_t$, and in the process complete the proof of Theorems \ref{thm:main2} and \ref{thm:main3}. For each $c \in \mathbb{R}$, introduce the events
$$\mathcal{G}_{t,c} := \{{\Psi}^{(j)}_{t, c}(Z_{t, c}^{(j)}) - {\Psi}^{(j)}_{t, c}(Z_{t, c}^{(j, 2)}) > d_t e_t \} \, ,$$
$$ \mathcal{H}_t := \{  r_t f_t < |Z_{t}^{(j)}| < r_t g_t \} \quad \text{and} \quad \mathcal{I}_t := \{  a_t(1-f_t) < \Psi_t^{(j)}(Z_{t}^{(j)}) < a_t(1+f_t) \} \, , $$
and the event
\begin{align}
\label{eq:defevent}
\mathcal{E}_{t,c} :=  \mathcal{S}^{(j)}_t(Z_{t}^{(j)})  \cap \mathcal{G}_{t,0} \cap \mathcal{G}_{t,c} \cap \mathcal{H}_t \cap \mathcal{I}_t
\end{align}
which acts to collect the relevant information that we shall later need.

\begin{proposition} \label{prop:event}
 For each $c \in \mathbb{R}$, as $t \to \infty$,
\begin{align*}
\PP(\mathcal{E}_{t,c}) \to 1 \, .
\end{align*}
\end{proposition}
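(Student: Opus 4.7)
The plan is to note that $\mathcal{E}_{t,c}$ is a finite intersection of events, and to prove each component has probability tending to one and then conclude by the union bound. Four of these events, namely $\mathcal{H}_t$, $\mathcal{I}_t$, $\mathcal{G}_{t,0}$ and $\mathcal{G}_{t,c}$, follow directly from the joint convergence in Corollary~\ref{cor:limit}; only $\mathcal{S}_t(Z_t^{(j)})$ requires additional work, combining Proposition~\ref{prop:corr} with the point process convergence.

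For the easier events, I would first invoke Corollary~\ref{cor:limit} (applied with $c = 0$ and with the given $c$). The convergence in law of $|Z_t^{(j)}|/r_t$ to a strictly positive, almost surely finite random variable gives $\PP(\mathcal{H}_t) \to 1$ since $f_t \to 0$ and $g_t \to \infty$. The convergence of $(\Psi_t^{(j)}(Z_t^{(j)}) - A_{r_t})/d_{r_t}$, together with the asymptotics $A_{r_t} \sim a_t$ and $d_{r_t} = o(a_t)$, yields $\Psi_t^{(j)}(Z_t^{(j)})/a_t \to 1$ in probability and hence $\PP(\mathcal{I}_t) \to 1$. For $\mathcal{G}_{t,c}$, the joint convergence gives that the rescaled gap $(\Psi_{t,c}^{(j)}(Z_{t,c}^{(j)}) - \Psi_{t,c}^{(j)}(Z_{t,c}^{(j,2)}))/d_{r_t}$ converges in law to $Y_1 - Y_2$, which is almost surely positive; since $d_{r_t} \sim d_t$ and $e_t \to 0$, the gap exceeds $d_t e_t$ with probability tending to one, for each of the two relevant values of $c$.

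The remaining event $\mathcal{S}_t(Z_t^{(j)})$ is handled by transferring the fixed-site statement of Proposition~\ref{prop:corr} to the random site $Z_t^{(j)}$ via translation invariance. Given $\varepsilon > 0$, choose $\tau, R > 0$ so that, eventually, $|Z_t^{(j)}| \leq R r_t$ and $\Psi_t^{(j)}(Z_t^{(j)}) \geq A_{r_t} - \tau d_{r_t}$ hold with probability at least $1 - \varepsilon$; this is possible by Corollary~\ref{cor:limit}. On this event $\lambda^{(j)}(Z_t^{(j)}) \geq A_t - \tau' d_t$ for some $\tau' = \tau + O(1)$, using that $A_{r_t} - A_t = O(d_t)$ and that the penalty term at scale $|z| \sim r_t$ is $O(d_t)$. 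A union bound over $z \in V_t$ with $|z| \leq R r_t$ then gives
\[ \PP(\mathcal{S}_t(Z_t^{(j)})^c) \leq \varepsilon + \sum_{|z| \leq R r_t} \PP(\mathcal{A}_t(z))\,\PP(\mathcal{S}_t(z)^c \mid \mathcal{A}_t(z)), \]
where $\mathcal{A}_t(z) := \{ \lambda^{(j)}(z) \geq A_t - \tau' d_t \}$. By translation invariance the conditional probability is independent of $z$, and by Proposition~\ref{prop:corr} (together with Proposition~\ref{prop:pathexp}, which identifies $\lambda^{(j)}$ with $\tilde\lambda_t^{(j)}$ on $\Pi^{(L_{t, \varepsilon})}$ eventually almost surely) it tends to zero; Proposition~\ref{prop:asympt} shows the first factor is of order $t^{-d} e^{\tau'}$. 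Hence the sum is $O((r_t/t)^d \, p_t)$ with $p_t \to 0$, and since $r_t/t$ grows at most polylogarithmically in $t$, the sum vanishes once $p_t$ decays faster than any inverse polynomial in $\log t$. Letting $\varepsilon \to 0$ then completes the proof.

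The main obstacle is this last point: Proposition~\ref{prop:corr} as stated yields only the qualitative $p_t \to 0$, whereas the argument requires $p_t$ to beat a polylogarithmic factor in $t$. The needed quantitative decay is implicit in the Laplace-method concentration that underlies the proof of Proposition~\ref{prop:corr}: the integrands there concentrate on windows of width $a_t^{-\kappa}$ for some $\kappa > 0$, yielding an escape probability which is sub-exponential in $a_t$; since $a_t$ is itself polylogarithmic in $t$, this is more than sufficient to absorb the $(r_t/t)^d$ prefactor.
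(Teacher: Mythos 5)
Your overall strategy mirrors the paper's: the events $\mathcal{G}_{t,0},\mathcal{G}_{t,c},\mathcal{H}_t,\mathcal{I}_t$ follow directly from Corollary~\ref{cor:limit} (with the two relevant values of $c$), and $\mathcal{S}_t(Z_t^{(j)})$ must be obtained by transferring Proposition~\ref{prop:corr} from a fixed site to the random argmax. (The paper's one-line proof cites only Proposition~\ref{prop:asympt} and Corollary~\ref{cor:limit}; you are right that Proposition~\ref{prop:corr} is also needed, and your union-bound transfer argument is the natural way to supply it.) Your treatment of the four ``easy'' events is correct.

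There is, however, a concrete error in the transfer step: the claim $A_{r_t} - A_t = O(d_t)$ is false for $\gamma \neq 1$. Writing $\log r_t = \log t + (1/\gamma - 1)\log(d\log t) - \log\log\log t$ and Taylor-expanding $a_s = (d\log s)^{1/\gamma}$ gives
\[ a_{r_t} - a_t = d\, d_t\,\bigl((1/\gamma - 1)\log\log t + O(\log\log\log t)\bigr), \]
which is of order $d_t\log\log t$. For $\gamma > 1$ this means $A_{r_t}$ lies $\Theta(d_t\log\log t)$ \emph{below} $A_t$, so the implication $\Psi_t^{(j)}(Z_t^{(j)}) \geq A_{r_t} - \tau d_{r_t} \Rightarrow \lambda^{(j)}(Z_t^{(j)}) \geq A_t - \tau' d_t$ fails for any bounded $\tau'$, and the event $\mathcal{A}_t(z)$ as you define it does not capture the argmax: this is a genuine gap. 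For $\gamma < 1$ your bound is valid but far too coarse, and it is this slack that manufactures the polylogarithmic growth of $\sum_{|z|\leq Rr_t}\PP(\mathcal{A}_t(z))$ which your final paragraph then struggles to absorb.

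Both issues disappear with the correct threshold $u_t := A_{r_t} - \tau d_{r_t} = A_t + x_t d_t$, $x_t = d(1/\gamma-1)\log\log t + O(\log\log\log t)$. Then Proposition~\ref{prop:asympt} gives $\sum_{|z|\leq Rr_t}\PP(\lambda^{(j)}(z) > u_t) = \Theta(1)$, so the union bound only requires the qualitative statement $\PP(\mathcal{S}_t(z)^c\mid\lambda^{(j)}(z)>u_t)\to 0$ and no quantitative rate at all. What must then be justified is that Proposition~\ref{prop:corr} holds with a level $x_t$ drifting to $\pm\infty$ at rate $\log\log t$, rather than fixed $x$. That it does is visible in the proof: in equation \eqref{eq:intrep2} the factor $e^{-x}$ multiplies an integral that does not depend on $x$, so the conditional distribution given $\mathcal{A}_t$ is level-independent, and the Taylor-expansion step in Proposition~\ref{prop:asympt} remains valid since $x_t d_t = o(a_t)$. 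Your closing heuristic about the Laplace-method escape rate is accurate but, with the corrected threshold, it is not actually required.
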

\begin{proof}
This follows from Proposition~\ref{prop:asympt}, Corollary \ref{cor:limit}, the coupling in the proof of Proposition~\ref{prop:pp1}, and since $A_{r_t} \sim a_t$ and $d_{r_t} \sim d_t$.
\end{proof}

In the next few propositions, we prove that the sites $Z_{t,c}^{(j)}$ and $Z_{t}^{(j)}$ are both equal to the localisation site $Z_t$ with overwhelming probability. 
\begin{proposition}
\label{prop:zeqzc}
For each $c \in \mathbb{R}$, on the event $\mathcal{E}_{t,c}$, as $t \to \infty$,
$$ Z_{t,c}^{(j)} = Z_{t}^{(j)} $$
holds eventually.
\end{proposition}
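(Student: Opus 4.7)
The plan is an argument by contradiction that exploits the two gap events $\mathcal{G}_{t,0}$ and $\mathcal{G}_{t,c}$ simultaneously (the case $c=0$ is immediate, so assume $c \neq 0$). Suppose for contradiction that on $\mathcal{E}_{t,c}$ we have $Z_{t,c}^{(j)} \neq Z_t^{(j)}$. Then $Z_{t,c}^{(j)}$ is a non-maximiser of $\Psi_t^{(j)}$ over $\Pi^{(L_t)}$ and $Z_t^{(j)}$ is a non-maximiser of $\Psi_{t,c}^{(j)}$, so the two gap events yield the strict inequalities
\[ \Psi_t^{(j)}(Z_t^{(j)}) - \Psi_t^{(j)}(Z_{t,c}^{(j)}) > d_t e_t \qquad \text{and} \qquad \Psi_{t,c}^{(j)}(Z_{t,c}^{(j)}) - \Psi_{t,c}^{(j)}(Z_t^{(j)}) > d_t e_t. \]
Summing these and using $\Psi_{t,c}^{(j)}(z) - \Psi_t^{(j)}(z) = c|z|/t$, the $\Psi_t^{(j)}$ contributions cancel and the key deterministic consequence
\[ \frac{c\,(|Z_{t,c}^{(j)}| - |Z_t^{(j)}|)}{t} > 2\,d_t e_t \]
drops out.

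I would then split on the sign of $c$. For $c<0$ the above forces $|Z_{t,c}^{(j)}| < |Z_t^{(j)}| - 2 d_t e_t t/|c|$; combining with $|Z_t^{(j)}| < r_t g_t$ from $\mathcal{H}_t$, the identity $d_t t \sim \gamma^{-1} r_t \log\log t$, and the consequence $e_t \log\log t \gg g_t$ of \eqref{scalingfns} (which follows from $g_t/\log\log s_t \ll b_t \ll e_t$ together with $s_t \le t$), the right-hand side becomes negative, contradicting $|Z_{t,c}^{(j)}| \ge 0$.

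For $c>0$ the consequence instead reads $|Z_{t,c}^{(j)}| > 2 d_t e_t t/c \sim (2/(c\gamma))\,r_t e_t \log\log t$, which vastly exceeds $r_t g_t$. The idea is then to derive a contradiction by showing that such a large $|Z_{t,c}^{(j)}|$ is incompatible with $Z_{t,c}^{(j)}$ being the maximiser of $\Psi_{t,c}^{(j)}$, because the entropic term dominates at that scale. Using the global upper bound $\lambda^{(j)}(z) \le \max_{y \in V_t} \xi(y) + \delta_\sigma^{-1} \sim a_t$ (which holds eventually almost surely by Lemmas~\ref{lem:asforxi} and \ref{lem:minmax}) and the fact that $\log\log t/\gamma - c \ge \log\log t/(2\gamma)$ for $t$ large,
\[ \Psi_{t,c}^{(j)}(Z_{t,c}^{(j)}) \le a_t(1+o(1)) - \frac{(\log\log t/\gamma - c)|Z_{t,c}^{(j)}|}{t} \le a_t(1+o(1)) - c_0\,d_t e_t \log\log t \]
for some constant $c_0 > 0$, while $\mathcal{I}_t$ and $\mathcal{H}_t$ give $\Psi_{t,c}^{(j)}(Z_t^{(j)}) > a_t(1-f_t) - o(d_t e_t)$. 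Since $d_t e_t \log\log t$ exceeds $a_t f_t$ (an asymptotic ordering compatible with, and to be arranged alongside, the placeholder relations \eqref{scalingfns}), one concludes $\Psi_{t,c}^{(j)}(Z_{t,c}^{(j)}) < \Psi_{t,c}^{(j)}(Z_t^{(j)})$, contradicting maximality of $Z_{t,c}^{(j)}$.

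The main obstacle will be the bookkeeping among the scaling functions: verifying that both $e_t \log\log t \gg g_t$ (for the $c<0$ branch) and $d_t e_t \log\log t \gg a_t f_t$, equivalently $e_t \log\log t \gg f_t \log t$ (for the $c>0$ branch), follow from, or can be consistently imposed alongside, the relations in \eqref{scalingfns}; once these asymptotic orderings are in hand, the conclusion $Z_{t,c}^{(j)} = Z_t^{(j)}$ on $\mathcal{E}_{t,c}$ follows immediately by contradiction.
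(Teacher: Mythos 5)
Your treatment of $c \le 0$ is correct and is essentially the same argument as the paper's: the paper likewise invokes $\mathcal{G}_{t,0}$, $\mathcal{G}_{t,c}$ and the $\mathcal{H}_t$-bound $|Z_t^{(j)}| < r_t g_t$ (giving $|c|\,|Z_t^{(j)}|/t < d_t e_t$), and reaches a contradiction; you package the bookkeeping as the summed inequality $c(|Z_{t,c}^{(j)}|-|Z_t^{(j)}|)/t > 2d_te_t$, but the ingredients and the conclusion for $c\le 0$ are the same.

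The $c > 0$ branch, however, has a genuine gap. You explicitly flag that the conclusion rests on the ordering $d_t e_t \log\log t \gg a_t f_t$, equivalently $e_t\log\log t \gg f_t\log t$, and hope it can be ``arranged alongside'' \eqref{scalingfns}. It cannot: it is incompatible with \eqref{scalingfns}. From the chain $g_t/\log\log s_t \ll b_t \ll f_t h_t \le f_t$ and from $(\log s_t)^2 \ll \log\log t$ (so that $\log\log s_t \le \tfrac12\log\log\log t$ eventually), together with $g_t \ge 1$ eventually, one obtains
\[
f_t \,\gg\, \frac{g_t}{\log\log s_t} \,\ge\, \frac{2}{\log\log\log t}\,,
\]
while $e_t < 1$ eventually forces
\[
\frac{e_t\log\log t}{\log t} \,<\, \frac{\log\log t}{\log t} \,\ll\, \frac{1}{\log\log\log t}\,,
\]
so in fact $f_t \gg e_t\log\log t/\log t$ --- the opposite of what you need. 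Concretely, your upper bound $\Psi_{t,c}^{(j)}(Z_{t,c}^{(j)}) \le a_t(1+o(1)) - c_0 d_t e_t\log\log t$ does not fall below $\Psi_{t,c}^{(j)}(Z_t^{(j)}) > a_t(1-f_t)$, because the entropic gain $d_t e_t \log\log t$ is swamped by the slack $a_t f_t$.

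The structural reason the $c>0$ case is hard here is that $\mathcal{E}_{t,c}$ as defined in \eqref{eq:defevent} controls the distance of $Z_t^{(j)}$ (via $\mathcal{H}_t$) but contains no analogous control on $|Z_{t,c}^{(j)}|$; your deterministic consequence for $c>0$ places $Z_{t,c}^{(j)}$ at distance $\gtrsim r_t e_t\log\log t \gg r_t g_t$, and nothing in $\mathcal{E}_{t,c}$ or the crude Gershgorin bound $\lambda^{(j)}\le a_t(1+o(1))$ forbids that. For what it is worth, the paper's own one-line proof --- combining $\mathcal{G}_{t,0}$, $\mathcal{G}_{t,c}$ and $|c|\,|Z_t^{(j)}|/t < d_t e_t$ --- likewise yields only $c(|Z_{t,c}^{(j)}|-|Z_t^{(j)}|)/t > 2d_te_t$, which contradicts $|Z_{t,c}^{(j)}|\ge 0$ precisely when $c\le 0$; and $c\le 0$ is the only sign actually needed downstream (the constants arising in Propositions~\ref{prop:U2neg} and \ref{prop:U3neg} enter $\Psi_{t,c}^{(j)}$ with a negative coefficient). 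So your $c\le 0$ argument covers the case that matters, but the $c>0$ case as you have written it does not go through.
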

\begin{proof}
Assume that $Z^{(j)}_{t, c} \neq Z_{t}^{(j)}$ and recall that $1/ \log \log t < e_t / g_t$ eventually by \eqref{scalingfns}. On the event $\mathcal{E}_{t,c}$, the statements
$$ {\Psi}^{(j)}_{t} (Z^{(j)}_{t}) - {\Psi}^{(j)}_{t} (Z_{t, c}^{(j)}) > d_t e_t  \quad \text{and} \quad {\Psi}^{(j)}_{t, c} (Z^{(j)}_{t,c}) - {\Psi}^{(j)}_{t, c} (Z_{t}^{(j)}) > d_t e_t $$
and, eventually,
$$|{\Psi}^{(j)}_{t}(Z_{t}^{(j)}) - {\Psi}^{(j)}_{t, c}(Z_{t}^{(j)})| = |c|\frac{|Z_{t}^{(j)}|}{t} < \gamma \frac{d_t g_t}{\log \log t} < d_t e_t $$
all hold, giving a contradiction.
\end{proof}

\begin{lemma}
\label{lem:jtorho}
For each $c \in \mathbb{R}$, on the event $\mathcal{E}_{t,c}$, as $t \to \infty$,
$$ \lambda^{(j)}(Z_{t}^{(j)}) \ge \lambda(Z_{t}^{(j)}) \quad \text{and} \quad \lambda^{(j)}(Z_t) \geq \lambda(Z_t)  $$
and
$$ \lambda^{(j)}(Z_{t}^{(j)}) - \lambda(Z_{t}^{(j)})  < d_t e_t  $$
all hold eventually.
\end{lemma}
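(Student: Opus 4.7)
The first two inequalities are immediate consequences of the monotonicity of the principal eigenvalue (Lemma~\ref{lem:mono}). Indeed, $\mathcal{H}^{(j)}(z)$ is obtained from $\mathcal{H}(z)$ by (i) enlarging the domain from $B(z,\rho)$ to $B(z,j)\supseteq B(z,\rho)$, since $j=[\gamma-1]\ge\rho$, and (ii) adding the non-negative potential $\xi\,\id_{B(z,j)\setminus B(z,\rho_\xi)}$. Both modifications weakly increase the principal eigenvalue, so $\lambda^{(j)}(z)\ge\lambda(z)$ at every site $z$, and in particular at $z=Z_t^{(j)}$ and $z=Z_t$.

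For the third inequality, I plan to compare $\lambda^{(j)}(z)$ and $\lambda(z)$ at $z=Z_t^{(j)}$ using the path expansions. On the event $\mathcal{E}_{t,c}$ we have $Z_t^{(j)}\in\Pi^{(L_t)}\subseteq\Pi^{(L_{t,\varepsilon})}$, so Proposition~\ref{prop:pathexp} applies and truncates $\lambda^{(j)}(z)$ to paths of length at most $2j$ with error $o(d_t e_t)$. On the other hand, Proposition~\ref{prop:genpathexp} applied to $\mathcal{H}(z)$ (whose restricted potential $\xi\,\id_{B(z,\rho_\xi)}$ gives the function $\eta_z$) yields an exact expansion
\[
\lambda(z)=\eta(z)+\invsigma(z)\sum_{k\ge 2}\sum_{\substack{p\in\Gamma_k(z,z)\\ p_i\ne z,\,0<i<k\\ \{p\}\subseteq B(z,\rho)}}\prod_{0<i<k}(2d)^{-1}\frac{\invsigma(p_i)}{\lambda(z)-\eta_z(p_i)}.
\]
A matching truncation of $\lambda(z)$ at $k\le 2j$ holds with the same $o(d_t e_t)$ error, by bounding denominators below using the separation Lemma~\ref{lem:assep}. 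The $\eta(z)$ leading terms cancel.

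It then suffices to show that the remaining term-by-term difference is $o(d_t e_t)$. The surviving discrepancies split into two categories: (I) closed paths in $B(z,j)$ of length $2\rho+2\le k\le 2j$ that leave $B(z,\rho)$ — these contribute to $\lambda^{(j)}$ but not to $\lambda$; and (II) closed paths of length $2\le k\le 2\rho$ that stay in $B(z,\rho)$ but visit some site $p_i\in B(z,\rho)\setminus B(z,\rho_\xi)$ where $\eta(p_i)$ and $\eta_z(p_i)$ differ by $\xi(p_i)<L_{t,\varepsilon}$, together with the first-order denominator discrepancies between $\lambda^{(j)}(z)$ and $\lambda(z)$. For (I), each denominator is at least of order $a_t$ by Lemma~\ref{lem:assep} and the bound $\lambda^{(j)}(z)\ge L_t\sim a_t$, while $\invsigma(z)=O(a_t^{-q_\sigma})$ on $\mathcal{E}_{t,c}$ and each other $\invsigma(p_i)$ is bounded by $\delta_\sigma^{-1}$; combined with the $(2d)^{-1}$ per step and the $O((2d)^k)$ paths of length $k$, the total contribution of category (I) is $O(a_t^{-q_\sigma-2\rho-1})$. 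For (II), expanding the difference $1/(\lambda^{(j)}(z)-\eta(p_i))-1/(\lambda(z)-\eta_z(p_i))$ yields a perturbation of relative size $O(L_{t,\varepsilon}/a_t)=O(a_t^{-(1-\varepsilon)})$, multiplied by the same path factors.

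The hard part will be verifying that both of these bounds are $o(d_t e_t)$ by unpacking the definitions of $\rho$, $\rho_\xi$ and $q_\sigma$: using $d_t\asymp a_t^{1-\gamma}$, the required inequality for (I) reduces to $a_t^{\gamma-2-q_\sigma-2\rho}=o(e_t)$, which follows from the exponent identity $q_\sigma+2\rho\ge\gamma-1+1=\gamma$ when $(\gamma,\mu)\notin\mathcal{B}$ (with an extra $\varepsilon$-margin from the strict inequality $\rho>(\gamma-1)\mu/(2(\mu+1))-1/2$ off the interface) and a boundary argument on $\mathcal{B}$; and a parallel check for (II). The self-consistency of $\lambda^{(j)}(z)$ (which appears on both sides of its own expansion) is handled by a standard bootstrap, substituting the first-order estimate $\lambda^{(j)}(z)=\eta(z)+O(a_t^{-1})$ from Proposition~\ref{prop:pathexp} back into the denominators and iterating a bounded number of times.
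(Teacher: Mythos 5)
Your treatment of the first two inequalities matches the paper: both follow from Lemma~\ref{lem:mono} by first enlarging the domain from $B(z,\rho)$ to $B(z,j)\supseteq B(z,\rho)$ (using $j\ge\rho$) and then adding the non-negative potential $\xi$ outside $B(z,\rho_\xi)$. For the third inequality, your decomposition into paths leaving $B(z,\rho)$ (category I) and paths crossing $B(z,\rho)\setminus B(z,\rho_\xi)$ (category II), bounded via Proposition~\ref{prop:pathexp} and the constraints imposed by $\mathcal{E}_{t,c}$, is the same route the paper takes, spelled out in more detail; the paper compresses it into the single asserted bound \eqref{eq:jtorho}.

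However, the final exponent arithmetic contains two slips that would need correcting.

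First, the claimed inequality $q_\sigma + 2\rho \ge \gamma$ is false in general: e.g.\ for $\gamma=2.5$, $\mu=10$ one has $\rho=1$ and $q_\sigma = 1.5/11 \approx 0.136$, so $q_\sigma+2\rho\approx 2.14<2.5$. What is actually needed, and what the paper records as \eqref{eq:rho}, is the strict inequality $q_\sigma + 2\rho > \gamma - 2$ (equivalently $-2\rho-1-q_\sigma < 1-\gamma$). This is exactly what follows from the strict bound $\rho > \tfrac{(\gamma-1)\mu}{2(\mu+1)} - \tfrac12$ that you already write down, since $\tfrac{(\gamma-1)\mu}{\mu+1} = (\gamma-1) - q_\sigma$; so the correct conclusion $a_t^{\gamma-2-q_\sigma-2\rho}\to 0$ holds, but by a narrower margin than you claim. (The margin suffices because $e_t,f_t,g_t$ are chosen in \eqref{scalingfns} to vary more slowly than any power of $a_t$.)

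Second, the asymptotic $L_{t,\varepsilon}/a_t = O(a_t^{-(1-\varepsilon)})$ is not correct: since $L_{t,\varepsilon}=((1-\varepsilon)\log|V_t|)^{1/\gamma}$ and $\log|V_t|\sim d\log t$, one has $L_{t,\varepsilon}\sim(1-\varepsilon)^{1/\gamma}a_t$, a constant multiple of $a_t$. The correct bound on the category-(II) denominator shift comes from the event $\mathcal{E}_{t,c}$, which guarantees $\xi(y)<g_t$ for $\rho_\xi<|y-Z_t^{(j)}|\le j$; this gives a relative perturbation of size $O(g_t/a_t)\to0$, which is much smaller than $L_{t,\varepsilon}/a_t$. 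With this fix, the required condition for (II) becomes $q_\sigma + 2\rho_\xi > \gamma - 3$, which indeed holds by the strict inequality $\rho_\xi > \tfrac{(\gamma-1)\mu}{2(\mu+1)}-1$.

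With those two corrections, your plan reproduces the paper's proof.
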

\begin{proof}
The first two statements follow from the domain monotonicity of the principal eigenvalue in Lemma~\ref{lem:mono}. For the third statement, remark that the event $\mathcal{E}_{t,c}$ implies that $Z_t^{(j)} \in \Pi^{(L_{t, \varepsilon})}$, that $\xi(y) < L_t$ for all $y \in B(Z_t^{(j)}, \rho)$, that $\xi(y) < g_t$ for all $y$ such that $j \ge |y - Z_t^{(j)}| > \rho_\xi$, and that $\sigma(Z_t^{(j)}) > a_t^{q_\sigma} f_t$. Hence, by considering the path expansion in Proposition~\ref{prop:pathexp} (valid by the coupling in the proof of Proposition \ref{prop:pp1}), we have that for some $C > 0$,
\begin{align} 
\label{eq:jtorho}
\lambda^{(j)}(Z_{t}^{(j)}) - \lambda(Z_{t}^{(j)})  <  \frac{C a_t^{ -  (\frac{\gamma - 1}{\mu + 1})^+  } g_t }{f_t (L_{t, \epsilon} - L_t)^{2 \rho + 1}}  < d_t e_t
\end{align}
eventually, with the last equality holding since 
\begin{equation}
\label{eq:rho}
-2\rho - 1 - \left( \frac{\gamma - 1}{\mu + 1} \right)^+ < 1 - \gamma \, .  \qedhere
\end{equation}
\end{proof}
\begin{remark}
Note that $\rho$ is precisely the smallest integer such that \eqref{eq:rho} holds.
\end{remark}

\begin{corollary}[Equivalence of $ Z_{t}^{(j)}$ and $Z_t$]
\label{corry:zeqz0}
For each $c \in \mathbb{R}$, on the event $\mathcal{E}_{t,c}$, as $t \to \infty$,
$$ Z_{t}^{(j)} = Z_t$$
eventually.
\end{corollary}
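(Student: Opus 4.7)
My plan is to argue by contradiction, exploiting the two inputs that the event $\mathcal{E}_{t,c}$ is designed to provide: a uniform gap between the top two statistics of $\Psi^{(j)}_t$ on $\Pi^{(L_t)}$ (the event $\mathcal{G}_{t,0}$), and a quantitative comparison between the local principal eigenvalues $\lambda^{(j)}$ and $\lambda$ at the site $Z_t^{(j)}$ (the final bound in Lemma~\ref{lem:jtorho}). The point is that the first quantity is strictly larger than the second, so a displacement of the maximizer from $Z_t^{(j)}$ to any other site in $\Pi^{(L_t)}$ would force an eigenvalue correction exceeding what Lemma~\ref{lem:jtorho} permits.

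Concretely, I will proceed as follows. First, suppose $Z_t\neq Z_t^{(j)}$. Both sites lie in $\Pi^{(L_t)}$ by definition, so the gap event $\mathcal{G}_{t,0}$ applied at the second-largest value gives
\[
\Psi^{(j)}_t(Z_t^{(j)}) - \Psi^{(j)}_t(Z_t) \;\ge\; \Psi^{(j)}_t(Z_t^{(j)}) - \Psi^{(j)}_t(Z_t^{(j,2)}) \;>\; d_t e_t.
\]
Second, by the maximality of $Z_t$ for $\Psi_t$ we have $\Psi_t(Z_t)\ge\Psi_t(Z_t^{(j)})$. Third, because the penalty term $|z|(\gamma t)^{-1}\log\log t$ is identical in $\Psi^{(j)}_t$ and $\Psi_t$, subtraction gives
\[
\bigl(\lambda^{(j)}(Z_t^{(j)}) - \lambda(Z_t^{(j)})\bigr) \;-\; \bigl(\lambda^{(j)}(Z_t) - \lambda(Z_t)\bigr) \;>\; d_t e_t.
\]
Fourth, by the domain and potential monotonicity in Lemma~\ref{lem:mono} (the Hamiltonian $\mathcal{H}$ is obtained from $\mathcal{H}^{(j)}$ by shrinking the box and zeroing $\xi$ on an annulus), we have $\lambda^{(j)}(z)\ge\lambda(z)$ at every $z$, so the second parenthesis is non-negative and hence
\[
\lambda^{(j)}(Z_t^{(j)}) - \lambda(Z_t^{(j)}) \;>\; d_t e_t.
\]
Fifth, this contradicts the bound $\lambda^{(j)}(Z_t^{(j)})-\lambda(Z_t^{(j)})<d_t e_t$ guaranteed by Lemma~\ref{lem:jtorho}, completing the argument.

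I do not expect a serious obstacle in executing this plan; the statement is essentially a bookkeeping consequence of the two facts already packed into $\mathcal{E}_{t,c}$ and into Lemma~\ref{lem:jtorho}. The only subtlety to double-check is that all the relevant sites genuinely belong to $\Pi^{(L_t)}$ (so that both the maximality statement and the gap statement apply), and that the comparison $\lambda^{(j)}\ge\lambda$ holds at arbitrary sites (not just at $Z_t^{(j)}$), which follows from the general monotonicity in Lemma~\ref{lem:mono} rather than from the specific construction of $Z_t^{(j)}$.
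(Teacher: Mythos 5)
Your proof is correct and takes essentially the same route as the paper. Both proofs exploit the gap event $\mathcal{G}_{t,0}$ and the maximality of $Z_t$ for $\Psi_t$, cancel the common penalty term so that $\Psi_t^{(j)} - \Psi_t = \lambda^{(j)} - \lambda$, use monotonicity to discard the contribution from $Z_t$, and conclude that $\lambda^{(j)}(Z_t^{(j)}) - \lambda(Z_t^{(j)}) > d_t e_t$, contradicting the third bound of Lemma~\ref{lem:jtorho}; the paper phrases this as two opposed inequalities while you chain them, but the content is identical.
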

\begin{proof}
Assume that $Z^{(j)}_{t} \neq Z_{t}$. On the event $\mathcal{E}_{t,c}$, Lemma~\ref{lem:jtorho} implies that
\begin{align*}
&  \left({\Psi}^{(j)}_{t}(Z_{t}^{(j)}) - \Psi_t(Z_t^{(j)}) \right) - \left( {\Psi}^{(j)}_{t}(Z_t) - \Psi_{t}(Z_t) \right) \\
& \quad = \left( {\lambda}^{(j)}(Z_{t}^{(j)}) - {\lambda}(Z_{t}^{(j)}) \right) - \left( {\lambda}^{(j)}(Z_{t}) - {\lambda}(Z_{t}) \right) < d_t e_t  
\end{align*}
holds eventually. On the other hand, on the event $\mathcal{E}_{t,c}$, and by the definition of $Z_{t}$ and $Z^{(j)}_{t}$ as the argmax of ${\Psi}_t$ and $\Psi_t^{(j)}$ respectively,
$$ {\Psi}_t^{(j)}(Z^{(j)}_{t}) - {\Psi}_t^{(j)}(Z_t) > d_t e_t \quad \text{and} \quad {\Psi}_t(Z_t) - {\Psi}_t(Z^{(j)}_{t}) > 0  $$
also hold, giving a contradiction.
\end{proof}

Finally, we prove a criterion for the independence of $Z_t$ from the trapping landscape $\sigma$. Define $\psi_t(z):= \xi(z) - \frac{|z|}{\gamma t} \log \log t$, and let $z_t := \argmax_{z \in \Pi^{(L_t)}}  \psi_t(z)$. Note that $z_t$ is independent of $\sigma$.

\begin{proposition}[Criterion for the independence of $Z_t$ from the trapping landscape $\sigma$]
\label{prop:critforind}
If $\gamma < 1$, then as $t \to \infty$,
\[ \PP \left( Z_t = z_t  \right) \to 1 \, .\]
\end{proposition}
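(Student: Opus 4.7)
The plan is to exploit the exact identity $\lambda(z)=\eta(z)$ valid in the regime $\gamma<1$, together with a gap estimate on the standardised PAM functional $\psi_t$. When $\gamma<1$ the definitions of $\rho$ and $\rho_\xi$ both yield $0$, so the local Hamiltonian $\mathcal{H}(z)=(\Delta\invsigma+\xi\id_{\{z\}})\id_{\{z\}}$ is one-dimensional with Dirichlet boundary conditions on the neighbours of $z$, and its principal eigenvalue is computed by hand to be exactly $\xi(z)-\invsigma(z)=\eta(z)$. Hence $\Psi_t(z)=\psi_t(z)-\invsigma(z)$ identically on $\ZZ^d$, and by Assumption~\ref{assump:sigma}(a) the perturbation $\invsigma(z)$ is uniformly bounded by $\delta_\sigma^{-1}$.

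From this identity, it follows that if $z_t^{(2)}:=\argmax_{z\in\Pi^{(L_t)}\setminus\{z_t\}}\psi_t(z)$ satisfies
\[ \psi_t(z_t)-\psi_t(z_t^{(2)})>2\delta_\sigma^{-1}, \]
then for every $z\in\Pi^{(L_t)}\setminus\{z_t\}$,
\[ \Psi_t(z_t)-\Psi_t(z)\ \ge\ \psi_t(z_t)-\psi_t(z)-2\delta_\sigma^{-1}\ >\ 0, \]
so $Z_t=z_t$. The proposition therefore reduces to verifying $\PP(\psi_t(z_t)-\psi_t(z_t^{(2)})>2\delta_\sigma^{-1})\to 1$. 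For this, I would repeat the extreme value analysis of Proposition~\ref{prop:pp1} and Corollary~\ref{cor:limit} with $\psi_t$ in place of $\Psi_t^{(j)}$. Since $\xi$ is i.i.d.\ Weibull with parameter $\gamma$, Assumption~\ref{assump:xi} yields $t^d\PP(\xi(0)>a_t+x d_t)\to e^{-x}$, and the classical argument of \cite[Lemma~3.1]{Sidorova12} then gives
\[ \sum_{z\in\Pi^{(L_t)}}\id_{(z r_t^{-1},\,(\psi_t(z)-a_t)/d_t)}\bigg|_{\hat H_\tau^\alpha}\ \Rightarrow\ \mathcal{M}\quad\text{in law}, \]
where $\mathcal{M}$ is the Poisson point process of intensity $dx\otimes e^{-y-|x|}dy$. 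The same computation as in Corollary~\ref{cor:limit} then shows that $(\psi_t(z_t)-\psi_t(z_t^{(2)}))/d_t$ converges in law to an almost-surely strictly positive random variable. Since $\gamma<1$ forces $d_t=\gamma^{-1}(d\log t)^{1/\gamma-1}\to\infty$, the gap diverges to $+\infty$ in probability, and in particular exceeds $2\delta_\sigma^{-1}$ eventually with overwhelming probability, completing the argument.

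The only step that requires any attention is the point-process convergence for $\psi_t$ on $\hat H_\tau^\alpha$; but this is strictly easier than the content of Proposition~\ref{prop:pp1}, since no path expansion is required and the Weibull tail of $\xi$ may be used directly in lieu of the tail asymptotics of Proposition~\ref{prop:asympt}. The essential new ingredient in this proposition is thus the \emph{exact} identity $\lambda=\eta$ available in the $\gamma<1$ regime, which cleanly decouples the trapping landscape from the determination of $Z_t$ at leading order.
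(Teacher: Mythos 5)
Your proof is correct and, while sharing the same essential mechanism as the paper's proof, takes a more direct and self-contained route. The common core is: for $\gamma<1$ the perturbation of the PAM penalisation functional $\psi_t$ coming from the trapping landscape is uniformly bounded (order $\delta_\sigma^{-1}$), while the gap in the top order statistics of $\psi_t$ grows like $d_t=\gamma^{-1}(d\log t)^{1/\gamma-1}\to\infty$, so the ordering of the top two sites is determined by $\psi_t$ alone.

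Where you differ from the paper: you invoke the exact identity $\lambda(z)=\eta(z)$, valid since $\rho=\rho_\xi=0$ for $\gamma<1$, to write $\Psi_t=\psi_t-\invsigma$ on the nose, and then you re-derive the point-process/gap estimate for the pure PAM functional $\psi_t$ from the Weibull tail of $\xi$. The paper instead routes through the $j$-local eigenvalue functional $\Psi_t^{(j)}$ and the already-established event $\mathcal{E}_{t,c}$: on that event the gap $\mathcal{G}_{t,0}$ for $\Psi_t^{(j)}$ is $>d_te_t$, the path expansion (Proposition~\ref{prop:pathexp}) gives $|\Psi_t^{(j)}(z)-\psi_t(z)|=O(1)$ uniformly on $\Pi^{(L_{t,\varepsilon})}$, the sites in $\Pi^{(L_t)}\setminus\Pi^{(L_{t,\varepsilon})}$ are handled crudely via Lemma~\ref{lem:minmax}, and then Corollary~\ref{corry:zeqz0} identifies $Z_t^{(j)}$ with $Z_t$. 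The paper's route is slightly longer but reuses infrastructure already in place (the event $\mathcal{E}_{t,c}$ and its gap property), avoiding the need to prove a separate point-process statement for $\psi_t$; yours is conceptually cleaner in this regime, replacing the $O(1)$ path-expansion error by an exact formula and replacing the intermediate $Z_t^{(j)}$ by a direct comparison of $\Psi_t$ and $\psi_t$. Both versions hinge on $d_t\to\infty$ for $\gamma<1$, which is the genuine content of the proposition. One cosmetic remark: your constant $2\delta_\sigma^{-1}$ can be sharpened to $\delta_\sigma^{-1}$, since $\Psi_t(z_t)-\Psi_t(z)\ge\psi_t(z_t)-\psi_t(z)-\invsigma(z_t)\ge\psi_t(z_t)-\psi_t(z)-\delta_\sigma^{-1}$, but this does not affect the argument.
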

\begin{proof}
By Proposition~\ref{prop:event} we may assume $\mathcal{E}_{t, c}$ holds. Observe that, on $\mathcal{E}_{t,c}$ and by Proposition~\ref{prop:pathexp}, any $z \in \Pi^{(L_{t, \varepsilon)}} \setminus \{Z^{(j)}_t\}$ satisfies
\[ \psi_t(Z^{(j)}_t) > \Psi^{(j)}_t(Z^{(j)}_t) > \Psi^{(j)}_t(z) + d_t e_t > \psi_t(z) + O(1) + d_t e_t \ .
\]
Moreover, by Lemma~\ref{lem:minmax} and on $\mathcal{E}_{t,c}$, any $z \in \Pi^{(L_t)} \setminus \Pi^{(L_{t, \varepsilon)}}$ also satisfies
\[ \psi_t(Z^{(j)}_t) > \Psi^{(j)}_t(Z^{(j)}_t) > \psi_t(z) + O(1) + d_t e_t \ . \]
Since $d_t e_t \to \infty$ if $\gamma < 1$, this implies that $Z^{(j)}_t = \argmax_{z \in \Pi^{(L_t)}} \psi_t(z) =: z_t$. Corollary \ref{corry:zeqz0} completes the proof.
\end{proof}

\subsection{Proof of Theorem \ref{thm:main2}}
We prove Theorem \ref{thm:main2} on the event $\mathcal{E}_{t,c}$, since by Proposition~\ref{prop:event} this event holds with overwhelming probability eventually. 
Part~\eqref{thm:main2a} is implied directly by the definition of the event $\mathcal{E}_{t,c}$. Parts~\eqref{thm:main2b}--\eqref{thm:main2d} follow by combining the definition the event $\mathcal{E}_{t,c}$ with Proposition~\ref{prop:corr} and applying the coupling in the proof of Proposition \ref{prop:pp1}. Finally, part~\eqref{thm:main2e} is a consequence of the point process convergence, and is proved in an identical manner to the corresponding results in \cite{Fiodorov13, Sidorova12}.

\subsection{Proof of Theorem \ref{thm:main3}}
Consider parts \eqref{thm:main3a} and \eqref{thm:main3b}. By definition, $Z_t$ depends only on the values of $\xi$ and $\sigma$ in balls of radius $\rho_\xi$ and $\rho$ respectively around each site, and so the radii $\rho_\xi$ and $\rho$ are certainly sufficient. To show necessity, consider that the results in parts~\eqref{thm:main2b}--\eqref{thm:main2d} of Theorem~\ref{thm:main2} establish the correlation of the fields $\xi$ and $\sigma$ at a distance $\rho_\xi$ and $\rho$ respectively around $Z_t$. Hence these radii are necessary as well. 

Consider then part \eqref{thm:main3c}. The sufficient condition for the reduction to $\xi$ follows directly from Proposition~\ref{prop:critforind}. To show necessity, consider that the results in part~\eqref{thm:main2c} of Theorem~\ref{thm:main2} establish that, if $\gamma \ge 1$, the value of $\sigma(Z_t)$ is not an independent copy of $\sigma(0)$, and hence $Z_t$ must depend on $\sigma$.

It remains to prove part~\eqref{thm:main3d}. If $\rho = 0$ then $Z_t$ depends only on $\eta$ by definition. On the other hand, suppose $\rho \ge 1$ and, for the purposes of contradiction, that there exists a random site $z_t$, depending only on $\xi$ and $\sigma$ through $\eta$, such that, as $t \to \infty$,
\[ \PP(Z_t = z_t) \to 1 \, .\]
 Fix a site $y$ and a constant $c>\delta_\sigma$. We establish a contradiction by considering two bounds on the probability of the event 
\[
\{\sigma(y)<c,\,|Z_t-y|=1\}.
\]
We first consider the case $(\gamma,\mu)\notin\mathcal{B}_\sigma$.
Then by part \eqref{thm:main2d} of Theorem \ref{thm:main2}, conditionally on event $\{|Z_t-y|=1\}$, we have that $\sigma(y)\to\delta_\sigma$ in probability as $t\to\infty$. This implies that there exists some $c_1 > 0$ such that
\begin{align}
\label{eq:sigmabound}
\PP(\sigma(y)<c,\,|Z_t-y|=1)> \left( \PP(\sigma(y)<c) + c_1 \right) \,\PP(|Z_t-y|=1)
\end{align}
eventually. In the case $(\gamma,\mu)\in\mathcal{B}_\sigma$, conditionally on event $\{|Z_t-y|=1\}$ and again by part~\eqref{thm:main2d} of Theorem \ref{thm:main2},
\[ f_{\sigma(y)}(x) \to  c_2 e^{\bar c_\sigma /x} f_\sigma(x) \, \]
for some $c_2 > 0$, and so \eqref{eq:sigmabound} holds in this case as well.

We now work on the event $\{Z_t = z_t\}$ and show how to obtain a lower bound on the probability of the event $\{\sigma(y)<c,\,|z_t-y|=1\}$. Let $\bar\eta=\{\eta(v):\,v\neq y\}$. Remark first that, since $z_t \in \Pi^{(L_t)}$, by Proposition~\ref{prop:pathexp} we have that $\lambda_t(z_t)$ is increasing in $\eta(y)$ for $|y-z_t|=1$. Hence there exists a function $\beta_t: \bar\eta \to \mathbb{R} \cup \{\infty\}$ such that, conditionally on $\bar \eta$,
 \[ \{   |z_t - y| = 1  \}   \quad \text{and}  \quad \{ \eta(y) \ge \beta_t(\bar \eta)   \}\quad \]
 agree almost surely. To see this, set $\beta_t(\bar\eta)$ to be the minimum $\eta(y)$ such that with such a value of $\eta(y)$, we have $|z_t-y|=1$ (and setting it to be infinity if no such value exists). Then clearly, if $\eta(y)<\beta_t(\bar\eta)$ we cannot have $|z_t-y|=1$, and on the other hand we claim that if $\eta(y)\ge\beta_t(y)$ we have $|z_t-y|=1$. This follows by the almost-sure separation of Lemma~\ref{lem:assep}, which ensures that $\{y=z_t\}$ has probability 0.  Denote by $\mathcal{F}_{\bar \eta}$ the $\sigma$-algebra generated by $\bar\eta$. Then, eventually almost surely,
 \begin{align*}
\PP(\sigma(y)<c,\,|z_t-y|=1)&=\EE_{\bar\eta}\left[\EE[\indic{|z_t-y|=1}\indic{\sigma(y)<c}|\,\mathcal{F}_{\bar\eta}]\right]\\
&=\EE_{\bar\eta}\left[\EE[\indic{\eta(y)> \beta_t(\bar\eta)} \indic{\sigma(y)<c}|\,\mathcal{F}_{\bar\eta}]\right] \\
&\le \EE_{\bar\eta}\left[\EE[\indic{\eta(y)> \beta_t (\bar\eta)}|\,\mathcal{F}_{\bar\eta}] \, \EE[\indic{\sigma(y)<c}|\,\mathcal{F}_{\bar\eta}]\right]\\
&=\PP(\sigma(y)<c) \, \PP(|z_t-y|=1) \, ,
\end{align*}
where the second equality uses the fact that $z_t$ depends on $\sigma$ only through $\eta$, and the inequality holds since, conditionally on $\mathcal{F}_{\bar\eta}$, the events $\{\eta(y)> \beta_t(\bar\eta)\}$ and $\{\sigma(y)<c\}$ are negatively correlated. Since $z_t = Z_t$ with probability going to one, combining with \eqref{eq:sigmabound} gives the required contradiction.

\section{Negligible paths}
\label{sec:neg}
In this section we show that the contribution to the total mass $U(t)$ from the components $U^2(t)$, $U^3(t)$, $U^4(t)$ and $U^5(t)$ are all negligible. We proceed in two parts: first we prove a lower bound on the total mass $U(t)$, and then we bound from above the contribution to the total mass from each $U^i(t)$. Throughout this section, let $\varepsilon$ be such that $0 < \varepsilon < \theta$.

\subsection{Preliminaries}
We begin by proving a general result on eigenfunction decay around sites of high potential, which will be used in both the lower and upper bound. For each $z \in \Pi^{(L_{t, \varepsilon})}$, let $\varphi_1$ denote the principal eigenfunction of the Hamiltonian ${\mathcal{H}}^{(j)}(z)$. 

\begin{proposition}
\label{prop:eigdecay}
For each $z \in \Pi^{(L_{t, \varepsilon})}$ uniformly, as $t \to \infty$, almost surely
\[  \sum_{y \in B(z, j) \setminus \{z\} } \varphi_1(y) \to 0  \qquad \text{and} \qquad  \sum_{y \in B(z, j) \setminus \{z\} } \frac{ \sigma(y)^{-\frac12} \varphi_1(y) }{ || \sigma^{-\frac{1}{2}} \varphi_1 ||_{\ell_2} } \to 0 \, . \]
\end{proposition}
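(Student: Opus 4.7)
The plan is to reduce both statements to a pointwise bound on the ratio $\varphi_1(y)/\varphi_1(z)$ via the path expansion of Proposition~\ref{prop:genpathexpeig}, and then sum over $y \in B(z,j) \setminus \{z\}$. Applied to $\mathcal{H}^{(j)}(z)$ with $D = B(z,j)$, that expansion reads
\[
\frac{\varphi_1(y)}{\varphi_1(z)} = \frac{\sigma(y)}{\sigma(z)} \sum_{k \ge 1}\, \sum_{\substack{p \in \Gamma_k(y,z) \\ p_i \neq z,\ 0 \le i < k \\ \{p\} \subseteq B(z,j)}}\, \prod_{0 \le i < k} \frac{1}{2d}\, \frac{\invsigma(p_i)}{\lambda_1 - \eta(p_i)},
\]
and the first thing I would point out is that the $\sigma(y)$ prefactor cancels exactly against $\sigma^{-1}(y)$ arising from the $i=0$ term of the product. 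In particular, the right-hand side depends on the trapping landscape only through $\sigma(z)$ and through the values of $\sigma$ at sites $p_i$ strictly between $y$ and $z$. This cancellation is what will allow the bound to be uniform without any further hypothesis on the upper tail of $\sigma$.

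Next I would bound each denominator $\lambda_1 - \eta(p_i)$ from below. Since $z \in \Pi^{(L_{t,\varepsilon})}$, Lemma~\ref{lem:minmax} gives $\lambda_1 \ge \eta(z) \ge L_{t,\varepsilon} - \delta_\sigma^{-1}$, and Lemma~\ref{lem:assep} (applied with exponent $\theta$) ensures $\xi(p_i) \le L_t$ for all $p_i \in B(z,j) \setminus \{z\}$ eventually almost surely, so that $\eta(p_i) \le L_t$. This yields $\lambda_1 - \eta(p_i) \ge L_{t,\varepsilon} - L_t - \delta_\sigma^{-1}$, which is of order $a_t$ because $\varepsilon < \theta$. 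Combined with $\invsigma(p_i) \le \delta_\sigma^{-1}$ from the no-quick-sites assumption, each factor in the product is $O(a_t^{-1})$, and a geometric-series estimate (using the crude bound $(2d)^k$ on the number of paths of length $k$) then gives
\[
\frac{\varphi_1(y)}{\varphi_1(z)} = O\!\left(\frac{1}{\sigma(z)\, a_t^{|y-z|}}\right)
\]
uniformly in $y \in B(z,j) \setminus \{z\}$ and in $z \in \Pi^{(L_{t,\varepsilon})}$, eventually almost surely.

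From here the two claims follow by summing. For the first, the $\ell_2$-normalisation gives $\varphi_1(z) \le 1$, and together with $\sigma(z) \ge \delta_\sigma$ we get
\[
\sum_{y \in B(z,j) \setminus \{z\}} \varphi_1(y) \le \varphi_1(z)\, O\!\left(\frac{1}{\sigma(z)\, a_t}\right) = O(a_t^{-1}) \to 0,
\]
uniformly in $z$. For the second, the trivial lower bound $\|\sigma^{-1/2}\varphi_1\|_{\ell_2}^2 \ge \sigma(z)^{-1} \varphi_1(z)^2$ together with the pointwise estimate yields
\[
\frac{\sum_{y} \sigma(y)^{-1/2} \varphi_1(y)}{\|\sigma^{-1/2} \varphi_1\|_{\ell_2}} \le \frac{\delta_\sigma^{-1/2}\, \varphi_1(z)\, O(\sigma(z)^{-1} a_t^{-1})}{\sigma(z)^{-1/2} \varphi_1(z)} = O(\sigma(z)^{-1/2} a_t^{-1}) = O(a_t^{-1}) \to 0.
\]

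The main subtlety is really the $\sigma(y)$ cancellation identified in the first step: without it one would retain a factor growing with the upper tail of $\sigma$, which under Assumption~\ref{assump:sigma} can grow arbitrarily fast when $\mu$ is small, and none of the subsequent geometric-series estimates would survive. Once that cancellation is observed, the remaining arguments are routine, with uniformity over $z \in \Pi^{(L_{t,\varepsilon})}$ guaranteed by the almost-sure separation of Lemma~\ref{lem:assep} together with the deterministic no-quick-sites lower bound.
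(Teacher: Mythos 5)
Your proof is correct and takes essentially the same route as the paper: the path expansion of Proposition~\ref{prop:genpathexpeig}, the lower bound $\lambda^{(j)}(z) - \eta(p_i) \gtrsim L_{t,\varepsilon} - L_t - \delta_\sigma^{-1}$ from Lemmas~\ref{lem:assep} and \ref{lem:minmax}, and the no-quick-sites bound $\invsigma \le \delta_\sigma^{-1}$. You make explicit the cancellation of the $\sigma(y)$ prefactor against the $\invsigma(p_0)$ factor, which the paper's very terse proof leaves implicit but which is indeed what keeps the estimate uniform over arbitrarily large traps.
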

\begin{proof}
By Proposition~\ref{prop:genpathexpeig}, we have the path expansion
\begin{align}
\frac{\varphi_1(y)}{\varphi_1(z)} = \frac{\sigma(y)}{\sigma(z)} \ \sum_{k \ge 1} \ \sum_{\substack{ p \in \Gamma_{k}(y, z) \\  p_i \neq z , \, 0 \le i < k \\ \mathrm{Set}(p) \subseteq  B(z, j) } } \ \prod_{0 \leq i < k} (2d)^{-1} \frac{\invsigma(p_i)}{{\lambda}^{(j)}(z) - \eta(p_i)}  \ , \quad y\in B(z,j)\setminus\{z\} \, .
\end{align}
Since, by Lemmas \ref{lem:assep} and \ref{lem:minmax}, for each $y\in B(z,j)\setminus\{z\}$, almost surely
$${\lambda}^{(j)}(z) - \eta(y_i) > L_{t, \varepsilon} - L_t - \delta^{-1}_\sigma \, , $$
and moreover since $\invsigma(y) < \delta^{-1}_\sigma$ for all $y \in B(z, j)$, the result follows.
\end{proof}

\begin{corollary}[Bound on total mass of the solution]
\label{cor:tm}
For each $z \in \Pi^{(L_{t, \varepsilon})}$ uniformly and any $c > 1$, as $t \to \infty$, almost surely
\[ \EE_z \left[  e^{\int_0^t  \xi(X_s) \, ds} \indic{ \tau_{B(z, j)^c} > t} \right] < c \, e^{t \lambda^{(j)}(z)} \, \]
eventually.
\end{corollary}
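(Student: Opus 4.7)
My plan is to rewrite the left-hand side as the total mass of a restricted Cauchy problem, apply the general upper bound from Proposition~\ref{prop:tm}, and then extract the constant factor $1+o(1)$ using the eigenfunction decay from Proposition~\ref{prop:eigdecay}.

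First, by the Feynman--Kac representation in Proposition~\ref{prop:fksln} applied to the domain $D = B(z, j)$, and since on the event $\{\tau_{B(z,j)^c} > t\}$ we have $X_t \in B(z,j)$, the quantity on the left of the inequality is precisely
\[ \EE_z \left[  e^{\int_0^t  \xi(X_s) \, ds} \indic{ \tau_{B(z, j)^c} > t} \right] = \sum_{y \in B(z,j)} u_z(t, y). \]
Proposition~\ref{prop:tm} then yields
\[ \sum_{y \in B(z,j)} u_z(t, y) \;\le\; e^{\lambda^{(j)}(z)\,t} \left( 1 + \frac{1}{\varphi_1(z)} \sum_{y \in B(z,j) \setminus \{z\}} \varphi_1(y) \right), \]
where $\varphi_1$ is the $\ell_2$-normalised principal eigenfunction of $\mathcal{H}^{(j)}(z)$, which is strictly positive by Perron--Frobenius applied to the (irreducible) operator on the connected finite domain $B(z,j)$.

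Next, I will show that the bracketed factor tends to $1$ uniformly over $z \in \Pi^{(L_{t,\varepsilon})}$ as $t \to \infty$. The first part of Proposition~\ref{prop:eigdecay} gives, uniformly in $z \in \Pi^{(L_{t,\varepsilon})}$,
\[ S_t(z) := \sum_{y \in B(z,j) \setminus \{z\}} \varphi_1(y) \longrightarrow 0 \qquad \text{almost surely}. \]
Since $\varphi_1 \ge 0$, and since for non-negative reals $\sum a_i^2 \le (\sum a_i)^2$, it follows from the $\ell_2$-normalisation that
\[ \varphi_1(z)^2 \;=\; 1 - \sum_{y \in B(z,j) \setminus \{z\}} \varphi_1(y)^2 \;\ge\; 1 - S_t(z)^2, \]
so $\varphi_1(z) \to 1$ uniformly, and therefore
\[ \frac{1}{\varphi_1(z)} \sum_{y \in B(z,j) \setminus \{z\}} \varphi_1(y) \;\le\; \frac{S_t(z)}{\sqrt{1 - S_t(z)^2}} \longrightarrow 0 \]
uniformly in $z \in \Pi^{(L_{t,\varepsilon})}$. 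Hence, given any $c > 1$, eventually the bracketed factor is strictly less than $c$ for all such $z$, which yields the desired bound. There is no substantive obstacle here: the work is entirely packaged into Propositions~\ref{prop:tm} and~\ref{prop:eigdecay}, and all that remains is the elementary observation that $\ell_1$-smallness of the off-peak mass of a non-negative $\ell_2$-unit vector forces the peak value to approach $1$.
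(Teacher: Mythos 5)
Your proof is correct and takes essentially the same route the paper intends: identify the left-hand side with the total mass of the restricted Cauchy problem (Proposition~\ref{prop:fksln}), apply the total-mass bound of Proposition~\ref{prop:tm}, and control the eigenfunction ratio via Proposition~\ref{prop:eigdecay}. The small extra step you add — converting the stated $\sum_{y\neq z}\varphi_1(y)\to 0$ into $\varphi_1(z)^{-1}\sum_{y\neq z}\varphi_1(y)\to 0$ via $\ell_2$-normalisation and positivity — is needed because Proposition~\ref{prop:tm} is phrased in ratio form, and your elementary argument for it is sound.
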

\begin{proof}
This follows by combining Propositions \ref{prop:eigdecay} and \ref{prop:tm}.
\end{proof}

\subsection{Lower bound on the total mass $U(t)$}
Recall that by the discussion in Section \ref{sec:outline},  the total mass $U(t)$ can be approximated by considering both the benefit of being near a site of high potential and the probabilistic penalty from diffusing to that site. To formalise a lower bound for $U(t)$ we need a bound on both of these terms.

We begin by bounding from below the benefit to the solution from paths that start and end at a site of high potential.

\begin{lemma}\label{lem:uloweratz}
For each $z \in \Pi^{(L_{t, \varepsilon})}$ uniformly,
$$ \log u_z(t, z) \geq t {\lambda}^{(j)}(z) + o(1) \, $$
eventually almost surely.
\end{lemma}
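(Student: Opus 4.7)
The plan is to invoke the spectral lower bound from Corollary \ref{cor:sb}, applied to the Hamiltonian $\mathcal{H}^{(j)}(z)$ with principal eigenvalue $\lambda^{(j)}(z)$ and (positive, $\ell_2$-normalised) principal eigenfunction $\varphi_1$. This gives directly
\[ u_z(t,z) \ge e^{t \lambda^{(j)}(z)}\,\frac{\sigma(z)^{-1} \varphi_1(z)^2}{\|\sigma^{-\frac12}\varphi_1\|_{\ell_2}^2}. \]
Taking logarithms, it will suffice to establish that the pre-factor on the right-hand side tends to $1$ uniformly over $z \in \Pi^{(L_{t,\varepsilon})}$ as $t \to \infty$.

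As observed in the proof of Lemma \ref{lem:tr}, this pre-factor equals $\tilde\varphi_1(z)^2$, where $\tilde\varphi_1 := \sigma^{-\frac12}\varphi_1 / \|\sigma^{-\frac12}\varphi_1\|_{\ell_2}$ is the $\ell_2$-normalised principal eigenfunction of the symmetrised operator $\sigma^{-\frac12}\mathcal{H}^{(j)}(z) \sigma^{\frac12}$. Since $\tilde\varphi_1$ is $\ell_2$-normalised and nonnegative, $|\tilde\varphi_1(y)| \le 1$ for all $y \in B(z,j)$, so
\[ \sum_{y \in B(z,j)\setminus\{z\}} \tilde\varphi_1(y)^2 \le \sum_{y \in B(z,j)\setminus\{z\}} \tilde\varphi_1(y). \]
By the second statement of Proposition \ref{prop:eigdecay}, the right-hand side vanishes uniformly over $z \in \Pi^{(L_{t,\varepsilon})}$, whence $\tilde\varphi_1(z)^2 = 1 - \sum_{y \neq z}\tilde\varphi_1(y)^2 \to 1$ uniformly. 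Taking logarithms gives the desired $o(1)$ correction.

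There is no real obstacle here, since the substantive content (namely the eigenfunction concentration at the high-$\xi$ site) has already been extracted as Proposition \ref{prop:eigdecay}; the present lemma is essentially a packaging of that fact with the one-line spectral lower bound of Corollary \ref{cor:sb}.
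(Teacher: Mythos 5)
Your proof is correct and takes essentially the same route as the paper: invoke Corollary~\ref{cor:sb} for the restricted Hamiltonian $\mathcal{H}^{(j)}(z)$ and then pass the prefactor $\sigma^{-1}(z)\varphi_1^2(z)/\|\sigma^{-\frac12}\varphi_1\|_{\ell_2}^2 = \tilde\varphi_1(z)^2$ to $1$ using the second half of Proposition~\ref{prop:eigdecay}. The one step you should make explicit is that the unrestricted $u_z(t,z)$ of the lemma dominates the Dirichlet solution appearing in Corollary~\ref{cor:sb}, via Proposition~\ref{prop:fksln} and dropping the indicator $\id_{\{\tau_{B(z,j)^c}>t\}}$ in the Feynman--Kac formula --- this is precisely the observation with which the paper opens its proof.
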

\begin{proof}
Recall the Feynman-Kac formula for the solution $u_z(t, z)$ (see, e.g., Proposition~\ref{prop:fksln}), and note that the expectation is larger than the corresponding expectation taken only over paths that do not leave $B(z, j)$. Using Corollary \ref{cor:sb}, we then have that
$$ u_z(t, z) \ge \frac{ e^{{\lambda}^{(j)}(z) t} \invsigma(z)  \varphi^2_1(z) }{ || \sigma^{-\frac{1}{2}} \varphi_1 ||^2_{\ell_2}   }\, ,  $$
where $\varphi_1$ denotes the principal eigenfunction of the Hamiltonian ${\mathcal{H}}^{(j)}(z)$. Since the domain $B(z, j)$ is finite, the fact that the eigenfunction $\sigma^{-\frac12} \varphi_1 $ is localised at $z$ (by Proposition~\ref{prop:eigdecay}) ensures that the square eigenfunction $\invsigma \varphi_1^2$ is also localised at $z$, and the result follows.
\end{proof}

The next step is to bound from above the probabilistic penalty incurred by diffusing to a certain site. This will be a function both of the distance of the site from the origin, as well as the size of the traps on paths from the origin to the site. Here we use the existence of quick paths that we established in a general setting in Section \ref{sec:prop}.

Recall the scaling function $s_t$, which satisfies the properties in \eqref{scalingfns}. If $d = 1$, for $\sigma_t := s_t$ and $n_t := r_t g_t$, recall the definitions of $I_t$ and $\{\sigma^i_t\}$ from Proposition~\ref{prop:quickpath1}. Let $p \in \Gamma_{|Z_t|}(0, Z_t)$ be the (unique) shortest path from $0$ to $Z_t$ and define
$$ N^p_i := \sum_{0 \le l < |Z_t|} \id_{\{\sigma(p_l) \in (\sigma_t^{i-1}, \sigma_t^i] \}}  \ , \quad i = 1, \ldots, I_t \, . $$ 
If $d \ge 2$, for $z_t := Z_t$, $\sigma_t := s_t$ and $S_t := \Pi^{(L_t)}$, recall the definition of $|Z_t|_{\text{chem}}$ from Proposition~\ref{prop:quickpath2}. Denote by $\Theta^d_t$ the event
\[ \Theta^d_t := \begin{cases} 
  \left\{ \sum_{i=1}^{I_t} N^p_i \log \sigma_t^i < t d_t b_t  \, , \ \max_{0 \le l < |Z_t|} \sigma(p_l) < \sigma_t^{I_t} \right\} \, , & d = 1\,; \\
\left\{ |Z_t|_{\text{chem}} < |Z_t| + r_t b_t \right\} \, ,& d \ge 2\, .
\end{cases} \]

\begin{proposition}[Existence of quick paths]
\label{prop:existencepath}
For each $c \in \mathbb{R}$, as $t \to \infty$, 
\[\PP(\Theta^d_t, \mathcal{E}_{t,c}) \to 1 \, .  \]
\end{proposition}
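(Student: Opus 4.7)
The plan is to condition on the event $\mathcal{E}_{t,c}$---which holds with overwhelming probability by Proposition~\ref{prop:event}---and then apply the general quick-path results of Propositions~\ref{prop:quickpath1} and \ref{prop:quickpath2} with endpoint $Z_t$, separately in the two dimensional regimes. In each regime the heart of the argument is a scale-matching computation that converts the generic quick-path bounds into the specific bounds defining $\Theta^d_t$; the scaling relations \eqref{scalingfns} have been designed precisely so that these match.

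For $d=1$, apply Proposition~\ref{prop:quickpath1} with $\sigma_t:=s_t$ and $n_t:=r_t g_t$. On $\mathcal{H}_t \subseteq \mathcal{E}_{t,c}$ we have $|Z_t|<r_t g_t=n_t$, so the proposition applies to the (unique) shortest path $p\in\Gamma_{|Z_t|}(0,Z_t)$. Combining its first conclusion with the identity $n_t/(t d_t)=\gamma g_t/\log\log t$ and the observation that $\log\log n_t\sim\log\log t$, the required bound $\sum_i N_i^p\log\sigma_t^i<t d_t b_t$ reduces to the two relations
\[ g_t(\log s_t)^2/\log\log t\ll b_t \qquad\text{and}\qquad g_t/\log\log s_t\ll b_t, \]
both of which are guaranteed by \eqref{scalingfns}. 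The second conclusion of Proposition~\ref{prop:quickpath1} gives the bound on $\max_l\sigma(p_l)$ directly, eventually almost surely.

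For $d\ge 2$, set $S_t:=\Pi^{(L_t)}\cap\{|z|>t^\varepsilon\}$ for some small $\varepsilon>0$. By Lemma~\ref{lem:assep} this satisfies $r(S_t)>t^\varepsilon$ eventually almost surely, and by construction $\min_{u\in S_t}|u|>t^\varepsilon$. On $\mathcal{E}_{t,c}$, the event $\mathcal{H}_t$ together with Corollary~\ref{corry:zeqz0} gives $|Z_t|>r_t f_t\gg t^\varepsilon$, so $Z_t\in S_t$ eventually. Apply Proposition~\ref{prop:quickpath2} with $z_t:=Z_t$, $\sigma_t:=s_t$ and the choice
\[ c_t:=\bigl(b_t/(\bar F_\sigma(s_t)g_t)\bigr)^{1/2}, \]
which tends to infinity and satisfies $\bar F_\sigma(s_t)c_t\to 0$ by \eqref{scalingfns}. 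Using $|Z_t|<r_t g_t$ and the definition of $c_t$, the conclusion of Proposition~\ref{prop:quickpath2} yields
\[ |Z_t|_{\rm{chem}}-|Z_t| < r_t g_t\bigl(\bar F_\sigma(s_t)c_t+t^{-c}\bigr) = r_t\sqrt{\bar F_\sigma(s_t)g_t b_t}+o(r_t b_t) \ll r_t b_t, \]
which is exactly the second component of $\Theta^d_t$.

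The main obstacle here is really just the bookkeeping involved in coordinating the auxiliary scales $s_t$, $g_t$, $b_t$, $f_t$, $h_t$, $e_t$ and $r_t$ through \eqref{scalingfns} so that the generic quick-path bounds translate into the precise errors $t d_t b_t$ and $r_t b_t$ required by the path decomposition of Section~\ref{sec:outline}; no new probabilistic input is needed beyond Propositions~\ref{prop:quickpath1}--\ref{prop:quickpath2} and the extremal theory already established for $\mathcal{E}_{t,c}$. A minor subtlety, handled above by truncating $\Pi^{(L_t)}$ to $S_t$, is that Proposition~\ref{prop:quickpath2} requires the obstacle set to be well-separated from the origin, whereas $\Pi^{(L_t)}$ itself might contain sites close to the origin.
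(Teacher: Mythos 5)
Your $d=1$ argument is essentially the paper's, made explicit: apply Proposition~\ref{prop:quickpath1} with $n_t=r_tg_t$, $\sigma_t=s_t$, and translate its bound into $td_tb_t$ via $td_t=r_t\log\log t/\gamma$ and $\log\log n_t\sim\log\log t$; the scale-matching is correct, and the resulting requirements are exactly \eqref{scalingfns}. The choice $c_t=(b_t/(\bar F_\sigma(s_t)g_t))^{1/2}$ for $d\ge2$ also checks out.

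However, the $d\ge2$ argument has a genuine gap: you apply Proposition~\ref{prop:quickpath2} directly with $z_t:=Z_t$, but that proposition is proved for a \emph{fixed} target site $z_t$, with the probability taken over $\sigma$; here $Z_t$ itself is a function of $\sigma$, so a naive application is not valid. The missing step is a conditioning/decoupling argument, and it is exactly the reason that $S_t$ is built into Proposition~\ref{prop:quickpath2} at all: one conditions on $\xi$ (determining $\Pi^{(L_t)}$) and on $\sigma|_{B(\Pi^{(L_t)},j)}$ (which determines $Z_t$); since the quick path is required to avoid $B(S_t,j)=B(\Pi^{(L_t)},j)$, the event $\Theta_t^d$ is then measurable with respect to the \emph{remaining} i.i.d.\ $\sigma$-values, which are independent of the conditioning, so Proposition~\ref{prop:quickpath2} applies conditionally and one integrates out. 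You instead interpret the role of $S_t$ as enforcing separation from the origin and ``handle'' this by truncating $\Pi^{(L_t)}$ to $\{|z|>t^\varepsilon\}$; that truncation is harmless but unnecessary (Lemma~\ref{lem:assep}, which includes $\{0\}$ in the separation, already gives $\min_{u\in\Pi^{(L_t)}}|u|>t^\varepsilon$ eventually a.s., so your $S_t$ coincides with $\Pi^{(L_t)}$), and it does not address the actual measurability issue, which is the substance of the proof in this case.
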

\begin{proof}
Recall that on event $\mathcal{E}_{t,c}$ we have that $|Z_t| < r_t g_t$. Suppose $d = 1$. Then the result follows immediately from Proposition~\ref{prop:quickpath1} and the properties of the scaling function $s_t$ in \eqref{scalingfns}, since
$$  \log \log r_t g_t \sim \log \log t  \, .$$
Suppose then $d \ge 2$. Note that conditioning on $\xi$ determines $\Pi^{(L_t)}$ and also that, by Lemma~\ref{lem:assep}, eventually almost surely $\Pi^{(L_t)}$ satisfies the properties required by the set $S_t$. Since $Z_t \in \Pi^{(L_t)}$, conditioning on the values of $\sigma$ in $B(\Pi^{(L_t)},j)$ therefore determines $Z_t$. Given $Z_t$ and $\Pi^{(L_t)}$, the event $\Theta_t^d$ is fully determined by the values of $\sigma$ in $\mathbb{Z}^d\setminus B(\Pi^{(L_t)},j)$. Hence we can apply Proposition~\ref{prop:quickpath2} with $z_t=Z_t$, $\sigma_t=s_t$ and $S_t=\Pi^{(L_t)}$, to deduce that there exists a $c_1 < 1$ such that, for all functions $c_t \to \infty$ such that $\bar F_\sigma(s_t) c_t \ll 1$,
$$ |Z_t|_{\rm{chem}} <  |Z_t| ( 1 + \bar F_\sigma(s_t) c_t + t^{-c_1}  ) $$
with probability tending to $1$. By \eqref{scalingfns}, we can pick a $c_t$ such that
$$ r_t g_t \bar F_\sigma (s_t) c_t \ll r_t b_t \, ,$$
and so we have the result.
\qedhere

% This allows us to consider the problem of finding a path from the origin to $Z_t$ in the subgraph $\bar{V}_t$ as one of site percolation on ${V}_t$ with probability 
% $$p_t :=  \PP(\sigma(0) < \sigma_\ast )  \to 1  \, .$$ 
% By Corollary \ref{cor:patharoundballs}, there exists a path from the origin to $Z_t$ in ${V_t}$ of length at most $|Z_t|(1 + \varepsilon_t)$.
% 
\end{proof}

We are now ready to prove the lower bound.
\begin{proposition}
\label{prop:lowerbound}
For each $c \in \mathbb{R}$, on the events $\mathcal{E}_{t,c}$ and $\Theta^d_t$, as $t \to \infty$,
$$ \log U(t) \ge t\lambda^{(j)}(Z_t)-\frac{|Z_t|}{\gamma}\log\log t + O(t d_t b_t) \,  $$
almost surely.
\end{proposition}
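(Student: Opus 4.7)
The idea is to lower bound $U(t) \ge u_0(t, Z_t)$ by restricting the Feynman--Kac expectation to trajectories that first follow the quick path $p \in \Gamma_k(0, Z_t)$ from Proposition~\ref{prop:existencepath}, and then stay localised in $B(Z_t, j)$ for the remaining time. Throughout I work on the event $\mathcal{E}_{t,c} \cap \Theta_t^d$, on which $k := |p| \le |Z_t|(1 + o(1))$ and the traps along $p$ are controlled.

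Applying the strong Markov property to $u_0(t, Z_t) = \EE_0[\id_{\tau_{Z_t}\le t}\, e^{\int_0^{\tau_{Z_t}} \xi(X_s)\, ds}\, u_{Z_t}(t - \tau_{Z_t}, Z_t)]$, and restricting to the event $A_t := \{p(X_{\tau_{Z_t}}) = p,\, \tau_i \le \epsilon_t\ \forall i\}$, where $\tau_i$ is the holding time at $p_i$ and $\epsilon_t := 1/a_t$, one obtains
\begin{align*}
u_0(t, Z_t) \ge \EE_0\!\left[e^{\int_0^{\tau_{Z_t}} \xi(X_s)\, ds} \id_{A_t}\right] \cdot u^{\mathrm{loc}}_{Z_t}(t - k\epsilon_t, Z_t),
\end{align*}
where $u^{\mathrm{loc}}_{Z_t}$ is the solution in $B(Z_t, j)$ with Dirichlet boundary. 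By Lemma~\ref{lem:uloweratz} applied in $B(Z_t, j)$, the second factor has log at least $(t - k\epsilon_t) \lambda^{(j)}(Z_t) + o(1)$. For the first, since conditional on following $p$ the holding times are independent exponentials of rate $\sigma(p_i)^{-1}$ with direction probability $(2d)^{-k}$, and $e^{\xi(p_i) \tau_i} \ge 1$ as $\xi \ge 0$,
\begin{align*}
\EE_0\!\left[e^{\int_0^{\tau_{Z_t}} \xi(X_s)\, ds} \id_{A_t}\right] \ge (2d)^{-k} \prod_{i=0}^{k-1}\!\left(1 - e^{-\epsilon_t/\sigma(p_i)}\right) \ge (4d\, a_t)^{-k} \prod_{i=0}^{k-1} \sigma(p_i)^{-1},
\end{align*}
where the last bound uses $\epsilon_t/\sigma(p_i) \le (a_t \delta_\sigma)^{-1} \to 0$ and the elementary inequality $1 - e^{-x} \ge x/2$ for small $x$.

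Combining and using $\log a_t = (1/\gamma) \log \log t + O(1)$ yields
\begin{align*}
\log u_0(t, Z_t) \ge t \lambda^{(j)}(Z_t) - \frac{k}{\gamma} \log \log t - \sum_{i=0}^{k-1} \log \sigma(p_i) - k\epsilon_t \lambda^{(j)}(Z_t) + O(k).
\end{align*}
It then remains to show each error is $O(t d_t b_t)$. The replacement of $k$ by $|Z_t|$ costs at most $r_t b_t (\log \log t)/\gamma = O(t d_t b_t)$ on $\Theta_t^d$. Both $O(k)$ and $k\epsilon_t \lambda^{(j)}(Z_t) = O(k)$ (using $\lambda^{(j)}(Z_t) \sim a_t$) are bounded by $O(r_t g_t) = O(t d_t b_t)$, since $g_t/\log \log t \ll b_t$ by \eqref{scalingfns}. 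The trap-sum is controlled by $\Theta_t^d$: for $d=1$ directly by $\sum_l N^p_l \log \sigma^l_t < t d_t b_t$, and for $d \ge 2$ by $\sum \log \sigma(p_i) \le k \log s_t \le r_t g_t \log s_t = O(t d_t b_t)$ via the bound $g_t (\log s_t)^2 / \log \log t \ll b_t$ in \eqref{scalingfns}.

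\textbf{Main obstacle.} The delicate point is the calibration $\epsilon_t = 1/a_t$, which is precisely the scale at which simultaneously: (i) the log-penalty per step, $\log \epsilon_t = -\log a_t$, matches the required $-(1/\gamma) \log \log t$; (ii) the linearisation $1 - e^{-\epsilon_t/\sigma(p_i)} \ge \epsilon_t/(2\sigma(p_i))$ is valid; and (iii) the total traversal time $k\epsilon_t$ is small enough that the exponential growth $e^{t \lambda^{(j)}(Z_t)}$ at $Z_t$ is essentially undiminished. That these three requirements can be met simultaneously, together with the trap-sum control afforded by the quick path, is ultimately what the scaling hierarchy \eqref{scalingfns} was engineered to ensure.
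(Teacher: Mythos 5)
Your proof is correct and takes a genuinely different route from the paper for the key probabilistic step. After both you and the paper apply the strong Markov property at $\tau_{Z_t}$ and invoke Lemma~\ref{lem:uloweratz} (equivalently, Corollary~\ref{cor:sb} combined with Proposition~\ref{prop:eigdecay}) for the eigenvalue contribution at $Z_t$, the divergence is in how the cost of diffusing from $0$ to $Z_t$ is bounded. The paper couples the BTM to an auxiliary walk $\tilde X$ with constant mean holding time $s_t$ on the quick path and $a_t^\nu$ inside $B(Z_t,j)$, then lower bounds $\PP_0(\tilde\tau<rt)$ by a product of Poisson mass functions and optimises the time split $r=r_1+r_2$ via Stirling's formula — this is the argument of \cite[Proposition~4.2]{Konig09} adapted to the random trapping landscape. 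You instead restrict directly to trajectories that follow the quick path with each holding time $\le\epsilon_t=1/a_t$, and the penalty comes out as a product of exponential tail probabilities $1-e^{-\epsilon_t/\sigma(p_i)}$, which you linearise. Your approach buys simplicity: it dispenses with the coupling, the Stirling expansion and the two-parameter optimisation, and the trap-sum $\sum\log\sigma(p_i)$ appears cleanly as the object that the quick-path event $\Theta_t^d$ is designed to control. The paper's approach is slightly more flexible in how it passes the traversal cost through the coarse-grained walk $\bar X$ in $d=1$, but both treatments are essentially equivalent in the end.

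Two small technical points you should tighten. First, the displayed inequality
$$u_0(t,Z_t)\ge\EE_0\!\left[e^{\int_0^{\tau_{Z_t}}\xi(X_s)\,ds}\id_{A_t}\right]\cdot u^{\mathrm{loc}}_{Z_t}(t-k\epsilon_t,Z_t)$$
is not literally valid because $u^{\mathrm{loc}}_{Z_t}(s,Z_t)$ need not be monotone in $s$; you should instead factor out the monotone lower bound from Corollary~\ref{cor:sb}, namely $e^{\lambda^{(j)}(Z_t)s}\invsigma(Z_t)\varphi_1^2(Z_t)/\|\sigma^{-\frac12}\varphi_1\|_{\ell_2}^2$ evaluated at $s=t-k\epsilon_t$, and then invoke the localisation of $\invsigma\varphi_1^2$ as in the proof of Lemma~\ref{lem:uloweratz}. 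Second, for $d\ge 2$ the quick path from Proposition~\ref{prop:quickpath2} only reaches $\partial B(Z_t,j)$ (it lives in $\ZZ^d(\sigma_t,S_t)$, which excludes $B(S_t,j)$), so the final $\le j$ steps into $Z_t$ leave $\ZZ^d(\sigma_t,S_t)$; along those steps the bound $\sigma(p_i)\le s_t$ is not available, but on $\mathcal{E}_{t,c}$ one has $\sigma(y)<g_t$ for $y\in B(Z_t,j)\setminus\{Z_t\}$, so the extra contribution to the trap-sum is $O(\log g_t)\ll t d_t b_t$. The claim $g_t/\log\log t\ll b_t$ you need does follow from \eqref{scalingfns}: since eventually $s_t\le t$, one has $1/\log\log s_t\ge1/\log\log t$ and hence $g_t/\log\log t\le g_t/\log\log s_t\ll b_t$.
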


\begin{proof}
In the following proof set $z = Z_t$ and abbreviate $\tau = \tau_z$. We first consider the case of $d\ge2$. By the Feynman-Kac formula \eqref{eq:fk}, the total mass $U(t)$ can be written as
\[  U(t)=\EE_0 \left[ \exp \left\{\int_0^t\xi(X_s)ds \right\} \right] \, .\]
Using the non-negativity of $\xi$ and by the strong Markov property, we have, for each $ r\in(0,1)$,
\begin{align}\label{eq:Usplit}\notag
 U(t) & \ge \EE_0\Big[\exp\big\{\int_0^t\xi(X_s)ds\big\} \id_{\{\tau< r t\}}\Big] \\ \notag
& \ge \EE_0\Big[\exp\big\{\int_\tau^{t-( r t-\tau)}\xi(X_s)ds\big\} \id_{\{\tau< r t\}}\Big] =\EE_0\Big[\exp\big\{\int_\tau^{t-( r t-\tau)}\xi(X_s)ds\big\}\Big]\PP_0(\tau< r t) \\ 
& \ge u_z((1-r)t, z) \, \PP_0(\tau < rt) \, . 
\end{align}
We now seek to bound $\PP_0(\tau < rt)$. Since we are on event $\Theta^d_t$, there exists a path 
\[p \in \bigcup_{y \in \partial B(z, j) } \Gamma_{\ell_t} (0, y)\]
for some $\ell_t < |z| + r_t b_t$ such that $\sigma(x) < s_t$ for all $x \in \mathrm{Set}(p)$. Moreover, since we are on event $\mathcal{E}_{t, c}$, each $\sigma(x) \in B(z, j) \setminus \{z\}$ is such that $\sigma(x) < a_t^\nu$ for some $\nu \in (0, 1)$.  We shall denote by $\{\tilde X_t\}_{t\in\mathbb{R}^+}$ a random walk with generator $\Delta\tilde\sigma^{-1}$, where $\tilde\sigma(x)=s_t$ for all $x \in \mathrm{Set}(p)$, $\tilde\sigma(x)=a_t^\nu$ for all $x \in B(z, j) \setminus \{z\}$, and $\tilde\sigma(x)=\sigma(x)$ otherwise. By a simple coupling argument we have that
\begin{align}\label{eq:taucouple}\PP_0(\tau< r t)\ge\PP_0(\tilde\tau< r t) \, ,
\end{align}
 where $\tilde\tau$ is the first hitting time of $z$ by $\tilde X$. Using a similar calculation as in \cite{Konig09}[Proposition~4.2], for any $r_1 + r_2 \le r$,
\begin{align*}
  \PP_0(\tilde\tau< r t) & >(2d)^{-\ell_t - j} \, \PP \left(\text{Poi}( r_1 ts_t^{-1})= \ell_t \right) \PP \left(\text{Poi}( r_2 t a_t^{-\nu} ) = j \right)\\
& = (2d)^{- \ell_t - j} e^{- r_1 ts_t^{-1}}\frac{( r_1 ts_t^{-1})^{\ell_t}}{(\ell_t)!}  e^{- r_2 t a_t^{-\nu} }\frac{( r_2 t a_t^{-\nu})^{j}}{j!} \, .
\end{align*}
Applying Stirling's formula, we obtain
\begin{align}
\label{eq:stir}
 \log\PP_0(\tilde\tau< r t)&\ge - r_1 ts_t^{-1} - r_2 t a_t^{-\nu}
-\ell_t  \log \left( \frac{ 2d \, \ell_t }{e r_1 t s_t^{-1}}\right) + j \log r_2 + O(\log t)  \, .
\end{align} 
Now note that on the event $\mathcal{E}_{t, c}$ we have that $Z_t \in \Pi^{(L_{t, \varepsilon})}$. Hence we can combine equations \eqref{eq:Usplit}--\eqref{eq:stir} and Lemma~\ref{lem:uloweratz} to get that
\begin{align*}
 \log U(t) & \ge(1-r_1 - r_2) t\lambda^{(j)}(z)-r_1 ts_t^{-1} - r_2 t a_t^{-\nu} - \ell_t \log \left( \frac{ 2d \,   \ell_t}{e r_1 ts_t^{-1}} \right) + j \log r_2 + O(\log t) \, .
\end{align*}
Use the bound $\ell_t < |z| + r_t b_t$ and choose $r = r_1 + r_2$ to maximise this equation, that is, set
\[  r_1 :=\frac{|z| + r_t b_t}{t(\lambda^{(j)}(z)+s_t^{-1})} \quad \text{and} \quad r_2 :=  \frac{ j }{ t( \lambda^{(j)}(z) + a_t^{-\nu} ) } \, .\]
It is clear that on event $\mathcal{E}_{t,c}$ we have $r \in (0,1)$. With these values of $r_1$ and $r_2$ we obtain
\[  \log U(t)\ge t\lambda^{(j)}(z)-(|z|+ r_t b_t) \Big\{\log\Big(\frac{\lambda^{(j)}(z)+s_t^{-1}}{s_t^{-1}}\Big) +O(1)\Big\} + O(\log t) \, . \]
On event $\mathcal{E}_{t,c}$ we have that $\lambda^{(j)}(z) < a_t(1+f_t)$. Since also $|z|<r_tg_t$ on event $\mathcal{E}_{t,c}$ we find that
\begin{align*}
  \log U(t)&\ge t\lambda^{(j)}(z)-|z|\log(\lambda^{(j)}(z)) - r_t b_t \log(\lambda^{(j)}(z)) + O \left( r_t g_t \log(s_t) \right) \\
&\ge t\lambda^{(j)}(z)-\frac{|z|}{\gamma}\log\log t+ O\left(t d_t b_t\right) 
\end{align*}
by the choice of the scaling functions $s_t$ in equation (\ref{scalingfns}).

Next, we turn to the case $d = 1$. Denote by $\{\bar X_t\}_{t\in\mathbb{R}^+}$ a random walk with generator $\Delta\bar\sigma^{-1}$ where $\bar\sigma(x)=\sigma_t^i$ if $\sigma(x)\in(\sigma_t^{i-1},\sigma_t^i]$. Again, by a simple coupling argument
\[  \PP_0(\tau<rt)\ge\PP_0(\bar\tau<rt) \,, \]
where $\bar\tau$ is the first hitting time of $z$ by $\bar X$ and $r \in (0,1)$. Furthermore, we have
\begin{align*}
 \PP_0(\bar\tau< rt) >  2^{-|Z_t|}\prod_{i=1}^{I_t}\PP(\text{Poi}(r_i t(\sigma_t^i)^{-1}) =  N^p_i),
\end{align*}
for any $\{r_i\}_{1\le i\le I_t}$ satisfying $\sum_i r_i \le r$. By a similar calculation to the $d\ge 2$ case, we have
\begin{align*}
 \log U(t) \ge t (1 - r) {\lambda}^{(j)}(z)  + \sum_{i=1}^{I_t} \left( - r_i t (\sigma_t^i)^{-1} - N^p_i \log (2 N^p_i /( e r_i t (\sigma_t^i)^{-1}) \right) + O(\log t) \, .
\end{align*}
Choose $r$ and $\{r_i\}$ to maximise this equation, that is, set
\[ r_i = \frac{N^p_i }{t(\lambda^{(j)}(z)+(\sigma_t^i)^{-1})} \quad \text{and} \quad  r = \sum_i r_t   \]
noting that $r \in (0, 1)$ for the same reason as in the $d \ge 2$ case. Then,
\begin{align*}
 \log U(t) & \ge t {\lambda}^{(j)}(z) + \sum_{i=1}^{I_t} \left( - N^p_i \left( \log \left(  {\lambda}^{(j)}(z) \sigma_t^i  \right)  \right) + O(1) \right) + O(\log t)  \\
 & = t{\lambda}^{(j)}(z) - |z| \log \left(  {\lambda}^{(j)}(z)  \right) - \sum_{i=1}^{I_t} \left( N^p_i \log  \sigma_t^i   + O(|z|) \right) + O(\log t) \, .
\end{align*}
The result follows since we are on event $\Theta_t^d$.
\end{proof}

\subsection{Contribution from each $U^i(t)$ is negligible}

In this section we prove that the contribution to $U(t)$ from the each of the components $U^i(t)$, for $i=2,3,4,5$, is negligible. The most difficult step is bounding the contribution from the components $U^2(t)$ and $U^3(t)$. 

The difficulty with these components is that paths are permitted to visit sites of high potential that are not $Z_t$. Away from these sites, there is a probabilistic penalty associated with each step of the path; this is easy to bound. However, close to these sites, the maximum contribution from the path may come from a complicated sequence of return cycles to the site. This motivates our set-up, which groups paths into equivalence classes depending only on their trajectory away from sites of high potential.

For each $t,$ we define a partition of paths into equivalence classes as follows. Suppose $p, \bar p \in \Gamma$ are two finite paths in $\mathbb{Z}^d$.  Define inductively,  $r^0=0$, and
\begin{align*}
s^{\ell} :=\min\{i \ge r^{\ell-1}:\,p_i \in\Pi^{(L_{t})}\} \quad \text{and} \quad  r^\ell :=\min\{i>s^\ell:\,p_i \in \partial B(p_{s^\ell},j)\}
\end{align*}
for each $\ell \in \mathbb{N}$, setting each to be $\infty$ if no such minimum $i$ exists, and define similarly $(\bar s^\ell, \bar r^\ell)_{\ell\ge1}$ for path $\bar p$. Then we say that $p$ and $\bar p$ are in the same equivalence class if and only if, for all $\ell \ge 0$,
\[ s^{\ell+1}-r^\ell=\bar s^{\ell+1}-\bar r^\ell \quad \text{and} \quad p_{r^\ell+i}=\bar p_{\bar r^\ell+i} \ , \quad \text{for each} \ i\in\{0,1,\ldots,s^{\ell+1}-r^\ell\} \, . \]
Note that although $s^\ell$ and $r^\ell$ depend on $t$ (through the set $\Pi^{(L_t)}$), we suppress this dependence for clarity. If $p$ and $\bar p$ are in the same equivalence class at time $t$ we write $p \sim \bar p$. Denote by $P(p):=\{\bar p \in \Gamma:\, p \sim\bar p\}$. Informally, the equivalence class $P(p)$ consists of paths that have identical trajectory except for when they are in balls of radius $j$ around sites $z \in \Pi^{(L_t)}$ (or, more accurately, when they first hit a site $z \in \Pi^{(L_t)}$ until when they leave the ball $B(z, j)$).

It is natural to group these equivalence classes $P(p)$ according to (i) how many balls of radius $j$ around sites $z \in \Pi^{(L_t)}$ the path visits; and (ii) the total length of the path outside such balls. So for $m, n \in \mathbb{N}$, let $\mathcal{P}_{n, m}$ be the set of equivalence classes $P(p)$ of paths $p$ that satisfy
\[ \max\{\ell:\,r^\ell<\infty\}=m \quad \text{and} \quad \sum_{\ell=0}^{m-1}(s^{\ell+1}-r^\ell)+ s^{m+1} \indic{s^{m+1} < \infty} + |p| \indic{s^{m+1}  =\infty} - r^m =n \, . \]
Note that if a path $p$ satisfies these two properties for some $m$ and $n$ then any other path $\bar p \in P_p$ will also satisfy these properties for the same $m$ and $n$ and hence $\mathcal{P}_{n,m}$ is well-defined. The quantity $m$ counts the number of balls of radius $j$ around $z \in \Pi^{(L_t)}$ that the path \textit{exits} (which is easier to work with than the number of balls the path enters); the quantity $n$ counts the total length of the path between leaving each of these balls and hitting the next site $z \in \Pi^{(L_t)}$.

Recalling the definitions of $p(X_t)$, define the event
$$\{ p(X) \in P(p) \} := \bigcup_{s \ge 0} \{ p(X_s) \in P(p) \}  \, , $$
and remark that we have the relationship
\begin{align}
\label{eq:indicle}
 \{p(X_t) \in P(p) \} \subseteq  \{ p(X) \in P(p)  \} \, . 
 \end{align}
Denote by
\[ U^{P(p)}(t)=\mathbb{E}_0 \left[ \exp\left\{\int_0^t\xi(X_s)\,ds\right\} \indic{p(X_t)\in P(p)}\right].
\]
the contribution to the total solution $U(t)$ from the path equivalence class $P(p)$. 

The following lemma bounds the contribution of each $P(p) \in \mathcal{P}_{n, m}$ in terms of $m$ and $n$. The key fact motivating our set-up is that the contribution is decreasing in $n$.

\begin{lemma}[Bound on the contribution from each equivalence class]
\label{L:negupper1}
Let $m, n \in \mathbb{N}$ and $p \in \Gamma(0)$ such that $\mathrm{Set}(p) \subseteq V_t$ and $P(p) \in \mathcal{P}_{n,m}$. Define $z^{(p)}:=\argmax_{z\in\mathrm{Set}(p)}{\lambda}^{(j)}(z)$ and let $\zeta>\max\{{\lambda}^{(j)}(z^{(p)}), L_{t,\varepsilon}\}$. Then there exist constants $c_1,c_2>0$ such that, uniformly in $m, n, p$ and $\zeta$, as $t \to \infty$,
\begin{align*}
 U^{P(p)}(t)
&\le e^{\zeta t} \big(c_1(\zeta-L_{t}) \big)^{-n}
 \left(1+c_2 \left(\zeta-{\lambda}^{(j)}(z^{(p)})\right)^{-1}\right)^m
\end{align*}
eventually almost surely.
\end{lemma}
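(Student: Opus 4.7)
The plan is to decompose the Feynman-Kac expectation at the successive stopping times $s^\ell$ and $r^\ell$, applying Lemma~\ref{lem:pathwise} to the prescribed outside segments and the cluster expansion Lemma~\ref{lem:clusterexp} to the in-ball excursions. First I would write
\[ U^{P(p)}(t) = e^{\zeta t}\, \EE_0\Big[\exp\Big\{\int_0^t (\xi(X_s) - \zeta)\,ds\Big\}\indic{p(X_t) \in P(p)}\Big] \]
and relax the indicator via \eqref{eq:indicle} to $\indic{p(X) \in P(p)}$, so that the constraint on the trajectory factorises cleanly over the stopping times $s^1, r^1, \ldots, s^m, r^m$ by the strong Markov property.

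For each outside segment, the geometric trajectory is fixed by the equivalence class $P(p)$, so Lemma~\ref{lem:pathwise} evaluates the corresponding expectation exactly as $\prod_i (2d)^{-1}\invsigma(p_i)/(\zeta - \eta(p_i))$. Using $\sigma(p_i) \ge \delta_\sigma$ from Assumption~\ref{assump:sigma}(a) and $\eta(p_i) \le \xi(p_i) < L_t$ (since outside sites lie in $V_t \setminus \Pi^{(L_t)}$), each factor is bounded by $(c_1(\zeta - L_t))^{-1}$ with $c_1 := 2d\delta_\sigma$; the positivity $\zeta > L_t$ needed here follows from $\zeta > L_{t,\varepsilon} > L_t$, which holds since $\varepsilon < \theta$. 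Summing the lengths of all outside segments (totalling $n$ by definition of $\mathcal{P}_{n,m}$) yields the factor $(c_1(\zeta - L_t))^{-n}$.

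For each of the $m$ completed excursions from $s^\ell$ to $r^\ell$ inside $B(z^{(p_\ell)}, j)$, the first form of Lemma~\ref{lem:clusterexp} applied with $D = B(z^{(p_\ell)}, j)$ provides the bound $1 + c_2/(\zeta - \lambda^{(j)}(z^{(p_\ell)}))$ with $c_2 := \delta_\sigma^{-1}|B(0,j)|$. Since $\lambda^{(j)}(z^{(p_\ell)}) \le \lambda^{(j)}(z^{(p)})$ by the defining property of $z^{(p)}$ as the argmax, this is in turn bounded by $1 + c_2/(\zeta - \lambda^{(j)}(z^{(p)}))$, and the product over the $m$ excursions yields the second factor in the claim.

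The main technical obstacle is handling the trailing portion of the trajectory after time $r^m$. When $s^{m+1} = \infty$, the remaining segment is a prescribed outside walk whose length is already counted in $n$, and is controlled by another application of Lemma~\ref{lem:pathwise}. When $s^{m+1} < \infty$, the walk enters a further ball without exiting it by time $t$; here I would bound the in-ball contribution using Corollary~\ref{cor:tm}, which yields
\[ \EE_{z^{(p_{m+1})}}\Big[\exp\Big\{\int_0^{t-s^{m+1}}(\xi(X_s)-\zeta)\,ds\Big\}\indic{\tau_{B^c} > t - s^{m+1}}\Big] \le c\, e^{(\lambda^{(j)}(z^{(p_{m+1})}) - \zeta)(t - s^{m+1})} \le c, \]
since $\zeta > \lambda^{(j)}(z^{(p)}) \ge \lambda^{(j)}(z^{(p_{m+1})})$. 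This residual constant can be absorbed into $c_1$ and $c_2$ without changing the form of the stated bound.
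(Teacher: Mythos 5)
Your decomposition matches the paper's: split at the stopping times $S^\ell, R^\ell$, evaluate the outside segments with Lemma~\ref{lem:pathwise}, control the in-ball excursions with the first form of Lemma~\ref{lem:clusterexp}, and treat the tail separately via Corollary~\ref{cor:tm}. The constants you extract ($c_1 = 2d\delta_\sigma$, $c_2 = \delta_\sigma^{-1}|B(0,j)|$) and the monotonicity step $\lambda^{(j)}(p_{s^\ell})\le\lambda^{(j)}(z^{(p)})$ are all correct. However, there is an ordering issue that, as written, leaves a gap. You propose to relax the indicator from $\indic{p(X_t)\in P(p)}$ to $\indic{p(X)\in P(p)}$ via \eqref{eq:indicle} at the outset, and only then split the expectation. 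But $\{p(X)\in P(p)\}=\bigcup_{s\ge0}\{p(X_s)\in P(p)\}$ imposes no constraint on the walk after the first moment its trajectory matches the class: once the walk has reached $p_{s^{m+1}}$ in the prescribed way, the event holds regardless of what the walk does afterwards. So after the relaxation the tail factor becomes the unrestricted quantity $\EE_{p_{s^{m+1}}}\bigl[\exp\{\int_0^{t-S^{m+1}}(\xi(X_s)-\zeta)\,ds\}\bigr]$, which is \emph{not} bounded by a constant — the walk may leave $B(p_{s^{m+1}},j)$ and visit sites $y$ with $\lambda^{(j)}(y)>\zeta$, making this exponentially large in $t$. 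Yet your application of Corollary~\ref{cor:tm} relies on precisely the indicator $\indic{\tau_{B^c}>t-S^{m+1}}$ that the relaxation has just discarded.

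The remedy is the one the paper uses: first condition on $S^{m+1}$ (by the strong Markov property) and split the tail factor off \emph{before} relaxing; bound the tail using the \emph{full} indicator $\indic{p(X_t)\in P(p)}$, which does force $\tau_{B(p_{s^{m+1}},j)^c}>t-S^{m+1}$, so that Corollary~\ref{cor:tm} applies and gives a bound by a constant; and only afterwards apply \eqref{eq:indicle} to the remaining factor before factorising it into the outside-segment and in-ball pieces. With that reordering, the rest of your argument — including the separate treatment of $s^{m+1}=\infty$ and the absorption of the residual constant into $c_1$ (valid since $n\ge1$, as $0\notin\Pi^{(L_t)}$ almost surely) — goes through exactly as in the paper.
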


\begin{proof}
The strategy of the proof is to split $U^{P(p)}(t)$ into three components, corresponding to the contribution: (i) from when $X_s$ is outside $B(\Pi^{(L_t)}, j)$ until $X_s$ hits a site $ z \in \Pi^{(L_t)}$; (ii) from when $X_s$ hits $z \in B(\Pi^{(L_t)}, j)$ until when $X_s$ leaves the ball $B(z, j)$; and (iii) if $X_s$ hits $ z \in \Pi^{(L_t)}$ and does not subsequently leave $B(z, j)$, from this component separately. To bound the contribution from these components, we make use of Corollary \ref{cor:tm}, Lemma~\ref{lem:clusterexp} and Lemma~\ref{lem:pathwise} respectively. 

There are two cases to consider, depending on whether the event described in (iii) occurs, that is, if $s^{m+1} < \infty$. We begin with this case. To simplify notation in the following we abbreviate
\[  I_a^b:=\exp\left\{\int_a^b(\xi(X_s)-\zeta)\,ds\right\} .\]
Recall the definition of $(s^\ell,r^\ell)_{\ell \in \mathbb{N}}$ and define the stopping times
\begin{align*}
R^0  :=0  \ , \quad S^{\ell}:=\inf\{s\ge R^{\ell-1}:\,X_s=p_{s^{\ell}}\} \quad \text{and} \quad R^\ell:=\inf\{s\ge S^\ell:\,X_s=p_{r_t^\ell}\}
\end{align*}
for each $\ell \in \{1, \ldots, m\}$, and similarly define $S^{m+1}$ since $s^{m+1} < \infty$. We can then write
\begin{align}
U^{P(p)}(t) &= \EE_0 \left[e^{\int_0^t\xi(X_s)\,ds} \indic{p(X_t) \in P(p)}\right] =e^{\zeta t} \, \EE_0\left[I_0^t \, \indic{p(X_t) \in P(p)}\right]\notag \\
&=e^{\zeta t} \, \EE_0\left[\left(\prod_{\ell=0}^{m} I_{R^\ell}^{S^{\ell+1}}\right)
\left(\prod_{\ell=1}^{m} I_{S^\ell}^{R^{\ell}}\right) I_{S^{m+1}}^t \indic{p(X_t) \in P(p)}\right].\notag
\end{align}
Note that, conditionally on $\mathcal{F}_{S^{m+1}}$ (the $\sigma$-algebra generated by $S^{m+1}$), the quantity $I_{S^{m+1}}^t$ is independent of all other $I_a^b$ in this expectation. Thus we have
\begin{align}
U^{P(p)}(t) &= e^{\zeta t} \, \EE\Bigg\{ \, \EE_0\left[\left(\prod_{\ell=0}^{m}I_{R^\ell}^{S^{\ell+1}}\right)
\left(\prod_{\ell=1}^{m}I_{S^\ell}^{R^{\ell}}\right) \indic{p(X_t) \in P(p)} \Big|\,\mathcal{F}_{S^{m+1}}\right]\notag\\
&\phantom{=e^{\zeta t}\EE\Bigg\{} \times \EE_0\left[I_{S^{m+1}}^t \indic{p(X_t) \in P(p)} \big|\,\mathcal{F}_{S^{m+1}}\right]\Bigg\}. \label{E:upper1}
\end{align}
We use Corollary \ref{cor:tm} to bound the expectation on the second line of \eqref{E:upper1}; in the calculation that follows, abbreviate $s := s^{m+1}$ and $S := S^{m+1}$. We obtain, for some $C > 1$,
\begin{align*}
 \EE_0 \left[I_{S}^t \indic{p(X_t) \in P_t(p)} \big|\, S \right] &  \le \indic{S \le t} \, \EE_{p_s} \left[ I_{0}^{t-S} \indic{\tau_{B(p_s, j)} > t - S  } \Big| S \right]   \le C  e^{(t-S)(\lambda^{(j)}(p_s)-\zeta)}  \le C 
  \end{align*}
almost surely, since $\zeta>\lambda^{(j)}(p_s)$. Combining with \eqref{E:upper1} and using equation \eqref{eq:indicle} we obtain
\begin{align}
U^{P(p)}(t) & \le C \, e^{\zeta t} \, \EE_0\left[\left( \prod_{\ell=0}^{m} I_{R^\ell}^{S^{\ell+1}} \right)
\left(\prod_{\ell=1}^{m} I_{S^\ell}^{R^{\ell}}\right) \indic{ p(X) \in P(p)}\right]\notag\\
&= C e^{\zeta t} \,
\EE_0\left[\left(\prod_{\ell=0}^{m} I_{R^\ell}^{S^{\ell+1}}\right) \indic{ p(X) \in P(p)}\right]\EE_0\left[
\left(\prod_{\ell=1}^{m} I_{S^\ell}^{R^{\ell}}\right) \indic{ p(X) \in P(p)}\right].\label{E:upper2}
\end{align}
Let $\xi^{(\ell)}_{\max{}}=\max_{r^\ell\le k<s^{\ell+1}}\xi(p_k)$, for $\ell = \{0, 1, \ldots, m\}$. By Lemma~\ref{lem:pathwise}, which we can apply here since $\zeta> L_{t, \varepsilon} > L_t \ge \max_{0 \le l \le m} \xi^{(\ell)}_{\max{}}$,
\begin{align}
 \EE_0\left[\left(\prod_{\ell=0}^{m} I_{R^\ell}^{S^{\ell+1}}\right)
 \indic{p(X) \in P(p) }\right] & \le (2d)^{-n} \prod_{\ell=0}^{m} \prod_{k=r^\ell}^{s^{\ell+1}-1} \! \left(1+\sigma(p_k)(\zeta-\xi^{(\ell)}_{\max{}})\right)^{-1} \label{eq:r_to_s}\\
&  \le (2d)^{-n} \left(1+\delta_\sigma(\zeta-L_t)\right)^{-n},\notag
\end{align}
almost surely, using the definition of $n$ and the lower bound on $\sigma$. Making the new abbreviation $s := s^\ell$, we have
\begin{align*}
 \EE_0 \left[\left(\prod_{\ell=1}^{m} I_{S^\ell}^{R^\ell}\right) \indic{p(X) \in P(p) } \right]
&=  \prod_{\ell=1}^{m}\EE_{ p_s  } \left[I_0^{\tau_{B( p_s, j) } } \indic{p(X) \in P(p) } \right]  \le \prod_{\ell=1}^{m}\EE_{ p_s  } \left[I_0^{\tau_{B( p_s, j) } }\right]  \, .
\end{align*}
Since $\zeta > \lambda^{(j)}(z^{(p)})$, we can apply the first bound in the cluster expansion in Lemma~\ref{lem:clusterexp} to deduce that
\begin{align}\label{eq:s_to_r}
 \prod_{\ell=1}^{m}\EE_{ p_s } \left[I_0^{\tau_{B( p_s, j) } }\right] \le \left(1+\frac{\delta^{-1}_\sigma |B(0,j)|}{\zeta- \lambda^{(j)}(z^{(p)})}\right)^{m}.
\end{align}
Using these two estimates, we obtain from equation (\ref{E:upper2}) the desired bound.

We now deal with the case that $s^{m+1} = \infty$. Similarly to the above, we condition on $\mathcal{F}_{R^{m}}$ (the $\sigma$-algebra generated by $R^m$) to write $ U^{P(p)}(t)$ as
\begin{align*}
 e^{\zeta t} \,  \EE\Bigg\{ \EE_0 \left[ \left( \prod_{\ell=0}^{m}I_{R^\ell}^{S^{\ell+1}} \right)
\left( \prod_{\ell=1}^{m}I_{S^\ell}^{R^{\ell}} \right) \indic{p(X)\in P(p)} \Big|\,\mathcal{F}_{R^{m}}\right] \EE_0\left[I_{R^m}^t\indic{R^m\le t}\big|\,\mathcal{F}_{R^{m}}\right]\Bigg\}.
\end{align*}
Set $l:=|p|-r^m>0$ and $\tau_{\text{end}} := \inf\{ s>0 : X_s = X_t\}$. Observe that, since $\zeta > L_{t, \varepsilon} > L_t \ge \xi(X_t)$, almost surely
\begin{align*}
 \EE_0\left[I_{R^m}^t \indic{p(X_t)\in P(p)}\big|\,\mathcal{F}_{R^{m}}\right] \le  \EE_0 \left[I_{R^m}^{\tau_{\text{end}}}\indic{p(X_t) \in P(p)} \big|\,\mathcal{F}_{R^{m}}\right] 
\end{align*}
and applying Lemma~\ref{lem:pathwise} (valid by Lemma~\ref{lem:minmax}) we get that
\begin{align*}
 \EE_0 \left[I_{R^m}^t  \indic{p(X_t) \in P(p)}   \big|\,\mathcal{F}_{R^{m}}\right] \le (2d)^{-l} (1+\delta_\sigma(\zeta-L_t))^{-l} 
\end{align*}
almost surely. The rest of the proof proceeds similarly to the previous case.
\end{proof}

We can use Lemma~\ref{L:negupper1} to bound the contribution to the total mass $U(t)$ from $U^2(t)$ and $U^3(t)$.

\begin{proposition}[Upper bound on $U^2(t)$]
\label{prop:U2neg}
There exists a constant $c$ such that, as $t \to \infty$,
$$ \log U^2(t) \le t  \max_{z \in \Pi^{(L_t)} \setminus \{Z_t\} }\Psi_{t, c}^{(j)}(z) +  O(t d_t b_t) $$
almost surely.
\end{proposition}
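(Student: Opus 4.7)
The plan is to decompose $U^2(t)$ along the path equivalence classes $P(p)$ introduced before Lemma~\ref{L:negupper1}, apply that lemma to each piece, and sum over $(n,m)$. This realises the upper-bound direction of the ``second heuristic'' $U^i(t)\approx\max_p U^p(t)$ from Section~\ref{sec:outline}.

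I would first set $\zeta:=\Lambda_t+\kappa_t$, where $\Lambda_t:=\max_{z\in\Pi^{(L_t)}\setminus\{Z_t\}}\lambda^{(j)}(z)$ and $\kappa_t$ is a positive scale of order $d_tb_t$. Since each $P(p)\subseteq E^2_t$ has $\lambda^{(j)}(z^{(p)})\le\Lambda_t<\zeta$ and $\zeta>L_{t,\varepsilon}$ eventually almost surely, Lemma~\ref{L:negupper1} applies and yields $U^{P(p)}(t)\le e^{\zeta t}(c_1(\zeta-L_t))^{-n}(1+c_2/\kappa_t)^m$ for $P(p)\in\mathcal{P}_{n,m}$. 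Since an equivalence class in $\mathcal{P}_{n,m}$ with $p_0=0$ is parameterised by its outside trajectory of total length $n$, its cardinality is at most $(2d)^n$; combined with $\zeta-L_t\sim a_t$ and the geometric constraint $n\ge|z^{(p)}|-2jm$ (each ball can shortcut the displacement by at most $2j$), summing over classes with $z^{(p)}=z$ gives
\[
\sum_{\substack{P(p)\in\mathcal{P}_{n,m}\\z^{(p)}=z}}U^{P(p)}(t)\le e^{\zeta t}\bigl(2d/(c_1a_t)\bigr)^{|z|}\bigl[(c_1a_t/(2d))^{2j}(1+c_2/\kappa_t)\bigr]^m(1+o(1))\,.
\]
The identity $\log a_t=(1/\gamma)\log\log t+O(1)$ converts $(2d/(c_1a_t))^{|z|}$ into $\exp(-(|z|/\gamma)\log\log t+O(|z|))$, producing the $\Psi_t^{(j)}$-skeleton of the desired bound. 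Choosing $c$ large enough absorbs the $O(|z|)$ constant-per-step overhead into the $c|z|/t$ term of $\Psi^{(j)}_{t,c}$, while $\zeta t=t\Lambda_t+O(td_tb_t)$ supplies the $t\lambda^{(j)}(z)$ piece; a union bound over the polynomially-many $z\in\Pi^{(L_t)}\setminus\{Z_t\}$ (via Lemma~\ref{lem:asforxi}) costs only an absorbable $\log t$. Both cases $s^{m+1}<\infty$ and $s^{m+1}=\infty$ of Lemma~\ref{L:negupper1} are treated identically.

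The hard part will be controlling the sum $\sum_{m\ge 0}[(c_1a_t/(2d))^{2j}(1+c_2/\kappa_t)]^m$: its base is not manifestly less than $1$ for $\kappa_t=O(d_tb_t)$, as the per-ball shortcut gain $\sim a_t^{2j}$ can compete with the cluster cost $\sim 1/\kappa_t$. Resolving this will require refining the crude $(2d)^n$ counting of equivalence classes by exploiting the sparsity of $\Pi^{(L_t)}$ from Lemma~\ref{lem:asforxi} (density $\le|V_t|^{-\theta}$), which yields an extra combinatorial factor of roughly $(n|V_t|^{-\theta})^m/m!$ that cancels the cluster blowup. The choice $j=[\gamma-1]$ is, in precisely this sense, the smallest integer at which shortcut gain and cluster cost balance so that the summed contribution remains $e^{O(td_tb_t)}$, matching the informal discussion preceding the statement.
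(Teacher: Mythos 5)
Your high-level strategy---decompose $U^2(t)$ via the equivalence classes $\mathcal{P}_{n,m}$ and apply Lemma~\ref{L:negupper1}---is the same as the paper's, but two of your concrete choices break the argument, and you identify the resulting blowup in $m$ without finding the right repair. First, fixing a single global $\zeta=\Lambda_t+\kappa_t$ with $\Lambda_t:=\max_{z\in\Pi^{(L_t)}\setminus\{Z_t\}}\lambda^{(j)}(z)$ is too coarse. For a class $P$ visiting a site $z^{(P)}$ with $\lambda^{(j)}(z^{(P)})$ well below $\Lambda_t$ but $|z^{(P)}|$ much smaller than $r_t$, your bound gives roughly $e^{\Lambda_t t}a_t^{-|z^{(P)}|}$; the target $\exp\{t\max_{z}\Psi^{(j)}_{t,c}(z)\}$ is, on the typical event $\mathcal{E}_{t,c}$, of order $e^{\Lambda_t t}a_t^{-|Z_t^{(j)}|}$ with $|Z_t^{(j)}|$ of order $r_t$, and the ratio $a_t^{|Z_t^{(j)}|-|z^{(P)}|}$ is of order $e^{t d_t}$, which cannot be absorbed into $e^{O(td_tb_t)}$. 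The paper instead sets $\zeta=\max\{\lambda^{(j)}(z^{(P)}),L_{t,\varepsilon}\}+d_tb_t$ per class, so that the exponential prefactor carries $\lambda^{(j)}(z^{(P)})$ and lines up with $\Psi^{(j)}_{t,c}(z^{(P)})$.

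Second, and more decisively, the geometric constraint $n\ge|z^{(p)}|-2jm$ is the wrong one: it decreases in $m$, so $(c_1(\zeta-L_t))^{-n}$ grows in $m$, and combined with $(1+c_2/\kappa_t)^m$ the sum over $m$ diverges, exactly as you observe. The repair is not a sharper class count via sparsity: the dominant factor is the per-class contribution $U^P(t)$, not the combinatorics, and a Poisson-type factor $(n|V_t|^{-\theta})^m/m!$ would not in any case cancel a blowup comparable to $(d_tb_t)^{-m}$. The correct fix is the stronger constraint $n>m(j+1)+|z^{(P)}|-|z^{(P)}|^{c_4}$, which the paper obtains by noting that each segment from exiting a $j$-ball around a $\Pi^{(L_t)}$-site to the next $\Pi^{(L_t)}$-hit has length at least $j+1$, combined with Corollary~\ref{cor:pathoutsideballs} applied to $B(\Pi^{(L_t)},j+1)$. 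With $n$ increasing linearly in $m$, the extra factor $a_t^{-m(j+1)}$ cancels $(c_2+c_3(d_tb_t)^{-1})^m$, because $j=[\gamma-1]$ is the smallest integer with $(j+1)\log a_t+\log d_t\to\infty$; so the sparsity of $\Pi^{(L_t)}$ enters through this lower bound on $n$ (via the almost-sure separation of high points), not through the count of classes. (A small further slip: the class count is $\kappa^{n+m}$ with $\kappa=\max\{2d,|\partial B(0,j)|\}$, since the exit point on each ball boundary must be specified in addition to the outside trajectory.)
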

\begin{proof}
Recall the path set $E_t^2$, and for each $m, n \in \mathbb{N}$ define
$$ \mathcal{P}^2_{n,m} := \bigcup_{p \in E_t^2} P_t(p) \cap \mathcal{P}_{n, m}   \, .$$ 
Note that $|\mathcal{P}^2_{n,m}|\le \kappa^{n+m}$, with $\kappa=\max\{2d,|\partial B(0,j)|\}$. We observe that
\begin{align*}
 U^2(t) &=  \sum_{n, m} U^{\mathcal{P}_{n,m}^{2}}(t) \le  \sum_{n, m} \kappa^{n+m} \max_{P \in \mathcal{P}^2_{n,m} } \left\{ U^{P}(t) \right\}  =  \sum_{n, m} \kappa^{-n -m}  \max_{P  \in \mathcal{P}^2_{n,m} } \left\{ \kappa^{2(n + m)}  U^{P}(t) \right\}  \\
  & \le \max_{n,m} \max_{P \in \mathcal{P}^2_{n,m}} \left\{ \kappa^{2(n + m)}  U^{P}(t)  \right\}  \sum_{n,m} \kappa^{-n -m} \, .
 \end{align*}
For each $P \in \mathcal{P}^2_{n,m}$, denote by $z^{(P)}$ the site $y \in \Pi^{(L_t)}$ on a given path $p \in P$ which maximises ${\lambda}^{(j)}(y)$, remarking that this a class property of $P$ eventually almost surely by Lemma~\ref{lem:assep}. Using Lemma~\ref{L:negupper1}, for each $P \in \mathcal{P}^2_{n,m}$ and for any $\zeta> \max\{{\lambda}^{(j)}(z^{(P)}), L_{t,\varepsilon}\}$, we have that there exist constants $c_1,c_2,c_3>0$ such that, eventually almost surely,
\begin{align*}
\kappa^{2(n+m)}  \, U^{P}(t)
&\le  e^{\zeta t} \left(c_1(\zeta-L_{t}) \right)^{-n}
 \left(c_2 + c_3(\zeta-{\lambda}^{(j)}(z^{(P)}))^{-1}\right)^m \, .
\end{align*}
Set $\zeta = \max\{{\lambda}^{(j)}(z^{(P)}), L_{t,\varepsilon}\} +  d_t b_t$. To lower bound $n$, observe that the number of steps between exiting a $j$-ball and hitting another site in $\Pi^{(L_t)}$ is at least $j+1$. We apply Corollary~\ref{cor:pathoutsideballs} to the balls $B(\Pi^{(L_t)},j+1)$ to deduce that, eventually almost surely \begin{align}\label{eq:boundonn}
n > m(j+1) + |z^{(P)}| - |z^{(P)}|^{c_4}\,,
\end{align} for some $c_4 < 1$.
Then, by monotonicity in $n$,
 \begin{align*}
 \kappa^{2(n+m)}  \, U^{P}(t)
&\le e^{t ( {\lambda}^{(j)}(z^{(P)})  + d_t b_t)} (c_1(L_{t, \varepsilon} -L_{t}))^{-|z^{(P)}| + |z^{(P)}|^{c_4}} \\ 
 & \qquad \times \left(c_1(L_{t, \varepsilon} -L_{t}))^{-j-1} (c_2+c_3 d_t b_t)^{-1}) \right)^m 
\end{align*}
eventually almost surely.  Note that $j$ was chosen precisely to be the smallest integer such that
\begin{align}\label{eq:j} (j + 1) \log a_t + \log (d_t) \to \infty \end{align}
which implies, since $b_t \gg 1/\log \log t$ by \eqref{scalingfns}, that
\[
(j + 1) \log a_t + \log (c_2 + c_3 d_t b_t) \to \infty \, .
\]
By Lemma~\ref{lem:assep}, for $z \in \Pi^{(L_{t})}$, as $t \to \infty$,
$$ |z|^{c_4} < t d_t b_t$$
eventually almost surely. Moreover,
\[ \log \left(L_{t, \varepsilon} - L_t \right) > \log a_t + c_5 \]
eventually for some positive $c_5$. So there exists a constant $c$ such that
 \begin{align*}
 2(n+m) \log \kappa + \log U^{P}(t)
&\le c |z^{(P)}|  + {\lambda}^{(j)}(z^{(P)})  t  - \frac{1}{\gamma} |z^{(P)}| \log \log t + t d_t b_t
\end{align*}
eventually almost surely, which yields the result.
 \end{proof}

\begin{proposition}[Upper bound on $U^3(t)$]
\label{prop:U3neg}
There exists a constant $c$ such that, as $t \to \infty$,
$$ \log U^3(t) \leq t {\Psi}_{t, c}^{(j)}(Z_t) - h_t \frac{1}{\gamma} |Z_t| \log \log t + O\left( t d_t b_t \right) $$ almost surely.
\end{proposition}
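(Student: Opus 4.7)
The proof will mirror that of Proposition~\ref{prop:U2neg}, so my plan is to import that framework wholesale and identify only the modification needed to produce the additional decay $-h_t |Z_t| (\log\log t)/\gamma$. I will decompose $U^3(t)$ using the same equivalence classes, now grouped as $\mathcal{P}^3_{n,m} := \bigcup_{p \in E^3_t} P(p) \cap \mathcal{P}_{n,m}$, and use the same summation trick to reduce the bound to the control of $\kappa^{2(n+m)} U^P(t)$ uniformly over $P \in \mathcal{P}^3_{n,m}$. By the definition of $E^3_t$, every such $P$ satisfies $z^{(P)} = Z_t$, so Lemma~\ref{L:negupper1} applies with exactly the choice $\zeta := \max\{\lambda^{(j)}(Z_t), L_{t,\varepsilon}\} + d_t b_t$ used in Proposition~\ref{prop:U2neg}.

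The key new ingredient, which will replace the bound \eqref{eq:boundonn}, is an improved lower bound on $n$ reflecting the fact that every such path must leave $B_t$. Any $p$ with $P(p) \in \mathcal{P}^3_{n,m}$ visits some site $w \in V_t$ with $|w| > |Z_t|(1+h_t)$; applying Corollary~\ref{cor:pathoutsideballs} to the sub-path from the origin to $w$ (which lies in $V_t$), together with the same inter-ball counting argument used in the proof of Proposition~\ref{prop:U2neg}, will yield, eventually almost surely,
\[
n > m(j+1) + |Z_t|(1+h_t) - (|Z_t|(1+h_t))^{c_4}
\]
for some $c_4<1$. With this in hand, substituting into Lemma~\ref{L:negupper1} and repeating the estimates of Proposition~\ref{prop:U2neg} verbatim will give, after taking logarithms and using $\log(L_{t,\varepsilon}-L_t) \ge \gamma^{-1}\log\log t + O(1)$,
\[
\log U^3(t) \le t\lambda^{(j)}(Z_t) - \frac{|Z_t|(1+h_t)}{\gamma}\log\log t + O(t d_t b_t),
\]
where the error absorbs both the $(|Z_t|(1+h_t))^{c_4}\log(L_{t,\varepsilon}-L_t)$ term and any linear-in-$|Z_t|$ contributions, using $|Z_t| < r_t g_t$ on $\mathcal{E}_{t,c}$ together with the scaling relations \eqref{scalingfns}.

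Rewriting the right-hand side as $t\Psi_{t,c}^{(j)}(Z_t) - h_t |Z_t| (\log\log t)/\gamma + O(td_t b_t)$ --- valid because the $c|Z_t|/t$ correction contributes only $O(|Z_t|) = O(r_t g_t) = O(t d_t b_t)$ --- will then complete the proof. Most of the work will be bookkeeping; the one substantive new step is the excursion lower bound on $n$, which is where the choice of $B_t$ with radius $|Z_t|(1+h_t)$ pays dividends and produces the advertised $h_t$ factor. I expect the main source of friction will be verifying that the inter-ball counting step that yielded \eqref{eq:boundonn} combines cleanly with the excursion bound for the sub-path from $0$ to $w$, rather than double-counting contributions to $n$.
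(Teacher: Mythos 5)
Your proposal is correct and follows essentially the same route as the paper: the paper's proof also invokes the $\mathcal{P}^2_{n,m}$ framework from Proposition~\ref{prop:U2neg} verbatim and simply replaces the bound \eqref{eq:boundonn} by the strengthened inequality coming from the fact that paths in $E^3_t$ must exit $B_t$, so that the $|z^{(P)}|$ in \eqref{eq:boundonn} is replaced by $(1+h_t)|Z_t|$. (The strengthened bound as printed in the paper reads $n > m(j+1) + (1+h_t)\frac{1}{\gamma}|Z_t|\log\log t - |Z_t|^{c_1}$, which appears to carry a stray $\frac{1}{\gamma}\log\log t$ factor inadvertently copied from the displayed target \eqref{eq:ub}; your version $n > m(j+1) + |Z_t|(1+h_t) - (|Z_t|(1+h_t))^{c_4}$ is the dimensionally sensible one and is what the subsequent algebra actually requires.)
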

\begin{proof}
Recall the set of paths $E_t^3$ and define $\mathcal{P}^3_{n,m}$ by analogy with $\mathcal{P}^2_{n,m}$. The proof then follows as for Proposition~\ref{prop:U2neg} after strengthening the bound in \eqref{eq:boundonn} to give that for each $p \in E_t^3$ and for some $c_1 <1$, eventually almost surely
\begin{equation*}
n > m(j+1) + (1 + h_t) \frac{1}{\gamma} |Z_t| \log \log t - |Z_t|^{c_1} \, . \qedhere
\end{equation*}
\end{proof}

\begin{proposition}[Upper bound on $U^4(t)$]
\label{prop:U4neg}
For all $t \ge 0$,
$$ U^4(t) \le e^{t L_t}  \, .$$
\end{proposition}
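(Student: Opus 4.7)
The plan is to apply the Feynman--Kac representation directly and exploit the constraint defining $E^4_t$ to cap the potential along admissible trajectories by $L_t$. Writing
\[ U^4(t) = \EE_0\Big[\exp\Big\{\int_0^t \xi(X_s)\,ds\Big\}\indic{p(X_t)\in E^4_t}\Big], \]
the natural reduction is to show that, on the event $\{p(X_t)\in E^4_t\}$, the trajectory $\{X_s\}_{0\le s\le t}$ visits no site of $\Pi^{(L_t)}$. Given this, $\xi(X_s)\le L_t$ almost surely on this event, so $\int_0^t\xi(X_s)\,ds\le tL_t$, and the bound
\[ U^4(t)\le e^{tL_t}\,\PP_0(p(X_t)\in E^4_t)\le e^{tL_t} \]
follows immediately.

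To justify the reduction, I would argue by contradiction. Suppose $p\in E^4_t$ and some $y\in\{p\}$ lies in $\Pi^{(L_t)}$, so that $\xi(y)>L_t$; by the lower bound in Lemma~\ref{lem:minmax} applied to $\mathcal{H}(y)$, we obtain $\lambda(y)\ge \xi(y)-\sigma^{-1}(y)>L_t-\delta_\sigma^{-1}$. Since $z^{(p)}\notin\Pi^{(L_t)}$ we have $\xi(z^{(p)})\le L_t$, and by Lemma~\ref{lem:assep} the ball $B(z^{(p)},\rho_\xi)$ contains at most one site of $\Pi^{(L_t)}$ eventually almost surely. Using the path expansion in Proposition~\ref{prop:pathexp} together with the Gershgorin-type upper bound in Lemma~\ref{lem:minmax} applied to $\mathcal{H}(z^{(p)})$, one can then compare $\lambda(y)$ and $\lambda(z^{(p)})$ quantitatively and conclude that the maximum of $\lambda$ over $\{p\}$ is attained at $y$ itself, forcing $z^{(p)}=y$ and contradicting $z^{(p)}\notin\Pi^{(L_t)}$.

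The main obstacle is this last comparison step: sites $z$ within distance $\rho_\xi$ of a high point $y$ can inherit a sizable local eigenvalue from $y$ through the bounce terms in the path expansion, so the strict inequality $\lambda(y)>\lambda(z)$ requires careful quantitative control of the order-$a_t^{-1}$ corrections, with the separation provided by Lemma~\ref{lem:assep} ensuring that only a single high point contributes to the expansion at each candidate $z$. Once this comparison is in place, the remainder of the proof is immediate.
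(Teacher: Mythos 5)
The reduction you propose---that a trajectory with $p(X_t)\in E^4_t$ cannot visit $\Pi^{(L_t)}$---is presumably what the paper's one-line proof has in mind, but it is not true, and the contradiction argument you sketch cannot close the gap. Your diagnosis of the obstacle is also off target: you worry about $z^{(p)}$ near the high site $y\in\{p\}$ inheriting $O(a_t^{-1})$-sized corrections, which the separation of Lemma~\ref{lem:assep} does indeed control. The actual failure mode is that the one $\Pi^{(L_t)}$-exceedance in $B(z^{(p)},\rho_\xi)$ need not be $y$ at all: it can be a \emph{different} site $y'\in\Pi^{(L_t)}$ that the path never visits and whose height far exceeds $\xi(y)$. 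Then by the lower bound of Lemma~\ref{lem:minmax} applied to $\mathcal{H}(z^{(p)})$ (with $y'\in B(z^{(p)},\rho_\xi)$), $\lambda(z^{(p)})\ge\xi(y')-\delta_\sigma^{-1}$, while by the upper bound and the separation of $\Pi^{(L_t)}$, $\lambda(y)\le\xi(y)+\delta_\sigma^{-1}$; so whenever $\xi(y')>\xi(y)+2\delta_\sigma^{-1}$, the argmax of $\lambda$ over $\{p\}$ cannot be $y$. Pairs $y,y'\in\Pi^{(L_t)}$ of sufficiently different heights exist with overwhelming probability, and for $\rho_\xi\ge1$ one can easily build a path from the origin that visits $y$ and later passes through a site $z$ with $0<|z-y'|\le\rho_\xi$ without visiting $y'$; on such a path $z^{(p)}\notin\Pi^{(L_t)}$, so $p\in E^4_t$, yet $y\in\{p\}\cap\Pi^{(L_t)}$ and $\int_0^t\xi(X_s)\,ds$ is not capped by $tL_t$.

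The eigenvalue gap that defeats the comparison is of order $\xi(y')-\xi(y)$, potentially $O(a_t)$, not $O(a_t^{-1})$, so no quantitative sharpening of the path expansion repairs it; moreover Proposition~\ref{prop:pathexp} is proved only for $z\in\Pi^{(L_{t,\varepsilon})}$, a set from which $z^{(p)}\notin\Pi^{(L_t)}$ is excluded, and Lemma~\ref{lem:assep} holds only eventually almost surely while the proposition asserts a bound for all $t\ge0$. In short, a deterministic cap on the potential along the trajectory cannot be read off the definition of $E^4_t$ alone once $\rho_\xi\ge1$; paths that land in $E^4_t$ despite visiting $\Pi^{(L_t)}$ would have to be handled by a decomposition in the style of Proposition~\ref{prop:U2neg}.
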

\begin{proof}
This follows trivially from the definition of $U^4(t)$. \end{proof}

\begin{proposition}[Negligibility of $U^5(t)$]
\label{prop:U5neg}
As $t \to \infty$, almost surely,
$$ \frac{U^5(t)}{U(t)} \to 0 \, .$$
\end{proposition}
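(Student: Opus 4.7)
\medskip

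\noindent\textbf{Proof plan.} I will prove the stronger statement that $\log(U^5(t)/U(t)) \to -\infty$ almost surely. The key observation is that for the underlying BTM to exit $V_t$ by time $t$ it must perform at least $R_t = t(\log t)^{1/\gamma}$ jumps, and the Poisson-type tail bound on the number of such jumps decays at rate $e^{-c R_t \log\log t}$, which is much faster than any growth afforded by the exponential of the potential integral.

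First, I would establish a crude a.s.\ lower bound on $U(t)$. Restricting the Feynman--Kac representation \eqref{eq:fk} to trajectories which remain at the origin until time $t$, and noting that the holding time at $0$ is exponential with mean $\sigma(0)$ while $\int_0^t \xi(X_s)\,ds = t\xi(0)$ on this event, yields $U(t) \geq e^{t\xi(0) - t/\sigma(0)} \geq e^{-t/\delta_\sigma}$ almost surely, using $\xi(0) \geq 0$ and $\sigma(0) \geq \delta_\sigma$.

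Second, for the upper bound on $U^5(t)$, let $N_t$ denote the number of jumps performed by $X$ up to time $t$. Since $\sigma \geq \delta_\sigma$ uniformly, the total jump rate of $X$ is bounded by $1/\delta_\sigma$, so by coupling $N_t$ is stochastically dominated by $\mathrm{Poi}(t/\delta_\sigma)$, giving a Chernoff-type upper tail bound. On the event $\{N_t = k\}$ the trajectory is confined to $B(0,k)$, so $\int_0^t \xi(X_s)\,ds \leq t \xi^*(k)$ with $\xi^*(k) := \max_{|z| \leq k} \xi(z)$; applying the proof of Lemma \ref{lem:asforxi} to growing boxes together with a Borel--Cantelli argument yields $\xi^*(k) \leq C(\log k)^{1/\gamma}$ for all $k$ sufficiently large, almost surely. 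Partitioning $U^5(t)$ dyadically over the range of $N_t \geq R_t$ and combining these two bounds, a routine calculation shows that in each dyadic range $[2^j R_t, 2^{j+1} R_t)$ the Poisson-tail cost $2^j R_t \gamma^{-1}\log\log t \,(1 - o(1))$ dominates the exponential contribution $t\xi^*(2^{j+1}R_t) = O\big(2^{j/\gamma}R_t\big)$, so the sum is dominated by the $j=0$ term and $\log U^5(t) \leq -c R_t \log\log t$ eventually almost surely, for some constant $c > 0$.

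Combining these estimates gives $\log(U^5(t)/U(t)) \leq -c R_t \log\log t + t/\delta_\sigma \to -\infty$ almost surely, since $R_t \log\log t \gg t$. The main technical point is controlling the competition between the random maximum $\xi^*(k)$, whose growth rate $(\log k)^{1/\gamma}$ increases with $k$, and the Poisson cost $\sim k \log\log t$ from performing $k$ jumps in time $t$; the dyadic partition isolates this trade-off, and the Poisson term wins uniformly in $k \geq R_t$ by a factor of $\log\log t$, precisely because the defining scale $R_t/t = (\log t)^{1/\gamma}$ forces an extra logarithmic gap between the two rates.
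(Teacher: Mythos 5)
Your proposal is correct in outline and follows essentially the same route the paper invokes: the paper's proof is just a citation to G\"artner--Molchanov (Section~2.5), which runs exactly this argument --- a Poisson tail bound on the number of jumps required to exit $V_t$ out-competing the a.s.\ growth of $\max_{B(0,k)}\xi$ --- plus the one-sentence observation that $\sigma(0)\ge\delta_\sigma$ lets the Poisson domination transfer unchanged to the BTM. Your lower bound $U(t)\ge e^{-t/\delta_\sigma}$ (from paths staying at the origin) and your stochastic domination $N_t\preceq\mathrm{Poi}(t/\delta_\sigma)$ are precisely the adaptations the paper has in mind.

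One local slip in the comparison: the bound $t\xi^*(2^{j+1}R_t)=O\bigl(2^{j/\gamma}R_t\bigr)$ is valid but too lossy when $\gamma<1$, since then $2^{j/\gamma}$ eventually out-grows $2^j\log\log t$, and the asserted domination of the Poisson cost over this bound fails for $j$ large. The fix is to keep the $\xi^*$ estimate polynomial in $j$: from $\xi^*(k)\le C(\log k)^{1/\gamma}$ one has
\[
t\,\xi^*\bigl(2^{j+1}R_t\bigr)\le Ct\bigl((j+1)\log 2+\log R_t\bigr)^{1/\gamma}\le C'\bigl(t(j+1)^{1/\gamma}+R_t\bigr),
\]
which is beaten by the Poisson cost $2^jR_t\bigl(\gamma^{-1}\log\log t+j\log 2-O(1)\bigr)$ uniformly in $j\ge 0$ and for every $\gamma>0$: at $j=0$ the margin is the factor $\log\log t$ you identify, and for $j\ge1$ the cost's geometric factor $2^j$ beats the gain's polynomial $(j+1)^{1/\gamma}$ outright. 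With this adjustment the dyadic summation and the final estimate $\log\bigl(U^5(t)/U(t)\bigr)\le -cR_t\log\log t+t/\delta_\sigma\to-\infty$ go through as you describe.
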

\begin{proof}
The equivalent statement for the PAM with Weibull potential is proved in \cite[Section 2.5]{Gartner98}, and is a consequence of a large probabilistic penalty for diffusing outside the macrobox $V_t$. The assumption that $\sigma(0) > \delta_\sigma$ ensures that the proof applies equally well in our case.
\end{proof}

\begin{corollary}
\label{cor:neg}
There exists a constant $c$ such that, as $t \to \infty$,
$$ \frac{U^2(t) + U^3(t) + U^4(t) + U^5(t)}{U(t)} \id_{\mathcal{E}_{t,c}} \id_{\Theta^d_t}\to 0$$
almost surely.
\end{corollary}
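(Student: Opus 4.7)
The plan is to combine the lower bound on $\log U(t)$ from Proposition~\ref{prop:lowerbound} with the upper bounds on $\log U^i(t)$ from Propositions~\ref{prop:U2neg}--\ref{prop:U5neg}, all valid on $\mathcal{E}_{t,c} \cap \Theta^d_t$. The negligibility of $U^5(t)/U(t)$ follows directly from Proposition~\ref{prop:U5neg}, so it suffices to show $\log(U^i(t)/U(t)) \to -\infty$ almost surely for $i=2,3,4$, using throughout the lower bound
\[ \log U(t) \ge t \Psi_t^{(j)}(Z_t) + O(t d_t b_t). \]

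For $i = 2$, Corollary~\ref{corry:zeqz0} and Proposition~\ref{prop:zeqzc} give $Z_t = Z_t^{(j)} = Z_{t,c}^{(j)}$ on $\mathcal{E}_{t,c}$, so Proposition~\ref{prop:U2neg} combined with the lower bound yields
\begin{align*}
\log(U^2(t)/U(t)) \le t \bigl( \Psi_{t,c}^{(j)}(Z_{t,c}^{(j,2)}) - \Psi_{t,c}^{(j)}(Z_{t,c}^{(j)}) \bigr) + c|Z_t| + O(t d_t b_t).
\end{align*}
On $\mathcal{G}_{t,c} \subset \mathcal{E}_{t,c}$ the first difference is $< -d_t e_t$, while $|Z_t| < r_t g_t$ on $\mathcal{H}_t$ together with $g_t/\log\log t \ll b_t$ from \eqref{scalingfns} shows $c|Z_t| = O(t d_t g_t/\log\log t) = o(t d_t b_t)$. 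Since $b_t \ll e_t$, the ratio tends to $-\infty$.

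For $i = 3$, Proposition~\ref{prop:U3neg} combined with the lower bound gives
\begin{align*}
\log(U^3(t)/U(t)) \le -\tfrac{h_t}{\gamma} |Z_t| \log\log t + c|Z_t| + O(t d_t b_t).
\end{align*}
On $\mathcal{E}_{t,c}$, $|Z_t| > r_t f_t$, and $r_t \log\log t$ is of the order $t d_t$ up to constants, so the leading term is bounded above by a negative multiple of $t f_t h_t d_t$. By the hierarchy $g_t/\log\log t \ll b_t \ll f_t h_t$ in \eqref{scalingfns}, both $c|Z_t|$ and $O(t d_t b_t)$ are $o(t f_t h_t d_t)$, forcing the ratio to $-\infty$. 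For $i = 4$, Proposition~\ref{prop:U4neg} gives $\log U^4(t) \le t L_t \sim t(1-\theta)^{1/\gamma} a_t$, while the event $\mathcal{I}_t \subset \mathcal{E}_{t,c}$ yields $\log U(t) \ge t a_t(1 - f_t) + O(t d_t b_t)$; since $\theta > 0$ forces $(1-\theta)^{1/\gamma} < 1$, the resulting linear-in-$t a_t$ negative term overwhelms both $t a_t f_t$ and $t d_t b_t$.

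The main obstacle is purely bookkeeping: in each of the three cases, one must confirm that the negative leading quantity genuinely dominates both the universal error $O(t d_t b_t)$ and the linear penalty $c|Z_t|$ produced by translating between $\Psi_t^{(j)}$ and $\Psi_{t,c}^{(j)}$. This is precisely what the chain of scaling relations in \eqref{scalingfns} has been calibrated to deliver --- the role of $e_t$ is to dominate $b_t$ for the $U^2$ estimate, while $f_t h_t$ dominates $b_t$ (and $g_t/\log\log t$) for the $U^3$ estimate.
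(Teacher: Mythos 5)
Your proof is correct and follows essentially the same route as the paper: combine the lower bound of Proposition~\ref{prop:lowerbound} with each of Propositions~\ref{prop:U2neg}--\ref{prop:U5neg}, identify $Z_t = Z_{t,c}^{(j)}$ via Proposition~\ref{prop:zeqzc} and Corollary~\ref{corry:zeqz0}, and close the errors with the scaling hierarchy \eqref{scalingfns}. The only slips are typographical: you write $\mathcal{G}_{t,c} \subset \mathcal{E}_{t,c}$ and $\mathcal{I}_t \subset \mathcal{E}_{t,c}$, where the inclusions run the other way ($\mathcal{E}_{t,c}$ is the intersection, so $\mathcal{E}_{t,c} \subset \mathcal{G}_{t,c}$ and $\mathcal{E}_{t,c} \subset \mathcal{I}_t$), though the surrounding argument uses the correct fact that being on $\mathcal{E}_{t,c}$ ensures the gap and the height estimate.
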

\begin{proof}
Let $c$ be the maximum of the constants appearing in Propositions~\ref{prop:U2neg} and \ref{prop:U3neg}. Combining Propositions~\ref{prop:lowerbound} and \ref{prop:U2neg}, and recalling that $Z_{t, c}^{(j)} = Z_t$ eventually by Proposition~\ref{prop:zeqzc} and Corollary \ref{corry:zeqz0}, we have that, on the events $\mathcal{E}_{t,c}$ and $\Theta^d_t$, eventually almost surely
$$ \log U^2(t) - \log U(t)  \le  t \left( {\Psi}_{t, c}^{(j)}(Z_{t, c}^{(j, 2)}) - {\Psi}_{t, c}^{(j)}(Z_{t, c}^{(j)})  \right) +c|Z_{t}|+ O(td_tb_t) \, .$$
Using the gap in the maximisers of $\Psi_{t, c}^{(j)}$ and since $|Z_t| < r_tg_t$, we have that, as $t \to \infty$,
$$ \log U^2(t) - \log U(t)  \le - t d_t e_t +O(r_tg_t)+ O(td_t b_t)\to  -\infty $$
by the properties of the scaling functions in \eqref{scalingfns}. Similarly, combining Propositions \ref{prop:lowerbound} and~\ref{prop:U3neg}, we have that, on the events $\mathcal{E}_{t,c}$ and $\Theta^d_t$, eventually almost surely
$$ \log U^3(t) - \log U(t)  \le  - h_t \frac{1}{\gamma} |Z_t| \log \log t + c |Z_t| + O(t d_t b_t) $$
and so, using that $|Z_t| >  r_t f_t$ on the event $\mathcal{E}_{t,c}$, as $t \to \infty$,
$$ \log U^3(t) - \log U(t)  \le - r_t f_t h_t \frac{1}{\gamma} \log \log t  + O(t d_t b_t) \to  -\infty $$
by the properties in \eqref{scalingfns}. Finally, combining Propositions \ref{prop:lowerbound}, \ref{prop:U4neg} and \ref{prop:U5neg}, we get the result.
\end{proof}

\section{Localisation}
\label{sec:loc}
In this section we complete the proof of Theorem~\ref{thm:main1}; that is, we show that the non-negligible component of the total solution, $u^1(t, z)$, is eventually localised at $Z_t$. Recall the idea of the proof that was outlined in Section \ref{sec:outline}, that: (i) the solution $u^1(t, z)$ is closely approximated by the principal eigenfunction of $\Delta \invsigma + \xi$ restricted to the domain 
\[ B_t := B \left( 0, |Z_t| (1 + h_t)  \right)  \cap V_t \] 
 and; (ii) the principal eigenfunction decays exponentially away from $Z_t$. Throughout this section, fix the constant $c > 0$ from Corollary~\ref{cor:neg}.

\subsection{Approximating the solution with the principal eigenfunction}
Let $\lambda_t$ and $v_t$ denote, respectively, the principal eigenvalue and eigenfunction of the Hamiltonian $\Delta \invsigma + \xi$ restricted to the domain $B_t$ with Dirichlet boundary conditions, renormalising $v_t$ so that $v_t(Z_t) = 1$. 

\begin{lemma}[Gap in $j$-local principal eigenvalues in $B_t$]
\label{lem:gap}
On the event $\mathcal{E}_{t, c}$, each $z \in B_t \setminus \{Z_t\}$ satisfies
$$ {\lambda}^{(j)}(Z_t) - {\lambda}^{(j)}(z) > d_t e_t + o(d_t e_t) \, .$$
\end{lemma}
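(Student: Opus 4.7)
The plan is a case analysis on $z \in B_t \setminus \{Z_t\}$, using the properties packaged into $\mathcal{E}_{t,c}$: the identity $Z_t = Z_t^{(j)}$ (Corollary~\ref{corry:zeqz0}), the gap $\Psi_t^{(j)}(Z_t^{(j)}) - \Psi_t^{(j)}(Z_t^{(j,2)}) > d_t e_t$ (from $\mathcal{G}_{t,0}$), the size bound $|Z_t| < r_t g_t$ (from $\mathcal{H}_t$), the value $\lambda^{(j)}(Z_t) \geq a_t(1-f_t)$ (from $\mathcal{I}_t$), and the local profile of $(\xi,\sigma)$ around $Z_t$ (from $\mathcal{S}_t(Z_t^{(j)})$).

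If $z \in \Pi^{(L_t)}$, I would translate the $\Psi_t^{(j)}$-gap into the desired $\lambda^{(j)}$-gap:
\[ \lambda^{(j)}(Z_t) - \lambda^{(j)}(z) > d_t e_t + \frac{|Z_t|-|z|}{\gamma t}\log\log t. \]
Since $|z| \le |Z_t|(1+h_t)$ on $B_t$ and $|Z_t| < r_t g_t$ on $\mathcal{H}_t$, the last term is bounded below by $-h_t g_t d_t$ (using the identity $r_t \log\log t/(\gamma t) = d_t$), which is $o(d_t e_t)$ by the scaling chain $g_t h_t \ll e_t$ of \eqref{scalingfns}.

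If instead $z \notin \Pi^{(L_t)}$, so $\xi(z) \le L_t$, the separation of $\Pi^{(L_t)}$ in Lemma~\ref{lem:assep} forces $B(z,j)$ to contain at most one site $y$ of $\Pi^{(L_t)}$. If there is none, Lemma~\ref{lem:minmax} immediately yields $\lambda^{(j)}(z) \le L_t + \delta_\sigma^{-1}$; since $L_t/a_t \to (1-\theta)^{1/\gamma} < 1$, the gap is of order $a_t$ and far exceeds $d_t e_t = o(a_t)$. Otherwise I apply Proposition~\ref{prop:genpathexp} with basepoint $y$ to both $\lambda^{(j)}(z)$ and $\lambda^{(j)}(y)$, truncate the cycle sums at length $2j$ (the tail absorbing into $o(d_t e_t)$ by the same tail estimate used in Proposition~\ref{prop:pathexp}), and compare the two truncated sums through the cycles lying in the symmetric difference of $B(z,j)$ and $B(y,j)$. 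When $y \neq Z_t$, the previous paragraph applied at $y$ supplies $\lambda^{(j)}(Z_t) - \lambda^{(j)}(y) > d_t e_t(1+o(1))$, while the symmetric-difference estimate yields $|\lambda^{(j)}(z) - \lambda^{(j)}(y)| = o(d_t e_t)$, finishing the case.

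When $y = Z_t$, so $z \in B(Z_t, j) \setminus \{Z_t\}$, the leading $\eta(Z_t)$ terms in both expansions agree and the difference reduces to $\sigma^{-1}(Z_t)(S_{Z_t} - S_z)$, where $S_{Z_t} - S_z$ is a strictly positive finite sum over cycles of length at most $2j$ contained in $B(Z_t,j)$ but not $B(z,j)$. The profile information from $\mathcal{S}_t(Z_t^{(j)})$ — in particular $\sigma(Z_t) \sim a_t^{q_\sigma}$ and $\sigma(Z_t+w) \sim \delta_\sigma$ for $|w| \le \rho$ — produces a polynomial-in-$a_t^{-1}$ lower bound on this difference which the scaling relations in \eqref{scalingfns} have been tuned to exceed $d_t e_t$. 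The main obstacle is precisely this final subcase: both $\lambda^{(j)}(Z_t)$ and $\lambda^{(j)}(z)$ are asymptotically equal to $\eta(Z_t)$ and the gap must be extracted from the second-order correction in the path expansion, making the argument rely essentially on the local environmental profile near $Z_t$ and on the precise balance of scales fixed in \eqref{scalingfns}.
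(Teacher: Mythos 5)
Your case analysis for $z \in \Pi^{(L_t)}$, and for $z\notin\Pi^{(L_t)}$ with $B(z,j)\cap\Pi^{(L_t)}=\varnothing$, reproduces the paper's argument (which consists of exactly these two cases). The further refinement according to whether $B(z,j)$ contains a high point other than $Z_t$, or contains $Z_t$ itself, goes beyond the paper's terse proof, and this is where problems appear. For $B(z,j)\ni y\in\Pi^{(L_t)}\setminus\{Z_t\}$, your two-sided claim $|\lambda^{(j)}(z)-\lambda^{(j)}(y)|=o(d_te_t)$ is not right---cycles of length $2j$ in $B(y,j)\setminus B(z,j)$ contribute $\invsigma(y)\,O(a_t^{-(2j-1)})$, not $o(d_te_t)$---but the one-sided bound $\lambda^{(j)}(z)\le\lambda^{(j)}(y)+o(d_te_t)$, which is all you need, does hold: $B(z,j)\subseteq B(y,2j)$ gives $\lambda^{(j)}(z)\le\lambda^{(2j)}(y)$ by Lemma~\ref{lem:mono}, and since every cycle through $y$ of length at most $2j$ lies in $B(y,j)$, the truncation in Proposition~\ref{prop:pathexp} gives $\lambda^{(2j)}(y)=\lambda^{(j)}(y)+o(d_te_t)$.

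The genuine gap is your final sub-case $z\in B(Z_t,j)\setminus\{Z_t\}$, which you correctly flag as the main obstacle but do not close. Writing $l:=|z-Z_t|$, the shortest cycles through $Z_t$ lying in $B(Z_t,j)\setminus B(z,j)$ have length $2(j-l+1)$, so $\lambda^{(j)}(Z_t)-\lambda^{(j)}(z)=\Theta\bigl(\invsigma(Z_t)\,a_t^{-(2(j-l+1)-1)}\bigr)$. Inserting $\sigma(Z_t)\sim c_\sigma a_t^{q_\sigma}$ on $\mathcal{S}_t(Z_t^{(j)})$, requiring this to exceed $d_te_t\sim a_t^{1-\gamma}e_t/\gamma$ is equivalent to $a_t^{\gamma-q_\sigma-2(j-l+1)}\gg e_t$. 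Even in the most favourable case $l=j$ the exponent is $\gamma-q_\sigma-2=((\gamma-2)\mu-1)/(\mu+1)$, which is strictly negative whenever $\gamma\ge2$ and $\rho=0$, an admissible regime. A negative exponent would force $e_t$ to decay polynomially in $a_t^{-1}$, but \eqref{scalingfns} constrains $e_t$ only through the slowly varying $g_t,h_t$, and Proposition~\ref{prop:pathexp} already requires $a_t^{-(2j+2)}\ll d_te_t$, i.e.\ sub-polynomial decay of $e_t$. So the ``tuning'' of \eqref{scalingfns} you appeal to is not available, and the stated inequality is in fact false for such $z$ in that regime. Note this is also a gap in the paper's own one-liner, since Lemma~\ref{lem:minmax} only covers the sub-case $B(z,j)\cap\Pi^{(L_t)}=\varnothing$; it turns out to be immaterial because Corollary~\ref{cor:gap} is invoked downstream (in Lemma~\ref{L:negupper2} and Proposition~\ref{prop:expdecay}) only at sites $z^{(P)}\in\Pi^{(L_t)}$, so only the $z\in\Pi^{(L_t)}$ case---which you and the paper handle identically---is ever used.
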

\begin{proof}
On the event $\mathcal{E}_{t, c}$, we have that $\lambda^{(j)}(Z_t) > a_t(1 - f_t)$ and so the claim is true for $z \notin \Pi^{(L_t)}$ by Lemma \ref{lem:minmax}. On the other hand, if $z \in \Pi^{(L_t)}$ then
$$ d_t e_t < {\Psi}_t^{(j)}(Z_t) -  {\Psi}_t^{(j)}(z) =  {\lambda}^{(j)}(Z_t) -  {\lambda}^{(j)}(z) + \frac{|z| - |Z_t|}{\gamma t} \log \log t  \, . $$
To complete the proof, notice that, for each $z \in B_t$,
$$ \frac{|z| - |Z_t| }{\gamma t} \log \log t <  \frac{r_t g_t h_t}{\gamma t} \log \log t =  d_t g_t h_t \ll d_t e_t$$
since $g_t h_t \ll e_t$ by \eqref{scalingfns}.
\end{proof}

\begin{corollary}
\label{cor:gap}
Eventually on the event $\mathcal{E}_{t, c}$, each $z \in B_t \setminus \{Z_t\}$ satisfies
$$ \lambda_t > {\lambda}^{(j)}(z) +d_te_t+o(d_te_t)  \, .$$
\end{corollary}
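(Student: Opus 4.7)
The plan is very short: the corollary is essentially a one-line deduction from Lemma \ref{lem:gap} combined with the domain monotonicity of the principal eigenvalue (Lemma \ref{lem:mono}). The idea is that $\lambda_t$ is the principal eigenvalue on the large domain $B_t$, while $\lambda^{(j)}(Z_t)$ is the principal eigenvalue on the small sub-domain $B(Z_t,j)$, so once we verify the inclusion $B(Z_t,j)\subseteq B_t$ we get $\lambda_t \ge \lambda^{(j)}(Z_t)$ for free, and then Lemma~\ref{lem:gap} finishes the job.

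First I would check the inclusion $B(Z_t,j)\subseteq B_t$ on the event $\mathcal{E}_{t,c}$. By definition of $\mathcal{E}_{t,c}$ we have $r_t f_t < |Z_t| < r_t g_t$. Since $j$ is a fixed integer while $|Z_t|h_t > r_t f_t h_t \to \infty$ by the choice of scaling functions in \eqref{scalingfns}, eventually $j < |Z_t|h_t$, so that $B(Z_t,j)\subseteq B(0,|Z_t|(1+h_t))$. Moreover $|Z_t|+j < r_t g_t + j \ll R_t$, so $B(Z_t,j)\subseteq V_t$. Combining, $B(Z_t,j)\subseteq B_t$ eventually on $\mathcal{E}_{t,c}$.

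Next, applying the domain monotonicity in Lemma~\ref{lem:mono} to the inclusion $B(Z_t,j)\subseteq B_t$ (which is strict as soon as $|Z_t|\ge 1$, also true eventually on $\mathcal{E}_{t,c}$) yields
\[
\lambda_t \;\ge\; \lambda^{(j)}(Z_t)\,.
\]
Finally, invoking Lemma~\ref{lem:gap} for an arbitrary $z\in B_t\setminus\{Z_t\}$ gives $\lambda^{(j)}(Z_t) > \lambda^{(j)}(z) + d_t e_t + o(d_t e_t)$, and chaining the two inequalities produces the claim.

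There is really no obstacle here: the only thing to watch out for is the routine scale check that the ball $B(Z_t,j)$ sits inside $B_t$, which is immediate from the hypotheses bundled into $\mathcal{E}_{t,c}$ and the relationships in \eqref{scalingfns}. No further estimates on $v_t$ or $\lambda_t$ are required at this stage (the exponential decay of $v_t$ will enter later, in the subsequent subsection).
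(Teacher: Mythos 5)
Your argument coincides exactly with the paper's: verify $B(Z_t,j)\subseteq B_t$ on $\mathcal{E}_{t,c}$, invoke domain monotonicity (Lemma~\ref{lem:mono}) to get $\lambda_t\ge\lambda^{(j)}(Z_t)$, and conclude via Lemma~\ref{lem:gap}. The only difference is that you spell out the scale check for the ball inclusion, which the paper leaves implicit; the reasoning is sound.
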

\begin{proof}
First note that, on the event $\mathcal{E}_{t, c}$, the ball $B(Z_t, j) \subseteq B_t$. Hence, by the domain monotonicity in Lemma~\ref{lem:mono}, we have $\lambda_t \ge {\lambda}^{(j)}(Z_t)$, and so the result follows from Lemma~\ref{lem:gap}.
\end{proof}

\begin{proposition}[Feynman-Kac representation for the principal eigenfunction]
\label{prop:pathexpeig}
Eventually on the event $\mathcal{E}_{t, c}$, 
\begin{align*}
v_t(z) & = \frac{\sigma(z)}{\sigma(Z_t)}  \EE_z \left[ \exp \left\{ \int_0^{\tau_{Z_t}} \left( \xi(X_s) - \lambda_t \right) \, ds  \right\}  \id_{\{ \tau_{B_t^c} > \tau_{Z_t} \}}  \right] \, ,
\end{align*}
where 
$$ \tau_{Z_t} := \inf\{ t \ge 0 : X_t = Z_t \}  \quad \text{and} \quad  \tau_{B_t^c} := \inf\{ t \ge 0 : X_t \notin B_t \} \, .$$
\end{proposition}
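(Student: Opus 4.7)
The plan is to obtain this representation as a direct specialisation of the general Feynman-Kac identity in Proposition~\ref{prop:genfk} to the Hamiltonian $\mathcal{H} = (\Delta\sigma^{-1} + \xi)\id_{B_t}$, with the distinguished base point taken to be $Z_t$.

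First I would justify that the normalisation $v_t(Z_t) = 1$ is well-posed. On the event $\mathcal{E}_{t,c}$ we have $B(Z_t, j) \subseteq B_t$, so in particular $Z_t \in B_t$. Since $\mathcal{H}$ on the finite connected domain $B_t$ (with Dirichlet boundary conditions) is similar, via the conjugation $\sigma^{-1/2}\mathcal{H}\sigma^{1/2}$ used in Section~\ref{sec:gentheory}, to a symmetric irreducible operator, the Perron-Frobenius theorem applies and a principal eigenfunction may be chosen strictly positive throughout $B_t$. In particular $v_t(Z_t) > 0$, and renormalising so that $v_t(Z_t) = 1$ is legitimate.

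Next I would invoke Proposition~\ref{prop:genfk} with $D = B_t$, the role of ``$y$'' played by the variable $z \in B_t$, and the role of ``$z$'' played by $Z_t$. All hypotheses are met: $B_t$ is bounded and both $z$ and $Z_t$ lie in it. The conclusion reads
\[
\frac{v_t(z)}{v_t(Z_t)} = \frac{\sigma(z)}{\sigma(Z_t)}\, \EE_z\!\left[\exp\left\{\int_0^{\tau_{Z_t}}(\xi(X_s) - \lambda_t)\,ds\right\}\id_{\{\tau_{B_t^c} > \tau_{Z_t}\}}\right],
\]
and substituting the normalisation $v_t(Z_t) = 1$ produces the asserted formula verbatim.

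I do not anticipate any substantive obstacle; the content of the proposition is essentially a restatement of Proposition~\ref{prop:genfk} in the present notation, combined with elementary positivity of the principal eigenfunction. The one point worth double-checking is the implicit integrability and uniqueness invoked in Proposition~\ref{prop:genfk}, which requires $\lambda_t$ not to be an eigenvalue of $\mathcal{H}$ restricted to $B_t \setminus \{Z_t\}$; this is immediate from strict domain monotonicity (Lemma~\ref{lem:mono}), which forces every eigenvalue of the restricted operator to be strictly smaller than $\lambda_t$. The event $\mathcal{E}_{t,c}$ enters only to ensure $Z_t$ is well-defined and contained in $B_t$.
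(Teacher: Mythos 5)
Your proof is correct and follows the same route as the paper: it is a direct application of Proposition~\ref{prop:genfk} with $D = B_t$ and base point $Z_t$, combined with the observation that the normalisation $v_t(Z_t)=1$ is legitimate. The paper's own proof is a single sentence, citing Proposition~\ref{prop:genfk} together with Corollary~\ref{cor:gap}; you instead cite the strict domain monotonicity (Lemma~\ref{lem:mono}) as the source of finiteness/uniqueness of the Feynman-Kac integral. These are compatible — Lemma~\ref{lem:mono} is what actually drives the proof of Proposition~\ref{prop:genfk}, while Corollary~\ref{cor:gap} is its quantitative strengthening in the present context, useful later for the exponential decay estimate but not strictly required here — so your attribution is, if anything, slightly more precise about the minimal hypothesis needed. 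The additional Perron-Frobenius observation you include (that $\varphi_1>0$ on the connected domain $B_t$, so dividing by $v_t(Z_t)$ is well-posed) is correct and worth recording, though the paper leaves it implicit.
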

\begin{proof}
This is an application of Proposition~\ref{prop:genfk}, valid precisely because of Corollary~\ref{cor:gap}.
\end{proof}

\subsection{Exponential decay of the principal eigenfunction}

Recall the partition of paths into equivalence classes in Section \ref{sec:neg}, the quantities $r^\ell$ and $s^\ell$ associated to each equivalence class, and, for $m, n \in \mathbb{N}$, the set of equivalence classes $\mathcal{P}_{n, m}$. Recall also the event $\{p(X) \in P(p)\}$.

Define the path set
$$ \bar{E}_t^1 := \left\{ p \in E_t^1: |p| = \min \left\{ i: p_i = Z_t  \right\} \right\} \,,$$ 
 and for each $m, n \in \mathcal{N}$ define
$$ \bar{\mathcal{P}}^1_{n, m} := \bigcup_{p \in \bar{E}_t^1} P_t(p) \cap \mathcal{P}_{n,m} \,.  $$
Further, for each $P \in  \bar{\mathcal{P}}^1_{n, m}$ and $y \in B_t$ define
\begin{align}
\label{eq:fkvp}
 v^{P}_t(y) := \frac{\sigma(y)}{\sigma(Z_t)} \mathbb{E}_{y}\left[ \exp\left\{\int_0^{\tau_{Z_t}} \left( \xi(X_s) - \lambda_t \right) \,ds\right\}\indic{p(X) \in P}\right] \, .
\end{align}
For each $P \in \bar{\mathcal{P}}^1_{n, m}$ denote by $z^{(P)}$ the site $y \in \Pi^{(L_t)}$ on a given path $p \in P$, excluding the site $Z_t$, which maximises ${\lambda}^{(j)}(y)$, setting $z^{(P)} = \varnothing$ (and ${\lambda}^{(j)}(\varnothing) = 0$) if no such $y$ exists. Remark that, whenever $z^{(P)}$ is defined, it is a class property of $P$ eventually almost surely, by Lemma~\ref{lem:assep}.

\begin{lemma}[Bound on the contribution from each equivalence class]
\label{L:negupper2}
Let $m, n \in \mathbb{N}$ and $P \in \bar{\mathcal{P}}^1_{n, m}$. Then there exist constants $c_1,c_2>0$ such that, for each $m, n, P$ and $y \in B_t \setminus \Pi^{(L_t)}$ uniformly, as $t \to \infty$,
\begin{align*}
 v^{P}_t(y) \, \sigma(Z_t) \le  \left(c_1(\lambda_t-L_{t}) \right)^{-n}
 \left(1+c_2(\lambda_t-{\lambda}^{(j)}(z^{(P)}))^{-1}\right)^{m-1}
\end{align*}
and, for every $y \in \Pi^{(L_t)}$ uniformly,
\begin{align*}
 v^{P}_t(y) \, \sigma(Z_t) \le  \left( \lambda_t-{\lambda}^{(j)}(z^{(P)}) \right)^{-1} \left(c_1(\lambda_t-L_{t}) \right)^{-n}
\left(1+c_2(\lambda_t-{\lambda}^{(j)}(z^{(P)}))^{-1} \right)^{m-1}
\end{align*}
both hold eventually almost surely.
\end{lemma}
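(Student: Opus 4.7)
Our plan is to follow the same path-decomposition strategy as in the proof of Lemma~\ref{L:negupper1}, adapting it from the solution Feynman-Kac representation to the eigenfunction representation~\eqref{eq:fkvp}. Specifically, we will condition on the stopping times $R^0 := 0$, and for $\ell \ge 1$, $S^\ell := \inf\{s \ge R^{\ell-1} : X_s \in \Pi^{(L_t)}\}$ and $R^\ell := \inf\{s \ge S^\ell : X_s \in \partial B(X_{S^\ell}, j)\}$, and use the strong Markov property to factorise the Feynman-Kac expectation into contributions from the outside-ball segments (whose trajectories are determined by the equivalence class $P$) and the inside-ball excursions (whose internal trajectories are random). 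For any path in $P \in \bar{\mathcal{P}}^1_{n, m}$, there will be exactly $m$ full inside-ball excursions around intermediate sites $z_1, \ldots, z_m \in \Pi^{(L_t)} \setminus \{Z_t\}$, and the path will terminate at $S^{m+1} = \tau_{Z_t}$ without entering a full excursion around $Z_t$ itself. The two cases of the statement will correspond to whether $y \notin \Pi^{(L_t)}$ (so $S^1 > 0$) or $y \in \Pi^{(L_t)}$ (so $S^1 = 0$ and the path begins inside $B(y, j)$).

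On each outside-ball segment, we will apply Lemma~\ref{lem:pathwise} with $\zeta = \lambda_t$. The hypothesis $\lambda_t > \max_i \eta(p_i)$ will hold eventually on $\mathcal{E}_{t,c}$ because $\lambda_t \ge \lambda^{(j)}(Z_t) > L_{t, \varepsilon} > L_t$ (using the domain monotonicity of Lemma~\ref{lem:mono} together with $Z_t \in \Pi^{(L_{t, \varepsilon})}$), while $\eta(p_i) \le \xi(p_i) \le L_t$ along outside-ball sites. Since $\sigma^{-1} \le \delta_\sigma^{-1}$ throughout, the product of the Lemma~\ref{lem:pathwise} factors over the $n$ outside-segment sites, combined with the $\sigma(y)/\sigma(Z_t)$ prefactor in~\eqref{eq:fkvp} (which cancels the $\sigma^{-1}(y)$ factor appearing for $i=0$ when $y \notin \Pi^{(L_t)}$), will yield the outside-ball bound $(c_1(\lambda_t - L_t))^{-n}$ for $v^P_t(y) \sigma(Z_t)$.

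On each inside-ball excursion we will drop the constraint on the exit location and apply Lemma~\ref{lem:clusterexp} to $D = B(z_\ell, j)$ with $\zeta = \lambda_t$. The key hypothesis $\lambda_t > \lambda^{(j)}(z_\ell)$ is supplied by Corollary~\ref{cor:gap}. Using the first bound for generic excursions, and bounding $\lambda^{(j)}(z_\ell) \le \lambda^{(j)}(z^{(P)})$, each such excursion will contribute at most $1 + c_2(\lambda_t - \lambda^{(j)}(z^{(P)}))^{-1}$. When $y \in \Pi^{(L_t)}$, the initial excursion starting at $y$ will instead be handled with the tighter second bound of Lemma~\ref{lem:clusterexp}, producing an extra prefactor $\sigma^{-1}(y)(\lambda_t - \lambda^{(j)}(y))^{-1}$; this will cancel the remaining $\sigma(y)$ from~\eqref{eq:fkvp} and, via $\lambda^{(j)}(y) \le \lambda^{(j)}(z^{(P)})$, yield the extra $(\lambda_t - \lambda^{(j)}(z^{(P)}))^{-1}$ factor appearing in the second bound of the statement.

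The hard part will be ensuring all the hypotheses of Lemmas~\ref{lem:pathwise} and~\ref{lem:clusterexp} hold uniformly on $\mathcal{E}_{t,c}$ along all admissible paths, and that the exponents and constants agree with the statement after the combinatorial bookkeeping of excursions and $\sigma$-factors. The strict spectral gap from Corollary~\ref{cor:gap} (giving $\lambda_t > \lambda^{(j)}(z^{(P)})$ with positive margin) and the almost-sure separation of high potential points from Lemma~\ref{lem:assep} (ensuring the equivalence-class decomposition is well-defined) will both be essential, as will the careful tracking of the $\sigma(y)/\sigma(Z_t)$ prefactor through the cancellations in both cases.
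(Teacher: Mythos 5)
Your proposal follows the same strategy as the paper: take the Feynman--Kac representation in~\eqref{eq:fkvp}, set $\zeta = \lambda_t$ (valid by Corollary~\ref{cor:gap}), decompose along the stopping times $S^\ell, R^\ell$ exactly as in Lemma~\ref{L:negupper1}, bound outside-ball segments by Lemma~\ref{lem:pathwise} and inside-ball excursions by Lemma~\ref{lem:clusterexp}, and handle the $\sigma(y)$ prefactor by cancellation against the first holding-time factor (when $y\notin\Pi^{(L_t)}$) or via the second bound of Lemma~\ref{lem:clusterexp} on the initial excursion (when $y\in\Pi^{(L_t)}$). The one point to reconcile is the excursion exponent: you count $m$ full excursions and would arrive at $(1+c_2(\lambda_t-\lambda^{(j)}(z^{(P)}))^{-1})^{m}$, whereas the statement has exponent $m-1$; the paper justifies the drop by one with the terse remark that the terminal site $Z_t$ contributes nothing (its product runs $\prod_{\ell=2}^{m-1}$), and this bookkeeping is exactly the part your outline defers to ``the hard part'' --- so make sure the excursion count matches the lower bound on $n$ used later in Proposition~\ref{prop:expdecay}, since the two must cancel against each other.
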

\begin{proof}
Starting with the Feynman-Kac representation for $v^{P}_t(y)$ in equation \eqref{eq:fkvp}, the proof follows similarly as in Lemma~\ref{L:negupper1} for $\zeta = \lambda_t$, which is a valid setting for $\zeta$ because of Corollary~\ref{cor:gap}. Two modifications are necessary to adapt the proof.

The first modification comes from the observation that, for any $p\in P$, the final site $Z_t$ gives no contribution to the expectation, and hence we have $m-1$ instead of the $m$ in Lemma~\ref{L:negupper1}. 

The second modification is necessary to take into account the additional $\sigma(y)$ factor present in the Feynman-Kac representation in equation \eqref{eq:fkvp}, which \textit{a priori} could be arbitrarily large. How we take this into account depends on whether $p$ starts at a site of high potential. If $y \notin \Pi^{(L_t)}$, we simply modify equation \eqref{eq:r_to_s} by pulling out the factor $\sigma(y)$ and bounding the right-hand side by
\[
(2d)^{-n}\invsigma(y)(\lambda_t-L_t)^{-1} \left(1+\delta_\sigma(\lambda_t-L_t)\right)^{-n+1},
\]
and the claimed result follows. If $y \in \Pi^{(L_t)}$, we instead modify equation \eqref{eq:s_to_r} by using the \textit{second} bound in Lemma~\ref{lem:clusterexp} on the product factor for $\ell = 1$, which yields (abbreviating $s := s^\ell$)
\[
\EE_y[I_0^{\tau_{B(y,j)}}]\prod_{\ell=2}^{m-1}\EE_{ p_s} \left[I_0^{\tau_{B( p_s, j) } }\right]\le \invsigma(y)(\lambda_t-
{\lambda}^{(j)}(z))^{-1}\left(1+\frac{\delta^{-1}_\sigma |B(0,j)|}{\lambda_t- \lambda^{(j)}(z^{(P)})}\right)^{m-1} \, ,\]
and again the claimed result follows. 
\end{proof}

\begin{proposition}[Exponential decay of principal eigenfunction]
\label{prop:expdecay}
On the event $\mathcal{E}_{t, c}$ there exists a constant $C >0$ such that, for each $y \in B_t$ uniformly, as $t \to \infty$,
\[ \log v_t(y) +\log \sigma(Z_t) \le  - C |y -Z_t| \log \log t    \]
eventually almost surely.
\end{proposition}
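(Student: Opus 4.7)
The plan is to start from the Feynman-Kac representation for $v_t$ in Proposition~\ref{prop:pathexpeig} and decompose the resulting expectation into contributions $v_t^P(y)\sigma(Z_t)$, one for each path equivalence class $P \in \bar{\mathcal{P}}^1_{n,m}$ introduced in Section~\ref{sec:neg}. Since $|\bar{\mathcal{P}}^1_{n,m}| \le \kappa^{n+m}$ for $\kappa := \max\{2d, |\partial B(0,j)|\}$ (by the same counting argument as in the proof of Proposition~\ref{prop:U2neg}), this reduces matters to controlling the uniform class-by-class bound supplied by Lemma~\ref{L:negupper2}.

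Lemma~\ref{L:negupper2} produces two factors: an ``outside ball'' factor $(c_1(\lambda_t - L_t))^{-n}$ and an ``inside ball'' factor $(1 + c_2/(\lambda_t - \lambda^{(j)}(z^{(P)})))^{m-1}$. I would control these using three inputs already established. First, $\lambda_t - L_t \ge \lambda^{(j)}(Z_t) - L_t \sim a_t$ by Lemmas~\ref{lem:mono} and \ref{lem:minmax}, so that the outside-ball factor contributes $-n(\log\log t)/\gamma\,(1+o(1))$ to the logarithm. Second, Corollary~\ref{cor:gap} gives the spectral gap $\lambda_t - \lambda^{(j)}(z^{(P)}) > d_t e_t\,(1+o(1))$ uniformly over $z^{(P)} \in B_t \setminus \{Z_t\}$, so the inside-ball factor contributes at most $O(m/(d_t e_t))$. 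Third, Corollary~\ref{cor:pathoutsideballs} together with the separation of high points from Lemma~\ref{lem:assep} respectively yield $n \ge |y - Z_t|(1 - t^{-c})$ and $m \le n \cdot |V_t|^{-(1-2\varepsilon)/d}$.

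Combining these estimates, the dominant term in $\log(v_t(y)\sigma(Z_t))$ is $-n(\log\log t)/\gamma$, while the counting contribution $(n+m)\log\kappa$ and the inside-ball contribution $O(m/(d_t e_t))$ are both of lower order, since $m$ is a polynomially small fraction of $n$ by (iii) and $1/(d_t e_t)$ is only polylogarithmic in $t$. The lower bound $n \ge |y-Z_t|(1-t^{-c})$ then yields $\log(v_t(y)\sigma(Z_t)) \le -C|y-Z_t|\log\log t$ for any $C < 1/\gamma$, eventually on the event $\mathcal{E}_{t,c}$.

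I expect the main technical obstacle to be adapting Lemma~\ref{L:negupper2} and the associated class-count $\kappa^{n+m}$ to equivalence classes of paths starting at $y$ (rather than at the origin), and in uniformly handling the case $y \in \Pi^{(L_t)}$. In that case Lemma~\ref{L:negupper2} generates the extra factor $(\lambda_t - \lambda^{(j)}(z^{(P)}))^{-1} = O(1/(d_t e_t))$, which is polylogarithmic in $t$ and so harmless for the leading-order decay rate but must be tracked explicitly through the summation over $n$ and $m$.
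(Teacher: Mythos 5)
The proposal has the right skeleton (decompose $v_t$ via Proposition~\ref{prop:pathexpeig} into contributions from equivalence classes $\bar{\mathcal{P}}^1_{n,m}$, apply Lemma~\ref{L:negupper2}, balance against the class count $\kappa^{n+m}$), but the step where you control the ``inside-ball'' factor by arguing that $m$ is polynomially small compared to $n$ is wrong, and without it the argument does not close.

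The claimed bound $m \le n\,|V_t|^{-(1-2\varepsilon)/d}$ does not hold. Recall that $m$ counts the number of times the path exits a ball $B(z,j)$ around a site $z\in\Pi^{(L_t)}$. A path in $\bar E_t^1$ is allowed to exit $B(z,j)$ at $\partial B(z,j)$ and immediately return to $z$; such a round trip increments $m$ by one while contributing only $j+1$ steps to $n$, and this can be repeated any number of times at the same site $z$. Consequently the only general lower bound on the transition length is the one the paper actually uses (cf.\ equation \eqref{eq:boundonn} and the proof of Proposition~\ref{prop:U3neg}), namely $n > (m-1)(j+1) + c_4|y-Z_t|$, so $m$ can be of order $n/(j+1)$, not a vanishing fraction of $n$. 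The well-separation in Lemma~\ref{lem:assep} bounds the number of \emph{distinct} high-potential sites visited, not the number of exits.

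This matters because the inside-ball factor is not lower order once $m$ is of order $n/(j+1)$. With $\lambda_t - \lambda^{(j)}(z^{(P)}) > d_t e_t$ and $d_t \sim \gamma^{-1}(d\log t)^{(1-\gamma)/\gamma}$, one has $1/(d_t e_t) \gg \log\log t$, so the estimate $O(m/(d_t e_t))$ is in fact \emph{much larger} than the outside-ball term $-n(\log\log t)/\gamma$; even the sharper estimate $(m-1)\log\bigl(1 + c_2/(d_t e_t)\bigr) \sim (m-1)\tfrac{\gamma-1}{\gamma}\log\log t$ is of the \emph{same} order as $-n(\log\log t)/\gamma$. There is no order-of-magnitude separation here. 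The paper's proof instead performs an exact cancellation: using $n > (m-1)(j+1) + c_4|y-Z_t|$ and monotonicity, it pairs each inside-ball factor $(c_2 + c_3(d_te_t)^{-1})$ with $(j+1)$ powers of $(c_1(L_{t,\varepsilon}-L_t))^{-1} \sim a_t^{-1}$, and the key identity \eqref{eq:j} (which is precisely the reason $j = [\gamma-1]$ is chosen) guarantees each grouped factor is eventually at most one. The surviving $c_4|y-Z_t|$ unpaired powers of $a_t^{-1}$ then give $-C|y-Z_t|\log\log t$. Your proposal does not use \eqref{eq:j} at all, which is a sign that the argument has lost the crucial balance; once the erroneous $m$-bound is removed, the conclusion no longer follows from the remaining estimates.

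Your observation that the case $y\in\Pi^{(L_t)}$ needs the second bound in Lemma~\ref{L:negupper2}, generating an extra factor $(\lambda_t-\lambda^{(j)}(z^{(P)}))^{-1}$, is correct and matches the paper; that extra factor is harmless for exactly the reason you give.
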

\begin{proof}
As in Proposition~\ref{prop:U2neg}, we observe that there exists $\kappa>1$ such that
\begin{align*}
 v_t(y)  &=  \sum_{n, m} \sum_{ P \in \bar{\mathcal{P}}_{n,m}^{1} } v_t^{P}(y)  \le \max_{n,m} \max_{P \in \mathcal{P}^1_{n,m}} \left\{ \kappa^{2(n + m)}  v_t^{P}(y)  \right\}  \sum_{n,m} \kappa^{-n -m} \, .
 \end{align*} 
Suppose $y \in B_t\setminus \Pi^{(L_t)}$. Then for each $P \in \bar{\mathcal{P}}^1_{n,m}$, by Lemma~\ref{L:negupper2} there exist $c_1,c_2,c_3>0$ such that
\begin{align*}
 \kappa^{2(n + m)} \sigma(Z_t) v_t^{P}(y)
&\le (c_1 (\lambda_t-L_{t}))^{-n}
 (c_2 +c_3(\lambda_t-{\lambda}^{(j)}(z^{(P)}))^{-1})^{m-1}
\end{align*}
eventually almost surely. Note also that by Corollary~\ref{cor:pathoutsideballs} (similarly to \eqref{eq:boundonn}), eventually almost surely
\[ n > (m-1) (j+1) + c_4 |y - Z_t| \]
for any $c_4 < 1$. Then, for any $0 < \varepsilon < \theta$,
 \begin{align*}
  \kappa^{2(n + m)} \sigma(Z_t) v_t^{P}(y)
&\le  \left(c_1(L_{t, \varepsilon} -L_{t}) \right)^{-c_4 |y - Z_t| } \left((c_1(L_{t, \varepsilon} -L_{t}))^{-j-1} (c_2 +c_3( d_t e_t)^{-1}) \right)^{m-1} 
\end{align*}
eventually almost surely by monotonicity in $n$ and Corollary~\ref{cor:gap}, and so, applying equation~\eqref{eq:j}, there exists a $C > 0$ such that
 \begin{align*}
  2(n + m) \log \kappa +  \log v_t^{P}(y) +\log \sigma(Z_t) \le - C |y - Z_t| \log \log t
\end{align*}
eventually almost surely. Suppose then that $y\in \Pi^{(L_t)}$. Here we proceed similarly, but we now need the stronger bound $n > m(j+1) + c_4  |y - Z_t|$ for any $c_4 < 1$, valid eventually almost surely for $y \in \Pi^ {(L_t)}$ by Lemma~\ref{lem:assep}. Then,
\begin{align*}
  \kappa^{2(n + m)} \sigma(Z_t) v_t^{P}(y)
&\le  \left( (c_1(L_{t, \varepsilon} -L_{t}))^{-j-1} (d_t e_t)^{-1}  \right) \left(c_1(L_{t, \varepsilon} -L_{t}) \right)^{-c_4 |y - Z_t| } \\ 
 & \qquad \times \left((c_1 (L_{t, \varepsilon} -L_{t}))^{-j-1} (c_2 +c_3( d_t e_t)^{-1}) \right)^{m-1} \, ,
\end{align*}
and the rest of the proof follows as before.\end{proof}

\subsection{Completion of the proof of Theorem \ref{thm:main1}}
We are now in a position to establish Theorem~\ref{thm:main1}. First, remark that Proposition~\ref{prop:expdecay} implies that, as $t \to \infty$,
$$  \id_{\mathcal{E}_{t, c}} \, \sigma(Z_t) \sum_{z \in B_t \setminus \{Z_t\}} v_t(z)^2 \to 0 $$
almost surely, and so in particular $\id_{\mathcal{E}_{t, c}}\|v_t\|^2_{\ell_2}\to1$, since we know $\sigma(Z_t)>\delta_\sigma$.
Hence since
\begin{align}
\label{eq:loc}
\id_{\mathcal{E}_{t, c}} \, \sigma(Z_t) \, \|\sigma^{-\frac1{2}}v_t\|_{\ell_2}^{2} \, \sum_{z \in B_t \setminus \{Z_t\}}v_t(z) \ \le \ \id_{\mathcal{E}_{t, c}} \, \delta^{-1}_\sigma  \, \|v_t\|_{\ell_2}^{2} \,  \sigma(Z_t)  \sum_{z \in B_t \setminus \{Z_t\}}v_t(z) \, ,
\end{align}
the left-hand side of equation \eqref{eq:loc} also converges to zero almost surely. To finish the proof, we apply Proposition~\ref{prop:thm4.1}, which gives that
$$   \id_{\mathcal{E}_{t, c}}   \frac{1}{U(t)} \sum_{z \in B_t \setminus \{Z_t\}} u_1(t, z)  \to 0 $$
almost surely. Combining the above with the negligibility results already established in Corollary~\ref{cor:neg} on events $\mathcal{E}_{t, c}$ and $\Theta^d_t$, and the fact that the events $\mathcal{E}_{t, c}$ and  $\Theta^d_t$ hold eventually with overwhelming probability by Proposition~\ref{prop:existencepath}, we have established Theorem~\ref{thm:main1}.
\qed 
%

%\nocite{*}
\bibliographystyle{plain}
\bibliography{paper}
\end{document}